\documentclass[12pt]{amsart}
\usepackage{hyperref}
\usepackage{amssymb}
\usepackage{epsfig}

\usepackage[margin=1.1in]{geometry}

\textheight 8.7in

\newtheorem*{statement}{Theorem}
\newtheorem{theorem}{Theorem}[section]
\newtheorem{thm}[theorem]{Theorem}
\newtheorem{lemma}[theorem]{Lemma}
\newtheorem{lem}[theorem]{Lemma}
\newtheorem{proposition}[theorem]{Proposition}
\newtheorem{prop}[theorem]{Proposition}
\newtheorem{corollary}[theorem]{Corollary}
\newtheorem{cor}[theorem]{Corollary}
\newtheorem{conjecture}[theorem]{Conjecture}

\theoremstyle{definition}

\newtheorem{problem}[theorem]{Problem}

\newtheorem{question}[theorem]{Question}
\theoremstyle{remark}
\newtheorem{example}{Example}[section]
\newtheorem{remark}{Remark}[section]
\newcommand\remind[1]{{\bf **** #1 }}

\def\L{{\mathbb L}}
\def\a{{\mathbf a}}
\def\b{{\mathbf b}}
\def\c{{\mathbf c}}

\def\xx{{\mathbf x}}

\def\cc{{\mathbb C}}
\def\R{{\mathbb R}}
\def\C{\mathfrak C}
\def\h{\mathfrak h}
\def\X{\overline X}
\def\Y{\overline Y}
\def\Z{{\mathbb Z}}
\def\x{\overline x}
\def\r{\overline r}
\def\s{\overline s}
\def\e{\overline e}

\def\G{GL_n(\R((t)))}
\def\g{GL_n(\R[t,t^{-1}])}

\def\wt{{\rm wt}}

\def\Imm{\mathrm{Imm}}

\def\supp{{\rm supp}}
\def\LSym{{\rm LSym}}

\def\curl{{\rm rot}}

\begin{document}

\author{Thomas Lam and Pavlo Pylyavskyy}

\thanks{
T.L. was partially supported by NSF grants DMS-0600677, DMS-0652641
and DMS-0901111, and by a Sloan Fellowship.  P.P. was partially
supported by NSF grant DMS-0757165. Part of this work was completed
during a stay at MSRI} \email{tfylam@umich.edu}
\email{pavlo@umich.edu}

\title{Total positivity in loop groups I: whirls and curls}

\begin{abstract}
This is the first of a series of papers where we develop a theory of
total positivity for loop groups.  In this paper, we completely
describe the totally nonnegative part of the polynomial loop group
$GL_n(\R[t,t^{-1}])$, and for the formal loop group $GL_n(\R((t)))$
we describe the totally nonnegative points which are not totally
positive.  Furthermore, we make the connection with networks on the
cylinder.

Our approach involves the introduction of distinguished generators,
called whirls and curls, and we describe the commutation relations
amongst them. These matrices play the same role as the poles and
zeroes of the Edrei-Thoma theorem classifying totally positive
functions (corresponding to our case $n=1$). We give a solution to
the ``factorization problem'' using limits of ratios of minors. This
is in a similar spirit to the Berenstein-Fomin-Zelevinsky Chamber
Ansatz where ratios of minors are used.
A birational symmetric group action arising in the commutation relation
of curls appeared previously in Noumi-Yamada's study of discrete Painlev\'{e} dynamical systems 
and Berenstein-Kazhdan's study of geometric crystals.

\end{abstract}
\maketitle

\DeclareRobustCommand{\SkipTocEntry}[5]{} \tableofcontents
\tableofcontents

\section{Introduction}\label{sec:intro}

A matrix with real entries is {\it {totally nonnegative}} if all of
its minors are nonnegative.

\addtocontents{toc}{\SkipTocEntry}
\subsection{Total positivity in loop groups}\label{sec:TP}
Suppose $A(t)$ is a matrix with entries which are real polynomials,
or real power series.  When do we say that $A(t)$ is totally
nonnegative?  First associate to $A(t)$ an infinite periodic matrix
$X$, as in the following example:
\begin{eqnarray*}
\begin{array}{c}\left(
\begin{array}{cc}
1 + 9t^2 & 2 + 5t \\
-1 -2t -3t^2  & 8+3t-4t^2 \end{array} \right) =  \\
\left(\begin{array}{cc}1 & 2\\ -1 & 8\end{array}\right) + t
\left(\begin{array}{cc}0 & 5\\ -2 & 3\end{array}\right) + t^2
\left(\begin{array}{cc}9 & 0 \\ -3 & 4\end{array} \right)
\end{array} &\hspace{-20pt}\rightsquigarrow& \hspace{-10pt}\left(
\begin{array}{c|cc|cc|cc|c}
\ddots & \vdots & \vdots & \vdots  & \vdots  & \vdots&\vdots \\
\hline
\dots& 0 & 5& 9& 0&0& 0&\dots \\
\dots&-2&3&-3&4&0&0&\dots \\
\hline
\dots&1  & 2 & 0 & 5 & 9 & 0&\dots\\
\dots& -1 & 8 &-2 & 3 &-3&4 & \dots \\
\hline
\dots&0&0&1  & 2 & 0 & 5 & \dots\\
\dots&0&0& -1 & 8 &-2 & 3 & \dots \\
\hline & \vdots & \vdots & \vdots  & \vdots  & \vdots & \vdots &
\ddots\end{array} \right) \\
A(t) \hspace{100pt} && \hspace{80pt} X
\end{eqnarray*}
We declare that $A(t)$ is totally nonnegative if and only if $X$ is
totally nonnegative.  We use this to define and study the totally
nonnegative part of the loop groups $\g$ and $\G$.  Here $\R((t))$
denotes the field of formal Laurent series.  We let $\G_{\geq 0}$
denote the totally nonnegative part of $\G$.  Our main aim is to
unify and generalize two classical subjects: total positivity in
$GL_n(\R)$ and totally positive functions.

\addtocontents{toc}{\SkipTocEntry}
\subsection{Total positivity in $GL_n(\R)$}
The theory of totally positive matrices began in the 1930's in the
works of Schoenberg \cite{Sch} and Gantmacher-Krein \cite{GK} who
discovered that totally positive matrices had remarkable spectral
properties and a variation-diminishing property, cf. \cite{K}.

Let $e_i(a) \in GL_n(\R)$ (resp.~$f_i(a) \in GL_n(\R)$) be the
Chevalley generators, which differ from the identity matrix by a
single entry in the $i$-th row (resp.~column) equal to $a \in \R$
immediately above (resp.~below) the diagonal.  From our point of
view, the most important classical result is:
\begin{thm}[Loewner-Whitney Theorem {\cite{Lo,Wh}}]\label{T:LW}
The space of non-singular totally nonnegative matrices $GL_n(\R)_{\geq 0}$ is
the multiplicative semigroup generated by Chevalley generators $e_i(a)$, $f_i(a)$
with positive parameters, and positive diagonal matrices.
\end{thm}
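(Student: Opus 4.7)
The plan is to verify the two inclusions separately. The containment of the semigroup inside $GL_n(\R)_{\geq 0}$ is the easy direction: one checks directly that $e_i(a)$ and $f_i(a)$ with $a \geq 0$ and diagonal matrices with positive entries are all totally nonnegative (every minor is either $0$, $1$, a product of positive diagonal entries, or the single off-diagonal parameter). The Cauchy--Binet formula expresses every minor of a product $AB$ as a sum of products of minors of $A$ and $B$, so products of totally nonnegative matrices remain totally nonnegative. Hence the semigroup generated by Chevalley generators with positive parameters and positive diagonal matrices lies in $GL_n(\R)_{\geq 0}$.

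The substantive direction is that every nonsingular totally nonnegative $A$ admits such a factorization. I would argue by induction on $n$ using a total-nonnegativity-preserving Gaussian elimination. The first step is to show that if $A \in GL_n(\R)_{\geq 0}$ then all leading principal minors of $A$ are strictly positive (otherwise the vanishing of a leading principal minor, combined with nonnegativity of the flag minors, would force $\det A = 0$). In particular $A_{11} > 0$ and we can begin extracting Chevalley factors from the left: if $A_{i+1,1} > 0$, let $a = A_{i+1,1}/A_{i,1}$ and write $A = f_i(a) \cdot A'$. Working bottom-up in the first column, we clear the subdiagonal entries and are left with a matrix whose first column is supported only on the $(1,1)$ position.

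Next I would repeat the same elimination on the right using $e_i(b)$ factors to clear the first row above the diagonal. After these two sweeps the matrix is block diagonal of the form $\mathrm{diag}(A_{11}, A'')$ with $A''$ a smaller nonsingular totally nonnegative matrix, and the inductive hypothesis supplies a factorization of $A''$. Reassembling the extracted factors on each side with the resulting positive diagonal matrix gives the desired expression of $A$.

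The main obstacle is showing that each elimination step preserves total nonnegativity, i.e. that the reduced matrix $A'$ is still in $GL_n(\R)_{\geq 0}$. This is the heart of the proof because naive row operations destroy total nonnegativity in general; here it works only because the multiplier is a specific ratio of entries (or, more conceptually, of minors) of $A$. The key lemma expresses an arbitrary minor of $A'$ as a ratio whose numerator is a minor or sum of minors of $A$ and whose denominator is a positive leading minor of $A$, typically via a Dodgson/Jacobi-type three-term determinantal identity. Once this positivity-preserving elementary reduction is in place, the inductive algorithm closes up without further difficulty, and the same Cauchy--Binet argument from the easy direction then tells us that the factorization we have produced lies in the claimed semigroup.
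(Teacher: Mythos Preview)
The paper does not prove Theorem~\ref{T:LW}; it is quoted as the classical Loewner--Whitney theorem with a citation to \cite{Lo,Wh}, and no argument is given. Your outline is a correct sketch of one standard route (essentially Whitney's reduction), with the usual caveat that the step you correctly flag as ``the heart of the proof''---that each elementary reduction preserves total nonnegativity---is only gestured at and would need an actual determinantal identity or Schur-complement argument to close.

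It is worth comparing your approach to the paper's proof of the loop-group analogue, Theorem~\ref{T:polygen}, which when specialized to constant loops would yield a proof of Theorem~\ref{T:LW}. That argument is organized differently from yours. Rather than clearing the first column and inducting on a block-diagonal complement, it locates a \emph{NE corner} $x_{i,j}$ (a nonzero entry with zeros above and to its right), checks that $x_{i+1,j}>0$, and left-multiplies by $e_i(-x_{i,j}/x_{i+1,j})$ to kill that corner; TNN of the reduced matrix is obtained from the fact that Schur complements of TNN matrices are TNN (Ando \cite{A}). The process terminates because the support strictly shrinks. Your block-diagonal induction on $n$ is perfectly good for $GL_n(\R)$, but it does not port to the infinite periodic setting the paper cares about---there is no ``first column'' and no smaller $n$ to recurse on---which is presumably why the paper favors the corner-chipping scheme.
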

Theorem \ref{T:LW} led Lusztig \cite{Lu1} to his ground-breaking
generalization of total positivity to reductive groups.  Lusztig
discovered deep connections between the theory of total positivity
and his own theory of canonical bases in quantum groups \cite{Lu2}.
In another direction, Fomin and Zelevinsky \cite{FZ1,FZ2} studied
the problem of parametrizing and testing for totally nonnegative
matrices. Their attempt to classify the ways to test whether a
matrix is totally nonnegative eventually led to the theory of {\it
cluster algebras} \cite{FZ3}.

Our first theorem (Theorem \ref{T:polygen}) establishes the analogue
of Theorem \ref{T:LW} for the totally nonnegative part $\g_{\geq 0}$
of the polynomial loop group, using the affine Chevalley generators.
Note that the polynomial loop group itself is {\it not} generated by
the torus and affine Chevalley generators with arbitrary parameters.
%

\addtocontents{toc}{\SkipTocEntry}
\subsection{Totally positive functions}
A formal power series $a(t) = 1 + a_1 t + a_2 t^2 + \cdots \in
\R[[t]]$ can be considered a $1 \times 1$ matrix.  We may then apply
the definition of total nonnegativity in $GL_1(\R((t)))$ of
subsection \ref{sec:TP} to define when a formal power series is
totally nonnegative.  Traditionally, formal power series $a(t)$
which are totally nonnegative are called {\it totally positive
functions}. The coefficients $\{a_1, a_2,\ldots \}$ are said to form
a {\it {Polya frequency sequence}}, see \cite{Br2}. Totally positive
functions were classified independently by Edrei and Thoma
\cite{E,Th}.

\begin{thm}[Edrei-Thoma theorem]\label{T:ET}
Every totally positive function $a(t)$ has a unique expression as
$$a(t) = e^{\gamma t}\frac{\prod_i (1+\alpha_i t)}{\prod_i
(1- \beta_i t)},$$ where $\alpha_i, \beta_i$ and $\gamma$ are
nonnegative parameters satisfying $\alpha_1 \geq \alpha_2 \geq
\ldots$, $\beta_1 \geq \beta_2 \geq \ldots$ and $\sum_i \alpha_i +
\sum_i \beta_i < \infty$.  In particular, totally positive functions
are meromorphic functions, holomorphic in a neighborhood of 0.
\end{thm}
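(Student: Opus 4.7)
The plan is to prove sufficiency and necessity separately. For sufficiency, I would first verify that each basic factor has totally nonnegative Toeplitz matrix $T(f) = (f_{j-i})_{i,j \geq 0}$ (with $f_n = 0$ for $n < 0$): for $(1+\alpha t)$ the matrix is bidiagonal with nonnegative entries; for $1/(1-\beta t) = \sum_{k \geq 0} \beta^k t^k$ a direct computation shows every minor is either zero or a nonnegative monomial in $\beta$; and for $e^{\gamma t}$ total nonnegativity follows by the approximation $e^{\gamma t} = \lim_n (1 + \gamma t/n)^n$ combined with coefficient-wise continuity of minors. Then I would observe that multiplication of power series corresponds to multiplication of Toeplitz matrices, $T(fg) = T(f)T(g)$, so by the Cauchy--Binet formula a minor of $T(fg)$ is a sum of products of minors of $T(f)$ and $T(g)$ and hence nonnegative. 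The assumed summability $\sum \alpha_i + \sum \beta_i < \infty$ ensures the infinite product converges coefficient-wise, permitting the same limiting argument to conclude.

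The necessity direction is considerably harder. My plan has three stages. Stage one: show $a(t)$ has positive radius of convergence and extends meromorphically along the real line. The $2\times 2$ minors of $T(a)$ give log-concavity inequalities $a_n^2 \geq a_{n-1} a_{n+1}$, so the ratios $a_{n+1}/a_n$ are non-increasing and hence $a_n \leq a_1^n$, yielding convergence in $|t| < 1/a_1$; higher order minor inequalities refine this into a meromorphic continuation. Stage two: handle the polynomial case. A polynomial $p(t)$ with $p(0) = 1$ and totally nonnegative Toeplitz matrix must have all roots real and negative, hence factor as $\prod (1 + \alpha_i t)$ with $\alpha_i \geq 0$; this can be extracted either from the variation-diminishing property or by direct analysis of Toeplitz determinants. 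Stage three: approximate $a(t)$ by polynomial totally nonnegative functions $p_N(t)$ (for example by clearing finitely many poles of a meromorphic truncation, or by constructions of Hurwitz/Pad\'e type), each of which factors by Stage two, and then pass to the limit to obtain the claimed Weierstrass-type product.

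The main obstacle will be stage three, namely controlling the limit so that the summability $\sum_i \alpha_i + \sum_i \beta_i < \infty$ is preserved and the residual growth appears precisely as an exponential factor $e^{\gamma t}$ with $\gamma \geq 0$. The crucial identity guiding the bookkeeping should be $a_1 = \gamma + \sum_i \alpha_i + \sum_i \beta_i$, which both forces the summability (since $a_1$ is finite) and pins down $\gamma$ as the leftover once zeros and poles are accounted for. Uniqueness should then be essentially automatic, since the $\alpha_i$ and $\beta_i$ are the reciprocals of the zeros and poles of the meromorphic extension of $a(t)$, and $\gamma$ is recovered as a residual. Holomorphicity in a neighborhood of $0$ follows because the largest pole location $1/\beta_1$ satisfies $\beta_1 \leq \sum_i \beta_i < \infty$, so no singularity accumulates at the origin.
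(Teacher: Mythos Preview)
The paper does not actually prove Theorem~\ref{T:ET}; it is quoted as a classical result with citations to Edrei and Thoma, and the paper explicitly remarks that ``no completely elementary proof of Theorem~\ref{T:ET} seems to be known,'' with the original proofs relying on Nevanlinna theory and Okounkov's on asymptotic representation theory. So there is no ``paper's own proof'' to compare against; your proposal is an attempt at something the authors themselves do not claim to supply.

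Your sufficiency direction is fine and standard. The necessity direction, however, has genuine gaps precisely at the places where the theorem is known to be hard. In Stage~1 you assert that ``higher order minor inequalities refine this into a meromorphic continuation,'' but this is not substantiated: meromorphicity is not a consequence of log-concavity-type inequalities in any direct way, and in fact it is one of the deep outputs of the theorem rather than an easy input. In Stage~3 your approximation scheme is circular: ``clearing finitely many poles of a meromorphic truncation'' presupposes the meromorphic structure you are trying to establish, and truncating the power series does not in general yield a totally nonnegative polynomial. The paper's own machinery (the ASW factorization of Section~\ref{sec:ASW}) does extract curl factors one at a time and can be iterated, but what it proves for $n=1$ is essentially the Aissen--Schoenberg--Whitney result (a factorization $a(t) = (\text{product of curls}) \cdot (\text{entire TNN})$), not the full Edrei--Thoma statement; identifying the entire part as $e^{\gamma t}$ times a product of whirls is exactly the step that historically required hard analysis. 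Your bookkeeping identity $a_1 = \gamma + \sum_i \alpha_i + \sum_i \beta_i$ is correct a posteriori but does not by itself force the decomposition to exist.
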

Thoma \cite{Th} showed that the classification of totally positive
functions was equivalent to the classification of characters of the
infinite symmetric group $S_\infty$.  This connection was made more
robust when Vershik and Kerov \cite{VK} interpreted the zeroes and
poles in Theorem \ref{T:ET} as asymptotic frequencies occurring in
the representation theory of $S_\infty$.  No completely elementary
proof of Theorem \ref{T:ET} seems to be known. For example, the
original proofs of Edrei and Thoma use Nevanlinna theory from
complex analysis, while Okounkov's proofs \cite{O} rely on the
connection with asymptotic representation theory.

One of the main themes of our work is the parallel between Theorem
\ref{T:ET} and Theorem \ref{T:LW}: $(1+ \alpha t)$, $1/(1-\beta t)$,
and $e^{\gamma t}$ can be thought of as semigroup generators for
totally positive functions, when we also allow taking limits of
products. We begin by considering the analogues of these generators
for $n > 1$.

\addtocontents{toc}{\SkipTocEntry}
\subsection{Whirls and curls}
We introduce matrices $M(a_1,a_2,\ldots,a_n) \in \G$ called {\it
whirls}, and $N(b_1,b_2,\ldots,b_n) \in \G$, called {\it curls},
depending on $n$ real (usually nonnegative) parameters. For $n = 2$,
their infinite periodic representations look like
$${\footnotesize{
M(a_1, a_2) = \left(\begin{array}{ccccccc} \ddots & \vdots & \vdots & \vdots & \vdots &\vdots \\
\cdots& 1& a_1 & 0 & 0 & 0& \cdots \\ \cdots& 0& 1& a_2 & 0 & 0 &
 \cdots \\
 \cdots& 0&0 & 1& a_1& 0&\cdots \\
 \cdots& 0&0&0&1& a_2& \cdots \\
  \cdots& 0&0&0&0&1&  \cdots \\
 & \vdots & \vdots & \vdots & \vdots &\vdots &\ddots
\end{array} \right)  N(b_1,b_2) = \left(\begin{array}{ccccccc} \ddots & \vdots & \vdots & \vdots & \vdots &\vdots \\
\cdots& 1& b_1 & b_1 b_2 & b_1^2 b_2 & b_1^2b_2^2& \cdots \\ \cdots& 0& 1& b_2 & b_1 b_2 &
b_1 b_2^2 &
 \cdots \\
 \cdots& 0&0 & 1& b_1& b_1 b_2&\cdots \\
 \cdots& 0&0&0&1& b_2& \cdots \\
  \cdots& 0&0&0&0&1&  \cdots \\
 & \vdots & \vdots & \vdots & \vdots &\vdots &\ddots
\end{array} \right)}}
$$
Unlike Theorem \ref{T:ET}, our theory is not commutative when $n >
1$. We study whirls and curls in detail.  In Section \ref{sec:comm},
we describe the commutation relations for whirls and curls.  In
Section \ref{sec:semigroup}, we define the notion of infinite
products of whirls or curls, and show (see Theorems \ref{thm:semi},
\ref{thm:lsemi} and \ref{T:Chevcurl}) the following.
\begin{statement}[Structure of infinite whirls and curls]
Infinite products of whirls (or curls) form semigroups which are
closed under multiplication by Chevalley generators on one side.
\end{statement}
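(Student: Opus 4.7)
The plan is to work in the $t$-adic topology on $\G$ and then leverage the commutation relations from Section \ref{sec:comm} to prove all three assertions: convergence and semigroup structure of infinite whirl (resp.\ curl) products, and absorption of Chevalley generators.

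First I would set up convergence. In the $t$-adic topology, a sequence in $\G$ converges when its entries stabilize coefficient by coefficient in $t$. Since each whirl differs from the identity by an expression of order $t$, a countable ordered product $M^{(1)}M^{(2)}\cdots$ converges in this topology under a suitable summability condition on the parameters; for curls the corresponding condition is slightly stronger, reflecting the geometric expansion visible in the displayed $N(b_1,b_2)$. I would then check that multiplication in $\G$ is continuous on such families. Semigroup closure follows easily: given two infinite products $P=M^{(1)}M^{(2)}\cdots$ and $Q=L^{(1)}L^{(2)}\cdots$, the concatenation of two summable sequences is summable, and its partial products, suitably reindexed, converge to $PQ$ by continuity.

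The heart of the statement is closure under Chevalley generators. The key input should be an elementary commutation relation of the form
\[
e_i(c)\,M(a_1,\ldots,a_n) \;=\; M(a_1',\ldots,a_n')\,e_i(c'),
\]
with an explicit rational transformation of the parameters derived in Section \ref{sec:comm}, together with analogous relations for $f_i$ and for curls. To analyze $e_i(c)\,P$, I would propagate $e_i(c)$ from left to right across the infinite whirl product: at the $k$-th truncation this gives a finite product of modified whirls followed by a residual generator $e_i(c^{(k)})$. Provided one can show that $c^{(k)} \to 0$ and that the modified whirl parameters remain summable uniformly in $k$, taking the limit identifies $e_i(c)\,P$ as an infinite whirl product. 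The curl case is handled symmetrically by working on the opposite side.

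The main obstacle is precisely the quantitative control in this last step. The dissipation $c^{(k)}\to 0$ and the uniform summability of the modified parameters are not formal consequences of merely having a commutation relation; they demand specific features of the rational map --- most likely a contraction or monotonicity property of the transformation appearing in Section \ref{sec:comm}. I expect the required estimate will be a dominated-convergence-type bound derived directly from the explicit commutation formula, in the spirit of the estimates that underlie convergence of infinite products in the classical theory of totally positive functions. Once this analytic control is in place, the rest of the argument is essentially bookkeeping.
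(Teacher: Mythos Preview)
Your treatment of the Chevalley-generator absorption is essentially the paper's argument (Lemma~\ref{lem:chthrw}): propagate $e_i(a)$ through the product using the explicit commutation of Lemma~\ref{lem:chevcom}, observe the contraction $a'_{i+1}=\frac{b_{i+1}a_i}{a_i+b_i}<b_{i+1}$, and conclude $a^{(j)}\to 0$ from summability of the whirl parameters. So that part is fine.

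The semigroup part, however, has a genuine gap. Your claim that ``concatenation of two summable sequences is summable, and its partial products, suitably reindexed, converge to $PQ$'' does not work: the concatenation of two $\mathbb{N}$-indexed sequences is not an $\mathbb{N}$-indexed sequence, and any honest reindexing (e.g.\ interleaving) changes the order of the factors. Since whirls do \emph{not} commute, the interleaved product $M^{(1)}L^{(1)}M^{(2)}L^{(2)}\cdots$ has no a priori reason to equal $(\prod_i M^{(i)})(\prod_j L^{(j)})$. What the paper actually does (Lemmata~\ref{lem:pwhirl}, \ref{L:multiY}, \ref{L:Z}) is substantially more delicate: for each threshold $m$, it takes the finitely many factors of $X$ and $Y$ with inverse radius of convergence below $m$, rewrites their product in the canonical ASW order as $Z_1^{(m)}\cdots Z_{s_m+t_m}^{(m)}$, extracts subsequential limits $Z_1,Z_2,\ldots$ by compactness, and then proves $\prod_i Z_i = XY$ via two-sided entrywise sandwich inequalities that rely on the quantitative commutation estimate of Lemma~\ref{lem:pwhirl}. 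None of this is ``bookkeeping''; the reordering and the compactness step are the whole content.

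A smaller but related issue: the $t$-adic topology is not the right framework. A whirl $M(\a)$ is not of the form $I+O(t)$ in the folded picture (for $n=2$ it is $\left(\begin{smallmatrix}1&a_1\\a_2 t&1\end{smallmatrix}\right)$), so successive partial products do not stabilize $t$-adically. The paper works instead with entrywise real convergence on the unfolded matrix, and the convergence of $\prod_i M(\a^{(i)})$ (Lemma~\ref{lem:infinitewhirl}) comes from monotonicity plus the bound $\alpha^d$ on the $d$-th diagonal, not from any $t$-adic estimate.
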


\addtocontents{toc}{\SkipTocEntry}
\subsection{The totally positive part $\G_{>0}$} If $X$ is an
infinite periodic matrix corresponding to $A(t) \in \G$, then every
sufficiently southwest entry of $X$ is necessarily equal to 0.  Thus
$X$ is never totally positive in the usual sense, which requires all
minors to be strictly positive.  We define $A \in \G_{\geq 0}$ to be
{\it totally positive} if it is totally nonnegative, and in
addition, all sufficiently northeast minors (see subsection
\ref{ssec:TP} for the precise definition) of the corresponding
infinite periodic matrix are strictly positive. We show (Theorem
\ref{T:notTP}):

\begin{statement}[Matrices of finite type]
The set $\G_{\geq 0} - \G_{>0}$ of totally nonnegative matrices in
the formal loop group which are not totally positive is a semigroup
generated by positive Chevalley generators, whirls, curls, shift
matrices (defined in Section \ref{sec:upper}), and diagonal
matrices.
\end{statement}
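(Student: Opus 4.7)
The plan is to prove both inclusions between the semigroup $S$ generated by positive Chevalley generators, whirls, curls, shift matrices, and diagonal matrices, and the set $\G_{\geq 0} \setminus \G_{>0}$.

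For the containment $S \subseteq \G_{\geq 0} \setminus \G_{>0}$, I would first note that each listed generator is totally nonnegative (whirls and curls by inspection of their explicit entries, the remainder by standard arguments), and that $\G_{\geq 0}$ is closed under multiplication by Cauchy--Binet, so $S \subseteq \G_{\geq 0}$. To rule out total positivity of any finite word $w$ in these generators, I would exhibit a far-northeast vanishing minor of $w$: each generator obeys a uniform bound on how far it can displace $t$-degrees of individual entries, so a finite product admits a uniform $t$-degree bound on the entry $X_{ij}$ in terms of $j-i$, and a $k\times k$ minor placed sufficiently far northeast of the diagonal must vanish identically. This finite-type dichotomy is precisely what the structure theorems for infinite whirls and curls cited just above the statement set up.

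For the reverse inclusion I would proceed by induction, peeling off one generator of $S$ from $A$ at a time, in the spirit of the Berenstein--Fomin--Zelevinsky Chamber Ansatz invoked in the abstract. Given $A \in \G_{\geq 0} \setminus \G_{>0}$, fix a vanishing northeast minor of the associated infinite periodic matrix. The vanishing, combined with total nonnegativity, forces a rank drop in a rectangular submatrix of $A$ that can be read off from limits of specific ratios of minors. These ratios identify the parameters of a candidate leading factor $G \in S$ (Chevalley, whirl, curl, diagonal, or shift), and one verifies via a standard minor identity that $G^{-1} A$ is again totally nonnegative. Iterating yields a candidate factorization $A = G_1 G_2 \cdots G_k \cdot D \cdot \Sigma$, where $D$ is diagonal and $\Sigma$ is a shift.

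The main obstacle is termination: why does the peeling stop after finitely many steps? If it did not, then $A$ would equal an infinite product of elements of $S$, and by the structure theorems for infinite whirls and curls (together with their closure under one-sided Chevalley multiplication), any such convergent infinite product is totally positive, contradicting $A \notin \G_{>0}$. To make this rigorous I would track a monotone invariant of the residue --- for example, the smallest index $N$ such that all $k \times k$ northeast minors past $N$ vanish, or a combined $t$-degree of the remaining whirl/curl content --- and show that each peeling strictly decreases it. This finite-type termination is the loop-group analogue of the observation that in the Edrei--Thoma theorem (Theorem \ref{T:ET}) the transcendental factor $e^{\gamma t}$ is required precisely when the products of $(1+\alpha_i t)$ and $1/(1-\beta_j t)$ are genuinely infinite; once termination is established, the decomposition expresses $A$ as a finite word in $S$, completing the proof.
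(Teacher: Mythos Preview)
Your easy inclusion $S \subseteq \G_{\geq 0} \setminus \G_{>0}$ is fine and matches the paper. The hard inclusion has a genuine gap in the termination argument.

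You write that if peeling did not terminate, $A$ would be an infinite product of generators, and ``by the structure theorems for infinite whirls and curls \dots\ any such convergent infinite product is totally positive.'' This is circular. The cited structure theorems (Theorems~\ref{thm:semi}, \ref{thm:lsemi}, \ref{T:Chevcurl}) only assert that infinite products of whirls (or curls) form semigroups closed under one-sided Chevalley multiplication; they say nothing about total positivity. The assertion that a genuinely infinite product of non-degenerate curls lies in $U_{>0}$ is essentially equivalent to Theorem~\ref{thm:tp}, the upper-triangular case of the very statement you are proving, so it cannot be invoked as an input. Your fallback of tracking ``the smallest index $N$ such that all $k\times k$ northeast minors past $N$ vanish'' is the right instinct, but you give no reason why a single peel decreases it.

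The paper's termination argument is concrete and different in content. After reducing to $Y \in U_{\geq 0}$ via Theorem~\ref{T:red} and Proposition~\ref{P:TP}, one fixes a vanishing \emph{solid} minor $\Delta_{I,J}(Y)=0$ with $I \leq J-1$ and $\Delta_{I,J-1}(Y)>0$ (Lemma~\ref{lem:sol}). The key move is to pass to $Y^{-c}$: this also has a vanishing solid minor, so Lemma~\ref{lem:fsin} forces all $\epsilon_i(Y^{-c})>0$, and ASW factorization peels a \emph{non-degenerate} curl from $Y^{-c}$; equivalently $Y = Y'M$ for a non-degenerate whirl $M$ with $Y'$ TNN. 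Now Cauchy--Binet for $\Delta_{I,J}(Y)=\sum_K \Delta_{I,K}(Y')\Delta_{K,J}(M)$, together with $\Delta_{J-1,J}(M)>0$, forces $\Delta_{I,J-1}(Y')=0$. Thus each peel moves the chosen vanishing minor exactly one column closer to the diagonal, and termination is immediate. Your proposal identifies neither the role of the $c$-inverse (needed to guarantee the factor removed is non-degenerate, so that the column set genuinely shifts) nor this column-shift mechanism; without them the ``monotone invariant'' remains a wish rather than an argument.
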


\addtocontents{toc}{\SkipTocEntry}
\subsection{Canonical form}
For simplicity, we restrict (using Theorem \ref{T:red}) to the
subsemigroup $U_{\geq 0} \subset \G_{\geq 0}$ consisting of matrices
$A(t)$ with upper triangular infinite periodic representations. In
Theorems \ref{thm:canon} and \ref{thm:regular}, we establish a
partial generalization of Theorem \ref{T:ET} to $n > 1$ (it is in
fact a rather precise generalization of the result of Aissen,
Schoenberg, and Whitney \cite{ASW}). We call a matrix $Y \in U_{\geq
0}$ {\it entire} if all $n^2$ matrix entries are entire functions.
The following results are our main theorems.

\begin{statement}[Canonical Form I]
Every $X \in U_{\geq 0}$ has a unique factorization as $X =
Z\exp(Y)W$, where $Z$ is a (possibly infinite) product of curls, $W$
is a (possibly infinite) product of whirls, and $Y$ is entire such
that $\exp(Y) \in U_{\geq 0}$.
\end{statement}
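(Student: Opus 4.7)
The plan is to parallel the Edrei--Thoma classification. The curls correspond to the poles $1/(1-\beta_i t)$, the whirls to the zeros $(1+\alpha_i t)$, and $\exp(Y)$ to the exponential factor $e^{\gamma t}$. I would extract curls on the left one at a time via explicit ``Chamber-Ansatz''-style formulas — limits of ratios of minors of the infinite periodic representation of $X$ — then symmetrically extract whirls on the right, and finally argue that what remains must be an entire totally nonnegative matrix.

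For the curl extraction, given $X \in U_{\geq 0}$ I would read off candidate parameters $b_1, \ldots, b_n$ as limits of ratios of appropriate minors reflecting the northwest asymptotics of the periodic representation of $X$, and form $Z_1 = N(b_1,\ldots,b_n)$. Using the commutation relations of Section \ref{sec:comm} and the closure of infinite products of curls under multiplication by Chevalley generators (the first boxed theorem), one checks that $Z_1^{-1} X$ is again in $U_{\geq 0}$. Iterating produces a sequence $Z_1, Z_2, \ldots$ with weakly decreasing parameter vectors. Convergence of $Z = Z_1 Z_2 \cdots$ in $U_{\geq 0}$ requires the analogue of Edrei--Thoma's condition $\sum \beta_i < \infty$, which I would extract from explicit inequalities among minors of $X$ forced by total nonnegativity. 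The mirror procedure peels whirls $W = \cdots M^{(2)} M^{(1)}$ off $Z^{-1} X$ on the right, leaving an intermediate $V = Z^{-1} X W^{-1} \in U_{\geq 0}$ from which no further curl on the left nor whirl on the right can be extracted.

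The crucial step is to show that such a $V$ has entire matrix entries and is of the form $\exp(Y)$ with $\exp(Y) \in U_{\geq 0}$. Heuristically, a pole of $V(t)$ would leave a tell-tale signature in exactly the minor ratios used in the curl extraction, contradicting non-extractability; a zero of $V(t)$ on the negative real axis would be detected similarly as a whirl. Combining entirety with the classification of $\G_{\geq 0} - \G_{>0}$ from the ``matrices of finite type'' statement rules out residual shift-matrix and Chevalley factors, leaving only a totally nonnegative entire exponential. Uniqueness then follows because the parameters of $Z$ and $W$ are computed from $X$ alone by the extraction formulas, so $\exp(Y) = Z^{-1} X W^{-1}$ is forced.

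The main obstacle is the entirety statement in the third paragraph: showing that non-extractability of curls on the left and whirls on the right forces all matrix entries of $V(t)$ to be entire. This is precisely the hard part of Edrei--Thoma in the $n=1$ case, where classical proofs invoke Nevanlinna theory. For $n>1$, I expect the argument will combine the semigroup closure results of Section \ref{sec:semigroup}, careful bookkeeping of how minors of the periodic matrix detect poles and zeros in each of the $n$ asymptotic ``directions'', and a compactness argument on partial factorizations to pass from the absence of extractable factors at every finite stage to global entirety of $V$.
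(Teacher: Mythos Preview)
Your overall architecture matches the paper: extract curls on the left via the limits $\epsilon_i(X) = \lim_{j\to\infty} x_{i,j}/x_{i+1,j}$, then whirls on the right, leaving an entire middle factor. But you misjudge where the work lies. You call entirety of the remainder ``the main obstacle'', analogous to the Nevanlinna-theoretic core of Edrei--Thoma; in the paper it is a few lines. The key observation you are missing is Lemma~\ref{lem:limits}: the product $\prod_i \epsilon_i(X)$ equals the reciprocal of the radius of convergence of every entry of $\overline{X}(t)$, and this follows from $2\times 2$ TNN minors alone. Once you have this, entirety of $Y = \lim_k X^{(k)}$ after iterated ASW extraction reduces to showing $\prod_i \epsilon_i(X^{(k)}) \to 0$, which is forced because otherwise the sum of the extracted curl parameters would diverge (Lemma~\ref{lem:ASWreduction}). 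No compactness argument, no complex analysis.

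Two smaller corrections. Your proposed check that $Z_1^{-1}X \in U_{\geq 0}$ via commutation relations and Chevalley closure is not the right mechanism, since $Z_1^{-1} = M(-\epsilon_1,\ldots,-\epsilon_n)$ has negative parameters and those results concern TNN factors; the paper instead exhibits each row-solid minor of $Z_1^{-1}X$ directly as a limit of nonnegative minors of $X$ (Lemma~\ref{lem:ASW}). And the whirl extraction on the right is not a ``mirror procedure'' done from scratch --- it is the \emph{same} curl extraction applied to $X^{-c}$, using the involution $N(\a)^{-c} = M(\a)$ and Lemma~\ref{lem:cinverse}. This duality also gives the double-entirety of the middle factor needed to write it as $\exp(Y)$ with $Y$ entire, so your appeal to the finite-type classification to reach the exponential form is unnecessary.
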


The ``limits of products'' $A$ and $B$ in the following theorem are
not necessarily single infinite products.

\begin{statement}[Canonical Form II]
Every matrix $\exp(Y) \in U_{\geq 0}$ with $Y$ entire, has a
factorization as $\exp(Y) = AVB$, where $A$ and $B$ are both limits
of products of Chevalley generators, and $V \in U_{\geq 0}$ is {\it
regular}.
\end{statement}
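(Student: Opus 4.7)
The plan is to prove Canonical Form II by a greedy peeling procedure that extracts affine Chevalley factors from the left and right of $\exp(Y)$, producing two convergent (limits of) products $A$ and $B$ whose removal leaves a regular residual $V$. This parallels the way, in Theorem \ref{T:ET}, one factors a totally positive function as $e^{\gamma t}$ times products of $(1+\alpha t)$ and $1/(1-\beta t)$: here the whirl and curl content has already been stripped by Canonical Form I, so only the ``$e^{\gamma t}$-like'' entire content remains to be decomposed along its commutative-but-noncommutative internal structure.

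First, I would introduce, for each affine Chevalley index $i$ and each side ``left'' or ``right,'' a scalar invariant $\lambda_i^{L}(X), \lambda_i^{R}(X) \in [0,\infty)$ defined as a limit of ratios of sufficiently northeast minors of the infinite periodic representation $X$, in the spirit of the Berenstein--Fomin--Zelevinsky Chamber Ansatz mentioned in the abstract. The key property to be established is that $\lambda_i^{L}(X)$ equals the largest $\alpha \geq 0$ for which $e_i(\alpha)^{-1} X$ still lies in $U_{\geq 0}$, and analogously on the right. Verifying this maximality amounts to a direct total-nonnegativity calculation with the explicit action of $e_i(\alpha)^{-1}$ on rows of $X$, and is the loop-group analogue of the classical extraction underlying Theorem \ref{T:LW}.

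Second, I would iterate the extraction: at step $k$, pick a pair (side, index) $(s_k, i_k)$ from a fair cycling over all such pairs, and peel off the factor with parameter $\lambda_{i_k}^{s_k}(X_{k-1})$ from $X_{k-1}$ to obtain $X_k$. Using Canonical Form I together with the Structure of Infinite Whirls and Curls theorem (one-sided closure of entire matrices under multiplication by Chevalley generators), the residual $X_k$ stays in $U_{\geq 0}$ and remains entire, so no new whirl or curl content is generated along the way. Summability $\sum_k \lambda_{i_k}^{s_k}(X_{k-1}) < \infty$ should follow by bounding the extracted parameters against a fixed coefficient of $\exp(Y)$, exactly paralleling $\sum \alpha_i + \sum \beta_i < \infty$ in Theorem \ref{T:ET}. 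This gives convergence of the partial products to $A$ and $B$ in the topology of $\G_{\geq 0}$, whence $V := A^{-1} \exp(Y) B^{-1} \in U_{\geq 0}$ as a limit of totally nonnegative matrices, and $\exp(Y) = AVB$.

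Finally, regularity of $V$ should be characterized as the vanishing of all the invariants $\lambda_i^{L}(V) = \lambda_i^{R}(V) = 0$; this is forced by the fair cycling together with monotonicity of $\lambda_i^{s}$ under extraction. The main obstacle I foresee is precisely this last step: showing that the greedy process actually exhausts all the extractable Chevalley content, so that no further factor remains hidden in $V$. The difficulty is that the affine Chevalley generators satisfy non-trivial braid and commutation relations, so a naive greedy order could in principle leave some content buried behind other generators. Resolving this likely requires a semicontinuity statement for the invariants $\lambda_i^{s}$ along the peeling sequence, combined with a compactness argument in $U_{\geq 0}$ and the commutation relations established in Section \ref{sec:comm}, to ensure that any accumulation point of $\{X_k\}$ is annihilated by every $\lambda_i^{s}$ simultaneously.
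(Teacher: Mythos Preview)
Your greedy-peeling strategy is in the same spirit as the paper's proof of Theorem~\ref{thm:regular}, and your identification of the main obstacle is exactly right: the hard part is showing that the residual $V$ is genuinely regular, i.e., that no further Chevalley content remains. But the resolution you sketch---semicontinuity of the invariants $\lambda_i^{s}$ plus compactness plus commutation relations---is both vague and unlikely to close the gap. The problem is that extracting $e_j$ on one side can resurrect a positive $\lambda_i$ on the other side or at another index, so even with fair cycling and summability (your summability bound via the superdiagonal entry is correct), there is no reason the $\omega$-limit of your sequence $\{X_k\}$ has all $\lambda_i^{s}$ equal to zero. You would need at minimum an upper-semicontinuity statement for $\lambda_i^{s}$ under entrywise limits, and it is not clear this holds.

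The paper bypasses all of this with a much blunter instrument: transfinite induction. One defines $X_\alpha$ for every ordinal $\alpha$ by peeling a Chevalley generator on each side at successor stages (using the ASW factorization to make the choice canonical), and taking the entrywise infimum $X_\alpha = \inf_{\beta<\alpha} X_\beta$ at limit ordinals. If $X_\alpha$ never becomes regular, the map $\alpha \mapsto X_\alpha$ is strictly decreasing entrywise and hence injective. But $U$ has only continuum many elements, so there cannot be an injection from the proper class of ordinals into $U$; hence some $X_\alpha$ is regular, and that is your $V$. The matrices $A$ and $B$ are then elements of the right/left limit semigroups $\L_r, \L_l$ (which are themselves transfinitely generated, see subsection~\ref{ssec:chev}), not merely single countable products. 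This cardinality trick completely avoids any semicontinuity or compactness analysis, and is the missing idea in your proposal.
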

In \cite{LP}, we strengthen this result by showing that the matrices
$A, V, B$ in the above theorem are unique.   The notion of regular
totally nonnegative matrices is introduced and discussed in Section
\ref{sec:canon}. These results establish that every $X \in U_{\geq
0}$ has three ``components'': (a) a whirl and curl component, (b) a
component consisting of products of Chevalley generators, and (c) a
regular totally nonnegative matrix. We study (a) in detail here, but
leave (b) and (c) for subsequent papers \cite{LP,LP3}.

\addtocontents{toc}{\SkipTocEntry}
\subsection{From planar networks to cylindric networks}
A fundamental property of totally positive matrices is their
realizability by planar weighted networks, connecting total
positivity with combinatorics.  By the Lindstr\"om theorem \cite{Li}
and Theorem \ref{T:LW} (see also \cite{Br}) a matrix $X \in
GL_n(\R)$ is totally nonnegative if and only if it is ``realizable''
by a planar weighted directed acyclic network.  In Section
\ref{sec:networks}, we prove (Theorem \ref{thm:network}) an
analogous statement for loop groups: a matrix $X \in \g$ is totally
nonnegative if and only if it is ``realizable'' by a weighted
directed acyclic network on a {\it cylinder} (see for example Figure
\ref{fig:loo6}).

In the classical (planar) case, the minors of the matrix $X \in
GL_n(\R)$ are interpreted in terms of non-intersecting families of
paths.  Using the winding number of paths on a cylinder, we define a
notion of pairs of paths being ``uncrossed'' ({\it not} the same as
non-crossing).  The analogous interpretation (Theorem \ref{thm:L})
of minors of $X \in \g_{\geq 0}$ involves uncrossed families of
paths on the cylinder, and includes {\it some} paths which do
intersect.

The idea of using a chord on a cylinder to keep track of the winding
number, as it is done in this paper, appeared first in the work of
Gekhtman, Shapiro, and Vainshtein \cite{GSV}, which remained
unpublished for some time.


\addtocontents{toc}{\SkipTocEntry}
\subsection{Factorization problem}
In \cite{BFZ}, Berenstein, Fomin and Zelevinsky study the problem of
finding an expression for the parameters $t_1, t_2, \ldots, t_\ell
\in \R_{> 0}$ in terms of the matrix entries of $X =
e_{i_1}(t_1)e_{i_2}(t_2) \cdots e_{i_{\ell}}(t_\ell)$.  They solve
the problem by writing the parameters $t_i$ as ratios of minors of
the ``twisted matrix'' of $X$.  This inverse problem led to the study
of double wiring diagrams and double Bruhat cells \cite{FZ2}, and
later contributed to the discovery of cluster algebras \cite{FZ3}.

In Section \ref{sec:fact}, we pose and solve a similar question in
our setting.  For a matrix $X$ which is an infinite product of
curls, we identify a particular factorization into curls, called
the {\it {ASW factorization}}.  Roughly speaking, the ASW
factorization has curls ordered by radius of convergence.  We
express (Theorem \ref{thm:mrl} and Corollary \ref{C:mrl}) the
parameters of the curls in the ASW factorization as limits of ratios
of minors of $X$. Other factorizations of $X$ into curls are
obtained from the ASW factorization by the action of the infinite
symmetric group $S_{\infty}$.

\addtocontents{toc}{\SkipTocEntry}
\subsection{Loop symmetric functions}
One of the technical tools we use throughout the paper is a theory
of tableaux for a Hopf algebra we call {\it loop symmetric
functions}, denoted $\LSym$.  For $n = 1$, we obtain the usual
symmetric functions.  Roughly speaking, $\LSym$ generalizes usual
symmetric functions in the same way matrix multiplication
generalizes scalar multiplication.  The points of $\G_{\geq 0}$ are
in bijection with algebra homomorphisms $\phi: \LSym \to \R$ which
take nonnegative values on a particular spanning set of $\LSym$.  We
leave the detailed investigation of $\LSym$ for future work.  In the
present article we define $\LSym$ analogues of homogeneous and elementary
symmetric functions, tableaux, and Schur functions, and give a
Jacobi-Trudi formula (Theorem \ref{T:schur}).

\addtocontents{toc}{\SkipTocEntry}
\subsection{Curl commutation relations, birational $R$-matrix, and
discrete Painlev\'{e} systems} The commutation relations for curls
give rise to a birational action of the symmetric group on a
polynomial ring, for which $\LSym$ is the ring of invariants.  This
birational action was studied extensively by Noumi and Yamada
\cite{NY, Y} in the context of discrete Painlev\'{e} dynamical
systems (see also \cite{Ki}).  It also occurs as a birational
$R$-matrix in the Berenstein-Kazhdan \cite{BK} theory of geometric
crystals (see also \cite{Et}). The tropicalization of this
birational action is the combinatorial $R$-matrix of affine
crystals, studied in \cite{KKMMNN}.

We hope to clarify these unexpected connections in the future.

\addtocontents{toc}{\SkipTocEntry}
\subsection{Future directions and acknowledgements.}
Our work suggests many future directions.  For example:

{\it What asymptotic representation theory corresponds to total
nonnegativity of the formal loop group?} (see \cite{Th, VK,Ol,O})

{\it How does our work generalize to loop groups of other types?}
(see \cite{Lu1})

{\it Is there an ``asymptotic'' notion of a cluster algebra?} (see
\cite{FZ3})

We also give a list of precise problems, conjectures and questions
in Section \ref{sec:prob}.

\smallskip

We thank Alexei Borodin and Bernard Leclerc for discussing this work
with us.  We are grateful to Michael Shapiro for familiarizing us with some of the ideas in \cite{GSV}. We also thank Sergey Fomin for many helpful comments, and
for stimulating this project at its early stage.

\section{The totally nonnegative part of the loop group}
\subsection{Formal and polynomial loop groups}
An integer $n \geq 1$ is fixed throughout the paper.  If $i \in \Z$,
we write $\bar i$ for the image of $i$ in $\Z/n\Z$.  Occasionally,
$\bar i$ is treated as an element of $\Z$, in which case we pick the
representatives of $\Z/n\Z$ in $\{1,2,\ldots,n\}$.

Let $\G$ denote the {\it formal loop group}, consisting of $n \times
n$ matrices $A(t) = (a_{ij}(t))_{i,j = 1}^n$ whose entries are
formal Laurent series of the form $a_{ij}(t) =\sum_{k \geq
-N}^\infty b_k t^k$, for some real numbers $b_k \in \R$ and an
integer $N$, and such that $\det(A(t)) \in \R((t))$ is a non-zero
formal Laurent series. We let $\g \subset \G$ denote the polynomial
loop group, consisting of $n \times n$ matrices with Laurent
polynomial coefficients, such that the determinant is a non-zero
monomial.  We will allow ourselves to think of the rows and
columns of $A(t)$ to be labeled by $\Z/n\Z$, and if no confusion
arises we may write $a_{ij}(t)$ for $a_{\bar i \bar j}(t)$, where $i
, j \in \Z$.

To a matrix $A(t) = (a_{ij}(t)) \in \G$, we associate a
doubly-infinite, periodic, real matrix $X=(x_{i,j})_{i,j = -
\infty}^{\infty}$ satisfying $x_{i+n,j+n}=x_{i,j}$ for any $i,j$,
called the {\it unfolding} of $A(t)$, defined via the relation:
$$
a_{ij}(t) = \sum_{k=-\infty}^{\infty} x_{i,j+kn} t^{k}.
$$
We call $A(t)$ the {\it folding} of $X$, and write $A(t) = \X(t)$
for this relation.  Clearly, $\X(t)$ and $X$ determine each other
and furthermore we have $XY = Z$ if and only if $\X(t) \Y(t) =
\overline{Z}(t)$. We abuse notation by writing $X \in \G$ or $X \in
\g$ if the same is true for $\X(t)$.  If $X \in \G$, we also write
$\det(X)$ for $\det(\X(t))$.  We define the {\it support} of $X$ to
be the set $\supp(X) = \{(i,j) \in \Z^2 \mid x_{ij} \neq 0\} $.
\begin{example} \label{ex:1}
For $n=2$, an element of $\G$ and its unfolding are
$$
 \left(\begin{array}{cc}
\cosh(\sqrt{abt}) & \sqrt{a/bt} \sinh(\sqrt{abt}) \\
\sqrt{bt/a} \sinh(\sqrt{abt}) & \cosh(\sqrt{abt})
\end{array} \right)
\rightsquigarrow
\left(\begin{array}{ccccccc} \ddots & \vdots & \vdots & \vdots & \vdots &\vdots \\
\cdots& 1& a & \frac{ab}{2} & \frac{a^2b}{6} & \frac{a^2b^2}{24}& \cdots \\ \cdots& 0& 1& b & \frac{ab}{2} &
\frac{a b^2}{6} &
 \cdots \\
 \cdots& 0&0 & 1& a& \frac{ab}{2}&\cdots \\
 \cdots& 0&0&0&1& b& \cdots \\
  \cdots& 0&0&0&0&1&  \cdots \\
 & \vdots & \vdots & \vdots & \vdots &\vdots &\ddots
\end{array} \right)
.
$$
\end{example}
\begin{example} \label{ex:2}
For $n=3$, an element of $\g$ and its unfolding are
$$
 \left(\begin{array}{ccc}
3 & 1 & 2 t^{-1} \\
1+t & 2 & 1 \\
t & 0 & 1
\end{array} \right)
\rightsquigarrow
\left(\begin{array}{cccccccc} \ddots & \vdots & \vdots & \vdots & \vdots & \vdots &\vdots \\
\cdots& 3& 1 & 0 & 0 & 0& 0 &\cdots \\ \cdots& 1& 2& 1 & 1 &
0 & 0 &
 \cdots \\
 \cdots& 0&0 & 1& 1& 0& 0 &\cdots \\
 \cdots& 0&0&2&3& 1& 0 &\cdots \\
  \cdots& 0&0&0&1&2& 1 & \cdots \\
    \cdots& 0&0&0&0&0& 1 & \cdots \\
 & \vdots & \vdots & \vdots & \vdots & \vdots &\vdots &\ddots
\end{array} \right)
.
$$
\end{example}

For a real parameter $a \in \R$ and an integer $k$, we define
$e_k(a) = (x_{i,j})_{i,j=-\infty}^\infty \in \g$ to be the matrix
given by
$$
x_{i,j}= \begin{cases} 1 & \mbox{if $i = j$} \\
 a & \mbox{if $j = i+1$ and $\bar i = \bar k$} \\
 0 & \mbox{otherwise.} \end{cases}
$$
Similarly, define $f_k(a)  \in \g$ to be the transpose of $e_k(a)$.
We call the $e_j$-s and $f_j$-s {\it {Chevalley generators}}.

\subsection{Totally nonnegative matrices}\label{ssec:TP}
If $X \in \G$, and $I \subset \Z$ and $J \subset \Z$ are finite sets
of equal cardinality, we write $\Delta_{I,J}(X)$ for the minor of
$X$ obtained from the rows indexed by $I$ and columns indexed by
$J$.  We write $X_{I,J}$ to denote a submatrix, so that
$\det(X_{I,J}) = \Delta_{I,J}(X)$.

Let us say that $X \in \G$ is {\it {totally nonnegative}}, or TNN
for short, if every finite minor of $X$ is nonnegative.  We write
$\G_{\geq 0}$ for the set of totally nonnegative elements of $\G$.
Similarly, we define $\g_{\geq 0}$.  We say that $X \in \G_{\geq 0}$
is {\it totally positive} if there exists an integer $k$ such that
for every pair of subsets $I = \{i_1 < i_2 < \cdots < i_r\} \subset
\Z$ and $J = \{j_1 < j_2 < \cdots < j_r\} \subset \Z$ satisfying
$i_t \leq j_t + k$ for each $t \in [1,r]$, we have $\Delta_{I,J}(X)
> 0$. In other words, $X$ is totally positive if every sufficiently
northeast minor is strictly positive.  We denote the totally
positive part of $\G$ by $\G_{> 0}$. Note that $\G_{>0} \cap \g =
\emptyset$.
\begin{example}
The matrices in both Example \ref{ex:1} and Example \ref{ex:2} are totally nonnegative. The matrix in Example \ref{ex:1} can be shown to be totally positive.
\end{example}

\begin{lem}\label{lem:product}
The sets $\G_{\geq 0}$, $\G_{> 0}$ and $\g_{\geq 0}$ are semigroups.
\end{lem}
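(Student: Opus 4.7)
The plan is to apply the Cauchy--Binet identity to the doubly-infinite unfoldings, so the only substantive point is to verify that the identity makes sense (i.e., that the relevant sum is finite) and then to exploit positivity term-by-term.

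First, I would record the support constraint forced by membership in $\G$. If $X \in \G$ with unfolding $(x_{i,j})$, then for each $i$ the condition $a_{ij}(t) = \sum_{k \geq -N} b_k t^k$ means $x_{i,j} = 0$ whenever $j \ll 0$; by periodicity this lower bound is uniform on each residue class mod $n$. Transposing the analysis, for each $j$ one has $x_{i,j} = 0$ whenever $i \gg 0$. In particular, for any finite set $I \subset \Z$ there is a constant $L(I)$ such that every column $k$ with $k < L(I)$ is zero on the rows indexed by $I$; symmetrically for finite $J \subset \Z$ there is $R(J)$ such that $y_{k,j} = 0$ for all $j \in J$ once $k > R(J)$. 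This immediately makes the product $XY$ well-defined as a matrix in $\G$ (each entry is a finite sum), and it makes $\g$ closed under multiplication (polynomial entries have polynomial entries in the product, and the determinant, being multiplicative, stays a nonzero monomial).

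Next, for finite subsets $I = \{i_1 < \cdots < i_r\}$ and $J = \{j_1 < \cdots < j_r\}$ of $\Z$, the Cauchy--Binet identity gives
$$\Delta_{I,J}(XY) \;=\; \sum_{K} \Delta_{I,K}(X)\,\Delta_{K,J}(Y),$$
where $K$ ranges over $r$-element subsets of $\Z$. The support observation above shows that every nonzero summand has $K \subset [L(I), R(J)]$, so this is a finite sum. If $X,Y \in \G_{\geq 0}$ every summand is nonnegative, hence $\Delta_{I,J}(XY) \geq 0$, proving that $\G_{\geq 0}$ (and therefore $\g_{\geq 0}$, by the previous paragraph) is a semigroup.

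For $\G_{>0}$, suppose $X,Y \in \G_{>0}$ with northeast strict-positivity thresholds $k_X, k_Y$; I claim $k_{XY} := k_X + k_Y$ works for $XY$. Given $I,J$ with $i_t \leq j_t + k_{XY}$, choose the single intermediate set $K = \{i_1 - k_X < i_2 - k_X < \cdots < i_r - k_X\}$. Writing $k_t = i_t - k_X$, one checks $i_t \leq k_t + k_X$ (trivially) and $k_t \leq j_t + k_Y$ (equivalent to the hypothesis $i_t - j_t \leq k_X + k_Y$). Hence both $\Delta_{I,K}(X)$ and $\Delta_{K,J}(Y)$ are strictly positive, while every other summand in Cauchy--Binet is nonnegative, so $\Delta_{I,J}(XY) > 0$.

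The only place where care is needed is the justification of Cauchy--Binet in the infinite setting, but this reduces precisely to the finiteness of the sum guaranteed by the support bounds; everything else is elementary. I expect no serious obstacle beyond setting up these support bookkeeping conventions cleanly.
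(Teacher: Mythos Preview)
Your proof is correct and follows the same approach as the paper, which simply invokes the Cauchy--Binet formula \eqref{E:CB} without further comment. You supply the details the paper omits: the finiteness of the Cauchy--Binet sum via the support bounds, and, for $\G_{>0}$, an explicit choice of intermediate index set $K = I - k_X$ witnessing a strictly positive summand with combined threshold $k_X + k_Y$.
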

\begin{proof}
Follows immediately from the Cauchy-Binet formula which states that
\begin{equation}\label{E:CB}
\Delta_{I,J}(XY) = \sum_{K} \Delta_{I,K}(X) \Delta_{K,J}(Y)
\end{equation}
where the sum is over sets $K$ with the same cardinality as $I$ and
$J$.
\end{proof}

\begin{lemma} \label{lem:2}
Suppose $X \in \G$.  Then the rows of $X$, considered as vectors in
$\R^\infty$, are linearly independent.
\end{lemma}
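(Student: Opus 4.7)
The plan is to produce a right inverse for $X$ and use it to pull any linear relation among the rows back to one among standard basis vectors. Since $A(t) \in \G$ has nonzero determinant in the field $\R((t))$, the inverse $A(t)^{-1}$ exists in $M_n(\R((t)))$ and in fact lies in $\G$; let $Y$ denote its unfolding. The identity $A(t)\,A(t)^{-1} = I_n$, together with the correspondence $XY = Z \iff \X(t)\Y(t) = \overline{Z}(t)$ recalled just before Example \ref{ex:1}, then yields $XY = I$, the doubly infinite identity matrix.

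The one technical point is that products of doubly infinite matrices, and of a doubly infinite row vector with such a matrix, are a priori ill defined. I would dispatch this using the specific shape of elements of $\G$: each Laurent series $a_{ij}(t)$ has only finitely many negative powers of $t$, so after unfolding each row $r_i$ of $X$ satisfies $x_{i,j} = 0$ for $j$ less than some $C_i$ (with $C_{i+n} = C_i + n$ by periodicity). Dually, each column of $Y$ is supported above some finite bound. Hence for every pair $(i,k)$ the entry $\sum_j x_{i,j}\, y_{j,k}$ is a finite sum, so $r_i \cdot Y$ is a well-defined row vector and equals the $i$-th row of $XY = I$, namely the standard basis vector $e_i$.

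With this in hand the lemma is immediate: given a finite relation $\sum_{s=1}^m c_s\, r_{i_s} = 0$ in $\R^\infty$, right multiplication by $Y$ distributes over the finite sum (each entry on either side being a finite sum) and produces $\sum_{s=1}^m c_s\, e_{i_s} = 0$, which forces every $c_s = 0$ since the $e_{i_s}$ are distinct standard basis vectors of $\R^\Z$. The only real obstacle is the support bookkeeping needed to make right multiplication by $Y$ rigorous; once that is in place, the argument reduces to the familiar fact that an invertible matrix has linearly independent rows.
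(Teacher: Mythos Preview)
Your argument is correct. The support bookkeeping is sound: rows of any unfolding in $\G$ are supported to the right of some bound (finitely many negative powers of $t$), and then periodicity $y_{i+n,j+n}=y_{i,j}$ forces each column to be supported above some bound, so every entry of $XY$ and of $(\sum_s c_s r_{i_s})Y$ is a finite sum and linearity goes through. Thus $r_{i_s}Y=e_{i_s}$ and independence follows.

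The paper takes a different, slightly shorter route: rather than constructing $Y=X^{-1}$ and checking well-definedness of the infinite products, it folds a putative relation $\sum_{i\in I}p_i\mathbf r_i=0$ down to a relation $\sum_{i\in I}p_i t^{i'}\overline{\mathbf r}_{\bar i}=0$ among the $n$ rows of $\overline X(t)$ over $\R((t))$, directly contradicting $\det(\overline X(t))\neq 0$. That avoids any discussion of infinite matrix products. Your approach, on the other hand, is the natural ``invertible implies independent rows'' argument and has the side benefit of making explicit that $X$ has a genuine two-sided inverse as an infinite periodic matrix, a fact the paper uses implicitly elsewhere. Both proofs ultimately rest on the same invertibility of $\overline X(t)$; yours just unpacks it one step further.
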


\begin{proof}
Assume the statement is false and $\sum_{i \in I} p_i {\mathbf r}_i
= 0$, where $I$ is a finite set of rows, $p_i \in \R$ are real
coefficients, and ${\mathbf r}_i$ denotes the $i$-th row of $X$.
Then the rows $\r_j$ of the folding $\X$ satisfy $\sum_{i \in I} p_i
t^{i'}\r_{\bar i} = 0$, where $i'$ is defined by $i - i'\, n \in
\{1,2,\ldots,n\}$.  But this implies that the rows of $\X$ are
linearly dependent over $\R((t))$, contradicting the assumption that
$\det(\X)$ is non-vanishing.
\end{proof}

A {\it {solid minor}} of a matrix is a minor consisting of
consecutive rows and columns.  A {\it row-solid minor} (resp.~{\it
column-solid minor}) is a minor consisting of consecutive rows
(resp.~consecutive columns).

\begin{lemma} \label{lem:solid}
Suppose $X \in \G$.  Then $X$ is TNN if either all row-solid minors
of $X$, or all column-solid minors of $X$, are nonnegative.
\end{lemma}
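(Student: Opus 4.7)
The plan is to prove the implication assuming all row-solid minors of $X$ are nonnegative; the column-solid case follows by transposition, which preserves membership in $\G$ and exchanges row-solid with column-solid minors. I would proceed by induction on the \emph{defect} $d(I) := i_r - i_1 - (r-1)$ of the row index set $I = \{i_1 < \cdots < i_r\}$ of an arbitrary minor $\Delta_{I,J}(X)$. The base case $d(I) = 0$ is exactly the hypothesis, since $I$ is then a block of consecutive indices.

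For the inductive step with $d(I) \geq 1$, I would pick an index $k$ with $i_{k+1} > i_k + 1$, an intermediate row index $m$ satisfying $i_k < m < i_{k+1}$, and an auxiliary column index $j^* \notin J$. The key tool is the Desnanot-Jacobi (Lewis Carroll) identity applied to the $(r+1) \times (r+1)$ submatrix $X_{I \cup \{m\}, J \cup \{j^*\}}$, with $m$ and $j^*$ positioned at opposite corners. Rearranged, this identity takes the form
$$\Delta_{I,J}(X) \cdot P = Q_1 + Q_2,$$
where $P$ is a row-solid minor of $X$ (hence nonnegative by hypothesis), and $Q_1, Q_2$ are each products of two minors of $X$ whose row index sets have defect strictly less than $d(I)$, and are therefore nonnegative by the inductive hypothesis. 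Dividing by $P$ when $P > 0$ yields $\Delta_{I,J}(X) \geq 0$.

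The hard part will be handling the degenerate case $P = 0$, where division is not available. Here the structural properties of $X \in \G$ are essential: $X$ has linearly independent rows (Lemma \ref{lem:2}) and is doubly-infinite periodic, so the choice of $m$ inside the gap (and of $j^*$) is highly flexible, and generically yields $P > 0$. If every allowed choice gives $P = 0$, I would employ a perturbation argument: approximate $X$ by a family $X_\epsilon \in \G_{\geq 0}$ with strictly positive row-solid minors (for example, by multiplying $X$ on one side by a small totally positive element of $\G$), apply the nondegenerate argument to $X_\epsilon$, and pass to the limit $\epsilon \to 0$ using continuity of minors in the matrix entries.
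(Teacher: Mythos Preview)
Your Desnanot--Jacobi step does not work as stated. You choose $m$ in a gap of $I$, so $i_1 < m < i_r$; hence $m$ is an \emph{interior} element of the ordered set $I\cup\{m\}$, not a corner. But the Lewis Carroll identity for $X_{I\cup\{m\},\,J\cup\{j^*\}}$ only deletes the extremal rows and columns, so $\Delta_{I,J}$ (which requires deleting exactly row $m$ and column $j^*$) does not appear among its six terms. If you instead use the generalized Jacobi identity that deletes an interior row, a calculation shows that one of the factors on the right-hand side has row set $I$ itself (same defect), so the induction does not close. There is in fact no purely determinantal induction of this shape: the implication ``all row-solid minors $\geq 0 \Rightarrow$ TNN'' is \emph{false} without a rank hypothesis (take the $3\times 4$ matrix with rows $(0,0,1,0),(0,0,0,0),(0,1,0,0)$: every row-solid minor vanishes or is an entry, yet $\Delta_{\{1,3\},\{2,3\}}=-1$). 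So linear independence of the rows must enter the main argument, not just the degenerate case. Your perturbation argument also does not repair this: multiplying on the right by $Y_\epsilon\in\G_{\geq 0}$ close to the identity gives, by Cauchy--Binet, $P(XY_\epsilon)=P(X)\cdot 1+O(\epsilon)$, so if $P(X)=0$ you cannot conclude $P(XY_\epsilon)>0$ without further work.

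The paper's proof avoids all of this by outsourcing the determinantal combinatorics to Cryer's theorem \cite{Cr}: a finite rectangular matrix of maximal rank whose row-solid minors are all nonnegative is TNN. Lemma~\ref{lem:2} guarantees that any finite block of consecutive rows of $X$ has full row rank, and any given minor of $X$ sits inside such a block (widened to have more columns than rows). Cryer's theorem then gives the desired nonnegativity.
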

\begin{proof}
Let $M$ be a rectangular matrix with at least as many columns as
rows.  By a theorem of Cryer \cite{Cr}, such a matrix $M$ of maximal
rank is totally nonnegative if all its row-solid minors are totally
nonnegative, cf. \cite[Theorem 2.1]{A}. By Lemma \ref{lem:2} we know
that every minor of $X$ is contained in a finite matrix of maximal
rank formed by several consecutive rows of $X$, and we may assume
that this finite matrix has more columns than rows.  Thus to
conclude nonnegativity of this minor it suffices to know
nonnegativity of the row-solid minors of $X$.  The same argument
proves the statement for column-solid minors.
\end{proof}

Throughout this paper, we will use the following naive topology on
$\G$.  Let $X^{(1)}, X^{(2)}, \ldots$ be a sequence of infinite
periodic matrices in $\G$.  Then $\lim_{k \to \infty} X^{(k)} = X$
if and only if $\lim_{k \to \infty} x^{(k)}_{ij} = x_{ij}$ for every
$i, j$.  We will show later in Proposition \ref{P:strongconverge}
that this seemingly weak notion of convergence implies much stronger
notions for convergence in the case of TNN matrices.

\begin{lem}\label{lem:limit}
Suppose $X$ is the limit of a sequence $X^{(1)}, X^{(2)}, \ldots$ of
TNN matrices.  Then $X$ is TNN.
\end{lem}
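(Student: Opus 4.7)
The plan is essentially to exploit the continuity of finite determinants together with the pointwise definition of convergence on $\G$. Since total nonnegativity of $X$ is defined by a single collection of inequalities---namely $\Delta_{I,J}(X) \geq 0$ for all finite index sets $I,J \subset \Z$ of equal cardinality---it suffices to verify each such inequality separately.

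First I would fix an arbitrary pair of finite subsets $I, J \subset \Z$ of equal cardinality $r$, and observe that the minor $\Delta_{I,J}(X) = \det(X_{I,J})$ is a polynomial (indeed, the $r \times r$ determinant polynomial) in the $r^2$ matrix entries $\{x_{ij} : i \in I, j \in J\}$. Next, by the definition of convergence in $\G$ given just before the lemma, $\lim_{k \to \infty} x_{ij}^{(k)} = x_{ij}$ for each pair $(i,j)$, and in particular for the finitely many pairs appearing in this minor. Since polynomials are continuous, this yields
\[
\lim_{k \to \infty} \Delta_{I,J}\bigl(X^{(k)}\bigr) = \Delta_{I,J}(X).
\]
Finally, each $X^{(k)}$ is TNN, so $\Delta_{I,J}(X^{(k)}) \geq 0$ for every $k$; the limit of a sequence of nonnegative reals is nonnegative, so $\Delta_{I,J}(X) \geq 0$. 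Since $I$ and $J$ were arbitrary, $X$ is TNN.

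There is essentially no obstacle here; the argument is a one-line consequence of continuity of finite determinants together with the fact that nonnegativity is preserved under pointwise limits. The only mild caveat is that one needs $X$ itself to belong to $\G$ (so that the statement ``$X$ is TNN'' is even meaningful), but this is implicitly part of the hypothesis. If one wished, one could observe in addition that only finitely many entries of $X$ are needed to decide each minor, so the naive entrywise topology---even though it will later be shown in Proposition~\ref{P:strongconverge} to imply stronger notions of convergence for TNN matrices---is already strong enough for this particular closure result.
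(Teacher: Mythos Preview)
Your proof is correct and follows exactly the same approach as the paper's: fix a finite minor, note it depends polynomially on finitely many entries, pass to the limit using entrywise convergence, and conclude nonnegativity is preserved. The paper's version is simply more terse.
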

\begin{proof}
We must prove that every finite minor $\Delta_{I,J}(X)$ of $X$ is
nonnegative. But each such minor involves only finitely many
entries. Thus $\Delta_{I,J}(X) = \lim_{i \to \infty}
\Delta_{I,J}(X^{(i)}) \geq 0$.
\end{proof}

For $X, Y \in \G$, we write $X \leq Y$, if the same inequality holds
for every entry.  We note the following statement, which is used
repeatedly.

\begin{lem} \label{lem:tripleomit}
Suppose $X$, $Y$ and $Z$ are nonnegative, upper-triangular matrices
with 1's on the diagonal.  Then $XYZ \geq XZ$.
\end{lem}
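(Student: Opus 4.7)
The plan is to verify the inequality entrywise. Fix indices $i, j$ and compute
\[
(XYZ)_{ij} = \sum_{k,\ell} X_{ik}\, Y_{k\ell}\, Z_{\ell j}.
\]
Because all three matrices are upper triangular, the only contributing indices satisfy $i \le k \le \ell \le j$, so the sum is finite and there is no convergence issue.

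Next I would split the sum according to whether $k = \ell$ or $k < \ell$. For the diagonal piece, use that $Y_{kk} = 1$ to get
\[
\sum_{k} X_{ik}\, Y_{kk}\, Z_{k j} = \sum_{k} X_{ik}\, Z_{k j} = (XZ)_{ij}.
\]
The remaining terms are $\sum_{k < \ell} X_{ik}\, Y_{k\ell}\, Z_{\ell j}$, and each factor is nonnegative by hypothesis, so this tail sum is $\ge 0$. Adding the two pieces gives $(XYZ)_{ij} \ge (XZ)_{ij}$, which is exactly the entrywise inequality $XYZ \ge XZ$.

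There is no real obstacle here: the only subtlety is to check that the matrix product is well-defined for doubly infinite periodic matrices, and this follows at once from upper triangularity, which forces only finitely many nonzero terms in each entry of the product. The diagonal-of-$Y$ trick is what makes the extra mass contributed by $Y$ manifestly nonnegative.
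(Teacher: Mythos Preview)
Your proof is correct; the paper itself states this lemma without proof, treating it as immediate. Your entrywise argument---isolating the $k=\ell$ diagonal contribution as $(XZ)_{ij}$ and observing the remaining terms are nonnegative---is exactly the natural verification.
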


\subsection{Semigroup generators for $\g_{\geq 0}$}
Let $T \subset GL_n(\R) \subset \G$ denote the subgroup of diagonal
matrices with real entries.  Let $T_{> 0}$ denote those diagonal
matrices with positive real entries. Let $S = (s_{ij})_{i,j = -\infty}^\infty \in \G$ denote the {\it
shift matrix}, defined by
$$
s_{ij} = \begin{cases} 1 & \mbox{if $j = i +1$} \\ 0 &
\mbox{otherwise.}\end{cases}
$$
The following is the loop
group analogue of the Loewner-Whitney theorem (Theorem \ref{T:LW}).

\begin{thm} \label{T:polygen}
The semigroup $\g_{\geq 0}$ is generated by shift matrices, the positive torus $T_{>
0}$ and Chevalley generators with positive parameters
$$
\{e_1(a), e_2(a), \ldots, e_{n}(a) \mid a > 0\} \ \ \cup \ \
\{f_1(a), f_2(a), \ldots, f_{n}(a) \mid a > 0\}.$$
\end{thm}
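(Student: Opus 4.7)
The plan is to prove both inclusions. For the easy direction $(\subseteq)$, I would verify directly that each listed generator has a totally nonnegative infinite periodic representation: shift matrices have $0$--$1$ entries with all minors in $\{0,1\}$, positive diagonal matrices are manifestly TNN, and each $e_k(a)$ or $f_k(a)$ with $a > 0$ differs from the identity by a single off-diagonal entry, so its nonzero minors are products of $1$'s and the single positive parameter $a$. By Lemma~\ref{lem:product} the generated semigroup then lies in $\g_{\geq 0}$.

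For the reverse inclusion $(\supseteq)$, I would proceed by induction on the $t$-span $\ell(X) := N_+(X) - N_-(X)$ of the Laurent expansion $\overline{X}(t) = \sum_k A_k t^k$, where $N_\pm$ are respectively the maximum and minimum $k$ for which some entry of $A_k$ is nonzero. In the base case $\ell(X) = 0$, the matrix $\overline{X}(t) = t^N A_N$ is a pure monomial with $A_N \in GL_n(\R)_{\geq 0}$, and I would factor $A_N$ via the classical Loewner-Whitney theorem (Theorem~\ref{T:LW}) into positive torus elements and positive finite Chevalley generators of indices $1, \ldots, n-1$, then absorb the $t^N$ factor as $S^{nN}$, exploiting the identity $S^n = tI$. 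For the inductive step $\ell(X) > 0$, I would extract from $X$ a positive-parameter Chevalley generator — typically an \emph{affine} one ($e_n$ or $f_n$), since these carry $t$ or $t^{-1}$ and can affect the degree span — such that the remaining matrix stays in $\g_{\geq 0}$ with strictly smaller complexity. The required positive parameter would arise as a ratio of extremal entries of $\overline{X}(t)$ whose nonnegativity is forced by a specific $2 \times 2$ minor being nonneg, in the spirit of the finite-dimensional Whitney reduction, and with TNN preservation verified via Lemma~\ref{lem:tripleomit}.

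The main obstacle is the inductive step. In the finite-dimensional Whitney argument one chooses the ``first'' off-diagonal entry and cancels it by a single Chevalley generator; in the loop setting the periodicity offers no canonical first entry, and multiplication by $e_n, f_n$ interacts subtly with the $t$-degrees — one must verify both that the extracted parameter is $\geq 0$ and that $\ell$ strictly drops rather than the degree window merely shifting. I expect this will necessitate a more refined lexicographic complexity measure, combining $\ell$ with a secondary count of monomials at the extremal degrees (or the position of the first such monomial in some fixed lex order). A secondary subtlety is that matrices with $\det \overline{X}(t) = c t^k$ for $k < 0$ call for inverse-shift factors $S^{-1}$; since $S^{-1}$ is itself TNN and lies in $\g$, this is most naturally handled by reading ``shift matrices'' in the statement to include all integer powers of $S$, as otherwise $S^{-1}$ would need to be built from limits of $f_n(a)$-type generators.
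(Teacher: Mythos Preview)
Your overall plan—verify the generators are TNN, then inductively peel Chevalley generators off an arbitrary $X \in \g_{\geq 0}$—is the paper's strategy, and your base case (reduce to $A\,t^N$, apply finite Loewner--Whitney to $A$, absorb $t^N$ as $S^{nN}$) is sound. But the inductive step you flag as the obstacle has two concrete gaps.

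First, the complexity measure and the choice of generator. The $t$-span $\ell(X)$ does not drop after a single extraction, and your proposed lexicographic refinement is left vague; moreover there is no reason the extracted generator should ``typically'' be the affine one. The paper's measure is simply $|\supp(X)|$ modulo periodicity, which is finite for $X\in\g$. After a preliminary power of $S$ to make $\det(\overline X)\in\R^*$, the paper locates a \emph{special NE corner}: a nonzero entry $x_{i,j}$ with nothing nonzero strictly to its north or east, such that $x_{i+1,j+1}$ is not itself a NE corner. Such a corner exists unless the northeasternmost nonzero diagonal is entirely filled, which would contribute an unmatched top-degree monomial to $\det(\overline X)$, contradicting $\det(\overline X)\in\R$. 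At a special corner one checks $x_{i+1,j}>0$ (else $2\times 2$ minors in rows $i,i+1$ force row $i+1$ to vanish), extracts $e_i\bigl(x_{i,j}/x_{i+1,j}\bigr)$ on the left, and the support strictly above the diagonal shrinks while that below does not grow. The index $i$ is dictated by the corner, not by whether it happens to be the affine index.

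Second, TNN preservation. Lemma~\ref{lem:tripleomit} is the wrong tool: it is an entrywise inequality $XYZ\geq XZ$ for unipotent upper-triangular matrices and says nothing about signs of minors of $e_i(-a)X$. The paper instead shows that the row-solid minors of $X'=e_i(-a)X$ that need checking (those containing row $i$ but not row $i{+}1$) arise as \emph{Schur complements} of the pivot $x_{i+1,j}$ inside suitable finite TNN submatrices of $X$, and invokes the classical fact that Schur complements in TNN matrices remain TNN. Together with Lemma~\ref{lem:solid} (row-solid minors suffice) this closes the step; without a Schur-complement or equivalent determinantal argument, your outline has no mechanism to certify that $X'$ stays in $\g_{\geq 0}$.
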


\begin{proof}
First, using a (possibly negative) power of the shift matrix we can reduce to the case when the determinant of an element of $\g_{\geq 0}$ is a non-zero real number. Next, we recall (see \cite{A}) that if
$$
M = \left( \begin{array}{cc} A & B \\ C & D \end{array}\right)
$$
is a block decomposition of a finite square matrix $M$ such that $D$
is invertible, then the {\it Schur complement} $S(M,D)$ of the block
$D$ is the matrix $A - BD^{-1} C$ which has dimensions equal to that
of $A$.

It is clear that all the generators stated in the Theorem do lie in
$\g_{\geq 0}$.  Now let $X \in \g_{\geq 0}$. Call a non-zero entry
$x_{i,j}$ of $X$ a {\it {NE corner} (northeast corner)} if
$x_{i,j+k}=x_{i-k,j}=0$ for $k \geq 1$. If $x_{i,j}$ is a NE corner
then it follows from the TNN condition for size two minors that all
entries strictly NE of $x_{i,j}$ all vanish.

A NE corner $x_{i,j}$ is {\it special} if $x_{i+1, j+1}$ is not a NE
corner.  We claim that either $x_{i,j} = 0$ for all $j>i$, or there
exists a special NE corner.  Indeed, if it was not so, that is if
all NE corners lie along a diagonal $i-j = c>0$ for some fixed $c$,
then entries on this diagonal would contribute to $\det(\X(t))$ a
monomial with a positive power of $t$ not achieved by any other term
in $\det(\X(t))$, leading to a contradiction.

Let $x_{i,j}$ be a special NE corner, which we may pick to be on a
diagonal as NE as possible.  We claim that $x_{i+1, j}
>0$. Indeed, if $x_{i+1,j}=0$ then by nonnegativity of all $2 \times
2$ minors in rows $i,i+1$ we conclude that all entries in row $i+1$
of $X$ are zero, contradicting the assumption that $X \in \g$.

Now, let $X' = e_{i}(-\frac{x_{i,j}}{x_{i+1,j}}) X$. We claim that
$X'$ is again TNN (and it is clear that $X' \in \g$).  By Lemma
\ref{lem:solid} it suffices to check nonnegativity of row-solid
minors, and in fact one only needs to check the row-solid minors
containing row $i$ of $X$ but not the row $i+1$.  Assume we have a
row-solid minor with rows $I = [i',i]$ and column set $J'$.  We may
assume that $\max(J') \leq j$, for otherwise this minor will be 0 in
both $X$ and $X'$. Now pick a set of columns $J = [j', j]$
containing $J'$.
Let $Y$ be the rectangular submatrix of $X$ with row set $[i',i+1]$
and columns set $[j',j]$. Complete it to a square matrix $Z$ by
adding zero rows or columns on the top or on the left. By
construction $Z$ is TNN and contains the row-solid minor we are
interested in. Suppose that $Z$ is a $m \times m$ matrix. Let $Z'$
be obtained from $Z$ by subtracting $\frac{x_{i,j}}{x_{i+1,j}}$
times the last row (indexed by $i+1$) from the second last row
(indexed by $i$). Then the top left $(m-1) \times (m-1)$ submatrix
of $Z'$ is by definition equal to the Schur complement of
$x_{i+1,j}$ in $Z$. It follows from \cite[Theorem 3.3]{A} that $Z'$
is also TNN, and thus the minor of $X'$ we are interested in has
nonnegative determinant.

Note that the part of the support of $X'$ above the main diagonal is
strictly contained in that of $X$.  On the other hand, the support
below the main diagonal has not increased, as can be seen by looking
again of positivity of $2 \times 2$ minors in rows $i,i+1$.  Since
after quotienting out by the periodicity the set $\supp(X)$ is
finite, this process, when repeated, must terminate.  That is, at
some point we have $x_{i,j}=0$ for all $j>i$.  A similar argument
with SW corners, and multiplication by $f_j$-s reduces $X$ to a TNN
matrix with entries only along the main diagonal.  What remains is
an element of $T_{>0}$, proving the theorem.
\end{proof}
\begin{example}
The matrix in Example \ref{ex:2} factors as $f_3(2) f_1(1) e_2(1)
e_1(1) e_3(1)$.
\end{example}

\section{Cylindric networks and total positivity} \label{sec:networks}
\subsection{Cylindric networks} \label{sec:cross}
Let $\C$ be a cylinder (that is, $S^1 \times [0,1]$) and consider an
oriented weighted network $N = (G,w,\h)$ on it defined as follows.
$G$ is a finite acyclic oriented graph embedded into $\C$, having
$n$ sources $\{v_i\}_{i=1}^n$ on one of the two boundary components
of $\C$, and having $n$ sinks $\{w_i\}_{i=1}^n$ on the other
boundary component.  Sources and sinks are numbered in
counterclockwise order (we visualize the cylinder drawn standing
with sources on the bottom and sinks on the top;
``counterclockwise'' is when viewed from above). We may, as usual,
think of the sources and sinks as labeled by $\{v_i, w_i \mid i \in
\Z/n\Z\}$ and write $v_i$ when we mean $v_{\bar i}$. The chord $\h$
is a single edge connecting the two boundary components, starting on
the arc $v_nv_1$ and ending on the arc $w_nw_1$.  We assume $\h$ is
chosen so that no vertex of $G$ lies on it.

The weight function $w: E(G) \longrightarrow \mathbb R_+$ assigns to
every edge $e$ of $G$ a real nonnegative weight $w(e)$.  The weight
$w(p)$ of a path $p$ is the product $\prod_{e \in p} w(e)$ of
weights of all edges along the path. For a collection $P = \{p\}$ of
paths we let $w(P) = \prod_{p \in P} w(p)$. For a path $p$ let the
{\it {rotor}} of $p$, denoted $\curl(p)$, be the number of times $p$
crosses $\h$ in the counterclockwise direction minus the number of
times $p$ crosses $\h$ in the clockwise direction.  If $x, y$ are
two vertices on a path $p$, we let $p_{[x,y]}$ denote the part of
the path $p$ between the points $x$ and $y$, and let $*$ denote
either the beginning or the end of a path.  For example, $p_{[x,*]}$
denotes the part of $p$ from $x$ to the end of $p$.

For an integer $i$, let us define $\alpha(i) = (i - \bar i)/n$,
where $\bar i$ is to be taken in $\{1,2,\ldots,n\}$.  For two
integers $i$ and $j$, an {\it $(i,j)$-path} is a path in $G$ which
\begin{enumerate}
 \item starts at the source $v_{\bar i}$;
 \item ends at the sink $w_{\bar j}$;
 \item has rotor equal to $\alpha(j)-\alpha(i)$.
\end{enumerate}
Define an infinite matrix $X(N) = (x_{i,j})_{i,j = -
\infty}^{\infty}$ by setting $x_{i,j}$ to be the sum of weights over
all $(i,j)$-paths in $G$.  Note that by definition $X(N)$ is
periodic: $x_{i,j} = x_{i+n,j+n}$.

Let $p$ be an $(i,j)$-path and let $q$ be an $(i',j')$-path. Assume
$c$ is a point of crossing of $p$ and $q$. Let $\tilde p$ and
$\tilde q$ be the two paths obtained by swapping $p$ and $q$ at $c$:
that is following one of them until point $c$ and the other
afterwards.  Although $\tilde p$ starts at $v_{\bar i}$ and ends at
$w_{\bar j'}$, it is not necessarily an $(i,j')$-path, since
$\curl(\tilde p)$ may not be equal to $\alpha(j') - \alpha(i)$.

\begin{figure}[h!]
    \begin{center}
    \input{loo1.pstex_t}
    \end{center}
    \caption{}
    \label{fig:loo1}
\end{figure}

\begin{example}
Let $m=n=4$ and consider two paths shown in Figure \ref{fig:loo1},
one an $(1,6)$-path and one an $(2,1)$-path. Then if we swap the two
paths at the marked point of crossing, we do not get a $(1,1)$-path
and a $(2,6)$-path.  Instead we get a $(1,5)$-path and a
$(2,2)$-path.
\end{example}

\begin{lem}\label{L:proper}
Let $c$ be a point of intersection of $p$ and $q$.  Then the path
$\tilde p$ is a $(i,j')$-path if and only if $\tilde q$ is a
$(i',j)$-path.  This happens when $\curl(p_{[c,*]}) -
\curl(q_{[c,*]}) = \alpha(j) - \alpha(j')$.
\end{lem}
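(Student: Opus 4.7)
The plan is to use the fact that rotor is additive under concatenation of paths: if $r$ is a path split at an interior point $c$ as $r = r_{[*,c]} \cdot r_{[c,*]}$, then $\curl(r) = \curl(r_{[*,c]}) + \curl(r_{[c,*]})$. This holds because the number of signed crossings of $\h$ by $r$ is just the sum of such crossings contributed by each half.

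First I would write down exactly what $\tilde p$ and $\tilde q$ are in these terms. By construction, $\tilde p = p_{[*,c]} \cdot q_{[c,*]}$ starts at $v_{\bar i}$ and ends at $w_{\bar{j'}}$, so it is automatically a path from source $\bar i$ to sink $\bar{j'}$; the only condition for it to be an $(i,j')$-path is the rotor condition
\[
\curl(\tilde p) = \alpha(j') - \alpha(i).
\]
Similarly $\tilde q = q_{[*,c]} \cdot p_{[c,*]}$ is an $(i',j)$-path iff $\curl(\tilde q) = \alpha(j) - \alpha(i')$.

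Next I would convert each of these rotor conditions using additivity and the known rotors of $p$ and $q$. For $\tilde p$:
\[
\curl(\tilde p) = \curl(p_{[*,c]}) + \curl(q_{[c,*]}) = \bigl(\alpha(j)-\alpha(i)\bigr) - \curl(p_{[c,*]}) + \curl(q_{[c,*]}),
\]
so the condition $\curl(\tilde p) = \alpha(j')-\alpha(i)$ is equivalent to
\[
\curl(p_{[c,*]}) - \curl(q_{[c,*]}) = \alpha(j) - \alpha(j').
\]
An entirely analogous calculation for $\tilde q$, starting from $\curl(\tilde q) = \curl(q_{[*,c]}) + \curl(p_{[c,*]})$ and subtracting the known rotors of $q$ and the target value $\alpha(j)-\alpha(i')$, produces the identical condition $\curl(p_{[c,*]}) - \curl(q_{[c,*]}) = \alpha(j) - \alpha(j')$. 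This simultaneously establishes both directions of the ``if and only if'' and identifies the explicit rotor criterion claimed in the second sentence.

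There is essentially no obstacle here beyond bookkeeping: the only thing to be careful about is that the swap at $c$ really does yield the decompositions $\tilde p = p_{[*,c]} \cdot q_{[c,*]}$ and $\tilde q = q_{[*,c]} \cdot p_{[c,*]}$ (so the prefix of $\tilde p$ is from $p$ and the suffix is from $q$, and vice versa), so that additivity of $\curl$ can be applied cleanly. After that the lemma reduces to a single linear identity in $\alpha(i), \alpha(i'), \alpha(j), \alpha(j')$.
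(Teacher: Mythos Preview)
Your proof is correct. The paper states this lemma without proof, evidently regarding it as immediate from the additivity of $\curl$ under concatenation and the definitions; your argument is exactly the natural verification one would supply.
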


In the case of Lemma \ref{L:proper}, we say that $c$ is a {\it
{proper crossing}} of $p$ and $q$.  Two paths that do not have a
proper crossing we call an {\it {uncrossed}} pair of paths.  Thus,
the crossing marked in Figure \ref{fig:loo1} is not proper. This
pair of paths is however not uncrossed since the other crossing, not
marked on the figure, happens to be proper.

\subsection{Cylindric Lindstr\"{o}m Lemma}

Let $I = i_1 < \ldots < i_K$ and $J = j_1 < \ldots < j_K$ be two
sets of indexes of equal (finite) cardinality $K$.  Let
$\Phi(I,J)$ denote the set of all families $P = \{p_k\}_{k=1}^K$
of paths such that
\begin{enumerate}
 \item each $p_k$ is an $(i_k, j_k)$-path;
 \item every pair of paths in $P$ are uncrossed.
\end{enumerate}

The following theorem is a cylindric analogue of Lindstr\"{o}m's
Lemma \cite{Li}.

\begin{theorem} \label{thm:L}
We have $$\Delta_{I,J}(X(N)) = \sum_{P \in \Phi(I,J)} w(P).$$
\end{theorem}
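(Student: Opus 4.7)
The plan is to lift the cylindric network to its universal cover and reduce the theorem to the classical planar Lindström--Gessel--Viennot lemma. First I would cut $\C$ along $\h$ and pass to the universal cover $\tilde{\C} \cong \R \times [0,1]$, on which $G$ lifts to an infinite, planar, acyclic weighted graph $\tilde G$. The sources $\{v_{\bar i}\}$ and sinks $\{w_{\bar j}\}$ lift to sequences $\{\tilde v_i\}_{i \in \Z}$ and $\{\tilde w_j\}_{j \in \Z}$ arranged in the natural linear order along the two boundary components, with the deck transformation shifting the index by $n$.

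Next I would identify the matrix entries of $X(N)$ with path generating functions in $\tilde G$. A path on $G$ from $v_{\bar i}$ to $w_{\bar j}$ with rotor $r$ lifts uniquely once a starting source is chosen; starting at $\tilde v_i$, its endpoint is $\tilde w_{\bar j + n(\alpha(i) + r)}$. This equals $\tilde w_j$ precisely when $r = \alpha(j) - \alpha(i)$, i.e.\ precisely when the cylindric path is an $(i,j)$-path. Therefore
\[
x_{i,j} \;=\; \sum_{\text{paths } \tilde v_i \to \tilde w_j \text{ in } \tilde G} w(\text{path}),
\]
so $X(N)$ is the path matrix of the planar network $\tilde G$ with respect to the source set $\{\tilde v_i\}$ and the sink set $\{\tilde w_j\}$.

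Because $\tilde G$ is planar and acyclic and the selected sources $\{\tilde v_{i_k}\}$ and sinks $\{\tilde w_{j_k}\}$ appear in the same linear order along the two sides of the strip, the classical Lindström lemma gives
\[
\Delta_{I,J}(X(N)) \;=\; \sum_{\tilde P} w(\tilde P),
\]
where $\tilde P$ ranges over vertex-disjoint families of paths in $\tilde G$ from $\tilde v_{i_k}$ to $\tilde w_{j_k}$ (contributions from non-identity matchings cancel in the usual way, or vanish outright, because non-crossing paths between ordered sources and sinks on a planar strip force the identity matching).

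The final step is to identify vertex-disjoint lifted families with cylindric families in $\Phi(I, J)$. Projection sends each $\tilde P$ to a family $P$ on $G$ with $p_k$ an $(i_k, j_k)$-path, and each $P \in \Phi(I,J)$ lifts uniquely once the starting sources are fixed. The key technical point, supplied by Lemma \ref{L:proper}, is that a crossing of two paths on $\C$ is proper precisely when the chosen lifts of those two paths meet inside $\tilde G$; non-proper crossings arise only from intersections of one lift with a deck-translate of the other, which are \emph{not} intersections of the chosen lifts in $\tilde G$. Hence uncrossed cylindric families correspond bijectively and weight-preservingly to vertex-disjoint lifted families, proving the theorem. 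I expect the main obstacle to be exactly this last correspondence: one must carefully verify that the rotor bookkeeping in the definition of a proper crossing matches the combinatorics of the chosen starting-source lifts, so that Lemma \ref{L:proper} translates directly into a planar non-intersection statement.
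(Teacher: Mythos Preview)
Your approach is correct and genuinely different from the paper's. The paper stays on the cylinder and builds a sign-reversing involution by hand: it takes any family of paths realizing a term of the determinant, finds the \emph{first} proper crossing (with respect to a chosen height function), swaps the two paths there, and then spends most of the work checking that this really is an involution --- i.e.\ that after the swap the same point is still the first proper crossing. Lemma~\ref{lem:1} is then invoked separately to show that only the identity matching survives.

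Your route is more structural: by lifting to the universal cover you reduce to the classical planar Lindstr\"om lemma, so both the involution and the identity-matching conclusion come for free from planarity. The one genuine verification you must carry out is the one you flagged: that a crossing $c$ of $p$ and $q$ is proper in the sense of Lemma~\ref{L:proper} precisely when the chosen lifts $\tilde p$ (starting at $\tilde v_i$) and $\tilde q$ (starting at $\tilde v_{i'}$) pass through the \emph{same} lift of $c$. This follows from a two-line computation: the lift of $c$ reached along $\tilde p$ sits on sheet $\alpha(i)+\curl(p_{[*,c]})$, and using $\curl(p)=\alpha(j)-\alpha(i)$ one sees that equality of sheets is exactly the condition $\curl(p_{[c,*]})-\curl(q_{[c,*]})=\alpha(j)-\alpha(j')$ from Lemma~\ref{L:proper}. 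Once that is written down, your argument is complete; note in particular that Lemma~\ref{lem:1} becomes an immediate corollary of the Jordan curve theorem on the strip, rather than a separate combinatorial lemma. The trade-off is that the paper's proof is entirely intrinsic to the cylinder, while yours imports the covering-space viewpoint; yours is shorter and more conceptual, theirs more self-contained.
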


First we prove the following lemma.

\begin{lemma} \label{lem:1}
If $i<i'$ and $j'<j$ then every $(i,j)$-path $p$ properly crosses
every $(i',j')$-path $q$.
\end{lemma}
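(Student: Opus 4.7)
The plan is to pass to the universal cover $\pi\colon\tilde{\C}\to\C$, realized as the strip $\R\times[0,1]$. I would normalize horizontal coordinates so that each source $v_{\bar k}$ has a distinguished lift $\tilde v_k$ at horizontal position $k$, each sink $w_{\bar k}$ has a distinguished lift $\tilde w_k$ at horizontal position $k$, and the lifts of $\h$ are the vertical lines $\tilde\h_m$ at half-integer positions $mn+\tfrac12$ separating consecutive fundamental domains. In these coordinates, the rotor of any lifted path segment equals its signed count of $\tilde\h$-crossings, which depends only on the horizontal coordinates of its endpoints. A short check then shows that the canonical lift of an $(i,j)$-path starting at $\tilde v_i$ must terminate at $\tilde w_j$, since going from $x=i$ to $x=j$ in the cover forces net $\tilde\h$-crossings equal to $\alpha(j)-\alpha(i)$, matching the required rotor.

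Next, I would lift $p$ and $q$ to canonical lifts $\tilde p$ (from $\tilde v_i$ to $\tilde w_j$) and $\tilde q$ (from $\tilde v_{i'}$ to $\tilde w_{j'}$). Since $G$ is acyclic, $p$ and $q$ are vertex-simple, so $\tilde p$ and $\tilde q$ are embedded simple arcs joining the two boundary components of the strip. A Jordan-curve-type argument shows that $\tilde p$ disconnects the strip into two open components, with the bottom-boundary points satisfying $x>i$ lying in the opposite component to the top-boundary points satisfying $x<j$. The hypotheses $i<i'$ and $j'<j$ then place the endpoints $\tilde v_{i'}$ and $\tilde w_{j'}$ of $\tilde q$ in opposite components of $\tilde\C\setminus\tilde p$, so by connectedness $\tilde q$ must meet $\tilde p$ at some point $\tilde c$.

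Set $c:=\pi(\tilde c)$ and let $\beta$ denote the horizontal coordinate of $\tilde c$. To check that $c$ is a proper crossing, I would apply the criterion of Lemma \ref{L:proper}. The subpath $\tilde p_{[\tilde c,\tilde w_j]}$ runs from $x=\beta$ to $x=j$, so $\curl(p_{[c,*]})$ is a function purely of $\beta$ and $j$ (the number of $\tilde\h$-lines between them, signed); since $\tilde c$ lies on $\tilde q$ with the same horizontal coordinate $\beta$, the rotor $\curl(q_{[c,*]})$ is the analogous function of $\beta$ and $j'$. Their difference is exactly $\alpha(j)-\alpha(j')$, because the common $\beta$-contribution cancels, and this is precisely the proper-crossing criterion.

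The main obstacle I anticipate is the strip-separation step: rigorously arguing that the simple arc $\tilde p$ disconnects $\tilde\C$, and that the ``bottom points right of the foot'' and the ``top points left of the head'' are actually in opposite components. Both are standard Jordan-curve-theoretic facts, but require some care because $\tilde p$ is an arc in a non-compact strip rather than a Jordan curve in the plane. I would handle this by restricting to a compact sub-rectangle of $\tilde\C$ containing $\tilde p$ and the four boundary points $\tilde v_i,\tilde v_{i'},\tilde w_j,\tilde w_{j'}$, then closing $\tilde p$ into a genuine Jordan curve via an auxiliary arc through the exterior of the strip and invoking the Jordan curve theorem.
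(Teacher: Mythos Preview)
Your argument is correct, and it takes a genuinely different route from the paper's.  The paper does begin, as you do, by unfolding the cylinder to see that $p$ and $q$ must intersect somewhere; but from that point on it works entirely downstairs.  It lists all crossings $c_1,\ldots,c_k$ of $p$ and $q$ on the cylinder, observes that for each non-crossing segment the rotor difference $a_m=\curl(p_{[c_m,c_{m+1}]})-\curl(q_{[c_m,c_{m+1}]})$ lies in $\{-1,0,1\}$, and then uses a discrete intermediate-value argument on the partial sums $\sum_{m\ge l}a_m$ to locate an index $l$ at which the tail equals $\alpha(j)-\alpha(j')$, which by Lemma~\ref{L:proper} makes $c_l$ proper.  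Two boundary cases ($l=0$ and $l=k+1$) require a short extra check.

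Your approach is more conceptual: you recognize that an intersection of the \emph{canonical} lifts $\tilde p$ and $\tilde q$ in the strip projects to a proper crossing essentially by definition, since both tails $\tilde p_{[\tilde c,*]}$ and $\tilde q_{[\tilde c,*]}$ start at the same lifted point and end at $\tilde w_j,\tilde w_{j'}$, forcing the rotor difference to be exactly $\alpha(j)-\alpha(j')$.  This bypasses the partial-sum bookkeeping and the boundary cases entirely.  The cost is the Jordan-separation step you flag; the paper's argument is more elementary in that it needs only the $\{-1,0,1\}$ bound, which follows from a simple cut-along-$p$ picture without invoking the full Jordan curve theorem.  Both approaches are short; yours explains more clearly \emph{why} proper crossings are inevitable (they are precisely the shadows of intersections of canonical lifts), while the paper's is closer to self-contained.
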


\begin{proof}
We make use of the following observation: assume $p$ and $q$ are two
paths that do not cross each other but might have one or two common
endpoints. Then $\curl(p)-\curl(q)$ can only take values $-1$, $0$,
or $1$.

Indeed, cut $\C$ along $p$, viewing the result as a rectangle with a
pair of opposite vertical sides identified. Since $q$ never crosses
$p$, it follows that $q$ remains strictly inside the rectangle.
Chord $\h$ is represented inside the rectangle by at least
$\curl(p)+1$ disjoint segments. We can ignore the segments which
have a crossing with the same vertical side of a rectangle, since
their intersections with $q$ contribute $0$ to $\curl(q)$. What
remains are exactly $\curl(p)+1$ segments, all but the first and the
last of which connect the two vertical sides of the rectangle. Those
$\curl(p)-1$ segments must be crossed by any path inside the
rectangle, in particular by $q$. The first and the last segments of
$p$ however may or may not be crossed, depending on relative
position of endpoints of $p$ and $q$. This implies the needed
statement concerning $\curl(p)-\curl(q)$.

We first claim that $p$ and $q$ have at least one point of
intersection.  This follows easily from unfolding the cylinder
repeatedly.  Let $c_1, \ldots, c_k$ be all the crossings of $p$ and
$q$ arranged in order.  Now, by the argument above each of the
quantities $a_0=\curl(p_{[*,c_1]})-\curl(q_{[*,c_1]})$,
$a_1=\curl(p_{[c_1,c_2]})-\curl(q_{[c_1,c_2]}), \ldots ,
a_k=\curl(p_{[c_k,*]})-\curl(q_{[c_k,*]})$ is equal to $-1$, $0$ or
$1$. Since $$\curl(p) - \curl(q) = \sum_{m=0}^k a_m =
\alpha(j)-\alpha(i)-\alpha(j')+\alpha(i') \geq \alpha(j)-\alpha(j')
\geq 0$$ there must be an index $l \in\{0,1,\ldots,k+1\}$ such that
$\sum_{m=l}^k a_m = \alpha(j)-\alpha(j')$.  If $l \notin \{0,k+1\}$
then $c_l$ is a proper crossing by Lemma \ref{L:proper}. If $l =
k+1$ then $\alpha(j) = \alpha(j')$ and $\bar j  > \bar j'$, so as a
result $a_k \leq 0$.  Similarly, if $l = 0$ then
$\alpha(i')-\alpha(i)=0$ and $\bar i'>\bar i$, so as a result $a_0
\leq 0$.  In both cases there exists at least one other index $l'
\in\{1,\ldots,k\} $ such that $\sum_{m=l}^k a_m =
\alpha(j)-\alpha(j')$. It is easy to see that the resulting $c_{l'}$
is a proper crossing.
\end{proof}

Now we are ready to prove Theorem \ref{thm:L}.

\begin{proof}
Let $P$ be a collection of $K$ paths each of which is an
$(i_k,j_l)$-path for some $k,l$ so that each element of $I$ and $J$
is used once.  Pick the first proper crossing $c$ of two paths $p, q
\in P$ (if it exists), where we choose an order on vertices of $G$
according to some height function.  We assume that the height
function is chosen so that along any path the vertices are
encountered in order of increasing height.  We can of course assume
without loss of generality that no two vertices of $G$ have the same
height. Now swap $p$ and $q$ after $c$, obtaining two new paths
$\tilde p$ and $\tilde q$. Let $\tilde P$ be the collection obtained
from $P$ by replacing $p, q$ with $\tilde p, \tilde q$.  We claim
that in $\tilde P$, $c$ is again the first proper crossing of any
pair of paths.

Assume $p$ is an $(i_k,j_l)$-path and $q$ is an
$(i_{k'},j_{l'})$-path. First, $c$ is clearly a proper crossing of
$\tilde p$ and $\tilde q$. We need to argue that it is still the
first proper crossing.  Suppose it is not.  Since $p$ and $q$ are
the only two paths in $\tilde P$ that changed, any possible new
proper crossing $\tilde c$ preceding $c$ must belong either to $p$
or to $q$ or to both.

If $\tilde c$ is a proper crossing of $\tilde p$ and $\tilde q$ then
from $\curl(p_{[c,*]}) - \curl(q_{[c,*]}) = \alpha(j_l) -
\alpha(j_{l'})$ and  $\curl(\tilde q_{[\tilde c,*]}) - \curl(\tilde
p_{[\tilde c,*]}) = \alpha(j_l) - \alpha(j_{l'})$ we obtain
$\curl(p_{[\tilde c, c]}) = \curl(q_{[\tilde c, c]})$, from which it
follows that $\tilde c$ should have been a proper crossing of $p$
and $q$ -- this contradicts the original choice of $c$.

Similarly, suppose $\tilde c$ is a proper crossing of say $\tilde q$
and some $r$, which is an $(i_{k''},j_{l''})$-path. Then
$\curl(q_{[c,*]})-\curl(p_{[c,*]})=\alpha(j_l) - \alpha(j_{l'})$ and
$\curl(\tilde q_{[\tilde c,*]})-\curl(r_{[\tilde
c,*]})=\alpha(j_{l'}) - \alpha(j_{l''})$ imply $\curl(q_{[\tilde
c,*]})-\curl(r_{[\tilde c,*]})=\alpha(j_l) - \alpha(j_{l''})$ and
$\tilde c$ should have been a proper crossing of $q$ and $r$.

Thus we have obtained a weight preserving involution on collections
$P$ of paths which have proper crossings.  We observe looking at the
corresponding terms of $\Delta_{I,J}(X(N))$ that this involution is
sign-reversing.  Thus, the corresponding contributions to the
determinant cancel.  To get the summation over $\Phi(I,J)$ it
remains to check that a collection of paths is pairwise uncrossed
only if each path in it is an $(i_k, j_k)$-path for some $k$. This
follows from Lemma \ref{lem:1}.
\end{proof}

\begin{remark}
Theorem \ref{thm:L} and the other results in this section can be
generalized to the case of $n$ sources $\{v_i\}_{i=1}^n$ and $m$
sinks $\{w_j\}_{j=1}^m$ in the obvious manner.
\end{remark}

\subsection{$\g_{\geq 0}$ and cylindric networks} \label{sec:2}


\begin{theorem} \label{thm:network}
Let $X \in \g$.  Then $X$ is equal to $X(N)$ for some cylindric
network $N$ with nonnegative weight function, if and only if $X \in
\g_{\geq 0}$.
\end{theorem}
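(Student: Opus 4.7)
The plan has two directions. For the \emph{if} direction, observe that by Theorem \ref{thm:L} every minor $\Delta_{I,J}(X(N))$ equals a sum of weights of uncrossed path families in $N$, hence is nonnegative whenever $N$ has nonnegative edge weights. So $X = X(N)$ automatically lies in $\g_{\geq 0}$.

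For the \emph{only if} direction, the strategy is to invoke the semigroup generation result of Theorem \ref{T:polygen} and realize each generator by a small cylindric network, then glue. The elementary network for a positive diagonal matrix $\diag(d_1,\ldots,d_n) \in T_{>0}$ consists of $n$ vertical strands with weights $d_1,\ldots,d_n$; for a Chevalley generator $e_k(a)$, we take $n$ unit-weight vertical strands plus an additional edge of weight $a$ from a point on the $k$-th strand to the $(k+1)$-st strand (and the mirror image for $f_k(a)$); for the shift matrix $S$, we take $n$ strands in which source $v_i$ is joined to sink $w_{i+1}$, with the strand leaving $v_n$ routed once around the cylinder and crossing the chord $\h$ counterclockwise so as to realize the rotor $+1$. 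In each case a direct check against the definition in Section \ref{sec:cross} confirms that $X(N)$ equals the prescribed generator.

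The tool for assembling these pieces is a concatenation lemma: if $N_1$ is stacked above $N_2$ on $\C$, with the sinks of $N_2$ identified with the sources of $N_1$ and the chord $\h$ extended across both pieces, then $X(N_1 \cdot N_2) = X(N_1)\, X(N_2)$. This is the cylindric analogue of the familiar planar statement. An $(i,j)$-path in $N_1 \cdot N_2$ decomposes uniquely as an $(i,k)$-path in $N_2$ followed by a $(k,j)$-path in $N_1$ for some $k \in \Z$; since rotor is additive under concatenation, the required rotor $\alpha(j)-\alpha(i)$ splits as $(\alpha(k)-\alpha(i)) + (\alpha(j)-\alpha(k))$. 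Summing $w(p)$ over all such decompositions recovers exactly the Cauchy--Binet--like identity $(X(N_1)X(N_2))_{ij} = \sum_k X(N_1)_{ik} X(N_2)_{kj}$.

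Combining these ingredients, Theorem \ref{T:polygen} writes an arbitrary $X \in \g_{\geq 0}$ as a product of shift matrices, torus elements, and positive Chevalley generators; replacing each factor by its elementary network and concatenating produces a nonnegative cylindric network $N$ with $X(N) = X$. The main obstacle I anticipate is purely bookkeeping: making sure the chord $\h$ joins consistently across each concatenation and that the rotors of wrapping paths, in particular those crossing the chord in the elementary network for $S$, are accounted for with the right sign so that no spurious $(i,j)$-paths appear beyond those matching the prescribed matrix entries. Once the rotor calculus for the generators and for concatenation is verified, the rest of the argument is mechanical.
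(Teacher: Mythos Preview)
Your proposal is correct and follows essentially the same approach as the paper: use Theorem~\ref{thm:L} for one direction, and for the other invoke Theorem~\ref{T:polygen} together with elementary ``building block'' networks for Chevalley generators and shift matrices, glued by concatenation. The only cosmetic issues are that your ``if''/``only if'' labels are swapped relative to the theorem as stated, and you should note that negative powers of $S$ (which Theorem~\ref{T:polygen} allows) are handled by the mirror-image shift network.
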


\begin{proof}
From Theorem \ref{thm:L} it follows that every $X \in \g$ that
arises from a cylindric network is TNN. Further, concatenation of a
cylindric network $N$ and one of the special ``building block''
networks as shown in Figures \ref{fig:loo2} and \ref{fig:loo8}
corresponds to multiplication of $X(N)$ by a Chevalley generator and
by a shift matrix respectively. We conclude by Theorem
\ref{T:polygen} that every element of $g_{\geq 0}$ can be
represented by a cylindric network.
\begin{figure}[h!]
    \begin{center}
    \input{loo2.pstex_t}
    \end{center}
    \caption{}
    \label{fig:loo2}
\end{figure}
\end{proof}

\begin{figure}[h!]
    \begin{center}
    \input{loo8.pstex_t}
    \end{center}
    \caption{}
    \label{fig:loo8}
\end{figure}

\subsection{Determinant of the folding}
Let $N$ be a cylindric network.  We now give a combinatorial
interpretation for the coefficients of the determinant
$\det(\overline{X(N)}(t))$.  Let $\{v_i\}_{i=1}^n$ and
$\{w_i\}_{i=1}^n$ be the sources and sinks of $N$ as before. Then
$\overline x_{ij}(t)$ enumerates the weights of paths from $v_i$ to
$w_j$ with an extra factor $t^{\curl(p)}$ keeping track of how many
times the path $p$ crossed the chord $\h$ in the counterclockwise
direction. Let $\Gamma_k$ be the set of families $P = (p_1, \ldots,
p_n)$ of paths, satisfying: (a) the path $p_i$ connects $v_i$ and
$w_{\overline{i+k}}$, (b) no pair of paths intersect in the naive
sense (rather than in the sense of ``uncrossed'' of subsection
\ref{sec:cross}), and (c) and there are $k$ (net) counterclockwise
crossings of paths in $P$ with $\h$.

\begin{theorem}
Let $N$ be a cylindric network.  Then
$$\det(\overline {X(N)}(t)) =
\sum_{k \in \Z} \left((-1)^{k(n-1)} \sum_{P \in \Gamma_k} w(P)
\right) t^k.$$
\end{theorem}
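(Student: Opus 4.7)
The plan is to start from the Leibniz expansion of the $n\times n$ determinant and convert it into a signed sum over families of paths in $N$, and then apply a sign-reversing, total-rotor-preserving involution to cancel all families with intersections.

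First I would expand
$$\det(\overline{X(N)}(t)) \;=\; \sum_{\sigma\in S_n}\mathrm{sgn}(\sigma)\prod_{i=1}^n \overline{x}_{i,\sigma(i)}(t).$$
Because $\overline{x}_{ij}(t)=\sum_{k}x_{i,j+kn}t^{k}$ and $x_{i,j+kn}$ enumerates $(i,j+kn)$-paths, which are precisely paths from $v_i$ to $w_j$ with rotor $k$, this rewrites as
$$\det(\overline{X(N)}(t)) \;=\; \sum_{\sigma\in S_n}\mathrm{sgn}(\sigma)\sum_{P=(p_1,\ldots,p_n)} w(P)\, t^{\sum_{i=1}^n \curl(p_i)},$$
where the inner sum runs over all tuples with $p_i$ any path from $v_i$ to $w_{\sigma(i)}$.

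Next I would define a sign-reversing involution on families that contain at least one shared vertex. Fix a total order on $V(G)$ refining the height function used in the proof of Theorem~\ref{thm:L}. Given such a family, locate the earliest vertex $c$ shared by two paths $p_a$ and $p_b$, and swap their tails at $c$ to obtain $\tilde p_a,\tilde p_b$. The edge multiset of the family is unchanged, so $w(P)$ is preserved. The new permutation is $\sigma\circ(a,b)$, so $\mathrm{sgn}(\sigma)$ flips. Crucially, each crossing of $\mathfrak{h}$ lies in exactly one of the four tails, so it is merely redistributed with the same sign, yielding $\curl(\tilde p_a)+\curl(\tilde p_b)=\curl(p_a)+\curl(p_b)$; thus the total rotor and the power of $t$ are preserved. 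The same argument as in Theorem~\ref{thm:L} shows that $c$ remains the earliest shared vertex of the new family, so this is a genuine involution and all intersecting contributions cancel in the signed sum.

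After cancellation only pairwise vertex-disjoint families remain, and the key geometric claim is that any such family $(p_1,\ldots,p_n)$ with $p_i:v_i\to w_{\sigma(i)}$ belongs to $\Gamma_k$ for the unique integer $k=\sum_i\curl(p_i)$, i.e.\ $\sigma=\sigma_k$. To prove it I would lift to the universal cover $\mathbb{R}\times[0,1]$ of the cylinder. Place lifts $\tilde v_1,\ldots,\tilde v_n$ of $v_1,\ldots,v_n$ in one fundamental domain in left-to-right order, and let $\tilde p_i$ be the lift of $p_i$ starting at $\tilde v_i$; by the definition of rotor, $\tilde p_i$ terminates at the lift $\tilde w_{\sigma(i)}$ translated by $\curl(p_i)$ deck transformations. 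Disjointness on the cylinder means $\tilde p_i$ is disjoint from every $\mathbb{Z}$-translate of every $\tilde p_j$, so the $\mathbb{Z}$-translates of $\tilde p_1,\ldots,\tilde p_n$ form an infinite pairwise-disjoint family of arcs between two periodically-marked lines in the simply connected strip. Such a non-crossing, $\mathbb{Z}$-equivariant matching is order-preserving, which forces $\sigma(i)-i+n\,\curl(p_i)=K$ to be a single integer $K$ independent of $i$. Summing this identity over $i$ and using $\sum\sigma(i)=\sum i$ yields $K=\sum_i\curl(p_i)=k$, so $\sigma(i)\equiv i+k\pmod n$ and the family lies in $\Gamma_k$.

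The final ingredient is the sign: $\sigma_1$ is an $n$-cycle, so $\mathrm{sgn}(\sigma_k)=\mathrm{sgn}(\sigma_1)^k=(-1)^{k(n-1)}$, which matches the claimed formula. The principal obstacle is the topological step: justifying that disjointness of the lifts together with all their $\mathbb{Z}$-translates produces a single common shift $K$. The involution and the sign computation are routine once this topological fact is set up carefully.
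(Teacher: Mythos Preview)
Your proposal is correct and follows essentially the same approach as the paper: expand the determinant, apply the Lindstr\"om-style sign-reversing involution (which preserves total rotor, hence the power of $t$), and then identify the surviving non-intersecting families as exactly the $\Gamma_k$. The paper's proof is terser---it simply asserts that non-intersecting families exist only when $\sigma$ is a power of the long cycle---whereas you supply the universal-cover argument to justify that observation; this is elaboration rather than a different method.
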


\begin{proof}
We proceed using the usual argument in Lindstr\"om's lemma.  Suppose
$P = (p_1,p_2,\ldots,p_n)$ is a family of paths such that $p_i$ goes
from $v_i$ to $w_{\sigma(i)}$ for some permutation $\sigma \in S_n$,
and so that there are $k$ (net) counterclockwise crossings of paths
in $P$ with $\h$.  If $p_i$ and $p_j$ intersect at a vertex $c$,
swapping the two paths after $c$ will give another family $P'$ with
the same weight, and still $k$ (net) counterclockwise crossings with
$\h$. Applying the usual sign-reversing involution argument (see the
proof of Theorem \ref{thm:L}), we see that the coefficient of $t^k$
in $\det(\overline {X(N)}(t))$ is equal to the weight generating
functions of such families $P$ with the additional requirement that
no pair of paths intersect.  We now observe such families $P$ exist
only if $\sigma$ is a power of the long cycle, that is, belong to
$\Gamma_k$.  The sign of the corresponding permutation $\sigma$ is
$(-1)^{k(n-1)}$.
\end{proof}

\begin{example} \label{ex:3}
Consider the network  given in Figure \ref{fig:loo6}, where all
edges are oriented upwards and have weight $1$.
\begin{figure}[h!]
    \begin{center}
    \input{loo6.pstex_t}
    \end{center}
    \caption{}
    \label{fig:loo6}
\end{figure}
One can check that the associated element of $\G$ and its folding are given by
$$
\left(\begin{array}{ccccccc} \ddots & \vdots & \vdots & \vdots & \vdots &\vdots \\
\cdots& 3& 5 & 2 & 1 & 0& \cdots \\ \cdots& 1& 7& 4 & 2 & 0 &
 \cdots \\
 \cdots& 0&3 & 3& 5& 2&\cdots \\
 \cdots& 0&1&1&7& 4& \cdots \\
  \cdots& 0&0&0&3&3&  \cdots \\
 & \vdots & \vdots & \vdots & \vdots &\vdots &\ddots
\end{array} \right)
\rightsquigarrow
\left(\begin{array}{cc}
3+2t & 3t^{-1}+5+t \\
1+4t & t^{-1}+7+2t
\end{array} \right)
.
$$
The determinant of the folded matrix equals $6-t$.  The non-crossing subnetwork corresponding to the $-t$ term is shown on the right of Figure \ref{fig:loo6}.
\end{example}

\begin{corollary}\label{C:coeffs}
If $X = X(N)$ arises from a cylindric network $N$, then the odd
minors of $\overline X(t)$ have nonnegative coefficients, the even
minors have sign-alternating coefficients.
\end{corollary}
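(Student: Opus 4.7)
The plan is to reduce this statement to the preceding determinant theorem by passing to a restricted cylindric network. Fix a $k \times k$ minor $\Delta_{I,J}(\overline{X}(t))$ with $I = \{i_1 < \cdots < i_k\}$ and $J = \{j_1 < \cdots < j_k\}$ in $\{1, \ldots, n\}$. I form a new network $N'$ on the same cylinder, with the same underlying weighted graph and chord $\h$, but with only $\{v_{i_a}\}_{a=1}^k$ declared as sources and $\{w_{j_b}\}_{b=1}^k$ as sinks. Because sources of $N$ have in-degree zero and sinks have out-degree zero, the unselected boundary vertices cannot lie in the interior of any directed path between selected sources and sinks; hence $N'$ is a legitimate cylindric network with $k$ sources and $k$ sinks arranged counterclockwise.

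Next I verify that the $k \times k$ matrix $\overline{X(N')}(t)$ equals the submatrix $\overline{X}_{I,J}(t)$. The entry $\overline{X(N')}(t)_{a,b}$ is by definition the $t^{\curl(p)}$-weighted generating function for paths from $v_{i_a}$ to $w_{j_b}$, which agrees with $\overline{X}_{i_a, j_b}(t) = \sum_\ell x_{i_a, j_b + \ell n}\, t^\ell$, since the rotor of a path is purely topological (net counterclockwise crossings of $\h$) and is independent of which vertices we have singled out as sources and sinks. Consequently $\Delta_{I,J}(\overline{X}(t)) = \det(\overline{X(N')}(t))$.

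Finally, I apply the preceding determinant theorem to $N'$, invoking the remark after Theorem \ref{thm:L} to replace the fixed $n$ with $k$ throughout, to obtain
\[
\Delta_{I,J}(\overline{X}(t)) \;=\; \sum_{m \in \Z} (-1)^{m(k-1)} \Big( \sum_{P \in \Gamma'_m} w(P) \Big)\, t^m,
\]
where each inner sum is nonnegative and $\Gamma'_m$ denotes the analogue of $\Gamma_m$ for $N'$. When $k$ is odd, $(-1)^{m(k-1)} = 1$ for every $m$, so all coefficients are nonnegative; when $k$ is even, $(-1)^{m(k-1)} = (-1)^m$, so the coefficients alternate in sign. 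The only point meriting attention is that the determinant theorem's proof carries over verbatim to a cylinder with $k$ (rather than $n$) sources and sinks, which it does: the sign appearing there is simply the sign of the $m$-th power of the long cycle in $S_k$, equal to $(-1)^{m(k-1)}$.
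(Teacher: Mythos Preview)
Your argument is correct and is precisely the intended derivation: the paper states Corollary~\ref{C:coeffs} immediately after the determinant theorem with no separate proof, and the natural reading is exactly what you do---restrict to the subnetwork on the chosen $k$ sources and $k$ sinks and apply the theorem with $k$ in place of $n$. One point worth making explicit (you gesture at it but do not quite say it): because $I,J\subset\{1,\ldots,n\}$, the original chord $\h$, lying on the arcs $v_nv_1$ and $w_nw_1$, automatically sits on the arcs from $v_{i_k}$ to $v_{i_1}$ and from $w_{j_k}$ to $w_{j_1}$, so $\h$ is correctly positioned for the relabeled network $N'$ and the identification of total rotor with cyclic-shift power goes through unchanged.
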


\section{Upper triangular matrices and a reduction result}
\label{sec:upper}
\subsection{Upper triangular matrices}
Let $U \subset \G$ be the subgroup of the formal loop group
consisting of infinite periodic matrices which are upper triangular,
and such that all diagonal entries are equal to 1.  We denote the
totally nonnegative matrices in $U$ by $U_{\geq 0}$, and the totally
positive matrices in $U$ by $U_{> 0}$.

We say that $X \in U_{\geq 0}$ is {\it finitely supported} if
finitely many of diagonals of $X$, given by $j - i = {\rm
constant}$, are non-zero. Otherwise we say that $X$ is not finitely
supported.

\begin{lem} \label{lem:fs}
If $X \in U_{\geq 0}$ is not finitely supported then all of its
entries above the main diagonal are non-zero.
\end{lem}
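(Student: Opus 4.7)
The plan is to prove the contrapositive: if some entry $x_{ij}$ with $i < j$ vanishes, then only finitely many of the diagonals $\{(k, k+d) : k \in \Z\}$ can be non-zero, contradicting the hypothesis. The crucial structural feature I would exploit is that $X \in U$ forces $x_{kk} = 1$ for every $k$, so a $2 \times 2$ minor having one of its corners on the main diagonal collapses, under TNN, into a sign constraint on a single off-diagonal entry.

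First I would propagate the zero $x_{ij} = 0$ to the right along row $i$. Taking the minor on rows $\{i, j\}$ and columns $\{j, l\}$ for any $l > j$, the determinant equals $x_{ij}x_{jl} - x_{il}x_{jj} = -x_{il} \geq 0$, which together with $x_{il} \geq 0$ forces $x_{il} = 0$. Symmetrically, the minor on rows $\{k, i\}$ and columns $\{i, j\}$ with $k < i$ has determinant $x_{ki}x_{ij} - x_{kj}x_{ii} = -x_{kj}$, forcing $x_{kj} = 0$ for every $k < i$. Reapplying the first argument to each newly obtained zero $x_{kj}$ then upgrades the conclusion to $x_{kl} = 0$ for every $k \leq i$ and $l \geq j$, i.e.\ a full ``northeast quadrant'' of zeros emanating from $(i,j)$.

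Next I would convert this rectangular vanishing into a statement about diagonals. On the $d$-th superdiagonal the forced zeros are the entries $x_{k, k+d}$ with $j - d \leq k \leq i$, a run of $d - (j - i) + 1$ consecutive positions. The periodicity relation $x_{k+n, k+n+d} = x_{k, k+d}$ says every diagonal has period $n$, so once $d \geq j - i + n - 1$ this run contains a full period and the entire $d$-th diagonal is zero. Hence the only diagonals that can support non-zero entries lie in the finite range $0 \leq d \leq j - i + n - 2$, so $X$ is finitely supported, giving the desired contradiction.

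The only point requiring real insight is spotting the two minor choices in the first step: putting one corner on the main diagonal (where the entry equals $1$) kills the ``opposite'' term of the $2 \times 2$ determinant and converts TNN into a hard vanishing. After that, everything reduces to iterating that observation and invoking $n$-periodicity, so I do not anticipate any genuine obstacle.
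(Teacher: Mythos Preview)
Your proof is correct and follows essentially the same approach as the paper: use $2\times 2$ minors with one corner on the main diagonal (where the entry is $1$) to propagate a single zero $x_{ij}$ to the entire northeast quadrant $\{(k,l): k\le i,\ l\ge j\}$, then invoke $n$-periodicity to kill all sufficiently far superdiagonals. The paper's proof is simply a terser version of yours, omitting the explicit minor computations and the diagonal-counting at the end.
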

\begin{proof}
Suppose some entry $x_{i,j} = 0$.  By using the nonnegativity of the
$2 \times 2$ minors involving $x_{i,j}$ and either $x_{i,i}$ or
$x_{j,j}$ we deduce that $x_{i,k} = 0$ for $k > j$ and $x_{k,j} = 0$
for $k < i$.  Thus all the entries northeast of $x_{i,j}$ are 0.
Since the entries of $X$ are periodic, we deduce that $X$ is
finitely supported.
\end{proof}

The entries of the folding of a totally positive $X$ are thus
polynomials if $X$ is finitely supported and infinite power series
otherwise.

\subsection{Reduction to $U_{\geq 0}$}

\begin{thm}\label{T:red}
Every $X \in \G_{\geq 0}$ has a unique factorization of the form $X
= F S^k Y$ where $F$ is the product of an element in $T_{>0}$ and
some $f_i(a)$-s, $k$ is an integer, and $Y \in U_{\geq 0}$.
\end{thm}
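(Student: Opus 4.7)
The plan is to reduce $X \in \G_{\geq 0}$ on the left by positive $f_i(a)$'s and a positive diagonal to a matrix whose leftmost nonzero entries lie on the diagonal $\{(i, i+k)\}$ for a fixed integer $k$, and then factor out $S^k$ to expose an element of $U_{\geq 0}$. Uniqueness will follow by identifying $k$ from $\det(\overline X(t))$ and intersecting the ``lower triangular with positive diagonal'' and ``upper unitriangular'' subgroups of $\G$.

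\textbf{Existence.} For $X \in \G_{\geq 0}$ let $\ell(i)$ be the column index of the leftmost nonzero entry of row $i$, well-defined because each row of an element of $\G$ has finitely many nonzero entries going to $-\infty$. A $2\times 2$ minor argument gives $\ell(i) \leq \ell(i+1)$, and setting $h_i = \ell(i) - i$, the integer sequence $h_i$ is $n$-periodic with $h_{i-1} \leq h_i + 1$. Let $k$ be the $t$-adic order of $\det(\overline X(t))$. Since $\det(\overline F(t))$ is a positive constant (each $f_i(a)$ has folded determinant $1$), $\det(\overline{S^k}(t)) = (-1)^{k(n-1)} t^k$, and $\det(\overline Y(t)) \in 1 + t\R[[t]]$, the target factorization $X = FS^kY$ forces $\max_j h_j \leq k$ via $\ell_F(i) \leq i$. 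When $h$ is not constantly $k$, some $i$ has $h_{i-1} = h_i + 1$, so $\ell(i-1) = \ell(i) = j$ and $x_{i-1,j} > 0$; left-multiplying by $f_{i-1}(-x_{ij}/x_{i-1,j})$ (the periodic Chevalley generator whose nonunit entry occupies position $(i, i-1)$ in the unfolded form) clears $x_{ij}$. The entries $x'_{i, j'}$ for $j' < j$ vanish automatically because both $x_{i, j'}$ and $x_{i-1, j'}$ are zero, and the Schur-complement argument from the proof of Theorem \ref{T:polygen}, applied to a suitable finite window containing rows $i-1, i$, shows all minors remain nonnegative. Thus the result stays in $\G_{\geq 0}$, $h_i$ strictly increases, no other $h_j$ changes, and each $h_j$ is an integer bounded above by $k$, so the procedure halts in at most $\sum_j (k - h_j^{\mathrm{initial}})$ steps at some $Z \in \G_{\geq 0}$ with $h_j = k$ for every $j$ and strictly positive entries on the shifted diagonal $j = i + k$. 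Factoring $Z = D \cdot S^k Y$ for the diagonal $D \in T_{>0}$ with $D_{ii} = Z_{i, i+k}$ and $Y \in U_{\geq 0}$, and absorbing $D$ with the inverse elementary moves (using the commutation $T f_i(a) = f_i(a\,t_{\overline{i+1}}/t_{\bar i}) T$ to push diagonals past $f_i$'s), produces $F$ in the required form and the factorization $X = F S^k Y$.

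\textbf{Uniqueness.} Given $X = F_1 S^{k_1} Y_1 = F_2 S^{k_2} Y_2$, the $t$-adic-order computation above gives $k_1 = k_2 = k$. The relation becomes $F_2^{-1} F_1 = S^k Y_2 Y_1^{-1} S^{-k}$; a direct computation yields $(S^k M S^{-k})_{ij} = M_{i+k, j+k}$, so conjugation by $S^k$ preserves $U$, and the right side is upper unitriangular. The left side is lower triangular with positive diagonal, and any element of $\G$ which is simultaneously lower triangular with positive diagonal and upper unitriangular is forced to be the identity. Hence $F_1 = F_2$ and $Y_1 = Y_2$.

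The main obstacle is maintaining the invariant $\max_j h_j \leq k$ and guaranteeing the Gaussian-elimination step is legal (no entries become negative); this relies on performing the row operation only at positions with $\ell(i-1) = \ell(i)$, so that the potentially problematic off-pivot entries are already zero and the Schur-complement argument applies cleanly.
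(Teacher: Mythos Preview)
Your overall strategy matches the paper's --- peel off $f_j$'s on the left until the southwest boundary of the support lies on a single diagonal, then factor out $S^k$ and a torus element --- but the existence argument contains a real gap.

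You pick \emph{any} $i$ with $h_{i-1}=h_i+1$ (equivalently $\ell(i-1)=\ell(i)=j$) and claim the Schur--complement argument from Theorem~\ref{T:polygen} shows $X'=f_{i-1}(-x_{ij}/x_{i-1,j})\,X$ is TNN. That argument needs the pivot column to have zeros below row $i$: in the $e$--version of Theorem~\ref{T:polygen} the vanishing $x_{i-k,j}=0$ (the NE--corner condition) is exactly what makes the single row operation coincide with the full Schur complement. Your hypothesis $\ell(i-1)=\ell(i)$ gives zeros to the \emph{left} of $(i,j)$ in rows $i-1,i$, but says nothing about $x_{i+1,j},x_{i+2,j},\dots$. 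Without those zeros the step can fail. For $n=3$ take $X\in GL_3(\R)_{\ge 0}\subset\G_{\ge 0}$ with
\[
\overline X=\begin{pmatrix}1&2&1\\1&3&2\\1&4&5\end{pmatrix}.
\]
Here $\ell(1)=\ell(2)=\ell(3)=1$, so every $i$ satisfies your condition; choosing $i=2$ gives $X'=f_1(-1)X$ with $\Delta_{\{2,3\},\{1,2\}}(X')=0\cdot4-1\cdot1=-1$. The fix is to pick $i$ with $h_i$ \emph{minimal} (and $h_{i-1}=h_i+1$, which then follows); minimality forces $\ell(i+m)>j$ for all $m\ge 1$, so $(i,j)$ is a genuine SW--corner on the most SW diagonal and the (rotated) Schur--complement argument goes through. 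This is precisely the ``special SW--corner'' move the paper invokes.

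A second, smaller gap: your termination relies on $\max_j h_j\le k$, but the justification you give (``the target factorization forces it'') presupposes the existence you are proving. The paper sidesteps this by noting that the $f_j$--moves leave $\det(\overline X)$ unchanged, so the support cannot drift northeast past the point where the lowest--degree monomial of $\det(\overline X)$ becomes unattainable. Once you restrict to the minimal--$h_i$ pivot, the same determinant argument (or the observation that $h_i$ can never be pushed above the initial $\max_j h_j$) gives termination directly.

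Your uniqueness argument is correct and slightly cleaner than the paper's: both reduce to the fact that a simultaneously lower--triangular and upper--unitriangular element of $\G$ is the identity.
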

\begin{proof}
We first prove existence.  By the definition of $\G$, the matrix $X$
has at least one SW-corner, where SW-corner is defined in obvious
analogy with the NE-corners used in the proof of Theorem
\ref{T:polygen}.  Arguing as in that proof, either (a) one can write
$X = f_{j}(a) X'$ where $X' \in \G$ and $a > 0$, or (b) the
southwestmost non-zero of diagonal of $X$ is completely filled with
non-zero entries.  If we are in Case (b), then we can use the shift
matrix $S$ to shift the southwest-most diagonal to the central
diagonal, and then multiply by a matrix in $T_{>0}$ to obtain the
desired matrix $Y \in U_{\geq 0}$.  In Case (a), we repeatedly
factor out Chevalley generators $f_j(a)$, which in particular does
not change the determinant $\det(X)$.  We must eventually encounter
Case (b), for otherwise we will have reduced the support of $X$ to
so far in the northeast that the lowest degree monomial in $\det(X)$
cannot be obtained.  This establishes existence.

We now prove uniqueness.  We first note that $S T_{> 0} S^{-1} \in
T_{>0}$ and that $S f_{i}(a) S^{-1} = f_{i-1}(a)$.  Suppose we have
$F S^k Y = F' S^{k'} Y'$. Then one has $Y'' = S^{k''} F''$ where
$Y'' \in U$, $F''$ is a product of $f_i(a)$-s with possibly negative
parameters, and $k'' \in \Z$.  But $\det(Y'') \in 1 + t\R[[t]]$ and
$\det(F'') \in \R$, so we conclude that $k'' = 0$.  But $F''$ is
lower triangular, and $Y''$ is upper triangular, so $F'' = Y'$ is
the identity matrix.  This implies that $k = k'$, $F = F'$, and $Y =
Y'$.
\end{proof}

For the rest of this section, and most of the rest of the paper, we
focus on the semigroup $U_{\geq 0}$.

\subsection{Convergence in $U_{\geq 0}$}
A {\it totally positive function} is a formal power series $a(t) = 1
+ a_1t + a_2 t^2 + \cdots$ which arises as $a(t) = \X(t)$ for $X \in
U_{\geq 0}$ with $n = 1$.  Note that with this terminology, we do
not make the usual distinction between totally nonnegative and
totally positive.  As we have mentioned, the Edrei-Thoma theorem
(Theorem \ref{T:ET}) classifies totally positive functions.

%

\begin{prop}\label{prop:mero}
Suppose $X \in U_{\geq 0}$.  Then the entries of $\overline X(t)$
are meromorphic functions holomorphic in a neighborhood of 0.
\end{prop}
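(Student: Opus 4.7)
The plan is to show that each entry $\overline{x}_{ij}(t)$ of the folding is, up to a nonnegative constant multiple and possibly an overall factor of $t$, a totally positive function in the classical ($n=1$) sense. Once this is in place, Edrei--Thoma (Theorem~\ref{T:ET}) instantly gives both meromorphy on $\mathbb{C}$ and holomorphy in a neighborhood of $0$. So the entire argument reduces to exhibiting a suitable $1 \times 1$--style TNN Toeplitz submatrix of $X$ whose folding is $\overline{x}_{ij}(t)$ (up to the normalizations just mentioned).

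Fix $i, j \in \{1,\dots,n\}$ and consider the doubly-infinite submatrix $M$ of $X$ indexed, when $i \leq j$, by rows $\{i+kn : k \in \Z\}$ and columns $\{j+kn : k \in \Z\}$; when $i > j$, by those same rows but by columns $\{j+(k+1)n : k \in \Z\}$, shifted by one period. Periodicity of $X$ makes $M$ a Toeplitz matrix, and upper-triangularity of $X$ combined with the bound $j < i+n$ in the first case (respectively $i < j+n$ in the second) forces $M$ to be upper triangular. Its constant diagonal entry is $c := x_{ij}$ (respectively $c := x_{i,j+n}$, using $x_{jj}=1$), and the generating function of its superdiagonal entries is exactly $\overline{x}_{ij}(t)$ (respectively $t^{-1}\overline{x}_{ij}(t)$). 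As a submatrix of a TNN matrix, $M$ is itself TNN, so if $c > 0$ then $c^{-1}M$ is a TNN infinite periodic upper triangular Toeplitz matrix with $1$'s on the diagonal, i.e.\ an element of $U_{\geq 0}$ in the case $n=1$; by definition its folding is a totally positive function, and Edrei--Thoma finishes this case.

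The only remaining case is $c = 0$. For $i \leq j$, the $2 \times 2$ minor of $X$ on rows $\{i,j\}$ and columns $\{j, j+kn\}$ equals
\[
x_{ij}\, x_{j, j+kn} - x_{i, j+kn}\, x_{jj} = -x_{i, j+kn}
\]
(using $x_{ij}=0$ and $x_{jj}=1$), so its nonnegativity together with $x_{i, j+kn}\geq 0$ forces $x_{i, j+kn}=0$ for every $k \geq 1$; hence $\overline{x}_{ij}(t)=0$. An entirely analogous computation using rows $\{i, j+n\}$ and columns $\{j+n, j+kn\}$ (for $k \geq 2$) handles the case $i > j$. There is essentially no obstacle in the proof beyond index bookkeeping: the real content is packaged inside Edrei--Thoma, and the structural observation is just that periodicity combined with upper-triangularity produces a Toeplitz submatrix of the correct shape at every diagonal coset of $\Z/n\Z$.
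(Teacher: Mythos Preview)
Your proof is correct and takes essentially the same approach as the paper's one-line argument (``Apply Theorem~\ref{T:ET} to each entry of $\overline{X}(t)$''): you have simply spelled out carefully why each entry is, up to a scalar and a power of $t$, a totally positive function, by extracting the appropriate Toeplitz submatrix along a coset of $n\Z$. The parenthetical ``using $x_{jj}=1$'' in your description of the $i>j$ case is a bit misplaced---that fact is used in the $c=0$ minor computation, not in identifying $c=x_{i,j+n}$---but this is purely cosmetic.
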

\begin{proof}
Apply Theorem \ref{T:ET} to each entry of $\X(t)$.  (See also the
proof of Proposition \ref{P:strongconverge}.)
\end{proof}

The {\it {radius of convergence}} of $X$, denoted $r(X)$, is the
minimum of the radii of convergence of the entries of $\X(t)$.  The
following Proposition shows that our weak notion of convergence
automatically implies stronger convergence.

\begin{prop}\label{P:strongconverge}
Suppose $X^{(1)}, X^{(2)}, \ldots$ is a sequence of matrices in
$U_{\geq 0}$ with limit $X$.  Then there is a neighborhood $V
\subset \cc$ of 0 so that
\begin{enumerate}
\item every matrix amongst $\X^{(i)}(t)$ and $\X(t)$ is holomorphic in $V$
\item
every matrix entry of $\X^{(i)}(t)$ approaches the corresponding
entry of $\X(t)$ uniformly, considered as holomorphic functions on
V.
\end{enumerate}
\end{prop}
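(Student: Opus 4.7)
The plan is to produce a common disk $V \subset \cc$ around the origin on which the entries of every $\X^{(m)}(t)$ and $\X(t)$ are holomorphic and uniformly bounded, and then invoke Vitali's convergence theorem to upgrade coefficient-wise convergence to uniform convergence.

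The crucial tool is a coefficient bound for $X \in U_{\geq 0}$. For any $i \in \Z$ and any column index $k > i+n$, nonnegativity of the $2 \times 2$ minor on rows $\{i, i+n\}$ and columns $\{i+n, k\}$ yields
$$
x_{i, i+n}\, x_{i+n, k} - x_{i, k}\, x_{i+n, i+n} \ \geq\ 0.
$$
Since $x_{i+n, i+n} = 1$ and $x_{i+n, k} = x_{i, k-n}$ by periodicity, this collapses to $x_{i, k} \leq x_{i, i+n} \cdot x_{i, k-n}$. Iterating with $k = j + l n$ for fixed $i, j \in \{1, \ldots, n\}$ produces
$$
x_{i, j+ln} \leq x_{i, i+n}^{l} \cdot x_{i, j} \ (l \geq 0, \ i \leq j), \qquad x_{i, j+ln} \leq x_{i, i+n}^{l-1} \cdot x_{i, j+n} \ (l \geq 1, \ i > j),
$$
and summing the resulting geometric series gives
$$
|a_{ij}(t)| \ \leq\ a_{ij}(|t|) \ \leq\ \frac{C_{ij}(|t|)}{\,1 - x_{i, i+n}\, |t|\,} \qquad \text{whenever } |t| < 1/x_{i, i+n},
$$
with $C_{ij}(|t|)$ equal to $1$, $x_{ij}$, or $x_{i, j+n}\,|t|$ in the cases $i = j$, $i < j$, $i > j$ respectively.

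To finish, since $X^{(m)} \to X$ entry-by-entry, the finitely many near-diagonal entries $x^{(m)}_{i, i+n}$, $x^{(m)}_{i, j}$, $x^{(m)}_{i, j+n}$ for $i, j \in \{1, \ldots, n\}$ are uniformly bounded in $m$. In particular there is a constant $M$ with $x^{(m)}_{i, i+n} \leq M$ for every $i$ and every sufficiently large $m$ (and for $X$ itself). Setting $V = \{ t \in \cc : |t| < 1/(2M) \}$, the bound above shows that every $a^{(m)}_{ij}$ and $a_{ij}$ extends holomorphically to $V$ with a common bound, giving (1). The hypothesis $X^{(m)} \to X$ means exactly that the Taylor coefficients of $a^{(m)}_{ij}$ converge to those of $a_{ij}$, which combined with uniform boundedness on $V$ lets one invoke Vitali's convergence theorem (equivalently, Montel's theorem plus uniqueness of Taylor expansion) to conclude uniform convergence of $a^{(m)}_{ij}$ to $a_{ij}$ on compact subsets of $V$, giving (2).

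The main obstacle is identifying the right $2 \times 2$ minor; once the inequality $x_{i, k} \leq x_{i, i+n} \cdot x_{i, k-n}$ is in hand, everything else is bookkeeping. Some care is required in the $i > j$ case, where the constant term of $a_{ij}$ vanishes and the induction begins one step later with base $x_{i, j+n}$ in place of $x_{i, j}$.
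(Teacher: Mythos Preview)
Your argument is correct. The approach differs from the paper's in execution though the core idea---extracting a geometric coefficient bound from a $2\times 2$ minor---is the same. The paper first reduces to the case $n=1$ by observing that for each fixed pair $(i,j)$ the submatrix of $X$ on rows $\equiv i$ and columns $\equiv j \pmod n$ is a totally nonnegative Toeplitz matrix, so each entry $a_{ij}(t)$ is (up to normalization) a totally positive function; it then uses the log-concavity minor $a_k^2 \geq a_{k-1}a_{k+1}$ to bound the radius of convergence by $1/a_1$, and finishes with an explicit tail estimate $\bigl|\sum_{k \geq K} a_k t^k\bigr| \leq (a_1 R)^K/(1-a_1 R)$ to obtain uniform convergence directly. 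You instead stay with general $n$ and use the minor on rows $\{i,i+n\}$ and columns $\{i+n,k\}$, exploiting the diagonal entry $x_{i+n,i+n}=1$; this yields the slightly weaker but equally serviceable bound $x_{i,k} \leq x_{i,i+n}\,x_{i,k-n}$ without passing through log-concavity, and then you offload the uniform-convergence step to Montel/Vitali rather than computing tails by hand. Your route is arguably cleaner since it avoids the reduction and the tail arithmetic, at the cost of invoking normal families. Two small points worth tightening: your bound $x^{(m)}_{i,i+n} \leq M$ is stated only for large $m$, but part (1) requires a common $V$ for \emph{all} $m$---just absorb the finitely many early terms into $M$; and Montel gives uniform convergence on compact subsets of $V$, so to match the statement you should shrink $V$ (say to half the radius) at the end.
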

\begin{proof}
It is enough to prove the statement for the case $n = 1$, that is,
for totally positive functions.  If $a(t) = 1 + a_1 t + \cdots$ is a
totally positive function, then looking at $2 \times 2$ minors we
have $a_1 \geq a_2/a_1 \geq a_3/a_2 \geq \cdots$, whenever the
ratios are defined.  Thus if $a(t)$ is not a polynomial, the radius
of convergence $r(a)$ of $a(t)$ is at least $a_i/a_{i+1}$ and we
have $r = \lim_{i \to \infty} a_i/a_{i+1}$.

Now suppose that $a^{(1)}(t), a^{(2)}(t), \ldots$ converge to
$a(t)$.  Then there is a sufficiently large $N$ so that for $k > N$,
$|a^{(k)}_1 - a_1| \leq 1$.  It follows that $r(a^{(k)}(t)) >
1/(a_1+1)$ for all $k > N$ and so there exists a neighborhood $V$ of
0 with property (1).

To see that $a^{(i)}(t)$ approaches $a(t)$ uniformly in a possibly
smaller neighborhood $V$, we note that for $|t| < R$ we have
$$
\left|\sum_{i \geq k} a_i t^i\right| \leq a_k R^k \sum_{i \geq k}
a_1^{i-k} R^{i-k} \leq \frac{(a_1R)^k}{1-a_1 R}.
$$
Fix some $R \ll 1/a_1$.  It follows that for any $\ell \gg 0$, the
value of $|a(t) - a^{(\ell)}(t)|$ for $|t| < R$ can be approximated
by throwing away all but the first $k$ terms.  But for $\ell$
sufficiently large, the first $k$ terms of $a(t)$ and
$a^{(\ell)}(t)$ are arbitrarily close.  This shows that $a^{(i)}(t)$
approaches $a(t)$ uniformly in $|t| < R$.
\end{proof}

Note that neither conclusion of Proposition \ref{P:strongconverge}
holds for general meromorphic functions.

%
%
%

\subsection{The operation $^{-c}$}
We define $X^c \in U$ to be the matrix obtained by applying to $X
\in U$ the transformation $x_{i,j} \mapsto (-1)^{|i-j|}x_{i,j}$. A
special role in what follows is played by the operation $^c$-{\it
{inverse}} given by $X \mapsto (X^c)^{-1}$.  Abusing notation
slightly, we shall also write $X^{-c} := (X^c)^{-1}$.  Note that
$(X^c)^{-1} = (X^{-1})^c$.  Also note that the operation $X \mapsto
X^{-c}$ is an involution, and that $(XY)^{-c} = Y^{-c}X^{-c}$.

\begin{lemma}\label{lem:cinverse}
Suppose $X \in U_{\geq 0}$.  Then $X^{-c} \in U_{\geq 0}$.
\end{lemma}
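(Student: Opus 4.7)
The plan is to express $\Delta_{I,J}(X^{-c})$ as a minor of $X$ itself (with no sign), using the classical Jacobi complementary minor identity combined with the sign-twist built into the $^c$-operation; once this identification is made, nonnegativity is immediate from $X \in U_{\geq 0}$.

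First I would record two elementary facts about the $^c$-operation. Writing $D = \mathrm{diag}(\ldots,(-1)^{-1},(-1)^0,(-1)^1,\ldots)$, we have $X^c = DXD^{-1}$, from which $X^{-c} = (X^c)^{-1} = DX^{-1}D^{-1} = (X^{-1})^c$, so the $c$-twist and inversion commute. Then, for any finite index sets $I,J \subset \Z$ with $|I|=|J|$, factoring $(-1)^i$ from each row and $(-1)^j$ from each column yields
\[
\Delta_{I,J}(X^c) \;=\; (-1)^{\sigma(I)+\sigma(J)}\,\Delta_{I,J}(X),
\qquad \sigma(S):=\sum_{s\in S} s,
\]
and consequently $\Delta_{I,J}(X^{-c}) = (-1)^{\sigma(I)+\sigma(J)}\,\Delta_{I,J}(X^{-1})$.

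Next I would reduce computing $\Delta_{I,J}(X^{-1})$ to a finite problem. Since $X$ is upper triangular with $1$'s on the diagonal, back substitution computes $(X^{-1})_{ij}$ using only the entries $x_{kl}$ with $k,l \in [i,j]$; in particular $X^{-1}\in U$ and, for any interval $[a,b]\supseteq I\cup J$, the entries of $(X_{[a,b]})^{-1}$ agree with the corresponding entries of $X^{-1}$. Hence $\Delta_{I,J}(X^{-1}) = \Delta_{I,J}((X_{[a,b]})^{-1})$, and the Jacobi complementary minor identity, applied to the finite upper-unitriangular matrix $X_{[a,b]}$ (which has determinant $1$), gives
\[
\Delta_{I,J}((X_{[a,b]})^{-1}) \;=\; (-1)^{\sigma(I)+\sigma(J)}\,\Delta_{J^c,I^c}(X_{[a,b]}),
\]
where $I^c,J^c$ denote complements inside $[a,b]$.

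Combining the two formulas the signs cancel, producing the clean identity
\[
\Delta_{I,J}(X^{-c}) \;=\; \Delta_{J^c,I^c}(X_{[a,b]}),
\]
and since $\Delta_{J^c,I^c}(X_{[a,b]})$ is literally a minor of $X$, the assumption $X\in U_{\geq 0}$ makes it nonnegative. Thus every minor of $X^{-c}$ is nonnegative, and (after noting that $X^{-c}$ is upper-unitriangular and periodic, so lies in $U$) we conclude $X^{-c}\in U_{\geq 0}$. The only potentially delicate point is making sure the truncation argument is independent of $[a,b]$, but this is automatic: both sides of the Jacobi identity are intrinsically defined by the entries of $X^{-1}$ in the block $[a,b]$, and adding trivial rows/columns with a single $1$ on the diagonal leaves the relevant minor of $X$ unchanged.
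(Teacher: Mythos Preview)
Your proof is correct, and it takes a genuinely different route from the paper. The paper argues as follows: for any interval $I=[a,b]$, the finite truncation $Y=X_{I,I}$ is an upper unitriangular TNN matrix, hence by the Loewner--Whitney theorem it factors as a product of Chevalley generators $e_i(a)$ with positive parameters; since $e_i(a)^{-c}=e_i(a)$ and $(WV)^{-c}=W^{-c}V^{-c}$, the matrix $Y^{-c}$ has the same factorization and is therefore TNN. Finally $(X^{-c})_{I,I}=Y^{-c}$, and every minor of $X^{-c}$ sits inside some such block.

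Your argument bypasses Loewner--Whitney entirely: by combining the sign twist in the $^c$-operation with the Jacobi complementary minor identity on the finite truncation $X_{[a,b]}$, you obtain the explicit formula $\Delta_{I,J}(X^{-c})=\Delta_{J^c,I^c}(X_{[a,b]})$, which is itself a minor of $X$. This is more elementary in that it relies only on a classical determinantal identity rather than a structural factorization theorem. It also yields extra information: your identity is essentially the content of the Proposition immediately following this Lemma in the paper (relating $\det(X_{i,j,k})$ to $\det(X^{-c}_{i,j,j+1-i-k})$), which the paper proves separately via Dodgson condensation. The paper's approach, on the other hand, foregrounds the semigroup viewpoint and the invariance of the generators under $^{-c}$, which is thematically aligned with the rest of the article. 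Your check that enlarging $[a,b]$ only appends unitriangular blocks (hence leaves the complementary minor unchanged) is exactly what is needed to make the truncation argument airtight.
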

\begin{proof}
It suffices to show that $X^{-c}_{I,I}$ is TNN for every interval $I
= [a,b]$, since every minor of $X^{-c}$ is contained in such a
submatrix.  Let $Y = X_{I,I}$ and $m = |I|$.  Then $Y \in
GL_m(\R)_{\geq 0} \subset GL_m(\R[t,t^{-1}])_{\geq 0}$.  By Theorem
\ref{T:polygen} (or Theorem \ref{T:LW}), $Y$ is a product of
Chevalley generators $\{e_i(a) \mid i = 1,2,\ldots,m-1\}$ with
positive parameters. We now observe that $e_i(a)^{-c} = e_i(a)$.
Using $(WV)^{-c} = W^{-c}V^{-c}$, we deduce that $Y^{-c}$ is also a
product of Chevalley generators with positive parameters.  But then
$X^{-c}_{I,I} = Y^{-c}$ is TNN.
\end{proof}

Suppose $i, j, k$ are integers such that $j-i-k \geq -1$ and $k \geq
0$.  Let $X_{i,j,k}$ denote the solid submatrix of $X$ obtained from
the rows $i, i+1, \ldots, j-k$ and the columns $i+k, i+k+1, \ldots,
j$.
\begin{prop}
Let $X \in U$.  Then $\det(X_{i,j,k}) = \det(X^{-c}_{i,j,j+1-i-k})$,
where if $j = i + k -1$ we define $\det(X_{i,j,k}) = 1$.
\end{prop}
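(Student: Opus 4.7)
The plan is to derive the identity from Jacobi's complementary minor formula applied to the principal solid block $A := X_{[i,j],[i,j]}$, together with a sign comparison between $X^{-1}$ and $X^{-c}$. Since $X \in U$, the block $A$ is upper triangular with $1$'s on the diagonal, so $\det(A) = 1$. Crucially, writing $X$ in $3 \times 3$ block form with respect to the partition $\mathbb{Z} = (-\infty, i-1] \sqcup [i,j] \sqcup [j+1, \infty)$ makes $X$ block upper triangular with $A$ as the middle diagonal block; the standard inverse formula for block upper triangular matrices then yields $(X^{-1})_{[i,j],[i,j]} = A^{-1}$, so any minor of $A^{-1}$ is a genuine minor of $X^{-1}$. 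This identification is the only slightly subtle structural point of the argument.

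With this in hand, $\det(X_{i,j,k})$ is the minor of $A$ on (local) rows $[1, j-k-i+1]$ and columns $[k+1, j-i+1]$, and Jacobi's identity (using $\det(A) = 1$) rewrites it as $(-1)^{k(j-i+2)}$ times the complementary minor of $A^{-1}$, which in global indices is the submatrix of $X^{-1}$ on rows $[i, i+k-1]$ and columns $[j-k+1, j]$. These are precisely the row and column sets defining $X^{-c}_{i,j,k'}$ with $k' = j + 1 - i - k$. To convert the $X^{-1}$ minor into the corresponding $X^{-c}$ minor, I would use the elementary identity $(-1)^{|p-q|} = (-1)^{p+q}$ for integers; this lets the signs in the entrywise definition $(X^{-c})_{p,q} = (-1)^{|p-q|}(X^{-1})_{p,q}$ be pulled out of a determinant expansion uniformly, yielding the general identity $\det(X^{-c}[I \mid J]) = (-1)^{\sigma(I)+\sigma(J)} \det(X^{-1}[I \mid J])$, where $\sigma(S) := \sum_{r \in S} r$. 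Applied to $I = [i, i+k-1]$ and $J = [j-k+1, j]$, this contributes a sign $(-1)^{k(i+j)}$.

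Adding the Jacobi sign and the conversion sign gives $(-1)^{k(j-i) + k(i+j)} = (-1)^{2kj} = 1$, so the two determinants agree. The degenerate case $j = i+k-1$, i.e., $k' = 0$, is handled by the conventions: the left side is an empty determinant (equal to $1$), while the right side is the determinant of the upper triangular block $X^{-c}_{[i,j],[i,j]}$ with $1$'s on the diagonal (also $1$). The whole argument is essentially bookkeeping of signs, with no genuine obstacle; the only place where it is easy to slip up is the Jacobi sign calculation, which I would sanity-check on a small explicit example (say $n = 1$, $k = 1$, small $j-i$) before committing to the parity computations above.
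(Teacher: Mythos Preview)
Your proof is correct and takes a genuinely different route from the paper's.

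The paper proceeds by induction on $k$: the case $k=0$ is trivial, the case $k=1$ is done by expanding $\det(X_{i,j,1})$ along the first row and recognizing the recursion defining $(X^{-c})_{i,j}$, and the inductive step applies Dodgson's condensation \eqref{E:Dodgson} symmetrically to both sides. Your argument instead invokes Jacobi's complementary minor formula once on the principal block $A = X_{[i,j],[i,j]}$ (using $\det A = 1$ and the block--upper--triangular observation that $(X^{-1})_{[i,j],[i,j]}=A^{-1}$), and then converts the resulting minor of $X^{-1}$ into one of $X^{-c}$ via the diagonal factorization $(X^{-c})_{p,q}=(-1)^{p+q}(X^{-1})_{p,q}$. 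The sign bookkeeping checks out: the Jacobi sign $(-1)^{k(j-i+2)}=(-1)^{k(j-i)}$ and the conversion sign $(-1)^{k(i+j)}$ combine to $(-1)^{2kj}=1$.

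In terms of what each approach buys: the paper's route is self-contained with tools already at hand (Dodgson is used again in Lemma~\ref{lem:sol}), and its $k=1$ base case yields an explicit closed formula for the entries of $X^{-c}$ in terms of solid minors of $X$, which has independent interest. Your route is shorter and more conceptual---the identity becomes a direct instance of Jacobi duality between complementary minors of $A$ and $A^{-1}$, with the $^{-c}$ operation absorbing exactly the leftover sign. The only structural ingredient you need beyond the paper is the (standard) block--triangular inverse fact ensuring that minors of $A^{-1}$ are genuine minors of $X^{-1}$; you identified this correctly.
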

\begin{proof}
If $k = 0$, then $\det(X_{i,j,k}) = 1 = \det(X^{-c}_{i,j,j+1-i-k})$.
Consider now $k = 1$.  That is, we need to show $(X^{-c})_{i,j} =
\det(X_{i,j,1})$. Expanding $\det(X_{i,j,k})$ into smaller minors
using the first row, we obtain
$$
\det(X_{i,j,1}) = \sum_{r = 0}^{j-i-1} (-1)^{r} x_{i,i+r+1}
\det(X_{i+r+1,j,1}).
$$
The claim then follows from the definition of $X^{-c}$ and induction
on $j - i$.

We now allow $k$ to be arbitrary.  We will prove the equality as a
polynomial identity. Recall that for an $n \times n$ matrix $M$,
Dodgson's condensation lemma \cite{D} says
\begin{align}\label{E:Dodgson} &\Delta(M)\Delta_{\{2,3,\ldots,n-1\},\{2,3,\ldots,n-1\}}(M) = \\
&\Delta_{\{1,\ldots,n-1\},\{1,\ldots,n-1\}}(M)\Delta_{\{2,\ldots,n\},\{2,\ldots,n\}}(M)
- \Delta_{\{2,\ldots,n\},\{1,\ldots,n-1\}}(M)
\Delta_{\{1,\ldots,n-1\},\{2,\ldots,n\}}(M). \notag
\end{align}

Applying this and proceeding by induction on $k$, we calculate
\begin{align*}&\det(X_{i,j,k+1}) \\ &= \frac{\det(X_{i,j-1,k})\cdot
\det(X_{i+1,j,k})-\det(X_{i,j,k})\cdot
\det(X_{i+1,j-1,k})}{\det(X_{i+1,j-1,k-1})} \\
& =\frac{\det(X^{-c}_{i,j-1,j-i-k})\cdot
\det(X^{-c}_{i+1,j,j-i-k})-\det(X^{-c}_{i,j,j+1-i-k})\cdot
\det(X^{-c}_{i+1,j-1,j-1-i-k})}{\det(X^{-c}_{i+1,j-1,j-i-k})}\\
&= \det(X^{-c}_{i,j,j-i-k}).
\end{align*}
Note that the equalities hold as polynomials when applied to a
matrix $X$ consisting of variables $x_{i,j}$.  Thus the divisions in
the calculation are always legitimate.
\end{proof}

\begin{lem}\label{lem:detfold}
We have
$$
\det(\overline{X^c})(t) = \begin{cases} \det(\overline X)(t) &
\mbox{if $n$ is even} \\ \det(\overline X)(-t) & \mbox{if $n$ is
odd.}
\end{cases}
$$
\end{lem}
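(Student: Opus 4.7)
The plan is to track what the operation $^c$ does to the folded matrix $\overline X(t)$ entry by entry, recognize the result as a diagonal conjugation (composed with a rescaling of $t$), and then take the determinant.

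First, I will unwind definitions. Choose row/column representatives $i,j \in \{1,\dots,n\}$ so that
\[
\overline X(t)_{ij} = \sum_{k \in \Z} x_{i,\,j+kn}\, t^{k}.
\]
Applying $^c$ multiplies the entry $x_{i,\,j+kn}$ by $(-1)^{|i-(j+kn)|} = (-1)^{i-j-kn} = (-1)^{i-j}(-1)^{kn}$, so
\[
\overline{X^c}(t)_{ij} = (-1)^{i-j}\sum_{k \in \Z} x_{i,\,j+kn}\bigl((-1)^n t\bigr)^{k} = (-1)^{i-j}\, \overline X\bigl((-1)^n t\bigr)_{ij}.
\]

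Second, I will repackage this as a matrix identity. Let $D = \mathrm{diag}((-1)^1,(-1)^2,\ldots,(-1)^n)$. Then conjugation by $D$ acts on an $n \times n$ matrix $A$ by $(DAD^{-1})_{ij} = (-1)^{i-j} A_{ij}$, so the computation above gives
\[
\overline{X^c}(t) = D \cdot \overline X\bigl((-1)^n t\bigr) \cdot D^{-1}.
\]
Taking determinants, the diagonal conjugation disappears and we obtain
\[
\det \overline{X^c}(t) = \det \overline X\bigl((-1)^n t\bigr),
\]
which immediately specializes to the two cases of the lemma according to the parity of $n$.

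There is no serious obstacle here: the whole argument is a sign bookkeeping exercise, and the only point requiring care is the parity computation $(-1)^{|i-j-kn|} = (-1)^{i-j}(-1)^{kn}$, which uses that $(-1)^{|m|} = (-1)^m$ for any integer $m$. Once this is in place, recognizing the resulting matrix as a conjugation by $D$ (so that its determinant equals that of $\overline X((-1)^n t)$) is automatic.
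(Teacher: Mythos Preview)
Your proof is correct and follows essentially the same approach as the paper: both compute that $\overline{X^c}(t)$ has entries $(-1)^{i-j}\,\overline X((-1)^n t)_{ij}$ and deduce the determinant identity from this. The paper leaves the passage to determinants implicit, whereas you spell it out explicitly via the conjugation $\overline{X^c}(t) = D\,\overline X((-1)^n t)\,D^{-1}$, which is a nice way to make that step transparent.
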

\begin{proof}
Suppose $\overline{X}(t) = (\overline{x}_{ij}(t))$.  Then
$\overline{X^c}(t)$ has entries $(-1)^{j-i}\overline{x}_{ij}((-1)^n
t)$.
\end{proof}

\section{Whirls, curls, and ASW factorization}\label{sec:ASW}
\subsection{Whirls and curls}
Let $a_1,\ldots,a_n$ be $n$ real parameters.  We define a {\it
{whirl}} to be a matrix $M = (m_{i,j})_{i,j=-\infty}^{\infty} =
M(a_1, \ldots, a_n)$ with $m_{i,i}=1$, $m_{i,i+1}=a_{i}$ and the
rest of the entries equal to zero.  Here, the indexing of the
parameters are taken modulo $n$.  Note that the Chevalley generator
$e_i(a)$ is given by $M(0,\ldots,0,a,0,\ldots,0)$ where the $a$ is
in the $i$-th position.  If at least one of the parameters $a_i$ in
a whirl is zero, then we call the whirl {\it {degenerate}}.  A
degenerate whirl always factors into Chevalley generators.
Furthermore, if the original parameters are nonnegative then the
parameters in factorization are also nonnegative.  We define a {\it
{curl}} to be a matrix $N$ of the form $N(a_1, \ldots, a_n) :=
M(a_1, \ldots, a_n)^{-c}$.  Examples of whirls and curls were given
in Section \ref{sec:intro}.

\begin{lem}\label{lem:whirldet}
The folded determinants of whirls and curls are given by
\begin{align*}
\det(M(a_1,\ldots,a_n)) = 1 + (-1)^{n+1} (\prod_{i=1}^n a_i)\, t \ \
\ \ \det(N(a_1,\ldots,a_n)) = \frac{1}{1 - (\prod_{i=1}^n a_i)\, t}.
\end{align*}
\end{lem}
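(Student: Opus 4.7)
The plan is to compute the whirl determinant directly by cofactor expansion, and then deduce the curl determinant formally from the relation $N = M^{-c}$ together with Lemma \ref{lem:detfold}.

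First I would write out the folded matrix $\overline{M(a_1,\ldots,a_n)}(t)$ explicitly. From the definition $\overline m_{ij}(t) = \sum_k m_{i,j+kn} t^k$, the only non-zero entries are $\overline m_{ii}(t) = 1$ for $i = 1,\ldots,n$, $\overline m_{i,i+1}(t) = a_i$ for $i = 1,\ldots,n-1$, and the ``wraparound'' entry $\overline m_{n,1}(t) = a_n t$. So the matrix is upper bidiagonal except for the single entry $a_n t$ in the lower-left corner.

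Next I would expand $\det \overline M(t)$ along the first column. Only the $(1,1)$ entry (equal to $1$) and the $(n,1)$ entry (equal to $a_n t$) contribute. The $(1,1)$-minor is upper triangular with $1$'s on the diagonal, contributing $1$. The $(n,1)$-minor is the matrix obtained by deleting row $n$ and column $1$, which is lower bidiagonal with diagonal entries $a_1, a_2, \ldots, a_{n-1}$; its determinant is $a_1 \cdots a_{n-1}$. Combining with the sign $(-1)^{n+1}$ from cofactor expansion gives
\[
\det \overline M(t) = 1 + (-1)^{n+1}\Bigl(\prod_{i=1}^n a_i\Bigr)\, t,
\]
as required.

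For the curl, I would use $N = M^{-c}$, so $\det \overline N(t) = 1/\det \overline{M^c}(t)$. By Lemma \ref{lem:detfold}, $\det \overline{M^c}(t)$ equals $\det \overline M(t)$ when $n$ is even and $\det \overline M(-t)$ when $n$ is odd. In either case one checks directly that $\det \overline{M^c}(t) = 1 - (\prod_i a_i)\, t$: for $n$ even the sign $(-1)^{n+1} = -1$ already gives this, while for $n$ odd the substitution $t \mapsto -t$ flips the sign of the linear term. Therefore
\[
\det \overline N(t) = \frac{1}{1 - (\prod_{i=1}^n a_i)\, t}.
\]
There is no real obstacle here: once the folded matrix is written down the whirl computation is a one-line cofactor expansion, and the curl computation is purely formal from Lemma \ref{lem:detfold}. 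The only mild care needed is tracking the parity of $n$ in the last step.
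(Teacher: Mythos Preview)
Your proof is correct. The paper states Lemma~\ref{lem:whirldet} without proof, treating it as a routine computation; your cofactor expansion for the whirl and your use of $N = M^{-c}$ together with Lemma~\ref{lem:detfold} for the curl are exactly the natural way to fill in the omitted details, and there is no circularity since Lemma~\ref{lem:detfold} appears earlier in the paper.
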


\subsection{$\epsilon$-sequence}
Let $X \in U_{\geq 0}$.  Define
$$\epsilon_i = \epsilon_i(X) = \lim_{j \longrightarrow \infty}
\frac{x_{i,j}}{x_{i+1,j}}.$$  Clearly $\epsilon_i$ depends only on $\bar i$.
Similarly, define
$$
\mu_i = \mu_i(X) = \lim_{j \longrightarrow -\infty}
\frac{x_{j,i+1}}{x_{j,i}}.
$$
\begin{example} \label{ex:4}
Let $n = 2$.  Consider the following matrix.
$$
\left(\begin{array}{cc}
\frac{1+2t}{(1-t)(1-2t)} & \frac{2}{(1-t)(1-2t)} \\
\frac{3t}{(1-t)(1-2t)} & \frac{1+t}{(1-t)(1-2t)}
\end{array} \right)
\rightsquigarrow
\left(\begin{array}{ccccccc} \ddots & \vdots & \vdots & \vdots & \vdots &\vdots \\
\cdots& 1& 2 & 5 & 6 & 13& \cdots \\ \cdots& 0& 1& 3 & 4 & 10 &
 \cdots \\
 \cdots& 0&0 & 1& 2& 5&\cdots \\
 \cdots& 0&0&0&1& 3& \cdots \\
  \cdots& 0&0&0&0&1&  \cdots \\
 & \vdots & \vdots & \vdots & \vdots &\vdots &\ddots
\end{array} \right)
.
$$
This matrix is in fact the product $N(1,1) N(1,2)$ of two curls, and
thus is totally nonnegative. Then $\epsilon_1 = \lim_{i \to \infty}
\frac{2^{i+2}-3}{3(2^i-1)} = \frac{4}{3}$. Similarly one computes
$\epsilon_2 = \frac{3}{2}$.
\end{example}

\begin{lemma}\label{lem:limits}
Suppose $X \in U_{\geq 0}$ and not finitely supported.  Then the limits
$\epsilon_i$ and $\mu_i$ exist.  Furthermore, $1/(\prod_{i=1}^n
\epsilon_i) = 1/(\prod_{i=1}^n \mu_i)$ is the radius of convergence
of every entry of the folding $\overline X(t)$.
\end{lemma}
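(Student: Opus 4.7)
The plan is to first establish existence of the limits via $2\times 2$ minor monotonicity, and then compute the radius of convergence of each entry of $\overline{X}(t)$ in two equivalent ways --- once in terms of the $\epsilon_i$'s and once in terms of the $\mu_i$'s.

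Since $X$ is not finitely supported, Lemma~\ref{lem:fs} guarantees that every entry of $X$ strictly above the diagonal is positive, so the defining ratios are well-defined for all sufficiently large $|j|$. The $2\times 2$ minor $\Delta_{\{i,i+1\},\{j,j'\}}(X)\geq 0$ with $j<j'$ rearranges to $x_{i,j}/x_{i+1,j}\geq x_{i,j'}/x_{i+1,j'}$, so this ratio is non-increasing in $j$ and bounded below by zero, hence converges to a limit $\epsilon_i\geq 0$ as $j\to+\infty$. Symmetrically, $\Delta_{\{j,j'\},\{i,i+1\}}(X)\geq 0$ with $j<j'$ gives $x_{j,i+1}/x_{j,i}\leq x_{j',i+1}/x_{j',i}$, so $x_{j,i+1}/x_{j,i}$ is non-decreasing in $j$ and converges to $\mu_i\geq 0$ as $j\to-\infty$.

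For the radius statement, fix an entry $\overline{x}_{a,b}(t)=\sum_k x_{a,b+kn}\,t^k$. By Lemma~\ref{lem:fs} the coefficients are positive for all $k$ large enough, so the ratio test applies. Writing $c=b+(k-1)n$, I will compute the consecutive-coefficient ratio $r_k := x_{a,b+kn}/x_{a,b+(k-1)n}$ in two ways. Via $\epsilon$: periodicity gives $x_{a,b+kn}=x_{a-n,c}$, so telescoping along column $c$ yields
\[
r_k \;=\; \frac{x_{a-n,c}}{x_{a,c}} \;=\; \prod_{l=0}^{n-1}\frac{x_{a-n+l,\,c}}{x_{a-n+l+1,\,c}} \;\longrightarrow\; \prod_{l=0}^{n-1}\epsilon_{\overline{a-n+l}} \;=\; \prod_{l=1}^{n}\epsilon_l
\]
as $c\to\infty$. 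Via $\mu$: telescoping along row $a$ gives $r_k=\prod_{l=0}^{n-1} x_{a,c+l+1}/x_{a,c+l}$; by periodicity, shifting by the multiple of $n$ equal to $(c+l)-\overline{c+l}$ identifies each factor with $x_{j',\,\overline{c+l}+1}/x_{j',\,\overline{c+l}}$ where $j'=a-(c+l)+\overline{c+l}\to-\infty$ as $k\to\infty$, so this factor tends to $\mu_{\overline{c+l}}=\mu_{\overline{b+l}}$, and since the index $l$ ranges over all residues modulo $n$, the product yields $r_k\to \prod_{l=1}^n\mu_l$.

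By Cauchy--Hadamard, the radius of convergence of $\overline{x}_{a,b}(t)$ equals $1/\prod_l\epsilon_l=1/\prod_l\mu_l$, with the convention $1/0=\infty$ (the entire case occurring precisely when some $\epsilon_l$ or $\mu_l$ vanishes). Since this common value is independent of $a$ and $b$, it is the radius of convergence of every entry of $\overline{X}(t)$, completing the proof. The main technical care is the periodicity bookkeeping that rewrites each telescoped factor in a form matching the definition of $\epsilon_i$ or $\mu_i$; once that is arranged, the two computations give the desired equality automatically.
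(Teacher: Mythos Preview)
Your proof is correct and follows essentially the same approach as the paper: monotonicity of the ratios from $2\times 2$ minor nonnegativity, then a telescoping product along a column (using periodicity $x_{i,j+n}=x_{i-n,j}$) to express the consecutive-coefficient ratio of $\overline{x}_{a,b}(t)$ as $\prod_l \epsilon_l$. You go slightly further than the paper by also carrying out the $\mu$-side telescoping explicitly (the paper leaves this to symmetry), which cleanly justifies the equality $\prod_l\epsilon_l=\prod_l\mu_l$. One cosmetic remark: what you invoke at the end is the ratio test for radius of convergence rather than Cauchy--Hadamard proper; since you have shown the limit of consecutive-coefficient ratios exists, the ratio test applies and gives $R=1/\prod_l\epsilon_l$.
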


\begin{proof}
The inequality $\frac{x_{i,j}}{x_{i+1,j}} \geq
\frac{x_{i,j+1}}{x_{i+1,j+1}}$ follows from the nonnegativity of the
$2 \times 2$ minor $x_{i,j}x_{i+1,j+1} - x_{i+1,j}x_{i,j+1}$ of $X$.
A non-increasing sequence of nonnegative real numbers has a limit,
giving the first statement of the Lemma.  The second statement
follows from the observation that
$$\frac{x_{i,j+n}}{x_{i,j}}=\frac{x_{i-n,j}}{x_{i,j}} =
\prod_{k=0}^{n-1}\frac{x_{i+k-n,j}}{x_{i+k+1-n,j}}.$$
\end{proof}

Although we often omit it from notation, the $\epsilon_i$-s are
depend on $X$.  We call $(\epsilon_1,\ldots,\epsilon_n)$ the
$\epsilon$-sequence of $X$.  Aissen, Schoenberg, and Whitney
\cite{ASW} used a factorization procedure as a first step towards
the Edrei-Thoma theorem.  We now describe a generalization of it to
$n >1$.  We call this generalization {\it {ASW factorization}}.

\begin{lemma}\label{lem:ASW}
Suppose $X \in U_{\geq 0}$ is not finitely supported.  Let $X' =
M(-\epsilon_1, \ldots, -\epsilon_n) X$.  Then $X' \in U_{\geq 0}$.
\end{lemma}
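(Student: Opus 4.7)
The plan is to show every finite minor of $X' = MX$ is nonnegative, where $M = M(-\epsilon_1, \ldots, -\epsilon_n)$. Since $M$ and $X$ are both upper triangular with $1$'s on the diagonal, so is $X'$, placing it in $U$. By Lemma \ref{lem:solid} it suffices to verify nonnegativity of row-solid minors $\Delta_{[a,b],J}(X')$ with $|J| = b-a+1$. Since $X'_{i,j} = x_{i,j} - \epsilon_i x_{i+1,j}$, the key idea is to approximate each $\epsilon_i$ by the ratio $x_{i,c}/x_{i+1,c}$ for a large column index $c$, perform the corresponding finite row operations inside a TNN submatrix of $X$, and then pass to the limit $c \to \infty$.

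First I would fix an integer $c > \max(J \cup \{b+1\})$. Lemma \ref{lem:fs} applied to the not-finitely-supported matrix $X$ guarantees $x_{i,c} > 0$ for all $i \in [a, b+1]$, so the numbers $\tilde\epsilon_i^{(c)} := x_{i,c}/x_{i+1,c}$ are well-defined, and the definition of $\epsilon_i$ ensures $\tilde\epsilon_i^{(c)} \to \epsilon_i$ as $c \to \infty$. Next I would consider the square $(b-a+2) \times (b-a+2)$ submatrix $Y := X_{[a,b+1],\, J \cup \{c\}}$, which is TNN as a submatrix of $X$, hence $\det Y \geq 0$. For each $i \in [a,b]$ I apply the determinant-preserving row operation of replacing row $i$ by row $i$ minus $\tilde\epsilon_i^{(c)}$ times row $i+1$ (the order is irrelevant, since each operation modifies only row $i$ and reads only the unmodified row $i+1$). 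The resulting matrix $\tilde Y$ satisfies $\det \tilde Y = \det Y \geq 0$, and by the choice of $\tilde\epsilon_i^{(c)}$ its last column vanishes in rows $a, a+1, \ldots, b$ while $\tilde Y_{b+1, c} = x_{b+1, c} > 0$.

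Expanding $\det \tilde Y$ along the last column then yields
$$\det \tilde Y = x_{b+1, c} \cdot \det \tilde Y_{[a,b], J},$$
so $\det \tilde Y_{[a,b], J} = \det Y / x_{b+1, c} \geq 0$. Since the $(i,j)$-entry of $\tilde Y_{[a,b],J}$ is $x_{i,j} - \tilde\epsilon_i^{(c)} x_{i+1,j}$, which converges to $X'_{i,j}$ as $c \to \infty$, passing to the limit gives $\Delta_{[a,b], J}(X') \geq 0$, as required. The main conceptual step is realizing the ``infinite'' left-multiplication by $M$ as a limit of genuine finite Gaussian eliminations performed inside TNN submatrices of $X$; once this reduction is in place the rest is bookkeeping with Lemmas \ref{lem:solid} and \ref{lem:fs}.
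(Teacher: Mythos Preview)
Your argument is correct and is essentially the same as the paper's own proof: both express a row-solid minor of $X'$ as the limit of $\Delta_{[a,b+1],\,J\cup\{c\}}(X)/x_{b+1,c}$ as $c\to\infty$, by clearing the extra column via the relations $\tilde\epsilon_i^{(c)}=x_{i,c}/x_{i+1,c}\to\epsilon_i$, and then invoke Lemma~\ref{lem:solid}. The only cosmetic wrinkle is the parenthetical ``the order is irrelevant'': as sequential elementary row operations the order does matter, but your intended transformation (each new row $i$ built from the \emph{original} rows $i$ and $i+1$) is a single left multiplication by a unipotent upper-triangular matrix, hence determinant-preserving, and this is all you need.
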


\begin{proof}
Let $J = j_1 < j_2 < \cdots < j_k$ be a set of column indices.  We
have
$$
\lim_{l \longrightarrow \infty}
\frac{
\det \left(
\begin{matrix}
x_{i,j_1} &  x_{i,j_2}  & \ldots & x_{i,j_k} & x_{i,l}\\
x_{i+1,j_1} &  x_{i+1,j_2}  & \ldots & x_{i+1,j_k} & x_{i+1,l}\\
\vdots & \vdots & \ddots & \vdots & \vdots\\
x_{i+k,j_1} &  x_{i+k,j_2}  & \ldots & x_{i+k,j_k} & x_{i+k,l}\\
x_{i+k+1,j_1} &  x_{i+k+1,j_2}  & \ldots & x_{i+k+1,j_k} &
x_{i+k+1,l}
\end{matrix}
\right)}
{x_{i+k+1,l}}=
$$

$$
\lim_{l \longrightarrow \infty}
\det \left(
\begin{matrix}
x_{i,j_1} &  x_{i,j_2}  & \ldots & x_{i,j_k} & x_{i,l}/x_{i+k+1,l}\\
x_{i+1,j_1} &  x_{i+1,j_2}  & \ldots & x_{i+1,j_k} & x_{i+1,l}/x_{i+k+1,l}\\
\vdots & \vdots & \ddots & \vdots & \vdots\\
x_{i+k,j_1} &  x_{i+k,j_2}  & \ldots & x_{i+k,j_k} & x_{i+k,l}/x_{i+k+1,l}\\
x_{i+k+1,j_1} &  x_{i+k+1,j_2}  & \ldots & x_{i+k+1,j_k} & 1
\end{matrix}
\right)=
$$

$$
\det \left(
\begin{matrix}
x_{i,j_1} &  x_{i,j_2}  & \ldots & x_{i,j_2} & \epsilon_{i}\dotsc\epsilon_{i+k}\\
x_{i+1,j_1} &  x_{i+1,j_2}  & \ldots & x_{i+1,j_2} & \epsilon_{i+1}\dotsc\epsilon_{i+k}\\
\vdots & \vdots & \ddots & \vdots & \vdots\\
x_{i+k,j_1} &  x_{i+k,j_2}  & \ldots & x_{i+k,j_2} & \epsilon_{i+k}\\
x_{i+k+1,j_1} &  x_{i+k+1,j_2}  & \ldots & x_{i+k+1,j_2} & 1
\end{matrix}
\right)=
$$

$$
\det \left(
\begin{matrix}
x_{i,j_1}-\epsilon_{i}x_{i+1,j_1} &  x_{i,j_2}-\epsilon_{i}x_{i+1,j_2}  & \ldots & x_{i,j_k}-\epsilon_{i}x_{i+1,j_k} & 0\\
x_{i+1,j_1}-\epsilon_{i+1}x_{i+2,j_1} &  x_{i+1,j_2}-\epsilon_{i+1}x_{i+2,j_2}  & \ldots & x_{i+1,j_k}-\epsilon_{i+1}x_{i+2,j_k} & 0\\
\vdots & \vdots & \ddots & \vdots & \vdots\\
x_{i+k,j_1}-\epsilon_{i+k}x_{i+k+1,j_1} &  x_{i+k,j_2}-\epsilon_{i+k}x_{i+k+1,j_2}  & \ldots & x_{i+k,j_k}-\epsilon_{i+k}x_{i+k+1,j_k} & 0\\
x_{i+k+1,j_1} &  x_{i+k+1,j_2}  & \ldots & x_{i+k+1,j_k} & 1
\end{matrix}
\right)=
$$

$$
\det \left(
\begin{matrix}
x_{i,j_1}-\epsilon_{i}x_{i+1,j_1} &  x_{i,j_2}-\epsilon_{i}x_{i+1,j_2}  & \ldots & x_{i,j_k}-\epsilon_{i}x_{i+1,j_k} \\
x_{i+1,j_1}-\epsilon_{i+1}x_{i+2,j_1} &  x_{i+1,j_2}-\epsilon_{i+1}x_{i+2,j_2}  & \ldots & x_{i+1,j_k}-\epsilon_{i+1}x_{i+2,j_k} \\
\vdots & \vdots & \ddots & \vdots & \\
x_{i+k,j_1}-\epsilon_{i+k}x_{i+k+1,j_1} &
x_{i+k,j_2}-\epsilon_{i+k}x_{i+k+1,j_2}  & \ldots &
x_{i+k,j_k}-\epsilon_{i+k}x_{i+k+1,j_k}
\end{matrix}
\right).
$$

This is a minor of $X'$, and every row-solid minor of $X'$ can be
presented as a limit in this way.  Since a limit of a nonnegative
quantity is nonnegative, we conclude that all row-solid minors of
$X'$ are nonnegative. By Lemma \ref{lem:solid}, we conclude that
$X'$ is totally nonnegative.
\end{proof}

We can rewrite the definition of $X'$ as $X = N(\epsilon_1, \ldots,
\epsilon_n) X'$.  This gives a factorization of $X$ into a product
of two TNN matrices.  Note that the radius of convergence of $X'$ is
at least as large as that of $X$.  Thus, if we repeat the ASW
factorization to obtain $X = N(\epsilon_1, \ldots, \epsilon_n)
N(\epsilon'_1, \ldots, \epsilon'_n) X''$ then we must have
$\prod_{i=1}^n \epsilon_i \geq \prod_{i=1}^n \epsilon'_i$. We also
note that the factorization in Lemma \ref{lem:ASW} involves the
``biggest'' whirl.

\begin{lemma}\label{lem:ASWconv}
Suppose $X \in U_{\geq 0}$ is not finitely supported. Suppose that
$$X' = M(-a_1, \ldots, -a_n) X$$ is TNN.  Then $a_i \leq \epsilon_i$
for each $i$.  Furthermore, if $a_i < \epsilon_i$ for some $i$ then
$r(X') = r(X)$.
\end{lemma}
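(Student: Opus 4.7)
The plan is to reduce both assertions to an entry-by-entry analysis of $X' = M(-a_1,\ldots,-a_n)X$. Since $X$ is upper triangular with $1$'s on the diagonal, multiplication by $M(-a_1,\ldots,-a_n)$ subtracts $a_i$ times row $i+1$ from row $i$, giving
$$x'_{i,j} = x_{i,j} - a_i x_{i+1,j}.$$
For $X'$ to be TNN, each such entry must in particular be nonnegative, and because $X$ is not finitely supported, Lemma \ref{lem:fs} gives $x_{i+1,j} > 0$ for all $j > i+1$. So the condition $x'_{i,j} \geq 0$ rearranges to $a_i \leq x_{i,j}/x_{i+1,j}$ for every such $j$. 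Letting $j \to \infty$ and invoking the definition of $\epsilon_i$ yields $a_i \leq \epsilon_i$.

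For the radius statement, I would establish the two inequalities separately. The direction $r(X') \geq r(X)$ follows by noting that the folded whirl $\overline{M(-a_1,\ldots,-a_n)}(t)$ has all entries polynomial in $t$ (of degree at most $1$); hence every entry of $\overline{X'}(t) = \overline{M(-a_1,\ldots,-a_n)}(t)\cdot\overline{X}(t)$ is an $\R[t]$-linear combination of entries of $\overline{X}(t)$ and remains holomorphic on the open disk of radius $r(X)$. For the reverse direction I would single out the specific index $i$ with $a_i < \epsilon_i$, fix some $j > i$, and study
$$\overline{x'}_{i,j}(t) = \sum_{k \ge 0} x_{i+1,j+kn}\bigl(x_{i,j+kn}/x_{i+1,j+kn} - a_i\bigr)\, t^k.$$
The inner factor converges to $\epsilon_i - a_i > 0$, so the coefficients are eventually strictly positive and the consecutive ratios have the same limit as $x_{i+1,j+(k+1)n}/x_{i+1,j+kn}$, namely $\prod_\ell \epsilon_\ell(X)$ by the calculation used in Lemma \ref{lem:limits}. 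The ratio test then gives radius of convergence $1/\prod_\ell \epsilon_\ell(X) = r(X)$ for this one entry, forcing $r(X') \leq r(X)$.

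The main subtlety I anticipate lies in the second half: one could be tempted to prove $r(X') = r(X)$ via a clean closed form $\epsilon_k(X') = \epsilon_{k+1}(\epsilon_k - a_k)/(\epsilon_{k+1}-a_{k+1})$ whose product telescopes over $k$, but this expression degenerates to $0/0$ precisely at indices where $a_k = \epsilon_k$, and in addition one would have to verify separately that $X'$ is not itself finitely supported. The single-row argument above sidesteps both issues simultaneously: the strict positivity $\epsilon_i - a_i > 0$ at the chosen row guarantees that the coefficients of $\overline{x'}_{i,j}(t)$ remain nonzero for large $k$, so the ratio test applies without degeneracy and incidentally certifies that row $i$ of $X'$ is not finitely supported.
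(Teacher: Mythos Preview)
Your proof is correct. The first assertion and the inequality $r(X')\geq r(X)$ are handled exactly as in the paper. For the reverse inequality $r(X')\leq r(X)$, however, you take a genuinely different route.

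The paper inverts the factorization, writing $X = N(a_1,\ldots,a_n)X'$, and observes that $r(N(a_1,\ldots,a_n)) = 1/\prod_i a_i > 1/\prod_i \epsilon_i = r(X)$; from $r(X)\geq \min\bigl(r(N),r(X')\bigr)$ one then deduces $r(X)\geq r(X')$. Your approach instead isolates the single row $i$ where $a_i<\epsilon_i$ and applies the ratio test directly to the coefficients $x_{i+1,j+kn}\bigl(x_{i,j+kn}/x_{i+1,j+kn}-a_i\bigr)$ of one entry of $\overline{X'}(t)$, reading off the radius $1/\prod_\ell\epsilon_\ell(X)$ from the limiting ratio computed in Lemma~\ref{lem:limits}. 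The paper's argument is shorter and more structural, leaning on the known radius of a curl; your argument is more self-contained and, as you note, automatically handles the degeneracies (some $a_k=\epsilon_k$, or $X'$ possibly finitely supported in other rows) that would complicate a telescoping computation of $\epsilon_k(X')$.
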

\begin{proof}
We obtain $X' = (x'_{i,j})$ from $X = (x_{i,j})$ by subtracting
$a_i$ times the $(i+1)$-th row from the $i$-th row.  But the ratio
$x_{i,j}/x_{i+1,j}$ approaches $\epsilon_i$, so $x'_{i,j} \geq 0$
implies that $a_i \leq \epsilon_i$.

For the last statement, suppose that $a_i < \epsilon_i$.  Since
$r(M(-a_1,\ldots,-a_n)) = \infty$, we have $r(X') \geq r(X)$.  But
using Lemma \ref{lem:limits}, we have $r(N(a_1,\ldots,a_n)) =
\prod_i\frac{1}{a_i} > \prod_i \frac{1}{\epsilon_i} = r(X)$ so that
from $X = N(a_1,\ldots,a_n)X'$, we have $r(X) \geq r(X')$.  Thus
$r(X') = r(X)$.
\end{proof}

\begin{example} \label{ex:5}
In Example \ref{ex:4}, it was computed that the curl $N(\frac{4}{3},
\frac{3}{2})$ can be factored out on the left. One can check that
the remaining totally nonnegative matrix is the curl $N(\frac{2}{3},
\frac{3}{2})$. Thus the ASW factorization of the matrix in this
example is $N(\frac{4}{3}, \frac{3}{2}) N(\frac{2}{3},
\frac{3}{2})$.
\end{example}

\subsection{Finitely supported TNN matrices}
\begin{theorem}\label{thm:finitefactor}
The semigroup $U^{\rm fin}_{\geq 0}$ of finitely supported matrices
in $U_{\geq 0}$ is generated by whirls and Chevalley generators with
nonnegative parameters.
\end{theorem}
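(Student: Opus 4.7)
The plan is to induct on the pair $(d(X), \ell(X))$ in lexicographic order, where $d(X)$ is the largest $d$ with some $x_{i,i+d}\neq 0$ and $\ell(X)\in\{0,\ldots,n\}$ is the number of nonzero entries on diagonal $d(X)$ modulo periodicity.  The base case $(0,0)$ forces $X = I$, so we may assume $d := d(X) \geq 1$ and split into two cases.

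If $\ell(X) < n$, I mimic the proof of Theorem~\ref{T:polygen}. Walking cyclically along diagonal $d$ produces a nonzero $x_{i,i+d}$ with $x_{i+1,i+d+1}=0$, that is, a special NE corner. The $2\times 2$ minor argument shows $x_{i+1,i+d}>0$ in our setting: if $d \geq 2$ and $x_{i+1,i+d}=0$, then the minors in rows $i,i+1$ against column $i+d$ force row $i+1$ of $X$ to vanish left of column $i+d$, contradicting $x_{i+1,i+1}=1$; if $d=1$, then $x_{i+1,i+d}=x_{i+1,i+1}=1$. Factoring out $e_i(x_{i,i+d}/x_{i+1,i+d})$ on the left and applying the Schur-complement argument from the proof of Theorem~\ref{T:polygen} shows the quotient lies in $U^{\rm fin}_{\geq 0}$ with strictly smaller lex measure, so induction concludes.

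If $\ell(X)=n$, every entry on diagonal $d$ is strictly positive, and iterating the $2\times 2$ minor inequality $x_{i,i+1} x_{i+1,i+d} \geq x_{i,i+d}$ forces every entry on each of the diagonals $1, \ldots, d$ to be positive. If $d=1$, then $X = M(x_{1,2},\ldots,x_{n,n+1})$ is itself a non-degenerate whirl, so assume $d \geq 2$. I would then exhibit a whirl $W = M(c_1,\ldots,c_n)$ with all $c_i > 0$ such that $X' := W^{-1} X$ lies in $U^{\rm fin}_{\geq 0}$ with $d(X') \leq d - 1$. Expanding $(W X')_{i,j} = X'_{i,j} + c_i X'_{i+1,j} = x_{i,j}$ and demanding $X'_{i,i+d} = 0$ for all $i$ yields a closed cyclic system of $n$ polynomial equations in the $c_i$ (a single quadratic in $c_1$ when $n=2$); the entries of $X'$ on diagonals beyond $d$ then vanish automatically via the recursion $X'_{i,j} = -c_i X'_{i+1,j}$ starting from the prescribed zeros on diagonal $d$.

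The main obstacle is the Case B existence claim: that the cyclic system for $c_1, \ldots, c_n$ admits a strictly positive solution and that the resulting $X'$ is TNN. Positivity of the $c_i$ must be extracted from the TNN inequalities on $X$; for instance, in the $n=2$ case, the discriminant of the quadratic in $c_1$ must be seen to be nonnegative as a consequence of the $2 \times 2$ minor conditions on the NE-most two diagonals of $X$. Given the $c_i$, TNN of $X'$ follows from a Schur-complement analysis applied diagonal by diagonal. The remainder of the proof is a direct adaptation of the NE-corner peeling from Theorem~\ref{T:polygen}.
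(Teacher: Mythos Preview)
Your Case A is fine and is exactly the NE-corner peeling from Theorem~\ref{T:polygen}. The real issue is Case B, where you try to factor a non-degenerate whirl $M(c_1,\ldots,c_n)$ from $X$ on the left by solving a cyclic polynomial system for the $c_i$. You correctly flag this as the main obstacle, and indeed it is a genuine gap: for general $n$ the system you write down is a coupled nonlinear system of degree $d$ in the $c_i$'s, and you give no argument for the existence of a strictly positive solution, nor for total nonnegativity of the quotient $X'$. The Schur-complement argument you invoke handles factoring out a single $e_i(a)$ with the explicit ratio $a=x_{i,j}/x_{i+1,j}$, but does not obviously extend to peeling an entire whirl at once; the hand-wave about the $n=2$ discriminant does not scale.

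The paper circumvents this entirely with one idea you are missing: pass to $X^{-c}$. By Lemma~\ref{lem:cinverse}, $X^{-c}\in U_{\geq 0}$, and if $X^{-c}$ is not finitely supported then the ASW factorization (Lemma~\ref{lem:ASW}) yields $X^{-c}=N(a_1,\ldots,a_n)Y$ with $a_i=\epsilon_i(X^{-c})$ and $Y$ TNN; taking $^{-c}$ back gives $X=Y^{-c}M(a_1,\ldots,a_n)$ with $Y^{-c}$ TNN and finitely supported with strictly fewer nonzero diagonals. This simultaneously supplies the positive whirl parameters as explicit limits of ratios and the TNN of the quotient via Lemma~\ref{lem:ASW}, with no system to solve. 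The remaining case, where $X^{-c}$ is finitely supported (or some $\epsilon_i(X^{-c})=0$), forces $\det(\overline X)$ to be constant, so Theorem~\ref{T:polygen} already factors $X$ into Chevalley generators. Your induction scheme is reasonable, but the engine that makes Case B run is the $^{-c}$/ASW combination, not a direct attack on the whirl parameters.
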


\begin{proof}
It is clear that the semigroup generated by whirls and Chevalley
generators with nonnegative parameters lies inside $U^{\rm
fin}_{\geq 0}$.  Now let $X \in U^{\rm fin}_{\geq 0}$.  First
suppose that $X^{-c}$ is finitely supported.  In this case, the
entries of $\overline{X^{-c}}(t)$ are polynomials, and in
particular, entire. But then both $1/\det(\overline{X^c}(t)) =
\det(\overline{X^{-c}}(t))$ and $\det(\overline{X^c}(t))$ are
polynomials, so we conclude that $\det(\overline{X^c}(t))$ and by
Lemma \ref{lem:detfold} $\det(X)$ is a constant.  By Theorem
\ref{T:polygen}, we deduce that $X$ fan be factored into a finite
number of nonnegative Chevalley generators.

Now suppose that $X^{-c}$ is not finitely supported.  Apply Lemma
\ref{lem:ASW} and Lemma \ref{lem:cinverse} to obtain $X^{-c} =
N(a_1, \ldots, a_n) Y$, where the parameters $a_i =
\epsilon_i(X^{-c})$ are nonnegative and $Y$ is totally nonnegative.
If at least one of parameters $a_i$ is zero, by Lemma
\ref{lem:limits} the entries of $\overline{X^{-c}}$ are entire, and
the determinant is entire.  We may then proceed as in the case that
$X^{-c}$ is finitely supported.

Thus we may assume that all $a_i$ are strictly positive. Then $X =
Y^{-c} M(a_1, \ldots, a_n)$, where both $X$ and $Y^{-c}$ are
finitely supported TNN matrices.  One observes that the number of
non-zero diagonals of $Y^{-c}$ must be strictly smaller than that of
$X$. Now repeat the application of Lemma \ref{lem:ASW} to $Y^{-c}$.
Since the number of non-zero diagonals of $X$ is finite, in a finite
number of steps we must obtain the situation in one of the two
previous paragraphs.  Thus we obtain a factorization of $X$ into a
finite number of whirls and Chevalley generators with nonnegative
parameters.
\end{proof}

Since whirls are representable by cylindric networks, as shown on
the left in Figure \ref{fig:loo3}, we immediately get the following
corollary.
\begin{figure}[h!]
    \begin{center}
    \input{loo3.pstex_t}
    \end{center}
    \caption{}
    \label{fig:loo3}
\end{figure}
\begin{corollary}
Every $X \in U^{\rm fin}_{\geq 0}$ is representable by a cylindric
network.
\end{corollary}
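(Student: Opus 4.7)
The plan is to deduce this directly from Theorem \ref{thm:finitefactor} together with the dictionary between products of matrices and concatenations of cylindric networks that was already established in the proof of Theorem \ref{thm:network}.

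First I would invoke Theorem \ref{thm:finitefactor} to write $X$ as a finite product $X = G_1 G_2 \cdots G_\ell$, where each factor $G_k$ is either a whirl $M(a_1,\ldots,a_n)$ with nonnegative parameters or a Chevalley generator $e_i(a)$ with $a \geq 0$. Next I would exhibit an elementary cylindric network representing each type of factor: for Chevalley generators this is the ``building block'' network from Figure \ref{fig:loo2}, and for whirls this is the network pictured on the left of Figure \ref{fig:loo3} (a single counterclockwise ring of edges whose weights are $a_1, \ldots, a_n$), which is the content of the sentence preceding the corollary. A direct path count verifies that these networks realize the correct matrices.

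Finally I would concatenate these networks along the cylinder in the order $G_1, G_2, \ldots, G_\ell$ from bottom to top, identifying the sinks of $G_k$ with the sources of $G_{k+1}$. As noted in the proof of Theorem \ref{thm:network}, such concatenation of cylindric networks corresponds exactly to multiplication of the associated matrices (paths through the concatenated network factor uniquely through the common boundary, and the rotor is additive under concatenation). Therefore the resulting cylindric network $N$ satisfies $X(N) = G_1 G_2 \cdots G_\ell = X$.

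There is essentially no obstacle here once Theorem \ref{thm:finitefactor} is in hand; the only point requiring a moment of care is checking that the rotor statistic behaves additively under concatenation, so that the cycling pattern produced by a whirl (a path going from $v_i$ to $w_{i+1}$ gains the expected contribution to its rotor) composes correctly with subsequent factors. This is an immediate consequence of how $\curl$ was defined as a signed count of crossings with the chord $\h$, so the argument reduces to invoking the previous results rather than introducing any new machinery.
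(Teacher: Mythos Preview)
Your proposal is correct and follows exactly the paper's approach: invoke Theorem \ref{thm:finitefactor} to factor $X$ into whirls and Chevalley generators, use the building-block networks of Figures \ref{fig:loo2} and \ref{fig:loo3} for the factors, and concatenate (as in the proof of Theorem \ref{thm:network}) to realize the product. The paper states this in one sentence preceding the corollary; your version simply spells out the same steps with a bit more care about additivity of the rotor.
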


\subsection{Totally positive matrices}
For $I = \{i_1 < i_2 < \cdots < i_k\}$ and $J =\{j_1 < j_2 < \cdots
< j_k\}$ we define $I \leq J$ if $i_t \leq j_t$ for each $t \in
[1,k]$.

\begin{theorem} \label{thm:tp}
Let $X \in U_{\geq 0}$.  Then $X \notin U_{>0}$ if and only if $X$
is a finite product of whirls and curls (including Chevalley
generators).  In other words, the semigroup generated by whirls and
curls is exactly the set $U_{\geq 0} - U_{> 0}$.
\end{theorem}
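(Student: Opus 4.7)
The ``if'' direction uses the rational structure of finite products. Suppose $X = P_1 \cdots P_m$ is a finite product of whirls and curls with positive parameters, and let $q$ be the number of curl factors, with $d_1, \ldots, d_q$ the products of their parameters. Then each entry of $\overline{X}(t)$ is a rational function with denominator dividing $\prod_{k=1}^q (1 - d_k t)$, a polynomial of degree $q$. Consequently, for each residue pair $\bar i, \bar j \in \{1, \ldots, n\}$, the Taylor-coefficient sequence $c_\ell := x_{\bar i, \bar j + \ell n}$ eventually satisfies a linear recurrence of order at most $q$. Take $I = \{\bar i, \bar i + n, \ldots, \bar i + qn\}$ and $J = \{\bar j + m_1 n, \ldots, \bar j + m_{q+1} n\}$ with $m_1 < \cdots < m_{q+1}$ large enough to lie in the recurrence range. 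By periodicity, the $(k, s)$-entry of $X_{I, J}$ is $c_{m_s - k}$, so its $s$-th column is the window $(c_{m_s}, c_{m_s - 1}, \ldots, c_{m_s - q})^T$. All such windows lie in the $q$-dimensional subspace of $\R^{q+1}$ cut out by the fixed linear relation of the recurrence; hence any $q + 1$ of them are linearly dependent, and $\Delta_{I, J}(X) = 0$. Taking $m_1$ arbitrarily large produces arbitrarily northeast vanishing minors, so $X \notin U_{>0}$.

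For the ``only if'' direction, assume $X \in U_{\geq 0}$ with $X \notin U_{>0}$. If $X$ is finitely supported, Theorem \ref{thm:finitefactor} directly yields the required factorization (Chevalley generators being degenerate whirls). Otherwise, Lemma \ref{lem:ASW} gives $X = N_1 X_1$ with $N_1 = N(\epsilon_1, \ldots, \epsilon_n)$ and $X_1 \in U_{\geq 0}$. The Cauchy--Binet formula, restricted to the intermediate index $K = I$, yields
\[
\Delta_{I, J}(X) \;\geq\; \Delta_{I, I}(N_1) \, \Delta_{I, J}(X_1) \;=\; \Delta_{I, J}(X_1),
\]
since $N_1$ is upper unipotent so $\Delta_{I, I}(N_1) = 1$. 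Hence total positivity of $X_1$ would propagate to $X$, so by contrapositive $X_1 \notin U_{>0}$. Iterating produces $X = N_1 N_2 \cdots N_k X_k$ with each $X_k \in U_{\geq 0}$ still not totally positive.

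The main obstacle is to argue that this iteration terminates at a finitely supported $X_k$. I would proceed by contradiction: if it does not terminate at a finitely supported matrix, then either some $X_k$ has $\epsilon_i(X_k) = 0$ for all $i$ (so $r(X_k) = \infty$ and $X_k$ is entire but not finitely supported), or the process produces nontrivial curls indefinitely, yielding a convergent infinite curl product. Two structural facts would then force $X \in U_{>0}$ in either subcase: first, a non-finitely-supported entire totally nonnegative matrix is totally positive (a loop-group generalization of the $e^{\gamma t}$ component of Edrei--Thoma); second, an infinite product of nontrivial curls is totally positive (since appropriate loop Schur function expansions in infinitely many positive parameters are strictly positive). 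Combined with the curl-times-totally-positive inequality above, either subcase contradicts $X \notin U_{>0}$. Hence the iteration terminates, yielding $X = N_1 \cdots N_k X_k$ with $X_k$ finitely supported, and Theorem \ref{thm:finitefactor} then expresses $X_k$ as a product of whirls, giving the desired finite factorization.
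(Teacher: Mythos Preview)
Your ``if'' direction is correct, though more elaborate than necessary: the paper simply observes that each whirl, curl, or Chevalley generator has all minors sufficiently far northeast of the diagonal equal to zero, and this property is inherited by finite products via Cauchy--Binet. Your linear-recurrence argument reaches the same conclusion by a longer route.

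The ``only if'' direction is where your approach diverges from the paper's, and where there is a real gap. You factor curls on the \emph{left} via ASW and try to argue termination by contradiction. The paper instead passes to $X^{-c}$ and factors \emph{whirls on the right}: starting from a vanishing solid minor $\Delta_{I,J}(X)=0$ chosen with $I\le J-1$ and $\Delta_{I,J-1}(X)>0$, one transfers the vanishing to a solid minor of $X^{-c}$, invokes Lemma~\ref{lem:fsin} to guarantee that all $\epsilon_i(X^{-c})>0$, and factors a non-degenerate whirl $M$ from $X$ on the right, $X=X'M$. A Cauchy--Binet argument (using that $\Delta_{J-1,J}(M)>0$ for a non-degenerate whirl) then forces $\Delta_{I,J-1}(X')=0$. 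The vanishing minor has shifted one column toward the diagonal; since $I\le J$ throughout, this terminates in a bounded number of steps. No appeal to infinite products or entire matrices is needed.

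Your termination argument, by contrast, rests on two unproved assertions. Your Fact~1 (entire, non-finitely-supported, TNN $\Rightarrow$ totally positive) is in fact true and follows from Lemma~\ref{lem:fsin} together with Lemma~\ref{lem:sol} and Corollary~\ref{C:TP}; your Edrei--Thoma remark is not a proof, but this is repairable. Your Fact~2 (an infinite product of non-degenerate curls is totally positive) is the genuine problem: the only route to it in this paper is the loop Schur function interpretation of minors (Theorem~\ref{T:schur}), which lives in Section~\ref{sec:infprod}, logically \emph{after} the present theorem. And you cannot dispense with Fact~2: if the ASW iteration produces non-degenerate curls indefinitely and the limit $Y=\lim X_k$ happens to be finitely supported (even the identity), your contradiction hinges on knowing the infinite curl product itself is totally positive. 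The paper's ``move the vanishing minor toward the diagonal'' device avoids this entirely by keeping the whole argument finite and internal to Section~\ref{sec:ASW}.
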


We start by proving the following lemma.

\begin{lemma}\label{lem:sol}
Suppose $X$ has a vanishing minor $\Delta_{I,J}(X)=0$ for $I \leq
J$. Assume that $(I,J)$ is chosen so that $|I| = |J| = k$ is
minimal.  Then $X$ has a solid vanishing minor $\Delta_{I',J'}(X) =
0$ of size $k$ with $I' \leq J'$. Furthermore, all minors
$\Delta_{I'',J''}(X)$ for $I'' \leq I'$ and $J' \leq J''$ vanish.
\end{lemma}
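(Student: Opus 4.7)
The key consequence of minimality is that every $(k-1) \times (k-1)$ minor $\Delta_{I^0, J^0}(X)$ with $I^0 \leq J^0$ is strictly positive. Combining this with the upper-triangularity of $X$, a block-decomposition argument shows that $i_{s+1} \leq j_s$ for every $s$: otherwise the submatrix $X_{I, J}$ would be block upper-triangular, and its block determinants would be valid $\leq$-minors of size $< k$, positive by minimality, contradicting $\Delta_{I, J}(X) = 0$. A short case analysis then verifies that every $(k-1) \times (k-1)$ submatrix of $M := X_{I, J}$ obtained by deleting one row and one column still corresponds to a valid $\leq$-minor of $X$, so all proper subminors of $M$ are strictly positive and $M$ has rank exactly $k - 1$.

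For the existence of a solid vanishing minor (Part 1), I reduce to the solid case by iterating Dodgson's condensation identity \eqref{E:Dodgson}. Given a non-solid pair $(I, J)$, I extend to a $(k+1) \times (k+1)$ submatrix of $X$ by inserting an additional row or column into a gap of $(I, J)$. One of the four Dodgson terms equals $\Delta_{I, J}(X) = 0$, the middle factor on the LHS is a positive $(k-1)$-minor by minimality, and the remaining $k$-minors are nonnegative by TNN. The identity then forces vanishing of a new $k$-minor $\Delta_{I^*, J^*}(X) = 0$ whose indexing pair has strictly smaller total gap $(i_k - i_1) + (j_k - j_1)$ than $(I, J)$. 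Iterating terminates at a solid vanishing pair $(I', J')$.

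For Part 2, I choose $(I', J')$ to be extremal among solid vanishing pairs in the following sense: no single-step ``southwest'' shift (decreasing a column index or increasing a row index while preserving $I' \leq J'$ and solidity) preserves vanishing. Such a pair exists by finite descent, since $b' - c'$ strictly decreases under any SW shift and is bounded below. The propagation to any $(I'', J'')$ with $I'' \leq I'$ and $J' \leq J''$ then follows by induction on the total displacement $\sum_s (i'_s - i''_s) + \sum_t (j''_t - j'_t)$, with the key unit step obtained by applying Dodgson to $X_{\{c-1\} \cup I',\, \{b-1\} \cup J'}$. Using $\Delta_{I', J'}(X) = 0$, Dodgson reduces to
\[
\Delta_{\tilde I, \tilde J}(X) \cdot (\text{a positive } (k-1)\text{-minor}) = -\Delta_{I', J'-1}(X) \cdot \Delta_{I'-1, J'}(X),
\]
with LHS nonnegative and RHS nonpositive (both $\Delta$ factors TNN), so both sides vanish. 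SW-extremality rules out $\Delta_{I', J'-1}(X) = 0$, forcing $\Delta_{I'-1, J'}(X) = 0$. A symmetric argument, using either a transposed Dodgson or the $X \mapsto X^{-c}$ involution from Lemma \ref{lem:cinverse}, yields $\Delta_{I', J'+1}(X) = 0$.

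The main obstacle is the careful combinatorial control of both iterations: Part 1 requires choosing the inserted row/column at each Dodgson step to guarantee the new vanishing pair remains of size exactly $k$ with valid $\leq$-ordering and strictly smaller gap; Part 2 requires combining Dodgson's product equality with the SW-extremality of $(I', J')$ (and its symmetric version) to isolate individual vanishings from a product, which is the delicate technical heart of the lemma.
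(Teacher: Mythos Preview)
Your overall strategy---Dodgson condensation plus the TNN sign constraint plus minimality of $k$---is exactly the paper's, but both parts of your execution have genuine gaps.

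\textbf{Part 1.} You propose to insert a row or column ``into a gap'' of $(I,J)$ and apply Dodgson. But the identity \eqref{E:Dodgson} only involves the four \emph{corner} $k\times k$ minors of the enlarged $(k{+}1)\times(k{+}1)$ matrix. If the inserted index lies strictly between existing indices, then deleting a first or last row/column can never recover the original $(I,J)$, so $\Delta_{I,J}$ is not among the four Dodgson terms and your sign argument does not start. The paper's move is different and uses upper-triangularity in an essential way: one adjoins the row $j_k$ and the column $j_k+1$ (both at the boundary). Then the potentially troublesome factor $\Delta_{(I\setminus\{i_1\})\cup\{j_k\},\,J}$ simplifies, because row $j_k$ of $X$ has a $1$ in column $j_k$ and zeros to its left, to the $(k{-}1)$-minor $\Delta_{I\setminus\{i_1\},\,J\setminus\{j_k\}}$, which is positive by minimality (or forces a block-triangular contradiction). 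This pins down the vanishing factor as $\Delta_{I,\,(J\setminus\{j_1\})\cup\{j_k+1\}}$, i.e.\ it \emph{shifts $J$ to the northeast} rather than filling a gap. After $k$ such column shifts $J$ becomes solid; the symmetric move (adjoin column $i_1$ and row $i_1-1$) solidifies $I$. Your ``gap-reduction'' monovariant is not what drives the argument.

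\textbf{Part 2.} Your SW-extremality trick gives the first unit step nicely: from \eqref{E:Dodgson} on $X_{\{c-1\}\cup I',\,\{b-1\}\cup J'}$ one gets $\Delta_{I'-1,J'}\cdot\Delta_{I',J'-1}=0$, and extremality kills the second factor. But the induction does not propagate. At the next step (Dodgson on $X_{\{c-2\}\cup(I'-1),\,\{b-1\}\cup J'}$) you obtain $\Delta_{I'-2,J'}\cdot\Delta_{I'-1,J'-1}=0$, and nothing in your notion of SW-extremality of $(I',J')$ rules out $\Delta_{I'-1,J'-1}=0$: the pair $(I'-1,J'-1)$ has the same value of $b-c$ and is not a single SW step from $(I',J')$. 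The same obstruction recurs in the column direction. The paper avoids this entirely: the Part~1 move works for \emph{every} vanishing pair of minimal size (no extremality hypothesis), because the nonvanishing of the ``bad'' factor comes from upper-triangularity plus minimality, not from any extremal choice. Iterating those moves directly from the solid $(I',J')$ gives the propagation.
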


\begin{proof}
For $k=1$ the statement is already proved in Lemma \ref{lem:fs}, so
assume $k>1$. If $I = i_1 < \ldots < i_k$ and $J = j_1 < \ldots <
j_k$ then $i_k < j_k$ since otherwise there is a smaller singular
minor. Look at the submatrx $X_{I \cup \{j_{k}\}, J \cup
\{j_k+1\}}$. Writing down Dodgson's condensation \eqref{E:Dodgson}
for this matrix we get
$$- \Delta_{I \cup \{j_k\} -\{i_1\}, J}(X) \Delta_{I, J\cup \{j_k+1\} - \{j_1\}}(X) =
\Delta_{I \cup \{j_{k}\}, J \cup \{j_k+1\}}(X) \Delta_{I-\{i_1\},
J-\{j_1\}}(X).$$ This implies that the left-hand side must be zero,
since it is non-positive and the right-hand side is nonnegative. If
$\Delta_{I \cup j_k - \{i_1\}, J}(X)=0$ then the size $k-1$ minor
$\Delta_{I - \{i_1\}, J- \{j_k\}}(X)$ vanishes.  If $I' = I -
\{i_1\}$ and $J' = J - \{j_k\}$ satisfies $I' \leq J'$ then this
contradicts the minimality of $k$.  Otherwise we would have $i_{t+1}
> j_t$ for some $t \in [1,k-1]$, implying that the submatrix
$X_{I,J}$ is block upper triangular.  Again this would imply a
smaller vanishing minor, contradicting the minimality of $k$.

Thus $\Delta_{I, J\cup \{j_k+1\}-\{j_1\}}(X)=0$.  Repeating this $k$
times, the column indexing set becomes solid, and similarly, we may
move the rows up to obtain a solid row indexing set.  The second
claim is proved in a similar manner.
\end{proof}

\begin{cor}\label{C:TP}
Suppose $X \in U_{\geq 0}$.  Then $X \in U_{> 0}$ if and only if all
minors $\Delta_{I,J}(X) > 0$ for $I \leq J$.
\end{cor}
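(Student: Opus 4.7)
The plan is to handle the two implications separately. The backward direction is immediate: if $\Delta_{I,J}(X) > 0$ for every $I \leq J$ of equal cardinality, then taking $k = 0$ in the definition of totally positive elements does the job, because the condition $i_t \leq j_t + 0$ is precisely $I \leq J$. Thus $X \in U_{>0}$.

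For the forward direction I argue by contradiction. Suppose $X \in U_{>0}$ with witnessing integer $k$, and assume there is some $\Delta_{I,J}(X) = 0$ with $I \leq J$. Among all such vanishing minors pick one with $|I| = |J|$ minimal. Lemma~\ref{lem:sol} then supplies a solid vanishing minor $\Delta_{I',J'}(X) = 0$ with $I' \leq J'$, together with the crucial strengthening that every minor $\Delta_{I'',J''}(X)$ with $I'' \leq I'$ and $J' \leq J''$ also vanishes.

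I exploit this by translating $I'$ down and $J'$ up. For a positive integer $c$, let $I''$ be obtained from $I'$ by subtracting $c$ from each entry, and let $J''$ be obtained from $J'$ by adding $c$ to each entry. Then $I'' \leq I'$ and $J' \leq J''$, so by Lemma~\ref{lem:sol} we have $\Delta_{I'',J''}(X) = 0$. On the other hand, the condition $i''_t \leq j''_t + k$ rearranges to $i'_t - j'_t \leq 2c + k$, which is automatic for large $c$ since $I' \leq J'$ already forces $i'_t - j'_t \leq 0$. For such $c$ the definition of $U_{>0}$ forces $\Delta_{I'',J''}(X) > 0$, a contradiction.

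The argument is essentially routine once Lemma~\ref{lem:sol} is in hand; the only real idea is that Lemma~\ref{lem:sol} promotes a single vanishing $I \leq J$ minor into an entire northeast-quadrant of vanishing minors, which then conflicts with the sufficient-northeastness built into the definition of $U_{>0}$. I do not anticipate any genuine obstacle beyond this invocation.
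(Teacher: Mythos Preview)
Your proof is correct and matches the paper's intended argument: the corollary is stated immediately after Lemma~\ref{lem:sol} with no separate proof, and the deduction you give---using the ``furthermore'' clause of Lemma~\ref{lem:sol} to push the vanishing minor arbitrarily far northeast until it contradicts the witnessing integer $k$---is exactly the inference the reader is meant to supply. The backward direction via $k=0$ is likewise the obvious one.
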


\begin{lemma} \label{lem:fsin}
If $X \in U_{\geq 0}$ is not finitely supported and has a vanishing
solid minor then all $\epsilon_i$-s are positive.
\end{lemma}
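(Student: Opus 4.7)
The plan is to argue by contradiction. Suppose $\epsilon_{i_0}=0$ for some $i_0\in\{1,\ldots,n\}$, and combine this with the assumed vanishing solid minor to reach a contradiction. I start by invoking Lemma \ref{lem:sol}: let $k$ be the smallest size of any vanishing minor $\Delta_{I,J}(X)=0$ with $I\leq J$, and fix the resulting solid sets $I'=[a,a+k-1]$ and $J'=[b,b+k-1]$ with $a\leq b$. Lemma \ref{lem:fs} forces $k\geq 2$, since on the diagonal $x_{a,a}=1$ and Lemma \ref{lem:fs} gives strict positivity above it.

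The main engine is the extension clause of Lemma \ref{lem:sol}. For any $a_0\leq a$ and any $k$-subset $J''\subset[b,\infty)$, both $I'':=[a_0,a_0+k-1]\leq I'$ and $J''\geq J'$ hold, the latter because the $t$-th smallest element of a $k$-subset of $[b,\infty)$ is automatically $\geq b+t-1$. Hence every $k\times k$ minor of the infinite submatrix $X_{[a_0,a_0+k-1],[b,\infty)}$ vanishes, so its rank is at most $k-1$. If one also requires $a_0\leq b-1$, then minimality of $k$ makes each $(k-1)\times(k-1)$ minor $\Delta_{[a_0,a_0+k-1]\setminus\{a_0+r\},[b,b+k-2]}(X)$ strictly positive, and the vector
$$v_r:=(-1)^r\,\Delta_{[a_0,a_0+k-1]\setminus\{a_0+r\},[b,b+k-2]}(X),\qquad r=0,1,\dots,k-1,$$
is a left-null vector of $X_{[a_0,a_0+k-1],[b,b+k-2]}$ with every coordinate nonzero. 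Expanding the vanishing bordered determinant $\Delta_{[a_0,a_0+k-1],\{b,b+1,\ldots,b+k-2,j\}}(X)=0$ along its last column extends the linear relation $\sum_{r=0}^{k-1}v_r\,x_{a_0+r,j}=0$ from $j\in[b,b+k-2]$ to every $j\geq b$.

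Now divide the relation by $x_{a_0+k-1,j}>0$ (positive for large $j$ by Lemma \ref{lem:fs}) and let $j\to\infty$. The ratios telescope into $\epsilon$'s:
$$\frac{x_{a_0+r,j}}{x_{a_0+k-1,j}}=\prod_{s=r}^{k-2}\frac{x_{a_0+s,j}}{x_{a_0+s+1,j}}\longrightarrow\prod_{s=r}^{k-2}\epsilon_{\overline{a_0+s}}.$$
I pick $a_0$ in the residue class satisfying $\overline{a_0+k-2}=i_0$ while keeping $a_0\leq\min(a,b-1)$, which is possible by periodicity. Since $\epsilon_{i_0}=0$, every factor with $r<k-1$ vanishes in the limit, whereas the $r=k-1$ term is identically $1$. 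Hence the relation collapses to $v_{k-1}=0$, which contradicts the strict positivity of $\Delta_{[a_0,a_0+k-2],[b,b+k-2]}(X)$ guaranteed by minimality of $k$. Therefore every $\epsilon_i$ is positive.

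The real work is done by Lemma \ref{lem:sol}'s extension to non-solid row/column sets, which is exactly what upgrades the single vanishing minor into rank-drop of the full infinite submatrix $X_{[a_0,a_0+k-1],[b,\infty)}$; this is the step I expect to be the main obstacle to spot. After that, the argument is a routine combination of a Cramer-style null-vector identification, a bordered-determinant expansion, and the telescoping-ratio limit governed by the $\epsilon_i$.
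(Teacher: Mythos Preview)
Your proof is correct. The overall strategy matches the paper's: invoke Lemma~\ref{lem:sol} to get a minimal size $k$ and propagate vanishing to all $k\times k$ minors sufficiently northeast, deduce that a $k$-row infinite-column submatrix has rank at most $k-1$, extract a linear relation among the rows with all coefficients nonzero (via minimality of $k$), and then pass to the limit in the column index.

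The tactical difference is in which rows are chosen. The paper takes rows spaced by $n$, namely $I=\{i,i+n,\ldots,i+(k-1)n\}$, and columns likewise; by periodicity this produces a genuine constant-coefficient linear recurrence for the single sequence $t\mapsto x_{i,j+tn}$, whose limiting ratio $\delta=\lim_{t\to\infty}x_{i,j+tn}/x_{i,j+(t-1)n}=\prod_i\epsilon_i$ must be a root of the characteristic polynomial $\sum_r c_r z^{k-r}$; since $c_k\neq 0$, that root is nonzero. You instead take $k$ \emph{consecutive} rows $[a_0,a_0+k-1]$, telescope the ratios into products of individual $\epsilon$'s, and then exploit the freedom in $a_0\pmod n$ to place the hypothetically vanishing $\epsilon_{i_0}$ in the last slot, forcing $v_{k-1}=0$ directly. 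Your route avoids the recurrence/characteristic-polynomial detour and is slightly more elementary; the paper's route is a bit shorter once the recurrence is set up and yields the product $\prod_i\epsilon_i\neq 0$ in one stroke rather than one $\epsilon_i$ at a time.
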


\begin{proof}
Let $k>1$ be the size of smallest singular minor.  It was shown in
Lemma \ref{lem:sol} that all, not necessarily solid, minors of size
$k$ far enough from the diagonal are singular.  Consider the $k
\times \infty$ submatrix $Y = X_{I,J}$ where $I =
\{i,i+n,\ldots,i+(k-1)n\}$ and $J = \{j,j+n,\ldots\}$, where $i$ and
$j$ are chosen so that all $k \times k$ minors of $Y$ vanish. Thus
$Y$ has rank less than $k$.  Since the $(k-1) \times (k-1)$ minors
of $Y$ do not vanish, there is a unique (up to scalar factor) linear
relation between the rows of $Y$, say $\sum_{r=1}^k c_r {\bf y}_r =
0$, where $y_r$ is the $r$-th row of $Y$, and all the $c_r$ are
non-zero.

We deduce that for large enough $t$ we have $ \sum_{r=1}^{k} c_r
x_{i,j+(t-r+1)n}$.  Then the limit $\delta = \lim_{t \to \infty}
\frac{x_{i,j+tn}}{x_{i,j+(t-1)n}}$, which we know exists by Lemma
\ref{lem:limits}, satisfies the polynomial equation $\sum_{r=1}^k
c_r \delta^{k-r} = 0$.  Since the $c_r$ (in particular $c_k$) are
all non-zero, $\delta \neq 0$.  But $\delta$ is exactly the product
of all $\epsilon_i$-s (for $i = 1,2,\ldots,n$).
%
%
\end{proof}

Now we are ready to prove the theorem.

\begin{proof}[Proof of Theorem \ref{thm:tp}]
Whirls, curls, and Chevalley generators all have the property that
minors sufficiently far from the diagonal vanish.  Thus any finite
product of such matrices will have the same property.  This shows
that the semigroup generated by whirls and curls consists of totally
nonnegative but not totally positive matrices.

Now suppose $X \in U_{\geq 0}$ is not totally positive.  By
Corollary \ref{C:TP}, $X$ has a vanishing minor $\Delta_{I,J}(X) =
0$ for $I \leq J$, which by Lemma \ref{lem:sol} we may assume to be
solid.  We first suppose that $(I,J)$ is chosen so that $I \leq J -
1$ and $\Delta_{I,J - 1}(X) > 0$ (here $J - 1$ denotes $\{j_1 - 1,
\ldots, j_k- 1\}$ where $J = \{j_1,\ldots,j_k\}$).  This is possible
because if $I$ is not $\leq J -1$, and both $I, J$ are solid then $I
= J$ and $\Delta_{I,J}(X)$ cannot vanish.

If $X^{-c}$ is finitely supported, the statement follows from
Theorem \ref{thm:finitefactor}. If it is not finitely supported, we
claim that ASW factorization (Lemma \ref{lem:ASW}) factors a
non-degenerate curl from $X^{-c}$. For that first note that if $I =
(i+1, \ldots, i+k)$ and $J = (j+1, \ldots, j+k)$ then as was shown
in the proof of Lemma \ref{lem:cinverse} $\Delta_{I,J}(X) =
\Delta_{I',J'}(X^{-c})$ where $I' = (i+1, \ldots, j)$ and $J' =
(i+1+k, \ldots, j+k)$. Thus $X^{-c}$ also has a singular solid minor
with $I' \leq J'$.  By Lemmata \ref{lem:fsin} and \ref{lem:ASW}, a
non-degenerate curl $N$ can be factored out from $X^{-c}$.  We may
thus write $X = X' M$ for a whirl $M = N^{-c}$ and totally
nonnegative $X'$. We claim that in $X'$ the minor $X'_{I,J-1}$ is
singular. Indeed, in $M$ the minor $M_{J-1,J}$ is non-singular. Then
if $\Delta_{I,J-1}(X')>0$ then by the Cauchy-Binet formula
\eqref{E:CB} we would have a positive term contributing to
$\Delta_{I,J}(X)$, and since all other terms are nonnegative we
obtain a contradiction.

Repeating this argument, the vanishing minor of $X$ is moved closer
and closer to the diagonal, so the process must eventually stop, at
which point we will have obtained the desired factorization of $X$.
\end{proof}
%
%

Note that curls can be represented by (non-acyclic) cylindric networks
as shown on the right in Figure \ref{fig:loo3}.  The definitions and
results of Section \ref{sec:networks} still hold when we allow
oriented cycles with non-zero rotor in this way.

\begin{corollary}
Every $X \in U_{\geq 0}$ which is not totally positive
is representable by a finite cylindric network.
\end{corollary}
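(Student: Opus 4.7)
The plan is to combine Theorem \ref{thm:tp} with the network realizations of the three types of building blocks already exhibited in the paper. The work is essentially bookkeeping once the factorization result is in hand.

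First, apply Theorem \ref{thm:tp} to write $X = G_1 G_2 \cdots G_m$ as a finite product, where each $G_i$ is a whirl, a curl, or a Chevalley generator with nonnegative parameter. Each of these three types of factor has an explicit cylindric network representative: Chevalley generators and shift factors correspond to the local ``building block'' networks of Figure \ref{fig:loo2} (used already in the proof of Theorem \ref{thm:network}); whirls correspond to the network on the left of Figure \ref{fig:loo3}; and curls correspond to the network on the right of Figure \ref{fig:loo3}, which is not acyclic but contains an oriented cycle of non-zero rotor.

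Second, concatenate these $m$ networks vertically on the cylinder, identifying the $n$ sinks of the network for $G_i$ with the $n$ sources of the network for $G_{i-1}$ in the same cyclic order, and aligning the chord $\h$ consistently. As was already observed in the proof of Theorem \ref{thm:network}, concatenation of cylindric networks corresponds to multiplication of the associated matrices. Hence the resulting finite network $N$ satisfies $X(N) = X(N_1) X(N_2) \cdots X(N_m) = G_1 G_2 \cdots G_m = X$.

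The only point that needs a remark (and the paper explicitly states this) is that the curl factors force the total network to contain directed cycles with non-zero rotor, so this is not an acyclic network in the strict sense of Section \ref{sec:networks}. However, since each such cycle winds non-trivially around the cylinder, every path in the network still traverses any given cycle only finitely many times before reaching a sink, so path sums and Theorem \ref{thm:L} extend verbatim, as noted in the remark preceding the corollary. This is the only conceptual step beyond pure assembly, and it is already handled by the paper's convention. The corollary follows.
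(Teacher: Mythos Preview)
Your proposal is correct and follows essentially the same approach as the paper: invoke Theorem \ref{thm:tp} to factor $X$ into finitely many whirls, curls, and Chevalley generators, represent each factor by its building-block cylindric network (Figures \ref{fig:loo2} and \ref{fig:loo3}), and concatenate, noting that the non-acyclic curl networks are handled by the remark immediately preceding the corollary. One small cleanup: shift matrices do not appear in the factorization of Theorem \ref{thm:tp} (they only enter in the $\G_{\geq 0}$ version, Theorem \ref{T:notTP}), so the reference to ``shift factors'' is extraneous here.
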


\subsection{Extension to the whole formal loop group}
\begin{prop}\label{P:TP}
A matrix $X \in \G_{\geq 0}$ is totally positive if and only if the
matrix $Y \in U_{\geq 0}$ of Theorem \ref{T:red} is totally
positive.
\end{prop}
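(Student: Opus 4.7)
The plan is to derive both implications from Cauchy--Binet applied to the factorization $X = FS^kY$ of Theorem~\ref{T:red}, exploiting that $F = D \prod_s f_{i_s}(a_s)$ with $D \in T_{>0}$ and $a_s > 0$ is lower triangular with strictly positive diagonal. Writing $Z = S^kY$, so that $Z_{ij} = Y_{i+k,j}$ and hence $\Delta_{L,J}(Z) = \Delta_{L+k,J}(Y)$, Cauchy--Binet yields
$$\Delta_{I,J}(X) \;=\; \sum_L \Delta_{I,L}(F)\,\Delta_{L+k,J}(Y).$$
Lower-triangularity of $F$ forces $\Delta_{I,L}(F)=0$ unless $L\leq I$ pointwise, while $\Delta_{I,I}(F)=\prod_{i\in I} D_{ii} > 0$; all remaining summands are nonnegative since $F,Y\in\G_{\geq 0}$.

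For the direction $Y\in U_{>0}\Rightarrow X\in\G_{>0}$ I would invoke Corollary~\ref{C:TP}, which says that $Y\in U_{>0}$ is equivalent to $\Delta_{L',J'}(Y)>0$ for every $L'\leq J'$. Whenever $I+k\leq J$ (i.e.~$i_t\leq j_t-k$ for each $t$), the isolated $L=I$ term alone already gives
$$\Delta_{I,J}(X) \;\geq\; \Delta_{I,I}(F)\,\Delta_{I+k,J}(Y) \;>\; 0,$$
so $X\in\G_{>0}$ with northeast offset $-k$ in the sense of subsection~\ref{ssec:TP}.

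For the reverse direction I argue by contrapositive. Assume $Y\notin U_{>0}$; by Corollary~\ref{C:TP} there is a vanishing minor $\Delta_{I_0,J_0}(Y)=0$ with $I_0\leq J_0$, and Lemma~\ref{lem:sol} upgrades this to a solid vanishing minor $\Delta_{I',J'}(Y)=0$ of some minimal size $k'$, together with the crucial propagation that $\Delta_{I'',J''}(Y)=0$ for \emph{every} $I''\leq I'$ and $J'\leq J''$ of size $k'$. The plan is then to exhibit vanishing $k'\times k'$ minors of $X$ arbitrarily far to the northeast. For each $M\geq 0$ I examine $\Delta_{I'-M-k,\,J'+M}(X)$; its Cauchy--Binet expansion
$$\sum_L \Delta_{I'-M-k,\,L}(F)\,\Delta_{L+k,\,J'+M}(Y)$$
has every surviving $F$-factor requiring $L\leq I'-M-k$, so $L+k\leq I'-M\leq I'$. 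Combined with $J'\leq J'+M$, the propagation in Lemma~\ref{lem:sol} then kills the $Y$-factor of every nonzero term, so $\Delta_{I'-M-k,\,J'+M}(X)=0$. As $M\to\infty$ the gap $(i'_t-M-k)-(j'_t+M)=i'_t-j'_t-k-2M\to -\infty$, defeating any prescribed northeast offset $k_0$ in the definition of $\G_{>0}$; hence $X\notin\G_{>0}$.

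The main obstacle is the reverse implication: one has to be sure that the propagation built into Lemma~\ref{lem:sol} annihilates \emph{every} Cauchy--Binet summand simultaneously. This works precisely because the lower-triangular support of $F$ restricts the summation index $L$ to lie weakly below $I'-M-k$, which is exactly the hypothesis feeding into Lemma~\ref{lem:sol}; the shift by $k$ fits cleanly with the offset freedom in the definition of $\G_{>0}$.
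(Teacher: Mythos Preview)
Your proof is correct and follows essentially the same route as the paper. For the forward implication the paper packages the argument as a separate Lemma~\ref{L:TP} (any product of a TNN matrix with a totally positive one is totally positive), whereas you carry out the Cauchy--Binet computation directly by isolating the $L=I$ term; these are the same idea. For the reverse implication both you and the paper invoke the propagation in Lemma~\ref{lem:sol} and then kill every Cauchy--Binet summand---the paper phrases this via the finite support of $FS^k$, while you use the equivalent observation that $F$ is lower triangular so the summation index satisfies $L\leq I$.
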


\begin{lem}\label{L:TP}
Suppose $X \in \G_{\geq 0}$ and $Y \in \G_{>0}$.  Then $XY, YX \in
\G_{>0}$.
\end{lem}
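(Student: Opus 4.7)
The TNN claim is immediate from Lemma \ref{lem:product}. The content is to produce, for each of $XY$ and $YX$, a parameter $k^*$ making every sufficiently northeast minor strictly positive.

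For $XY$, I would apply Cauchy--Binet \eqref{E:CB}:
$$\Delta_{I,J}(XY) = \sum_K \Delta_{I,K}(X)\, \Delta_{K,J}(Y).$$
Each summand is nonnegative, so strict positivity reduces to finding a single $K$ with both factors positive. Since $X \in \G_{\geq 0}$, nonzero means positive; the $Y$-factor forces $k_t \leq j_t + k$, where $k$ certifies $Y \in \G_{>0}$. Thus I need, for any sufficiently northeast $I$, a set $K$ with $\Delta_{I,K}(X) \neq 0$ whose entries are not far to the right of $I$. The symmetric task arises for $YX$: one needs $K$ with $\Delta_{K,J}(X) \neq 0$ whose entries are not far to the left of $J$.

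The crux is the following \emph{key lemma}, which I would prove first: there exists a constant $M = M(X)$ such that for every finite $I \subset \Z$ there is $K$ of the same cardinality with $\Delta_{I,K}(X) \neq 0$ and $k_t \leq i_t + M$ for each $t$; symmetrically, for every finite $J$ there is $K$ with $\Delta_{K,J}(X) \neq 0$ and $k_t \geq j_t - M$. Granting the lemma, set $k^* = k - M$. For $XY$, if $i_t \leq j_t + k^*$, pick $K$ by the lemma; then $k_t \leq i_t + M \leq j_t + k$, so $\Delta_{K,J}(Y) > 0$ and $\Delta_{I,J}(XY) > 0$. For $YX$, pick $K$ from the column form of the lemma with $k_t \geq j_t - M$; then $i_t \leq j_t + k^* \leq k_t + k$, giving $\Delta_{I,K}(Y) > 0$.

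To prove the key lemma I would use that $A(t) = \X(t)$ is invertible over $\R((t))$, so $A(t)^{-1}$ lies in $\G$; let $Z \in \G$ be its unfolding. Periodicity together with the left-boundedness of Laurent series entries produces a uniform $M$ such that $z_{ij} = 0$ whenever $j < i - M$. From $XZ = I_\infty$ and Cauchy--Binet,
$$1 = \Delta_{I,I}(XZ) = \sum_K \Delta_{I,K}(X)\, \Delta_{K,I}(Z),$$
so some $K$ makes both factors nonzero. The step I expect to be the most delicate is converting $\Delta_{K,I}(Z) \neq 0$ into the \emph{per-index} bound $k_t \leq i_t + M$: the nonvanishing determinant only provides a permutation $\sigma$ with $z_{k_s, i_{\sigma(s)}} \neq 0$, hence $k_s \leq i_{\sigma(s)} + M$, for every $s$. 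I plan a pigeonhole argument: if $k_t > i_t + M$ for some $t$, then every $s$ with $\sigma(s) \leq t$ forces $k_s \leq i_{\sigma(s)} + M \leq i_t + M < k_t$, so $s < t$, contradicting $|\{s : \sigma(s) \leq t\}| = t$. The column version of the key lemma is proved symmetrically, starting from $ZX = I_\infty$.
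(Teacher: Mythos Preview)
Your argument is correct, and it takes a genuinely different route from the paper's.  The paper's proof simply invokes Theorem~\ref{T:red}: writing $X = FS^kY$ with $F$ lower triangular TNN and $Y \in U_{\geq 0}$, one sees immediately that $\Delta_{I,\,I+k}(X) > 0$ for \emph{every} $I$ (the term $L=I$ in the Cauchy--Binet expansion through $F$, $S^k$, $Y$ is positive and all others are nonnegative).  So a single fixed shift $K = I+k$ works uniformly, and the rest is the same Cauchy--Binet argument you give.

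Your approach bypasses the reduction theorem entirely: you extract the needed $K$ directly from the invertibility of $X$ in $\G$ via Cauchy--Binet on $XZ = I_\infty$, and your pigeonhole step correctly upgrades the permutation-indexed bound $k_s \le i_{\sigma(s)} + M$ to the per-index bound $k_t \le i_t + M$.  This is more self-contained and would work in any setting where one only knows invertibility plus a uniform lower bound on supports, whereas the paper's route is shorter once Theorem~\ref{T:red} is in hand and yields the stronger structural fact that a single diagonal of $X$ consists entirely of positive entries.  One small point worth making explicit in your write-up: the Cauchy--Binet sum $\sum_K \Delta_{I,K}(X)\Delta_{K,I}(Z)$ is indeed finite, since $X \in \G$ forces $k_t \ge i_t - N$ for some uniform $N$ and $Z \in \G$ forces $k_t \le i_t + M$.
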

\begin{proof}
By Theorem \ref{T:red}, at least one of the diagonals of $X$ has
only non-zero entries.  The statement follows easily.
\end{proof}

\begin{proof}[Proof of Proposition \ref{P:TP}]
The ``if'' direction follows immediately from Lemma \ref{L:TP}.  For
the other direction, it is enough to show that if $X \in U_{\geq 0}$
is not totally positive, and $Y$ is a finitely supported matrix
(such as $FS^k$ in Lemma \ref{T:red}) then $XY$ is not totally
positive. By Lemma \ref{lem:sol}, all minors of $X$ sufficiently far
from the diagonal vanish.  The statement then follows from the
Cauchy-Binet formula \eqref{E:CB}.
\end{proof}

\begin{thm}\label{T:notTP}
A matrix $X \in \G_{\geq 0}$ is not totally positive if and only if
it is a finite product of whirls, curls, upper or lower Chevalley
generators, and shift matrices.
\end{thm}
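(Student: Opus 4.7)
The theorem has two directions; I would treat them separately using the machinery developed in the excerpt.

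The forward direction is a direct synthesis of earlier results. Suppose $X \in \G_{\geq 0}$ is not totally positive. By Theorem \ref{T:red}, $X$ has the unique factorization $X = F S^k Y$ with $F$ a finite product of lower Chevalley generators with positive parameters and a positive diagonal matrix, $k \in \Z$, and $Y \in U_{\geq 0}$. Proposition \ref{P:TP} implies $Y$ is not totally positive, and Theorem \ref{thm:tp} then expresses $Y$ as a finite product of whirls, curls, and upper Chevalley generators with positive parameters. Reassembling $X = F S^k Y$ exhibits $X$ as a finite product of the listed generators (allowing positive diagonal matrices, which are implicit in the listing as in the introductory Statement of the theorem).

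For the reverse direction, let $X = G_1 G_2 \cdots G_r$ be a finite product of the listed generators. My plan is to rewrite $X$ in the form $F S^k Y$ of Theorem \ref{T:red} with $Y$ explicitly a finite product of whirls, curls, and upper Chevalley generators with positive parameters; Theorem \ref{thm:tp} then yields $Y \notin U_{>0}$, and Proposition \ref{P:TP} transfers this conclusion to $X \notin \G_{>0}$. The rewriting proceeds by induction on $r$, absorbing each new generator into the running factorization using commutation relations that push shifts, lower Chevalley generators, and positive diagonals to the left while leaving whirls, curls, and upper Chevalley generators on the right.

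The four commutation rules I would verify are: (i) the shift $S$ conjugates each generator type to itself with cyclically shifted parameters, e.g.\ $S e_i(a) S^{-1} = e_{i-1}(a)$, $S f_i(a) S^{-1} = f_{i-1}(a)$, $S M(a_1,\ldots,a_n) S^{-1} = M(a_n, a_1, \ldots, a_{n-1})$, with analogous identities for curls and positive diagonals; (ii) a positive diagonal $h$ satisfies $h e_i(a) = e_i(a\, h_{i+1}/h_i)\, h$, and the analogous rescaling identities $h M = M' h$ and $h N = N' h$ preserve positivity of parameters; (iii) $f_i(a)$ commutes with $e_j(b)$ when $i \neq j$, and for $i = j$ the classical $SL_2$-identity $e_i(b) f_i(a) = f_i(a/(1+ab))\, h_i(1+ab)\, e_i(b/(1+ab))$ applies; and (iv) analogous LDU identities $M \cdot f_i(a) = f_i(a')\, h\, M'$ and $N \cdot f_i(a) = f_i(a')\, h\, N'$ with all new parameters positive. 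Iteratively applying these rules, along with the easy closure of whirl-curl-upper-Chevalley products under right multiplication by another such factor, yields the desired factorization $X = F S^k Y$.

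The main obstacle is step (iv): decomposing $M \cdot f_i(a)$ and $N \cdot f_i(a)$ as lower-times-diagonal-times-upper with positive parameters. This can be handled by a direct computation in the periodic two-row block indexed by the orbit of $i$ and $i+1$, where positivity of denominators such as $1 + a\, a_i$ is automatic; as a sample, the $n=2$ case yields $M(a_1,a_2) f_1(c) = f_1(c/(1+ca_1)) \cdot \diag(1+ca_1, 1/(1+ca_1)) \cdot M(a_1/(1+ca_1), a_2(1+ca_1))$, and the general case is similar. Alternatively, one may argue abstractly: $M \cdot f_i(a) \in \G_{\geq 0}$, so Theorem \ref{T:red} provides a factorization $M \cdot f_i(a) = F' S^0 Y'$; a vanishing solid minor of $M$ propagates via Cauchy-Binet (equation \eqref{E:CB}) to a vanishing solid minor of $M \cdot f_i(a)$, which via Proposition \ref{P:TP} yields a vanishing solid minor of $Y'$, and then Theorem \ref{thm:tp} writes $Y'$ as the desired product. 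The case of curls is entirely parallel.
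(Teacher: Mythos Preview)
Your forward (``only if'') direction is exactly the paper's argument: Theorem~\ref{T:red} reduces to $U_{\geq 0}$, Proposition~\ref{P:TP} transfers the failure of total positivity to $Y$, and Theorem~\ref{thm:tp} finishes.

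Your reverse (``if'') direction is correct in spirit but far more elaborate than necessary, and the paper takes a much shorter route. You propose an inductive rewriting of an arbitrary product into the normal form $F S^k Y$ via a catalogue of commutation rules, including the nontrivial step~(iv) of pushing $f_i(a)$ past whirls and curls. The paper bypasses all of this: it simply observes that each listed generator $G$ has the property that all minors sufficiently far northeast of the diagonal vanish (there is some $s$ so that $\Delta_{I,J}(G)=0$ whenever $i_t \leq j_t - s$ for all $t$), and that this property is closed under finite products by Cauchy--Binet~\eqref{E:CB}. Hence any finite product of the generators has the same property and cannot be totally positive. You even stumble onto this argument yourself in your ``alternative abstract'' handling of step~(iv), where you invoke Cauchy--Binet to propagate a vanishing minor; the point is that this propagation works for the entire product at once, making the whole commutation programme unnecessary.

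What each approach buys: your method would, if the computations in~(iv) were carried out, give an explicit normal form for any such product, which is more information than the theorem requires. The paper's argument is a two-line proof that uses nothing beyond Cauchy--Binet and the trivial observation about supports of the generators.
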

\begin{proof}
The ``only if'' direction follows from Proposition \ref{P:TP} and
Theorem \ref{thm:tp}.  For, the ``if'' direction, all stated
generators have all minors sufficiently northeast of the diagonal
vanishing; that is all minors $\Delta_{I,J}$ where $I =
\{i_1,\ldots,i_k\}$, $J = \{j_1,\ldots,j_k\}$ and $i_t \leq j_t - s$
for some $s$.  Thus any finite product of such matrices will have
the same property.
\end{proof}

\begin{example}
We already know that the element of $\G$ in Example \ref{ex:3} is
representable by a cylindric network. We should also be able to
factor it the way it is described in the theorem. Indeed, one can
check that
$$
f_1(1/3) f_2(9/16) T(9/8,16/3) e_1(128/45) e_2(150/368)
M(23/30,5/23)
$$
is one such factorization, where $T$ denotes an element of the
torus.
\end{example}

\begin{cor}
A matrix $X \in \G_{\geq 0}$ is not totally positive if and only if
there exists $s$ and $k$ such that $\Delta_{I,J}(X) = 0$ whenever $I
= \{i_1,\ldots,i_k\}$, $J = \{j_1,\ldots,j_k\}$ satisfy $i_t \leq
j_t - s$.
\end{cor}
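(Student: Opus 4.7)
The plan is to use Theorem~\ref{T:notTP} for both directions. For the $\Leftarrow$ direction, suppose there exist $s$ and $k$ such that every size-$k$ minor $\Delta_{I,J}(X)$ with $i_t \leq j_t - s$ for all $t$ vanishes. Then for any integer $k_0$, one can exhibit such a vanishing minor with $i_t \leq j_t + k_0$ (by pushing $I,J$ sufficiently far northeast), contradicting the definition of total positivity.

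For the $\Rightarrow$ direction, I would apply Theorem~\ref{T:notTP} to write $X = G_1 G_2 \cdots G_\ell$ as a finite product of whirls, curls, upper and lower Chevalley generators, and shift matrices. The key observation, already flagged in the proof of Theorem~\ref{T:notTP}, is that each generator $G$ has all sufficiently northeast minors vanishing: one identifies $(k_G, s_G)$ such that $\Delta_{I,J}(G) = 0$ whenever $|I| = |J| = k_G$ and $i_t \leq j_t - s_G$. I would verify this case by case: shift matrices $S^a$ have non-vanishing minors only when $J = I + a$; Chevalley generators and whirls are essentially bidiagonal, so size-$2$ minors with $j_t - i_t \geq 2$ vanish; for curls $N(b_1,\ldots,b_n)$, the explicit formula $N_{i,j} = \prod_{l=i}^{j-1} b_{\bar l}$ shows that rows become proportional sufficiently far to the right, forcing vanishing of the relevant submatrix minors.

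The plan is then to propagate the property to the product $X$ by iterating the Cauchy-Binet formula~\eqref{E:CB}: $\Delta_{I,J}(X) = \sum_{K_1,\ldots,K_{\ell-1}} \Delta_{I,K_1}(G_1)\,\Delta_{K_1,K_2}(G_2) \cdots \Delta_{K_{\ell-1},J}(G_\ell)$. A non-vanishing term forces every intermediate factor to be non-vanishing, and therefore constrains each step $K_{t-1} \to K_t$ to have bounded northeast displacement, controlled by the corresponding $s_{G_t}$. Summing these bounds across the $\ell$ factors yields a finite total shift $s$ (with $k = \max_t k_{G_t}$) beyond which all size-$k$ minors of $X$ must vanish, which is exactly the desired statement.

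The main obstacle is the curl case: curls have infinite northeast support, and one must carefully identify the right size $k_G$ and argument for why minors vanish, as opposed to the easier bidiagonal analyses for shift matrices, whirls, and Chevalley generators. A secondary subtlety is that the "northeast distance" in the sense $\min_t(j_t - i_t)$ does not add cleanly across Cauchy-Binet steps, since each factor may only pin down certain coordinates. Tracking which coordinates are forced tight by which factor, and showing that the aggregate constraint still gives a uniform bound on the displacement, is where the bookkeeping becomes delicate.
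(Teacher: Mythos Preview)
Your approach mirrors the paper's own (implicit) argument: the Corollary is placed immediately after Theorem~\ref{T:notTP}, whose ``if'' direction already asserts that each generator has all sufficiently northeast minors vanishing and that this propagates to products via Cauchy--Binet. So strategically you are doing exactly what the paper intends.

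However, the obstacle you flag for curls is fatal, not merely delicate. Your sketch says the rows of a curl $N$ become proportional ``sufficiently far to the right,'' and that is true in the sense that any size-$2$ minor with $\min(J)\ge\max(I)$ vanishes (indeed $N_{i,j}=P_j/P_i$ for $j\ge i$, a rank-one pattern). But the hypothesis in the Corollary is only $i_t\le j_t-s$ componentwise, which does \emph{not} force $\min(J)\ge\max(I)$. Concretely, for a non-degenerate curl $N=N(b_1,\ldots,b_n)$ and any $k,s\ge1$, take
\[
I=\{0,\,(s{+}1),\,2(s{+}1),\ldots,(k{-}1)(s{+}1)\},\qquad J=I+s.
\]
Then $i_t=j_t-s$ for every $t$, yet $j_t<i_{t+1}$, so the submatrix $N_{I,J}$ is upper triangular with strictly positive diagonal entries $N_{i_t,j_t}=\prod_{l=i_t}^{j_t-1}b_{\bar l}$; hence $\Delta_{I,J}(N)\ne0$. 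Thus no choice of $(k,s)$ makes the asserted vanishing hold for a single curl, and your curl step cannot be completed as written. The same example shows your ``secondary subtlety'' is real: the quantity $\min_t(j_t-i_t)$ simply does not control vanishing for curls, so no amount of Cauchy--Binet bookkeeping with that invariant will close the gap.

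What your argument \emph{does} establish (and what the phrase ``sufficiently northeast'' is presumably meant to encode) is the variant with the stronger hypothesis $i_k\le j_1-s$, i.e.\ $\max(I)\le\min(J)-s$. Under that hypothesis every generator in Theorem~\ref{T:notTP} has all size-$2$ minors vanishing for a suitable $s$ (for curls this is exactly the rank-one observation above). For the product, it is cleanest not to chain through individual generators but to use Lemma~\ref{lem:sol} together with periodicity on the factor $Y\in U_{\ge0}$ from Theorem~\ref{T:red}: the solid vanishing minor $(I',J')$ of Lemma~\ref{lem:sol} yields vanishing of every size-$k$ minor with $\max(I)\le\min(J)-s$ once $s\ge j'_k-i'_1+n$, and a single Cauchy--Binet step against the finitely supported factor $FS^m$ then transports this to $X$.
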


\begin{cor}
Suppose $X \in \G_{\geq 0}$.  Then either every sufficiently large
and sufficiently northeast minor of $X$ vanishes, or every
sufficiently northeast minor of $X$ is positive.
\end{cor}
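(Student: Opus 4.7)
The plan is to dispatch the corollary as a simple dichotomy keyed off total positivity. If $X \in \G_{> 0}$, then the second alternative holds tautologically: the definition of total positivity provides an integer $s$ such that every minor $\Delta_{I,J}(X)$ with $i_t \leq j_t - s$ is strictly positive.

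The substantive case is $X \in \G_{\geq 0} \setminus \G_{> 0}$. Here I would invoke the preceding corollary to produce $s$ and $k$ for which every size-$k$ minor with $i_t \leq j_t - s$ vanishes, and then upgrade ``size exactly $k$'' to ``size $\geq k$''. Concretely, fix $k' \geq k$ and any minor $\Delta_{I,J}(X)$ with $|I|=|J|=k'$ and $i_t \leq j_t - s$; set $Y := X_{I,J}$, a TNN $k' \times k'$ submatrix. For each $k$-subset $T \subseteq [1,k']$, the principal sub-minor of $Y$ indexed by $T$ equals the $X$-minor $\Delta_{I_T,J_T}$ with $I_T = \{i_t : t \in T\}$ and $J_T = \{j_t : t \in T\}$, which inherits the shift condition $(I_T)_m \leq (J_T)_m - s$ from the ambient one and so vanishes by the preceding corollary. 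Thus every principal size-$k$ sub-minor of the TNN matrix $Y$ is zero. Applying Lemma \ref{lem:sol} inside $Y$---which takes a vanishing minor with $I \leq J$ (principal ones trivially satisfy this), yields a solid vanishing minor, and propagates vanishing northeast---together with the characterization of the rank of a TNN matrix by its solid minors (Cryer, as used in Lemma \ref{lem:solid}), I would conclude that $Y$ has rank strictly less than $k \leq k'$. Hence $\det(Y) = 0$, i.e.\ $\Delta_{I,J}(X) = 0$, which is the first alternative with threshold $K = k$ and shift $s$.

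The step I expect to require real care is this final rank deduction: promoting ``all principal size-$k$ sub-minors of a TNN matrix vanish'' to ``rank $< k$''. Vanishing of principal minors is a priori weaker than vanishing of all size-$k$ minors, but for TNN matrices Lemma \ref{lem:sol} reduces a vanishing $I \leq J$ minor to a solid one and propagates it northeast, and Cryer's theorem then reads off the rank from solid minors. Getting the bookkeeping of ``principal versus solid versus non-solid'' clean inside the submatrix $Y$ is where the main technical work will sit; the rest of the argument is formal.
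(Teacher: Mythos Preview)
Your setup is right: the case $X\in\G_{>0}$ is tautological, and in the non--totally-positive case the preceding corollary hands you $s,k$ with every size-$k$ minor $\Delta_{I,J}(X)$ vanishing once $i_t\le j_t-s$. The issue is the final step. You aim for ``all principal $k\times k$ sub-minors of the TNN matrix $Y$ vanish $\Rightarrow$ $\mathrm{rank}(Y)<k$'', and propose to extract this from Lemma~\ref{lem:sol} plus Cryer. That implication is false for TNN matrices: take $k=1$ and
\[
Y=\begin{pmatrix}0&1&0\\0&0&1\\0&0&0\end{pmatrix},
\]
which is TNN, has every principal minor (of every size) equal to $0$, yet has rank $2$. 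Lemma~\ref{lem:sol} does not help here: its proof uses the ambient $U_{\ge 0}$ structure (for $k=1$ it quotes Lemma~\ref{lem:fs}, which needs the unit diagonal), and in any case its conclusion produces \emph{one} solid vanishing minor with northeast propagation, not the global rank bound you want. Cryer's theorem, as invoked in Lemma~\ref{lem:solid}, is a nonnegativity criterion, not a rank criterion.

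Two clean repairs are available. The most direct one avoids rank entirely: with $Y=X_{I,J}$ of size $k'\ge k$, expand $\det(Y)$ by Laplace along the first $k$ rows,
\[
\det(Y)=\sum_{B\in\binom{[k']}{k}} \pm\,\Delta_{[1,k],B}(Y)\,\Delta_{[k+1,k'],[k']\setminus B}(Y).
\]
For any such $B=\{b_1<\cdots<b_k\}$ one has $b_m\ge m$, hence $i_m\le j_m-s\le j_{b_m}-s$, so $\Delta_{[1,k],B}(Y)=\Delta_{\{i_1,\dots,i_k\},\{j_{b_1},\dots,j_{b_k}\}}(X)=0$ by the preceding corollary. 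Every term vanishes and $\det(Y)=0$; this is presumably the argument the paper has in mind, since the corollary is stated without proof. Alternatively, if you want to keep your principal-minor observation, the correct bridge is not rank but Loewner--Whitney (Theorem~\ref{T:LW}) plus Cauchy--Binet~\eqref{E:CB}: for any invertible TNN matrix every principal minor is strictly positive (write it as a product of Chevalley generators and a positive diagonal, each of which has all principal minors equal to $1$ or positive, and note that the $S=T$ term in Cauchy--Binet already gives strict positivity). Contrapositively, one vanishing principal minor of the TNN matrix $Y$ forces $\det(Y)=0$, which is all you need.
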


\section{Whirl and curl relations}\label{sec:comm}
This section is concerned with relations that exist between products
of whirls, curls and Chevalley generators.  In the case $n=1$ there
are no Chevalley generators, while whirls and curls simply commute.
For arbitrary $n$, we introduce a relation between products of two
whirls or two curls, and another one between a whirl and a curl.  We
call these relations {\it {commutation relations}}, even though the
factors do not commute.  The commutation relations are well-defined
only when one of the two factors is non-degenerate.  If both factors
are degenerate the commutation relations are not well-defined.
However, in this case we may use the usual braid relations between
Chevalley generators (see \cite{Lu1, LP}).

Let $\a = (a_1,\ldots,a_n), \b = (b_1,\ldots,b_n) \in \R_{\geq 0}^n$
be two sets of parameters.  Define $$\kappa_i({\bf a}, {\bf b}) =
\sum_{j=i}^{i+n-1} \prod_{k=i+1}^j b_{k}
\prod_{k=j+1}^{i+n-1}a_{k}.$$   We call $\a$ degenerate if at least
one of the $a_i$ vanishes.  Let $R \subset \R_{\geq 0}^n \times
\R_{\geq 0}^n$ be the subset of pairs $(\a,\b)$ such that at most
one of $\a$ and $\b$ is degenerate.

Now define a map $\eta: R \to R$ by $\eta(\a,\b) = (\b',\a')$ where
$$b'_i = \frac{b_{i+1} \kappa_{i+1}({\bf a}, {\bf b})}{ \kappa_{i}({\bf a}, {\bf b})} \ \ \ \ \ \ \ \ a'_i = \frac{a_{i-1} \kappa_{i-1}({\bf a}, {\bf b})}{ \kappa_{i}({\bf a}, {\bf b})}.$$
It is not hard to see that $\eta$ is a well-defined map from $R$ to
$R$.  For example, for $n=3$ we have $$b'_1 = \frac{b_2 (a_1 a_3 +
a_1 b_3 + b_1 b_3)}{a_2 a_3 + b_2 a_3 +b_2 b_3}.$$

\begin{lemma} \label{lem:eta}
The function $\eta$ has the following properties:
\begin{enumerate}
  \item $a'_i + b'_i = a_i + b_i$;
  \item $b'_i a'_{i+1} = a_i b_{i+1}$;
  \item $\prod_i a_i = \prod_i a'_i$, $\prod_i b_i = \prod_i b'_i$;
  \item $\eta$ is an involution.
\end{enumerate}
\end{lemma}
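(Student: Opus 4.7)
Parts (2) and (3) are immediate from the defining formulas. For (2), the $\kappa$-factors cancel:
\[
b'_i a'_{i+1} = \frac{b_{i+1}\kappa_{i+1}}{\kappa_i}\cdot\frac{a_i\kappa_i}{\kappa_{i+1}} = a_i b_{i+1}.
\]
For (3), $\prod_i a'_i = \bigl(\prod_i a_{i-1}\bigr)\prod_i (\kappa_{i-1}/\kappa_i) = \prod_i a_i$, since the cyclic products $\prod_i \kappa_{i-1}$ and $\prod_i \kappa_i$ coincide; the argument for $\prod_i b'_i = \prod_i b_i$ is identical.

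For (1), after clearing $\kappa_i$ the claim becomes the polynomial identity
\[
(a_i + b_i)\,\kappa_i \;=\; a_{i-1}\kappa_{i-1} + b_{i+1}\kappa_{i+1}.
\]
I would prove this by establishing two companion identities
\[
a_i\kappa_i - b_{i+1}\kappa_{i+1} \;=\; a_i a_{i+1}\cdots a_{i+n-1} - b_i b_{i+1}\cdots b_{i+n-1} \;=\; -(b_i\kappa_i - a_{i-1}\kappa_{i-1})
\]
and adding them. Each half is a direct expansion: using $\kappa_j = \sum_{l=j}^{j+n-1} b_{j+1}\cdots b_l\,a_{l+1}\cdots a_{j+n-1}$, a single reindexing of the summation variable together with the cyclic identifications $a_{i+n}=a_i$, $b_{i+n}=b_i$ cause all intermediate monomials to pair off and cancel, leaving only the extreme all-$a$ and all-$b$ monomials.

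For (4), I would reduce the assertion $\eta^2 = \mathrm{id}$ to the key identity
\[
\kappa_i(\b',\a') \;=\; \kappa_i(\a,\b) \qquad\text{for every } i, \quad(\ast)
\]
from which involutivity follows by a short substitution: writing $\eta(\b',\a') = ((\a')', (\b')')$, the defining formula gives
\[
(a')'_i \;=\; \frac{a'_{i+1}\kappa_{i+1}(\b',\a')}{\kappa_i(\b',\a')} \;=\; \frac{(a_i\kappa_i/\kappa_{i+1})\kappa_{i+1}}{\kappa_i} \;=\; a_i,
\]
and the parallel calculation yields $(b')'_i = b_i$. To prove $(\ast)$, I would substitute $a'_k = a_{k-1}\kappa_{k-1}/\kappa_k$ and $b'_k = b_{k+1}\kappa_{k+1}/\kappa_k$ into the definition of $\kappa_i(\b',\a')$; each monomial telescopes to
\[
\frac{a_i a_{i+1}\cdots a_{j-1}\cdot b_{j+2}\cdots b_{i+n}\cdot\kappa_i^2}{\kappa_j\,\kappa_{j+1}},
\]
and after clearing denominators $(\ast)$ reduces to a single polynomial identity in the $\kappa_l$'s. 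This polynomial identity is the main obstacle; I would attack it either by induction on $n$ using the recursion $\kappa_j = a_{j+n-1}\,\kappa^{(n-2)}_j + b_{j+1}\cdots b_{j+n-1}$ for a suitable truncation $\kappa^{(m)}_j$, or by recognizing both sides as matching entries of the whirl commutation identity $M(\a)M(\b) = M(\b')M(\a')$ developed in Section~\ref{sec:comm}.
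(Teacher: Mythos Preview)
Your treatment of (1), (2), and (3) is correct and close to the paper's. For (1) you split the required identity $(a_i+b_i)\kappa_i = a_{i-1}\kappa_{i-1} + b_{i+1}\kappa_{i+1}$ into two telescoping halves; the paper expands $(a_i+b_i)\kappa_i$ directly and regroups, which amounts to the same computation.

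Your argument for (4) has a genuine gap. You correctly reduce involutivity to the identity $(\ast)$: $\kappa_i(\b',\a') = \kappa_i(\a,\b)$. This identity is true, but you do not prove it, and neither of your proposed attacks is convincing as stated. The inductive recursion on $n$ is not set up, and the appeal to the whirl commutation $M(\a)M(\b)=M(\b')M(\a')$ is circular in spirit and too weak in content: that matrix identity is \emph{equivalent} to (1) and (2) (the only nontrivial entries of the product are $a_i+b_i$ and $a_ib_{i+1}$), so it encodes nothing beyond what you already have and does not hand you $\kappa_i$ directly. Thus the ``main obstacle'' you identify is left standing.

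The paper avoids this obstacle entirely with an indirect argument. Assuming both $\a,\b$ non-degenerate, properties (1) and (2) let one eliminate all variables in favor of $b'_1$, yielding a continued-fraction equation
\[
b'_1 \;=\; a_1+b_1 - \dfrac{a_nb_1}{a_n+b_n - \dfrac{a_{n-1}b_n}{\;\cdots\; - \dfrac{a_1b_2}{b'_1}}}
\]
which is quadratic in $b'_1$. Hence the system (1)--(2) (with the sums $a_i+b_i$ and products $a_ib_{i+1}$ fixed) has at most two solutions, namely $(\a,\b)$ and $(\b',\a')$. Applying $\eta$ to $(\b',\a')$ must land on one of them; since (3) shows $\eta$ preserves $\prod a$ and $\prod b$ separately, and the fixed points of $\eta$ are exactly those with $\prod a = \prod b$, one concludes $\eta(\b',\a')=(\a,\b)$ when $\prod a \neq \prod b$. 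Continuity of $\eta^2$ then extends the conclusion to all of $R$. This is both shorter and conceptually cleaner than a direct verification of $(\ast)$.
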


\begin{proof}
We have $$(a_i + b_i) \kappa_{i}({\bf a}, {\bf b}) = (a_i + b_i)
\sum_{j=i}^{i+n-1} \prod_{k=i+1}^j b_{k}
\prod_{k=j+1}^{i+n-1}a_{k}$$ $$= a_i  \sum_{j=i+1}^{i+n-1}
\prod_{k=i+1}^j b_{k} \prod_{k=j+1}^{i+n-1}a_{k} + \prod_i a_i + b_i
\sum_{j=i}^{i+n-2} \prod_{k=i+1}^j b_{k} \prod_{k=j+1}^{i+n-1}a_{k}
+ \prod_i b_i$$ $$= b_{i+1}  \sum_{j=i+1}^{i+n-1} \prod_{k=i+2}^j
b_{k} \prod_{k=j+1}^{i+n}a_{k} + \prod_i b_i + a_{i-1}
\sum_{j=i}^{i+n-2} \prod_{k=i}^j b_{k} \prod_{k=j+1}^{i+n-2}a_{k} +
\prod_i a_i$$ $$= a_{i-1} \kappa_{i-1}({\bf a}, {\bf b}) + b_{i+1}
\kappa_{i+1}({\bf a}, {\bf b}),$$ from which (1) follows. (2) and
(3) are straight forward from the definition of $\eta$.

To prove (4), first suppose that $\a$ and $\b$ are both
non-degenerate.  Using (1) and (2), one can solve for $b'_1$ to get
$$b'_1 = a_1+b_1-\frac{a_nb_1}{a_n+b_n-\frac{a_{n-1}b_n}{\ldots
-\frac{a_1b_2}{b'_1}}}.$$ This is a quadratic equation in $b'_1$ and
thus has at most two distinct solutions.  Furthermore, it is clear
that $b'_1$, together with the values of $(a_i + b_i)$ and
$(a_ib_{i+1})$ determine $(\b',\a')$ once (1) and (2) are known.  So
there are at most two solutions to (1) and (2) (with $(a_i + b_i)$
and $(a_ib_{i+1})$ fixed) which are $(\a,\b)$ and $(\b',\a')$.
Applying $\eta$ to $(\b',\a')$ must again give one of these
solutions.  Now we observe that $\eta(\a,\b) = (\a,\b)$ if and
only if $\prod_i a_i = \prod_i b_i$.  It thus follows from (3) that
$\eta(\b',\a')= (\a,\b)$.

Finally, the function $\eta^2(\a,\b)$ is continuous, so the claim
extends to the case that $\a$ or $\b$ is degenerate.
\end{proof}

\begin{theorem}\label{T:commute}
If $\eta({\bf a}, {\bf b}) = ({\bf b'},{\bf a'})$ then $M(\a)M(\b) =
M(\b')M(\a')$ and $N(\b)N(\a)=N(\a')N(\b')$.
\end{theorem}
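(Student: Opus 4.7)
The plan is to prove the whirl relation $M(\a)M(\b) = M(\b')M(\a')$ by direct computation of matrix entries, and then derive the curl relation as a formal consequence via the operation $^{-c}$.

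First, I would compute the product $M(\a)M(\b)$ explicitly. Since $M(\a)$ has $1$'s on the diagonal and $a_i$'s on the superdiagonal (with all other entries zero), the product $M(\a)M(\b)$ is upper triangular with nonzero entries only on the diagonal, superdiagonal, and second superdiagonal. A straightforward computation gives
\begin{align*}
(M(\a)M(\b))_{i,i} &= 1,\\
(M(\a)M(\b))_{i,i+1} &= a_i + b_i,\\
(M(\a)M(\b))_{i,i+2} &= a_i\, b_{i+1},
\end{align*}
and zero elsewhere. The same formulas apply with $(\a,\b)$ replaced by $(\b',\a')$, giving $(M(\b')M(\a'))_{i,i+1} = b'_i + a'_i$ and $(M(\b')M(\a'))_{i,i+2} = b'_i\, a'_{i+1}$.

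The equality $M(\a)M(\b) = M(\b')M(\a')$ is therefore equivalent to the two identities $a_i + b_i = a'_i + b'_i$ and $a_i b_{i+1} = b'_i a'_{i+1}$ for all $i$. But these are precisely parts (1) and (2) of Lemma \ref{lem:eta}, so the whirl relation follows immediately.

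For the curl relation, I would use the fact (stated in the discussion of $^{-c}$) that $(XY)^{-c} = Y^{-c}X^{-c}$, together with the definition $N(\c) = M(\c)^{-c}$. Then
\[
N(\b)N(\a) = M(\b)^{-c}M(\a)^{-c} = (M(\a)M(\b))^{-c},
\]
and likewise $N(\a')N(\b') = (M(\b')M(\a'))^{-c}$. Applying $^{-c}$ to both sides of the already established identity $M(\a)M(\b) = M(\b')M(\a')$ yields $N(\b)N(\a) = N(\a')N(\b')$, completing the proof.

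No step here is a serious obstacle: the whirl product is a trivial computation because $M(\a)$ has only two nonzero diagonals, and the two parameter identities needed are exactly what Lemma \ref{lem:eta} was designed to supply. The only subtlety worth flagging is the use of $(XY)^{-c} = Y^{-c}X^{-c}$ and the well-definedness of the curl relation for $(\a,\b)\in R$, which is already handled by Lemma \ref{lem:eta} (and in particular by the involutivity in part (4), ensuring $\eta$ lands in $R$).
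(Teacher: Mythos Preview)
Your proof is correct and follows essentially the same approach as the paper's own proof: compute the nonzero entries of $M(\a)M(\b)$ to be $a_i+b_i$ and $a_ib_{i+1}$, invoke parts (1) and (2) of Lemma~\ref{lem:eta}, and then obtain the curl identity by applying $^{-c}$. The paper's proof is simply a terser version of exactly what you wrote.
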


\begin{proof}
The non-zero entries above diagonal in $M(a_1,\ldots,a_n)
M(b_1,\ldots,b_n)$ are $a_i + b_i$ and $a_i b_{i+1}$. Now apply (1)
and (2) from Lemma \ref{lem:eta}. The case of curls follows by
taking $^{-c}$ of the whirl case.
\end{proof}
\begin{example}
In Examples \ref{ex:4} and \ref{ex:5} we saw that $N(1, 1) N(1, 2) =
N(\frac{4}{3}, \frac{3}{2}) N(\frac{2}{3}, \frac{3}{2})$. Indeed,
let us take ${\bf {a}} = (1,1)$ and ${\bf {b}} = (1,2)$. Then
$\kappa_{1}({\bf a}, {\bf b}) = 1+2 =3$ and $\kappa_{2}({\bf a},
{\bf b}) = 1+1 =2$, which gives $b'_1 = \frac{2 \cdot 2}{3}$, $b'_2
= \frac{1 \cdot 3}{2}$, $a'_1 = \frac{1 \cdot 2}{3}$, $a'_2 =
\frac{1 \cdot 3}{2}$ as desired.
\end{example}

If $(\a^{(1)},\a^{(2)},\ldots,\a^{(k)})$ is a sequence of $n$-tuples
of nonnegative real numbers, we denote by
$\eta_i(\a^{(1)},\a^{(2)},\ldots,\a^{(k)})$ the sequence of
$n$-tuples obtained by applying $\eta$ to $(\a^{(i)},\a^{(i+1)})$
(assuming $\eta$ is well-defined).

\begin{theorem}\label{thm:braid}
The map $\eta$ satisfies the braid relation:
$$
\eta_i \circ \eta_{i+1} \circ
\eta_i(\a^{(1)},\a^{(2)},\ldots,\a^{(k)}) = \eta_{i+1} \circ \eta_i
\circ \eta_{i+1} (\a^{(1)},\a^{(2)},\ldots,\a^{(k)})
$$
whenever the expressions are well-defined.
\end{theorem}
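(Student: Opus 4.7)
The plan is to reduce the braid identity to a uniqueness-of-factorization statement for products of three whirls, and then to prove that uniqueness. Since whirl and curl relations are linked by the anti-involution $X \mapsto X^{-c}$ via Theorem \ref{T:commute}, I treat only the whirl case; moreover since $\eta_i$ and $\eta_{i+1}$ only affect three consecutive positions, I may assume $k=3$ and prove $\eta_1\eta_2\eta_1(\a,\b,\c) = \eta_2\eta_1\eta_2(\a,\b,\c)$.

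By Theorem \ref{T:commute}, each $\eta_j$ leaves the product $M(\a^{(1)})\cdots M(\a^{(k)})$ unchanged, and by Lemma \ref{lem:eta}(3) it also preserves the column-product $\prod_r a_r^{(j)}$ attached to each factor tuple. Tracing both compositions I find that $\eta_1\eta_2\eta_1(\a,\b,\c)$ and $\eta_2\eta_1\eta_2(\a,\b,\c)$ each yield a triple $(X,Y,Z)$ satisfying
\begin{equation*}
M(X)\,M(Y)\,M(Z) \;=\; M(\a)\,M(\b)\,M(\c),
\end{equation*}
together with the ordered products $\prod_r X_r = \prod_r c_r$, $\prod_r Y_r = \prod_r b_r$, $\prod_r Z_r = \prod_r a_r$. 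Thus the braid identity reduces to showing that such a factorization is uniquely determined by the three ordered products.

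To prove the uniqueness, I would analyze the entries of $P = M(X)M(Y)M(Z)$, which vanish off the diagonals $j-i \in \{0,1,2,3\}$ and satisfy $P_{i,i+1} = X_i+Y_i+Z_i$, $P_{i,i+2} = X_iY_{i+1}+X_iZ_{i+1}+Y_iZ_{i+1}$, and $P_{i,i+3} = X_iY_{i+1}Z_{i+2}$. Together with the three specified ordered products, these form a polynomial system in the $3n$ unknowns $X_r,Y_r,Z_r$, of which $(X,Y,Z)=(\a,\b,\c)$ is certainly one solution. The main obstacle is establishing \emph{generic uniqueness}: for generic $(\a,\b,\c)$ (all three products distinct and strictly positive), the system has exactly six solutions, forming a single $S_3$-orbit under reorderings of the three whirls, so that the ordered triple of products singles out a unique element. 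My strategy would be to pass to the curl product $P^{-c}=N(Z)N(Y)N(X)$, identify the unordered multiset $\{\prod_r X_r, \prod_r Y_r, \prod_r Z_r\}$ as a matrix invariant via Lemma \ref{lem:whirldet} applied to $\det\overline{P^{-c}}(t)$, and then peel off one specified curl at a time using ASW-style limits (Lemma \ref{lem:ASW}) together with the two-factor uniqueness supplied by the involutivity of $\eta$ in Lemma \ref{lem:eta}(4). Finally, since $\eta$ is rational on $R$, once the braid identity is verified on the dense generic locus it extends by continuity to all of $R^3$ on which both compositions are defined.
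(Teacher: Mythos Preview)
Your approach is correct and essentially matches the paper's: both reduce to $k=3$, track that the matrix product and the ordered column-products are preserved (Theorem \ref{T:commute} and Lemma \ref{lem:eta}(3)), restrict to the Zariski-dense locus where the three products are distinct and positive, and then use the ASW mechanism on the curl side to peel off the factors uniquely. The only cosmetic differences are that the paper works with curls from the outset and invokes Lemmata \ref{lem:limits} and \ref{lem:ASWconv} (rather than Lemma \ref{lem:ASW} alone) for the key step that the first curl factor is forced by the $\epsilon$-sequence; your detours through Lemma \ref{lem:whirldet} and the two-factor involutivity in Lemma \ref{lem:eta}(4) are unnecessary once you have the ordered products and \ref{lem:ASWconv}.
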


\begin{proof}
We may suppose $k = 3$, and consider a triple $(\a,\b,\c)$.  Since
we are interested in the equality of two rational functions, it
suffices to show that the statement is true for a Zariski dense set.
We consider tuples $({\bf a}, {\bf b}, {\bf c})$ such that $\prod_i
a_i > \prod_i b_i > \prod_i c_i$.  Since this set locally looks like
$\mathbb \R^{3n}$, it is clear that it is Zariski dense.  Let $({\bf
c'}, {\bf b'}, {\bf a'})$ and $({\bf c''}, {\bf b''}, {\bf a''})$ be
the triples on the left and right hand side of the statement of the
theorem.

Then using Lemma \ref{lem:eta}, we deduce $\prod_i a'_i = \prod_i
a''_i = \prod_i a_i$, $\prod_i b'_i = \prod_i b''_i = \prod_i b_i$,
$\prod_i c'_i = \prod_i c''_i = \prod_i c_i$.  Using Theorem
\ref{T:commute}, we have $$X = N(\c)N(\b)N(\a) = N(\a')N(\b')N(\c') =
N(\a'')N(\b'')N(\c'').$$  By assumption we have $r(X) = 1/(\prod_i
a_i)$ (since $r(N(\a)) = \prod_i 1/a_i$), and by Lemma
\ref{lem:limits} and Lemma \ref{lem:ASWconv}, we deduce that $\a' =
\a''$.  Similarly $\b' = \b''$ and $\c' = \c''$.
\end{proof}

\begin{cor} \label{cor:sinf}
The $k-1$ maps $\eta_1,\eta_2,\ldots,\eta_{k-1}$ generate an action
of $S_k$ on $(\R_{>0}^n)^k$.
\end{cor}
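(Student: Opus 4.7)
The plan is to verify that $\eta_1, \ldots, \eta_{k-1}$ satisfy the defining Coxeter relations for the symmetric group $S_k$: (i) each $\eta_i$ is an involution, (ii) $\eta_i \eta_j = \eta_j \eta_i$ when $|i-j| \geq 2$, and (iii) the braid relation $\eta_i \eta_{i+1} \eta_i = \eta_{i+1} \eta_i \eta_{i+1}$.

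First I would check that $\eta_i$ indeed defines a self-map of $(\R_{>0}^n)^k$. Since each pair of adjacent tuples under consideration lies in $(\R_{>0}^n)^2$, and $\kappa_i(\a,\b) > 0$ whenever $\a$ and $\b$ have strictly positive entries, the formulas defining $\b'$ and $\a'$ produce strictly positive entries. Thus each $\eta_i$ maps $(\R_{>0}^n)^k$ into itself, and in particular there are no well-definedness issues to worry about on this domain (unlike in the more general setting where one or both of $\a,\b$ could be degenerate).

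Next, relation (i) is exactly part (4) of Lemma \ref{lem:eta}, which says $\eta$ is an involution; applied componentwise this gives $\eta_i^2 = \mathrm{id}$. Relation (ii) is immediate from the definition: $\eta_i$ modifies only the $i$-th and $(i+1)$-th factors, while $\eta_j$ for $|i-j|\geq 2$ modifies disjoint factors, so the two operations commute trivially. Relation (iii) is precisely the content of Theorem \ref{thm:braid}, which was established on the Zariski-dense open set where the products $\prod a_i^{(j)}$ are all distinct and strictly decreasing; by continuity of $\eta$ on the strictly positive locus, the braid relation holds everywhere on $(\R_{>0}^n)^k$.

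Having verified all three Coxeter relations, the universal property of $S_k$ (as the group generated by simple transpositions $s_1, \ldots, s_{k-1}$ modulo exactly these relations) yields a group homomorphism $S_k \to \mathrm{Sym}((\R_{>0}^n)^k)$ sending $s_i \mapsto \eta_i$. I do not anticipate a genuine obstacle here since the real work was already done in Theorem \ref{thm:braid}; this corollary is a packaging statement, and the only small point to be careful about is that restricting to the strictly positive locus removes all issues of where $\eta$ is defined.
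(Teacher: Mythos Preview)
Your verification of the Coxeter relations is correct and matches the paper's approach. However, you stop at producing a homomorphism $S_k \to \mathrm{Sym}((\R_{>0}^n)^k)$, whereas the corollary asserts that the $\eta_i$ \emph{generate} an action of $S_k$ --- that is, the group they generate inside $\mathrm{Sym}((\R_{>0}^n)^k)$ is isomorphic to $S_k$, not a proper quotient. Satisfying the Coxeter relations only gives a surjection from $S_k$ onto the group generated by the $\eta_i$; you still need to rule out a nontrivial kernel.

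The paper closes this gap in one line: choose $(\a^{(1)},\ldots,\a^{(k)}) \in (\R_{>0}^n)^k$ with $\prod_i a^{(1)}_i > \prod_i a^{(2)}_i > \cdots > \prod_i a^{(k)}_i > 0$. By Lemma~\ref{lem:eta}(3), applying $\eta_j$ swaps the products in positions $j$ and $j{+}1$, so the map sending a $k$-tuple to its tuple of products intertwines the $\eta$-action with the standard $S_k$-action on $\R^k$. Since the chosen products are pairwise distinct, the orbit of this point has size at least $k!$, and hence the homomorphism is injective.
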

\begin{proof}
By Theorem \ref{thm:braid} and Lemma \ref{lem:eta}, the maps satisfy
the relations of the simple generators of the symmetric group $S_k$,
and so generate an action of a subgroup of $S_k$.  But if we pick a
point $(\a^{(1)},\ldots,\a^{(k)}) \in (\R_{>0}^{n})^k$ such that
$\prod_i a^{(1)}_i > \prod_i a^{(2)}_i > \cdots > \prod_i a^{(k)}_i
> 0$ then the orbit of this point under the $k-1$ maps has size at
least $k!$.  Thus the maps generate an action of $S_k$.
\end{proof}

\begin{remark}
Corollary \ref{cor:sinf} had previously been established in a number
of different contexts: by Noumi and Yamada (see \cite{NY}) in the
context of birational actions of affine Weyl groups, by Kirillov
\cite{Ki} in his study of tropical combinatorics, by
Berenstein-Kazhdan \cite{BK} in the theory of geometrical crystals,
and by Etingof \cite{Et} in the study of set-theoretical solutions
of Yang-Baxter equations.
\end{remark}

Now define $\theta: R \to R$ by $\theta({\bf a}, {\bf b})=({\bf
b'},{\bf a'})$ where $$b'_i = \frac{(a_i +
b_i)b_{i+1}}{a_{i+1}+b_{i+1}} \ \ \ \ \ \ a'_i = \frac{(a_i +
b_i)a_{i+1}}{a_{i+1}+b_{i+1}}.$$

\begin{lemma} \label{lem:theta}
The function $\theta$ has the following properties:
\begin{enumerate}
  \item $a_i + b_i = a'_i + b'_i$
  \item $\prod_i a_i = \prod_i a'_i$, $\prod_i b_i = \prod_i b'_i$
  \item the inverse map $\theta^{-1}$ is given by $({\bf a}, {\bf b}) \mapsto ({\bf b'},{\bf a'})$ where
  $$
  b'_i = \frac{(a_i + b_i)b_{i-1}}{a_{i-1}+b_{i-1}} \ \ \ \ \ \
  a'_i = \frac{(a_i + b_i)a_{i-1}}{a_{i-1}+b_{i-1}}
  $$
  \item if $\theta^{2k-1}({\bf a}, {\bf b})=({\bf b'},{\bf a'})$ then
  $$b'_i = \frac{(a_i+b_i)b_{i+2k-1}}{a_{i+2k-1}+b_{i+2k-1}} \ \ \ \ \ \
  a'_i = \frac{(a_i+b_i)a_{i+2k-1}}{a_{i+2k-1}+b_{i+2k-1}}$$
  \item if $\theta^{2k}({\bf a}, {\bf b})=({\bf a'},{\bf b'})$ then
  $$a'_i = \frac{(a_i+b_i)a_{i+2k}}{a_{i+2k}+b_{i+2k}} \ \ \ \ \ \
  b'_i = \frac{(a_i+b_i)b_{i+2k}}{a_{i+2k}+b_{i+2k}}$$
  \item $\theta^{{\rm lcm}(n,2)}$ is the identity map.
\end{enumerate}
\end{lemma}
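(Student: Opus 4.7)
My plan is to verify the six properties in order, with (1) and (2) providing the structural identities that drive the rest. Property (1) is immediate from the definitions: $a'_i + b'_i = (a_i+b_i)(a_{i+1}+b_{i+1})/(a_{i+1}+b_{i+1}) = a_i+b_i$. Property (2) is a telescoping cancellation: in $\prod_i a'_i = \prod_i (a_i+b_i)a_{i+1}/(a_{i+1}+b_{i+1})$, each factor $(a_j+b_j)$ appears once in a numerator and once in a denominator because indices are cyclic mod $n$, so $\prod_i a'_i = \prod_i a_{i+1} = \prod_i a_i$, and likewise for the $b$'s.

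For (3) I will directly compose the claimed $\theta^{-1}$ with $\theta$ and simplify. Writing $\theta^{-1}(\a,\b) = (\b^*,\a^*)$, the first step is to observe $a^*_i + b^*_i = a_i + b_i$ by the same one-line calculation as in (1). Applying $\theta$ to $(\b^*, \a^*)$ then yields a pair whose $\a$-component has $i$th entry $(a_i+b_i)a^*_{i+1}/(a_{i+1}+b_{i+1})$. Substituting $a^*_{i+1} = (a_{i+1}+b_{i+1})a_i/(a_i+b_i)$ causes the two fractions to telescope to $a_i$, and the same argument recovers $b_i$.

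Properties (4) and (5) I will prove by induction on $k$, with (5) supplying the backbone. For the base case $\theta^2$, applying $\theta$ to $(\b', \a')$ produces denominators $b'_{i+1}+a'_{i+1} = a_{i+1}+b_{i+1}$ (by (1)), which cancel exactly against the denominator inside $a'_{i+1}$; the composition collapses to $(a_i+b_i)a_{i+2}/(a_{i+2}+b_{i+2})$ in the first coordinate. The inductive step is the identical cancellation with the shift $2k$ in place of $0$, and (4) is obtained by applying one more $\theta$ to the formula in (5).

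Finally (6) follows by combining (5) with the periodicity $a_{i+n}=a_i$, $b_{i+n}=b_i$. If $n$ is even, $\mathrm{lcm}(n,2)=n$, and (5) with $2k=n$ gives $a'_i = (a_i+b_i)a_i/(a_i+b_i) = a_i$; if $n$ is odd, the same computation with $2k=2n$ yields the identity. It is worth noticing that in the odd case $\theta^n$ is \emph{not} the identity: by (4) it swaps $\a$ and $\b$, which is exactly why $\mathrm{lcm}(n,2)$ rather than $n$ is the right exponent. No step presents a genuine obstacle; the lemma is formal once (1) is in hand and the index shifts in (4)--(5) are tracked carefully modulo $n$.
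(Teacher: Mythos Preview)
Your proof is correct and follows the same approach as the paper, which simply says that (1), (2), (3) follow directly from the definition, (4) and (5) are verified by induction, and (6) follows from (4) and (5). Your write-up supplies the explicit telescoping and cancellation details that the paper leaves implicit, and your closing remark about $\theta^n$ swapping $\a$ and $\b$ when $n$ is odd is a nice sanity check that the paper does not spell out.
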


\begin{proof}
Statements (1), (2) and (3) follow directly from definition, (4) and
(5) are easily verified by induction, (6) follows from (4) and (5).
\end{proof}

\begin{theorem}
If $\theta({\bf a}, {\bf b})=({\bf b'},{\bf a'})$ then $M(\a) N(\b)
= N(\b') M(\a')$.
\end{theorem}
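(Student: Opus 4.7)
The plan is to verify the identity $M(\a)N(\b) = N(\b')M(\a')$ entry by entry, after first obtaining a closed form for the entries of $N(\cdot)$. Because both sides are infinite periodic matrices in $U$, it is enough to compare entries above the diagonal; the diagonal entries are automatically $1$ and the entries below diagonal are $0$ on both sides.

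First I would record the closed form for a curl. Since $N(b_1,\ldots,b_n) = M(b_1,\ldots,b_n)^{-c}$ and $M^c = I+A$ where $A$ has $-b_i$ in position $(i,i+1)$ and zeros elsewhere, the geometric series $N = I + A' + (A')^2 + \cdots$ (where $A' = -A$, with $b_i$ in position $(i,i+1)$) converges entrywise, giving
$$N(\b)_{i,j} \;=\; \prod_{k=i}^{j-1} b_{\bar k} \qquad (j \geq i),$$
with cyclic indexing. This matches the explicit $n=2$ example given earlier in the paper.

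Next, using the fact that the only superdiagonal entries of $M(\a)$ are $m_{i,i+1}=a_i$, direct row-by-column multiplication yields, for $j>i$,
$$(M(\a)N(\b))_{i,j} \;=\; N(\b)_{i,j} + a_i\, N(\b)_{i+1,j} \;=\; (a_i+b_i)\prod_{k=i+1}^{j-1} b_k,$$
and similarly
$$(N(\b')M(\a'))_{i,j} \;=\; N(\b')_{i,j} + N(\b')_{i,j-1}\,a'_{j-1} \;=\; (a'_{j-1}+b'_{j-1})\prod_{k=i}^{j-2} b'_k.$$
By Lemma \ref{lem:theta}(1), $a'_{j-1}+b'_{j-1} = a_{j-1}+b_{j-1}$, so the identity reduces to
$$(a_i+b_i)\prod_{k=i+1}^{j-1} b_k \;=\; (a_{j-1}+b_{j-1})\prod_{k=i}^{j-2} b'_k.$$

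Finally, I would substitute $b'_k = (a_k+b_k)b_{k+1}/(a_{k+1}+b_{k+1})$ into the right-hand product. The factors $(a_k+b_k)$ telescope against the shifted $(a_{k+1}+b_{k+1})$ in the denominator, leaving only $(a_i+b_i)/(a_{j-1}+b_{j-1})$; the $b_{k+1}$ factors in the numerator reindex to $\prod_{k=i+1}^{j-1} b_k$. This gives the required equality. There is no serious obstacle here—the proof is essentially a telescoping computation—provided one first extracts the clean closed form for $N(\b)_{i,j}$; the main bookkeeping subtlety is just keeping the cyclic indexing of the $b$'s and $b'$'s consistent.
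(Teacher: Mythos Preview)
Your proof is correct and follows essentially the same approach as the paper's own proof: both compute the $(i,j)$-entries of $M(\a)N(\b)$ and $N(\b')M(\a')$ directly from the explicit formula $N(\b)_{i,j}=\prod_{k=i}^{j-1}b_k$, then use $a'_{j-1}+b'_{j-1}=a_{j-1}+b_{j-1}$ and telescope the product $\prod_{k=i}^{j-2}b'_k$ via the definition of $b'_k$. The only difference is that you spell out the derivation of the curl entries and the telescoping in a bit more detail, whereas the paper compresses the whole computation into a single chain of equalities.
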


\begin{proof}
The $(i,j)$-th entry in $N(b'_1,\ldots,b'_n) M(a'_1,\ldots,a'_n)$ is
$$(b'_{j-1}+a'_{j-1}) \prod_{k=i}^{j-2} b'_k = (a_{j-1}+b_{j-1})
\prod_{k=i}^{j-2} \frac{(a_k + b_k)b_{k+1}}{a_{k+1}+b_{k+1}}=
(a_{i}+b_{i}) \prod_{k=i+1}^{j-1} b_{k}$$ which is exactly the
$(i,j)$-th entry of $M(a_1,\ldots,a_n) N(b_1,\ldots,b_n)$.
\end{proof}

Both $\eta$ and $\theta$ are well-defined as long as at least one of
${\bf a}$ and ${\bf b}$ is non-degenerate.  The following lemma
shows that interpreting a Chevalley generator as a degenerate whirl
and using $\eta$ results in the same relation as interpreting a
Chevalley generator as a degenerate curl and using $\theta$.

\begin{lemma} \label{lem:chevcom}
We have $\eta((0, \ldots, 0, a_i, 0, \ldots, 0), {\bf b}) =
\theta^{-1}((0, \ldots, 0, a_i, 0, \ldots, 0), {\bf b}) = ({\bf
b'},{\bf a'})$ and the map can be described as follows:
\begin{enumerate}
 \item $b'_k = b_k$, $k \not = i+1, i$;
 \item $b'_{i+1}=\frac{b_{i+1}b_i}{a_i+b_i}$, $a'_i=a_i+b_i$;
 \item $a'_k = 0$, $k \not = i+1$;
 \item $a'_{i+1}=\frac{b_{i+1}a_i}{a_i+b_i}$.
\end{enumerate}
\end{lemma}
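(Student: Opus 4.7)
Both sides of the equality $\eta(\a,\b) = \theta^{-1}(\a,\b)$ will be evaluated by direct substitution into the explicit formulas, with $\a = (0,\ldots,0,a_i,0,\ldots,0)$. The $\theta^{-1}$ side is mechanical: plugging into the formulas of Lemma~\ref{lem:theta}(3) and splitting by the cases $k \notin \{i,i+1\}$, $k = i$, and $k = i+1$, the factors $a_k$ and $a_{k-1}$ in the numerator and denominator vanish except in the expected positions, and one reads off $b'_i = a_i + b_i$, $b'_{i+1} = b_{i+1}b_i/(a_i+b_i)$, $a'_{i+1} = b_{i+1}a_i/(a_i+b_i)$, with the remaining $b'_k$ equal to $b_k$ and the remaining $a'_k$ equal to zero.

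For the $\eta$ side, the central task is to compute $\kappa_\ell(\a,\b)$ for every $\ell$. The key observation is that each summand in $\kappa_\ell$ carries the factor $\prod_{k=j+1}^{\ell+n-1} a_k$, which vanishes unless the index range $[j+1,\ell+n-1]$ is either empty (forcing $j = \ell+n-1$) or a single index $\equiv i \pmod n$ (forcing $\ell \equiv i+1 \pmod n$ and $j = \ell+n-2$, contributing an extra factor $a_i$). Setting $B = b_1 b_2 \cdots b_n$, this yields $\kappa_\ell = B/b_\ell$ whenever $\ell \not\equiv i+1 \pmod n$, and $\kappa_{i+1} = B(a_i+b_i)/(b_i b_{i+1})$.

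Substituting these into $b'_k = b_{k+1}\kappa_{k+1}/\kappa_k$ and $a'_k = a_{k-1}\kappa_{k-1}/\kappa_k$, the vanishing of $a_{k-1}$ for $k \neq i+1$ forces $a'_k = 0$ off of $k = i+1$, while $a'_{i+1} = a_i\,\kappa_i/\kappa_{i+1} = a_i b_{i+1}/(a_i+b_i)$. The ratio $\kappa_{k+1}/\kappa_k$ collapses to $b_k/b_{k+1}$ away from $k \in \{i,i+1\}$, yielding $b'_k = b_k$; at $k = i$ and $k = i+1$ it picks up the extra factor $(a_i+b_i)/b_i$ and its reciprocal, producing exactly $b'_i = a_i+b_i$ and $b'_{i+1} = b_{i+1}b_i/(a_i+b_i)$. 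This matches the $\theta^{-1}$ output term by term, establishing the lemma.

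The only step requiring real care is the combinatorial case analysis isolating which summands of $\kappa_\ell$ survive the degeneracy of $\a$; once this is in hand, the rest is elementary algebra, and I anticipate no substantive obstacle.
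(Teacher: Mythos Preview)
Your proof is correct and follows the paper's approach exactly: direct computation of the $\kappa_\ell$ under the degeneration, then substitution into the defining formulas for $\eta$ and (additionally) $\theta^{-1}$. The paper only sketches one representative case ($\kappa_{i+1}$, $\kappa_{i+2}$, and $b'_{i+1}$), whereas you carry out the full case analysis; along the way you correctly obtain $b'_i = a_i + b_i$, which is what item~(2) of the lemma should read (the printed ``$a'_i = a_i + b_i$'' contradicts item~(3) and is evidently a typo).
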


\begin{proof}
A direct computation. For example, one has $\kappa_{i+2}({\bf a},
{\bf b}) = \prod_{k \not = i+2} b_k$, $\kappa_{i+1}({\bf a}, {\bf
b}) = (a_i+b_i) \prod_{k \not = i,i+1} b_k$ and by definition
$b'_{i+1} = b_{i+2} \kappa_{i+2}({\bf a}, {\bf b})/
\kappa_{i+1}({\bf a}, {\bf b}) = b_{i+1}b_i/(a_i+b_i)$.
\end{proof}

For later use we also give the following result.

\begin{lemma} \label{lem:chevcom2}
The map $\theta((0, \ldots, 0, a_i, 0, \ldots, 0), {\bf b}) = ({\bf
b'},{\bf a'})$ can be described as follows:
\begin{enumerate}
 \item $b'_k = b_k$, $k \not = i-1, i$;
 \item $b'_{i-1}=\frac{b_{i-1}b_i}{a_i+b_i}$, $a'_i=a_i+b_i$;
 \item $a'_k = 0$, $k \not = i-1$;
 \item $a'_{i-1}=\frac{b_{i-1}a_i}{a_i+b_i}$.
\end{enumerate}
\end{lemma}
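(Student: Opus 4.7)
The plan is to prove Lemma \ref{lem:chevcom2} by direct substitution into the defining formulas for $\theta$. Recall that $\theta(\a,\b) = (\b',\a')$ is given entrywise by
$$b'_j = \frac{(a_j+b_j)b_{j+1}}{a_{j+1}+b_{j+1}}, \qquad a'_j = \frac{(a_j+b_j)a_{j+1}}{a_{j+1}+b_{j+1}}.$$
Plugging in the degenerate tuple $\a = (0,\ldots,0,a_i,0,\ldots,0)$, the numerator factor $(a_j+b_j)$ equals $b_j$ except when $j \equiv i \pmod n$, and the denominator factor $(a_{j+1}+b_{j+1})$ equals $b_{j+1}$ except when $j+1 \equiv i$. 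So the verification splits naturally into three cases indexed by $j$.

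First, when $j \not\equiv i, i-1$, both $a_j$ and $a_{j+1}$ vanish, so the formulas collapse to $b'_j = b_j$ and $a'_j = 0$, accounting for the generic entries in items (1) and (3). Next, when $j = i-1$, only $a_{j+1} = a_i$ is nonzero, and substitution gives $b'_{i-1} = \tfrac{b_{i-1}b_i}{a_i+b_i}$ and $a'_{i-1} = \tfrac{b_{i-1}a_i}{a_i+b_i}$, which are the content of (4) and the first half of (2). Finally, when $j = i$, only $a_j = a_i$ is nonzero; the factor $b_{i+1}$ then cancels between numerator and denominator, leaving $b'_i = a_i+b_i$ and $a'_i = 0$, completing the remaining assertions of (1)--(3).

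There is no substantive obstacle here: the entire proof is a three-case bookkeeping exercise of the sort already carried out for $\eta$ in Lemma \ref{lem:chevcom}. The only structural point worth flagging is the parallel with that lemma, where the same degenerate input was fed into $\theta^{-1} = \eta$ and the nontrivial modifications landed at indices $i$ and $i+1$; here, under $\theta$ itself, the modifications instead land at indices $i-1$ and $i$. This one-position shift between the actions of $\theta$ and $\theta^{-1}$ is already visible in parts (3)--(5) of Lemma \ref{lem:theta}, and so the statement of the present lemma could also be deduced formally from Lemma \ref{lem:chevcom} by inverting.
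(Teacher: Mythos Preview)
Your proof is correct and follows the same approach as the paper, which simply records ``Direct computation from the definitions.'' Your three-case substitution is exactly that computation carried out explicitly (and in passing you correctly obtain $b'_i = a_i+b_i$, which is evidently what item~(2) intends, since the printed ``$a'_i = a_i+b_i$'' contradicts item~(3)); the closing remark about recovering the result from Lemma~\ref{lem:chevcom} via inversion is a pleasant extra observation not present in the paper.
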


\begin{proof}
Direct computation from the definitions.
\end{proof}

\section{Infinite products of whirls and curls}\label{sec:infprod}
\subsection{Infinite whirls and curls}
For a possibly infinite sequence of matrices $(X^{(i)})_{i =
1}^{\infty}$ we write $\prod_{i=1}^{\infty} X^{(i)}$ for the limit
$$
\lim_{k \to \infty} (X^{(1)} X^{(2)} \cdots X^{(k)}).
$$
Similarly define $\prod_{i = -\infty}^{-1} X^{(i)}$ by
$$
\lim_{k \to -\infty} (X^{(k)} X^{(k+1)} \cdots X^{(-1)}).
$$

\begin{lem}\label{lem:infinitewhirl}
Let $(a^{(1)}_{1},a^{(1)}_{2},\ldots,a^{(1)}_{n}),
(a^{(2)}_{1},a^{(2)}_{2},\ldots,a^{(2)}_{n}), \ldots$ be an infinite sequence of
$n$-tuples of nonnegative numbers such that $\sum_{i = 1}^\infty
\sum_{j = 1}^n a^{(i)}_{j} < \infty$.  Then the limits
$$
\prod_{i=1}^{\infty} M(a^{(i)}_{1}\ldots,a^{(i)}_{n}),
\prod_{i=-\infty}^{-1} M(a^{(-i)}_{1}\ldots,a^{(-i)}_{n}),
\prod_{i=1}^{\infty} N(a^{(i)}_{1}\ldots,a^{(i)}_{n}),
\prod_{i=-\infty}^{-1} N(a^{(-i)}_{1}\ldots,a^{(-i)}_{n})
$$
exist and are TNN matrices.  Conversely, the limits exist only if
the sum is finite.
\end{lem}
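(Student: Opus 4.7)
My plan is to reduce both convergence statements to monotone convergence, then produce an explicit upper bound for each entry of the partial products and finally invoke Lemma~\ref{lem:limit} for total nonnegativity. The converse is handled by examining the single superdiagonal.

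\textbf{Setup and monotonicity.} Since all whirls and curls with nonnegative parameters lie in $U_{\geq 0}$ and have $1$'s on the diagonal, Lemma~\ref{lem:tripleomit} tells me that multiplying an additional whirl (or curl) on either the left or the right can only weakly increase each entry. Therefore the partial products
\[
P_k = \prod_{i=1}^{k} M(\a^{(i)}) \quad \text{or} \quad \prod_{i=1}^{k} N(\a^{(i)})
\]
(and similarly for the $i=-\infty$ side) form an entrywise monotonically increasing sequence of TNN matrices. Hence the entrywise limit exists in $[0,\infty]^{\Z \times \Z}$, and the issue is just to show each entry is finite. Once finiteness is established, Lemma~\ref{lem:limit} gives total nonnegativity.

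\textbf{Whirl bound.} Because each $M(\a^{(l)})$ has only the diagonal and superdiagonal nonzero, the entry $(i,j)$ with $\ell := j-i \geq 0$ of $M(\a^{(1)})\cdots M(\a^{(k)})$ equals
\[
\sum_{1 \leq l_1 < l_2 < \cdots < l_\ell \leq k} \; \prod_{s=1}^{\ell} a^{(l_s)}_{\overline{i+s-1}},
\]
corresponding to the choice of which $\ell$ of the $k$ whirls contribute a superdiagonal step. Bounding each factor $a^{(l_s)}_{\overline{i+s-1}}$ by $\sum_r a^{(l_s)}_r$, this sum is at most
\[
\frac{1}{\ell!}\left(\sum_{l=1}^{\infty}\sum_{r=1}^{n} a^{(l)}_r\right)^{\ell},
\]
which is finite by hypothesis.

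\textbf{Curl bound.} A single curl has infinitely many nonzero entries, but they are bounded by $N(\a^{(l)})_{r,r'} = \prod_{s=r}^{r'-1} a^{(l)}_{\bar s} \leq C_l^{\,r'-r}$ where $C_l := \max_r a^{(l)}_r$. Expanding the product as a sum over chains $i = r_0 \leq r_1 \leq \cdots \leq r_k = j$, I get
\[
(N(\a^{(1)})\cdots N(\a^{(k)}))_{ij} \leq \sum_{i=r_0 \leq \cdots \leq r_k = j} \prod_{l=1}^{k} C_l^{\,r_l - r_{l-1}},
\]
which is exactly the corresponding $(i,j)$-entry in the $n=1$ case with parameters $C_l$, i.e.\ the coefficient of $t^{j-i}$ in $\prod_l (1-C_l t)^{-1}$. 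Since $\sum_l C_l \leq \sum_l \sum_r a^{(l)}_r < \infty$, this infinite product is a totally positive function by Theorem~\ref{T:ET}, so the coefficient of any fixed power of $t$ is finite. This reduces the $n>1$ curl case to the classical $n=1$ statement and establishes finiteness of each entry.

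\textbf{Converse.} If any of the four limits exists, then in particular the $(i,i+1)$-entry must be finite. For whirls this entry is literally $\sum_{l=1}^{k} a^{(l)}_{\bar i}$, and for curls it is at least $\sum_{l=1}^{k} a^{(l)}_{\bar i}$ (take the chain that increments only at curl $l$). Finiteness as $k \to \infty$ for every $\bar i \in \{1,\dots,n\}$ is equivalent to $\sum_{l}\sum_{r} a^{(l)}_r < \infty$. The $i = -\infty$ side is handled symmetrically by reversing the roles of the first and last factors. The main bookkeeping obstacle is just being careful about which row/column the cyclic index $\overline{i+s-1}$ refers to in the bounds; no deep estimate beyond the classical $n=1$ convergence is needed.
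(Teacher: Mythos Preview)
Your proof is correct and follows the same overall skeleton as the paper (monotonicity of partial products, then an explicit entrywise bound, then Lemma~\ref{lem:limit}), but you handle the curl case differently. The paper simply observes that $\prod_{i} N(\a^{(i)}) = \bigl(\prod_{i} M(\a^{(i)})\bigr)^{-c}$ and that each entry of $X^{-c}$ is a polynomial (a solid minor) in finitely many entries of $X$, so convergence of the whirl product immediately gives convergence of the curl product. Your route instead bounds a curl entrywise by the $n=1$ curl with parameter $C_l = \max_r a^{(l)}_r$ and reduces to the classical case. Both work; the paper's $^{-c}$ trick is shorter and avoids a second estimate, while your argument is more self-contained and does not use the $^{-c}$ machinery at all. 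Two minor remarks: for the curl superdiagonal you actually have equality $(P_k)_{i,i+1} = \sum_{l} a^{(l)}_{\bar i}$, not merely $\geq$, since the superdiagonal of a product of upper unitriangular matrices is additive; and invoking Theorem~\ref{T:ET} for the $n=1$ bound is heavier than needed, since $h_d(C_1,C_2,\ldots) \leq (\sum_l C_l)^d$ already gives the finiteness you want. Finally, you supply the converse, which the paper's proof omits.
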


\begin{proof}
We will prove the statement for $\prod_{i=1}^{\infty}
M(a^{(i)}_{1}\ldots,a^{(i)}_{n})$.  The result for curls is
obtained by taking inverses.  Each entry of the sequence
$\prod_{i=1}^{k} M(a^{(i)}_{1}\ldots,a^{(i)}_{n})$ is non-decreasing as $k
\to \infty$ so it suffices to prove that every entry is bounded.  It
is easy to see that the entries directly above the diagonal are
bounded by $\alpha = \sum_{i = 1}^\infty \sum_{j = 1}^n a^{(i)}_{j}$. By
induction, one sees that entries along the $d$-th diagonal are
bounded by $\alpha^d$.

By Lemma \ref{lem:limits} we see that $\prod_{i=1}^{\infty}
M(a^{(i)}_{1}\ldots,a^{(i)}_{n})$ is TNN.
\end{proof}

We call the products above {\it {right-infinite whirls}}, {\it
{left-infinite whirls}}, {\it {right-infinite curls}} and {\it
{left-infinite curls}}. If $X$ is an infinite whirl (resp.~curl) we
say that $X$ is of {\it whirl type} (resp.~{\it curl type}).

\begin{lem}\label{lem:infinitewhirldet}
Let $X$ one of the infinite products in Lemma
\ref{lem:infinitewhirl}.  Then the folded determinant of $X$ is
given by
$$
\det(\overline{X}(t)) = \begin{cases} \prod_{i = 1}^\infty (1 + (-1)^{n+1} (\prod_{j=1}^n a^{(i)}_{j})\, t) &\mbox{if $X$ is of whirl type} \\
\prod_{i=1}^{\infty} \frac{1}{1 - (\prod_{j=1}^n a^{(i)}_{j})\, t}
&\mbox{if $X$ is of curl type}.
\end{cases}
$$
\end{lem}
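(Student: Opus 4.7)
The plan is to reduce the statement to a routine application of multiplicativity of the determinant for finite products and then pass to the limit using the strong convergence established in Proposition~\ref{P:strongconverge}.

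First I would treat finite products. Write $X_k = \prod_{i=1}^{k} M(a^{(i)}_1,\ldots,a^{(i)}_n)$ in the whirl case (and analogously in the other three cases). Since $\overline{XY}(t)=\overline{X}(t)\,\overline{Y}(t)$, the folded determinant is multiplicative, so by Lemma~\ref{lem:whirldet}
\[
\det(\overline{X_k}(t)) \;=\; \prod_{i=1}^{k}\bigl(1+(-1)^{n+1}\bigl(\textstyle\prod_{j=1}^n a^{(i)}_j\bigr)t\bigr),
\]
with the analogous product of geometric series in the curl case.

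Next I would verify that each side converges as $k \to \infty$ uniformly on some disk around $0$. For the left-hand side: by Lemma~\ref{lem:infinitewhirl}, $X_k \to X$ in the naive entry-wise topology on $U_{\geq 0}$, and each $X_k$ lies in $U_{\geq 0}$. Proposition~\ref{P:strongconverge} then guarantees a common neighborhood $V$ of $0$ on which every entry of $\overline{X_k}(t)$ and $\overline{X}(t)$ is holomorphic and on which entries converge uniformly. Since $\det(\overline{X_k}(t))$ is a polynomial of degree $n$ in the entries of $\overline{X_k}(t)$, uniform entry-wise convergence on $V$ implies uniform convergence of the determinants to $\det(\overline{X}(t))$ on $V$.

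For the right-hand side I would show the infinite product converges uniformly near $0$. The hypothesis $\sum_i\sum_j a^{(i)}_j<\infty$ forces $\sum_i\prod_j a^{(i)}_j<\infty$: setting $\alpha=\sum_i\sum_j a^{(i)}_j$, one has $\prod_j a^{(i)}_j \leq \alpha^{n-1}\sum_j a^{(i)}_j$, and summing on $i$ gives a bound by $\alpha^n$. Thus $\sum_i \lvert (\prod_j a^{(i)}_j)\,t\rvert$ converges uniformly on compact subsets of $\mathbb{C}$, so the whirl-type product converges to a holomorphic entire function. In the curl case, the same bound guarantees uniform convergence of $\prod_i 1/(1-(\prod_j a^{(i)}_j)t)$ on a sufficiently small disk. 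Combining uniform convergence of both sides and the finite-$k$ equality yields the claimed formula.

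The only non-routine point is the one already handled: arguing that the naive limit $X_k \to X$ upgrades to uniform convergence of the folded entries on a neighborhood of $0$, and this is exactly the content of Proposition~\ref{P:strongconverge}. The rest is bookkeeping.
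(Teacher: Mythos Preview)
Your proof is correct, and the overall strategy---reduce to finite products via multiplicativity of $\det$ and Lemma~\ref{lem:whirldet}, then pass to the limit---matches the paper's. The difference is in how the limit is justified. The paper makes the single observation that each coefficient of $\det(\overline X(t))$, as a formal power series in $t$, depends on only finitely many entries of $X$; hence entrywise convergence $X_k\to X$ immediately gives coefficientwise convergence of determinants, and one is done. You instead invoke Proposition~\ref{P:strongconverge} to upgrade to uniform holomorphic convergence on a disk, then deduce uniform convergence of the $n\times n$ determinants, and separately check convergence of the right-hand infinite product. This is perfectly valid but brings in more analytic machinery than necessary: the paper's ``finitely many entries'' remark sidesteps Proposition~\ref{P:strongconverge} entirely and works purely at the level of formal power series. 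On the other hand, your version is more explicit about why the right-hand infinite product converges (in particular your verification that $\sum_i\prod_j a^{(i)}_j<\infty$), a point the paper leaves implicit.
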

\begin{proof}
Each coefficient of $\det(\overline{X}(t))$ depends on only finitely
many entries of $X$.  The statement then follows from taking an
infinite product of Lemma \ref{lem:whirldet}.
\end{proof}

\subsection{Loop symmetric functions}

In this subsection, we assume familiarity with the theory of Young
tableaux and symmetric functions \cite{EC2}.  Let $Y =
(y_{k,l})_{k,l=-\infty}^\infty = \prod_{i=1}^\infty N(\xx_i)$
 be a right-infinite curl, where $\x_i =
 (x_i^{(1)},x_i^{(2)},\ldots,x_i^{(n)})$.  Note that in order to
agree with usual symmetric function conventions, we have labeled (in
this subsection only) the upper and lower indices of the curl
parameters $x_i^{(j)}$ in the opposite manner to our usual notation.
We caution the reader that with variables $a_j^{(i)}$ it is the
lower index that is in $\Z/n\Z$.

We now interpret the entries of $Y$ as analogs of homogeneous
symmetric functions in variables $x_i^{(j)}$.   Define for each $r
\geq 1$ and each $k \in \Z/n\Z$,
$$
h_{r}^{(k)}({\xx}) = \sum_I x^{(k)}_{i_1} x^{(k+1)}_{i_2} \dotsc
x^{(k+r-1)}_{i_{r}}
$$
where the sum is taken over all weakly increasing sequences $1 \leq
i_1 \leq i_2 \leq \ldots \leq i_{r}$.  We shall call the
$h_r^{(k)}({\xx})$ {\it loop homogeneous symmetric functions}.

\begin{lemma}\label{L:homog}
Let $Y = \prod_{i=1}^\infty N(\x_{i})$.  Then we have $y_{k,l} =
h_{l-k}^{(k)}({\xx})$.
\end{lemma}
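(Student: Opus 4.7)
The plan is to compute the $(k,l)$ entry of $Y$ directly by expanding the infinite matrix product and recognizing the result as the sum defining $h_{l-k}^{(k)}(\xx)$.

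First I would establish the entry formula for a single curl: since $N(\xx_i) = M(\xx_i)^{-c}$ and $M(\xx_i)$ is bidiagonal with 1's on the diagonal and $x_i^{(j)}$ on the superdiagonal (with superscripts read mod $n$), a routine inversion (or inspection of the $n=2$ example in the introduction) gives
\[
N(\xx_i)_{p,q} \;=\; x_i^{(p)} x_i^{(p+1)} \cdots x_i^{(q-1)} \qquad (p \leq q),
\]
with superscripts taken mod $n$, and $N(\xx_i)_{p,q} = 0$ for $p > q$.

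Next I would compute a finite product. Because each curl is upper triangular with 1's on the diagonal, iterated matrix multiplication gives
\[
\bigl(N(\xx_1) \cdots N(\xx_r)\bigr)_{k,l} \;=\; \sum_{k = k_0 \leq k_1 \leq \cdots \leq k_r = l} \; \prod_{s=1}^{r} N(\xx_s)_{k_{s-1},k_s},
\]
where each factor equals $1$ when $k_{s-1} = k_s$, and equals $x_s^{(k_{s-1})} x_s^{(k_{s-1}+1)} \cdots x_s^{(k_s - 1)}$ otherwise. The key combinatorial step is a reparameterization: instead of recording the chain $(k_0,\ldots,k_r)$, record for each position $p \in \{k,k+1,\ldots,l-1\}$ the unique index $s(p) \in [1,r]$ such that the chain crosses from $\leq p$ to $\geq p+1$ at stage $s(p)$. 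This gives a bijection between pairs $(\text{chain}, \text{entry choices})$ on the left and weakly increasing sequences $s(k) \leq s(k+1) \leq \cdots \leq s(l-1)$ in $[1,r]$, under which the monomial contributed becomes $\prod_{p=k}^{l-1} x_{s(p)}^{(p)}$. Thus
\[
\bigl(N(\xx_1) \cdots N(\xx_r)\bigr)_{k,l} \;=\; \sum_{1 \leq s(k) \leq \cdots \leq s(l-1) \leq r} x_{s(k)}^{(k)} x_{s(k+1)}^{(k+1)} \cdots x_{s(l-1)}^{(l-1)}.
\]

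Finally I would pass to the limit $r \to \infty$. Lemma \ref{lem:infinitewhirl} guarantees that the infinite product $Y = \prod_{i=1}^{\infty} N(\xx_i)$ converges entrywise, and the monotone convergence of the finite sum above (each new term is nonnegative) identifies the limit with
\[
\sum_{1 \leq i_1 \leq i_2 \leq \cdots \leq i_{l-k}} x_{i_1}^{(k)} x_{i_2}^{(k+1)} \cdots x_{i_{l-k}}^{(k+l-k-1)} \;=\; h_{l-k}^{(k)}(\xx),
\]
after relabeling $i_j = s(k+j-1)$. The only step that requires care is the reparameterization bijection; the rest is bookkeeping and an appeal to the previous lemma for convergence.
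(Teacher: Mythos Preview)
Your proof is correct and follows essentially the same approach as the paper: both first establish the formula for a finite product of curls and then pass to the limit using Lemma~\ref{lem:infinitewhirl}. The only cosmetic difference is that the paper carries out the finite case by induction on the number of curls (splitting sequences according to the position of the first occurrence of the new index $m$), whereas you expand the full product at once and use the reparameterization bijection; these are two presentations of the same combinatorial identity.
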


\begin{proof}
We first argue the statement is valid for any finite number of
curls. We proceed by induction, the case of one curl follows
trivially from the definition of curls. Assume we have already shown
that the entries of $Y = \prod_{i = 1}^{m-1} N(x_i^{(1)}, \ldots,
x_i^{(n)})$ are described by the stated formula. Let us consider $Y'
= Y N$ where $N = N(x_m^{(1)}, \ldots, x_m^{(n)})$. We have
$y'_{k,l} = \sum_{t=0}^{l-k} y_{k,k+t} N_{k+t, l}$. We know that
$y_{k,k+t}$ equals the sum $\sum_I x_{i_1}^{(k)} x_{i_2}^{(k+1)}
\dotsc x^{(k+t-1)}_{i_{t}}$ over all weakly increasing sequences $I$
of length $t$.  At the same time $N_{k+t, l}$ equals the product
$x_{m}^{(k+t)} x_{m}^{(k+t+1)} \dotsc x_m^{(l)}$. Thus the term
$y_{k,k+t} N_{k+t, l}$ of the summation equals the sum $ \sum_{I'}
x_{i_1}^{(k)} x_{i_2}^{(k+1)} \dotsc x_{i_{l-k}}^{(l-1)}$ over all
sequences
$$I' = i_1 \leq i_2 \leq \ldots \leq i_{k+t-1} < i_{k+t} = \ldots =
i_{l} = m.$$ Summing over $t$ gives the desired result.

For an infinite product of curls, the result follows from taking the
limit $m \to \infty$.  The limit exists by Lemma
\ref{lem:infinitewhirl}.
\end{proof}

Now we provide an analog of Jacobi-Trudi formula, giving an
interpretation for minors of $Y$ as generalizations
$s_\lambda({\xx})$ of skew Schur functions, which we call {\it loop
Schur functions}.  Let $\lambda = \rho/\nu$ be a skew shape, which
we shall draw in the English notation:
\begin{figure}[h!]     \begin{center}
    \input{loo7.pstex_t}
    \end{center}
    \caption{}
    \label{fig:loo7}
\end{figure}

A square $s = (i,j)$ in the $i$-th row and $j$-th column has {\it
content} $j - i$ and has {\it residue} $r(s) = \overline{j- i} \in
\Z/n\Z$. Recall that a semistandard Young tableaux $T$ with shape
$\lambda$ is a filling of each square $s \in \lambda$ with an
integer $T(s) \in \Z_{> 0}$ so that the rows are weakly-increasing,
and columns are increasing.  An example of a semistandard tableau is
given on the right in Figure \ref{fig:loo7}. The weight $x^T$ of a
tableaux $T$ is given by $x^T = \prod_{s \in \lambda}
x_{T(s)}^{(r(s))}$.  We define the loop Schur function by
$$
s_\lambda({\x}) = \sum_{T} x^T
$$
where the summation is over all semistandard Young tableaux of
(skew) shape $\lambda$.  We shall also need several alternative
definitions. We define the {\it {mirror residue}} $\r(s) =
\overline{i- j} \in \Z/n\Z$. We define $\x^T = \prod_{s \in \lambda}
x_{T(s)}^{(\r(s))}$ and the {\it mirror loop Schur functions}
$$
\s_\lambda({\xx}) = \sum_{T} \x^T.
$$

\begin{theorem}\label{T:schur}
Let $Y = \prod_{i=1}^\infty N(\xx_{i})$.  Let $I = i_1 < i_2 <
\ldots < i_k$ and $J = j_1 < j_2 < \ldots < j_k$ be two sequences of
integers such that $i_t \leq j_t$.  Define $$\lambda = \lambda(I,J)
= (j_k, j_{k-1} + 1, \ldots, j_1+k-1)/(i_k,i_{k-1}+ 1, \ldots,
i_1+k-1).$$ Then
$$
\Delta_{I,J}(Y) = \det(h_{j_t - i_s}^{(i_s)}(\xx))_{s,t=1}^k =
s_\lambda({\xx}).
$$
\end{theorem}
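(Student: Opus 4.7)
The first equality is almost immediate: by Lemma~\ref{L:homog}, the $(s,t)$-entry of the submatrix $Y_{I,J}$ is exactly $y_{i_s,j_t} = h_{j_t-i_s}^{(i_s)}(\xx)$, so $\Delta_{I,J}(Y)$ is the stated determinant by definition. So the entire task is to prove the Jacobi--Trudi type identity $\det(h_{j_t-i_s}^{(i_s)}(\xx))_{s,t=1}^k = s_\lambda(\xx)$.

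My plan is to mimic the classical proof of Jacobi--Trudi via the Lindstr\"om--Gessel--Viennot lemma, with care taken to track the residues. First I will set up a planar lattice path model on $\Z \times \Z_{>0}$ with unit east and north steps, where an east step from $(c,h)$ to $(c+1,h)$ carries the weight $x_h^{(\bar c)}$, depending only on $h$ and on $\bar c \in \Z/n\Z$. A single path from $(i,1)$ to $(j,N)$ then collects along its east steps the monomial $x_{h_1}^{(i)} x_{h_2}^{(i+1)} \cdots x_{h_{j-i}}^{(j-1)}$ for a weakly increasing sequence $1 \le h_1 \le \cdots \le h_{j-i} \le N$, so summing over all such paths (and letting $N \to \infty$) gives exactly $h_{j-i}^{(i)}(\xx)$.

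Next I will take sources $u_s = (i_s,1)$ and sinks $v_t = (j_t,\infty)$ and apply the standard LGV sign-reversing involution. Because $I \le J$, any non-intersecting family must pair $u_s$ with $v_s$, and so
$$\det(h_{j_t-i_s}^{(i_s)}(\xx))_{s,t=1}^k \;=\; \sum_{\text{non-intersecting }P} w(P).$$
Finally I will build the usual weight-preserving bijection with semistandard Young tableaux of shape $\lambda(I,J)$: record in row $k-s+1$ of $T$ the sequence of heights of the east steps of the $s$-th path, read left to right. Row weak-increase is automatic, and strict increase down columns corresponds precisely to non-intersection of adjacent paths. A direct check shows that the $m$-th box of row $k-s+1$ sits in row $k-s+1$ and column $i_s+k-s+m$, hence has content $i_s+m-1$, whose residue matches the upper index of the weight $x_{h}^{(i_s+m-1)}$ assigned to the $m$-th east step of the $s$-th path. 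Thus $w(P) = \xx^{T(P)}$, and summing gives $s_\lambda(\xx)$.

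The main thing to be careful about is the residue bookkeeping: one must set up the source/sink labeling so that the content of each tableau box agrees modulo $n$ with the upper index of the $x$-variable attached to the corresponding east step. Once this matching is verified, LGV and the tableau bijection go through exactly as in the classical symmetric function case. An alternative would be to expand everything as a cylindric network using Figure~\ref{fig:loo3} and appeal to Theorem~\ref{thm:L}, but since the minors under consideration satisfy $I \le J$ no winding occurs, and the planar LGV approach above is cleaner.
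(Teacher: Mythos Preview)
Your proof is correct and follows essentially the same route as the paper's: both arguments invoke the Lindstr\"om--Gessel--Viennot lemma on the planar lattice with east-step weights $x_{h}^{(\bar c)}$ (the paper writes the equivalent $x_{q+1}^{(p)}$ with heights shifted by one), place sources at the $i_s$ and sinks at the $j_t$, and then read off the bijection between non-intersecting path families and semistandard tableaux of shape $\lambda(I,J)$. In fact you supply more detail than the paper does on the residue bookkeeping---your verification that the $m$-th box of row $k-s+1$ has content $i_s+m-1$ matching the upper index of the corresponding east-step weight is exactly the check the paper defers to the reader.
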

Note that if $I$ and $J$ do not satisfy the condition $i_t \leq j_t$
then $\Delta_{I,J}(Y) = 0$.

\begin{proof}
The first equality follows from Lemma \ref{L:homog}.  We prove the
second inequality using the Gessel-Viennot method in the standard
manner.  We refer the reader to \cite[Chapter 7]{EC2} for details
concerning this method.

Consider the square lattice grid in the plane, and orient all
vertical edges north and all horizontal edges east.  Assign to
vertical edges weight $1$. Assign to a horizontal edge of the grid
connecting $(p,q)$ with $(p+1, q)$ the weight $x_{q+1}^{(p)}$.
Consider $k$ sources with coordinates $(i_s, 0)$, $s = 1, \ldots, k$
and $k$ sinks with coordinates $(j_t, \infty)$, $t = 1, \ldots, k$.
One checks directly that the weight generating function of paths
from the source $(i_s,0)$ to $(j_t,\infty)$ is equal to $h_{j_t -
i_s}^{(i_s)}$.  By the Gessel-Viennot method, the determinant
$\det(h_{j_t - i_s}^{(i_s)})_{s,t=1}^k$ is the weight generating
function of non-intersecting families of paths from these $k$
sources to the $k$-sinks.  It is easy to see that such families are
in bijection with semistandard tableaux $T$ of shape $\lambda$, and
that the weight of the path family corresponding to a tableau $T$ is
exactly $x^T$.
\end{proof}

\begin{example}
Let $n = 3$.  For $I = (1,2,5)$ and $J = (4,7,9)$ we get the skew
shape shown in Figure \ref{fig:loo7}. The monomial corresponding to
the shown semistandard filling is $$x_1^{(1)} x_2^{(1)} x_3^{(1)}
(x_1^{(2)})^2 (x_3^{(2)})^3 x_1^{(3)} x_2^{(3)} (x_4^{(3)})^2.$$
\end{example}

We now state similar theorems for right-infinite whirls, and the
proofs are completely analogous.  Let $Y = \prod_{i \geq 1}
M(x_i^{(1)}, \ldots, x_i^{(n)})$ be a right-infinite whirl. We
define the {\it (mirror) loop elementary symmetric functions}
$\e_{r}^{(k)}({\xx}) = \sum_I x_{i_1}^{(k)} x_{i_2}^{(k+1)} \dotsc
x_{i_r}^{(k+r-1)}$, where the sum is taken over all increasing
sequences $i_1 < i_2 < \ldots < i_{r}$.

\begin{lemma}\label{L:elem}
Let $Y = \prod_{i = 1}^\infty M(\xx_{i})$.  We have $y_{k,l} =
\e_{l-k}^{(k)}({\xx})$.
\end{lemma}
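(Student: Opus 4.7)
The plan is to mirror the proof of Lemma \ref{L:homog} for curls, adapted to the whirl case. The essential combinatorial difference is that a single whirl only has nonzero entries on the diagonal and just above it, so when multiplying through a sequence of whirls each entry above the diagonal is ``used'' at most once along any path. This is exactly the condition that produces a strictly increasing index sequence, which is the defining feature of $\e_r^{(k)}$ as opposed to $h_r^{(k)}$.

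First, I would establish the formula for finite products $Y_m = M(\x_1)M(\x_2)\cdots M(\x_m)$ by induction on $m$. The base case $m = 1$ is immediate from the definition of $M(\x_1)$: we have $(Y_1)_{k,k} = 1 = \e_0^{(k)}(\x_1)$, $(Y_1)_{k,k+1} = x_1^{(k)} = \e_1^{(k)}(\x_1)$, and $(Y_1)_{k,l} = 0 = \e_{l-k}^{(k)}(\x_1)$ for $l - k \geq 2$ (as an elementary function in one variable vanishes for $r \geq 2$). For the inductive step, write $Y_m = Y_{m-1} M(\x_m)$. Because $M(\x_m)$ has at most two nonzero entries per column, only two terms survive in the matrix product:
$$(Y_m)_{k,l} = (Y_{m-1})_{k,l} \cdot 1 + (Y_{m-1})_{k,l-1} \cdot x_m^{(l-1)}.$$
By the inductive hypothesis this equals $\e_{l-k}^{(k)}(\x_1,\ldots,\x_{m-1}) + x_m^{(l-1)}\, \e_{l-k-1}^{(k)}(\x_1,\ldots,\x_{m-1})$. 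This matches the standard recursion for $\e_{l-k}^{(k)}(\x_1,\ldots,\x_m)$ obtained by splitting the defining sum according to whether the largest index $i_{l-k}$ equals $m$ or is strictly less than $m$; in the former case the factor $x_m^{(l-1)}$ is extracted (note that the residue on the last factor is $k + (l-k) - 1 = l - 1$, as required).

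Finally, I would pass to the infinite product. Lemma \ref{lem:infinitewhirl} guarantees that $Y = \lim_{m \to \infty} Y_m$ exists entrywise under the hypothesis $\sum_i \sum_j a_j^{(i)} < \infty$ (which holds here with $a_j^{(i)} = x_i^{(j)}$, given that $Y$ is assumed to be a well-defined right-infinite whirl). On the other hand, the same summability assumption implies that the series $\e_{l-k}^{(k)}(\xx)$ converges absolutely and equals the limit of its partial sums $\e_{l-k}^{(k)}(\x_1,\ldots,\x_m)$. Taking $m \to \infty$ in the finite identity yields $y_{k,l} = \e_{l-k}^{(k)}(\xx)$.

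There is no real obstacle here; the only thing to be slightly careful about is getting the indexing of residues right (since $x_i^{(j)}$ has $j$ a residue modulo $n$, and the residues shift by one at each step along the path through the whirls). The combinatorics is essentially the Lindström-style picture used for Theorem \ref{T:schur}, but collapsed to a single row of boxes: a path from $(k,0)$ to $(l,\infty)$ in the lattice corresponds to choosing which whirls $i_1 < i_2 < \cdots < i_{l-k}$ contribute the upward steps, yielding precisely $\e_{l-k}^{(k)}(\xx)$.
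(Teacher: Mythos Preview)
Your proposal is correct and follows exactly the approach the paper indicates: the paper does not give a separate proof of Lemma \ref{L:elem}, merely stating that ``the proofs are completely analogous'' to Lemma \ref{L:homog}. Your induction on the number of whirl factors, with the two-term recursion coming from the fact that $M(\x_m)$ has only two nonzero entries per column, and the passage to the limit via Lemma \ref{lem:infinitewhirl}, is precisely the intended analogous argument.
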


If $\lambda$ is a skew shape, we let $\lambda'$ denote the conjugate
of $\lambda$, obtained reflecting $\lambda$ in the main diagonal.

\begin{theorem}
Let $Y = \prod_{i = 1}^\infty M(\xx_i)$.  Let $I = i_1 < i_2 <
\ldots < i_k$ and $J = j_1 < j_2 < \ldots < j_k$ be two sequences of
integers such that $i_t \leq j_t$.   Define
$$\lambda = \lambda(I,J) = (j_k, j_{k-1} + 1, \ldots,
j_1+k-1)/(i_k,i_{k-1}+ 1 \ldots, i_1+k-1).$$ Then
$$
\Delta_{I,J}(Y) = \det(\e_{j_t - i_s}^{(i_s)}(\xx))_{s,t=1}^k =
\s_{\lambda'}({\xx}).
$$
\end{theorem}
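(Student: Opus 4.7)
The plan is to follow the structure of the proof of Theorem \ref{T:schur} step by step, substituting the elementary analogues at each stage. The first equality is immediate from Lemma \ref{L:elem}: since each entry of $Y$ is $y_{k,l} = \e_{l-k}^{(k)}(\xx)$, the $(s,t)$ entry of the submatrix $Y_{I,J}$ is $\e_{j_t - i_s}^{(i_s)}(\xx)$, and $\Delta_{I,J}(Y) = \det(\e_{j_t - i_s}^{(i_s)}(\xx))_{s,t=1}^k$ is just the definition of the minor. (Entries with $j_t - i_s < 0$ are interpreted as $0$, so when $i_t \leq j_t$ fails the minor vanishes.) So the real content is the combinatorial identity $\det(\e_{j_t - i_s}^{(i_s)})_{s,t=1}^k = \s_{\lambda'}(\xx)$.

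For the combinatorial identity I would use the Lindstr\"{o}m--Gessel--Viennot method on an oriented planar network, exactly as in the proof of Theorem \ref{T:schur}, with the crucial difference that paths now carry strict rather than weakly-increasing labels, reflecting the strict inequalities in the definition of $\e_r^{(k)}(\xx) = \sum_{i_1 < i_2 < \cdots < i_r} x_{i_1}^{(k)} x_{i_2}^{(k+1)} \cdots x_{i_r}^{(k+r-1)}$. Concretely, I would take the standard square lattice with vertical edges oriented north and horizontal edges oriented east, place sources at $(i_s,0)$ and sinks at $(j_t,\infty)$, and assign weights to the edges so that the weight generating function for lattice paths from $(i_s,0)$ to $(j_t,\infty)$ is $\e_{j_t-i_s}^{(i_s)}(\xx)$. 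The strict-increase condition corresponds to paths being restricted to have at most one horizontal edge per row (equivalently, the horizontal steps use strictly increasing row-indices). The determinant then equals the signed sum over all path families, and the usual crossing-swap involution cancels all but the non-intersecting families.

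The final step is to identify non-intersecting path families with semistandard tableaux computing $\s_{\lambda'}$. Because the horizontal edges are used at most once per row (strict) but may repeat in a column (weak), the resulting tableaux on the skew shape $\lambda(I,J)$ are strict in rows and weak in columns; transposing converts these into ordinary semistandard tableaux on the conjugate shape $\lambda'$. A direct bookkeeping check shows that under this transpose, the cell content $j-i$ becomes $i-j$, so the exponent of each variable $x_\bullet^{(\bullet)}$ read off the path family matches exactly the monomial $\xx^T = \prod_{s \in \lambda'} x_{T(s)}^{(\r(s))}$ in the definition of the mirror loop Schur function $\s_{\lambda'}(\xx)$.

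The main obstacle, and essentially the only non-routine aspect once the analogy with Theorem \ref{T:schur} is set up, is pinning down the exact edge weighting on the lattice so that (a) the path-sum for a single source/sink pair reproduces $\e_{j-i}^{(i)}(\xx)$ with the correct upper indices in the correct order, and (b) after transposition the resulting monomial for a non-intersecting family is $\xx^T$ with the \emph{mirror} residue $\r(s) = \overline{i-j}$ rather than the residue $r(s) = \overline{j-i}$. This is why the statement involves $\s_{\lambda'}$ (mirror loop Schur on the conjugate shape) rather than $s_{\lambda'}$; everything else is a transparent parallel to the curl case.
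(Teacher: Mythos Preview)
Your proposal is correct and matches the paper's approach: the paper does not give a separate proof but simply states that the argument is ``completely analogous'' to that of Theorem~\ref{T:schur}, which is precisely the Gessel--Viennot argument you outline, with the strict-path modification for elementary functions and the transpose to $\lambda'$ accounting for the mirror residue.
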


\begin{remark}
\def\LSym{{\rm LSym}}
If we consider the $x^{(j)}_i$ as variables, then $\{h_r^{(k)}\}$
are algebraically independent (and so are the $\{\e_r^{(k)}\}$). The
commutative ring which the $\{h_r^{(k)}\}$ generate we call {\it
loop symmetric functions}, denoted $\LSym$.  (The ring generated by
the $\{\e_r^{(k)}\}$ is distinct from $\LSym$, considered as
subrings of the ring of formal power series.) The ring $\LSym$ is a
Hopf algebra which coincides with the usual ring of symmetric
functions when $n = 1$. We shall study $\LSym$ in detail in future
work.
\end{remark}

\begin{remark}
Our loop homogeneous symmetric functions also appear in the context
of Noumi-Yamada's study of discrete Painlev\'{e} dynamical systems,
see \cite{Y}.
\end{remark}

\begin{remark}
The concept of {\it {chess tableaux}} in the work of Scott \cite{Sc}
seems to be related to the weight of the tableaux as defined here.
\end{remark}

\subsection{Basic properties of infinite whirls and curls}
We say that a matrix $A = A(t)$ is entire if every entry of $A$ is
entire.  We say $X \in U$ is entire if $A(X)$ is.

\begin{lem}\label{lem:whirlmu}
Let $X = \prod_{i=1}^{\infty} M(\a^{(i)})$ (resp.~$X =
\prod_{i=-\infty}^{-1} M(\a^{(-i)})$) be well-defined as in Lemma
\ref{lem:infinitewhirl} and not finitely supported. Then $\mu_i(X) =
0$ (resp.~$\epsilon_i(X) = 0$) for each $i$. In particular, $X$ is
entire.
\end{lem}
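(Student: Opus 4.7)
The plan is to prove $\mu_i(X) = 0$ for each $i$ in the right-infinite whirl case; entireness will then follow at once from Lemma \ref{lem:limits}, since $r(X) = 1/\prod_i \mu_i(X) = \infty$. The left-infinite whirl case, with $\epsilon_i$ in place of $\mu_i$, is entirely symmetric.

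For each $N \geq 1$, I factor $X = X_{\leq N}\, X_{> N}$, where $X_{\leq N} := M(\a^{(1)}) \cdots M(\a^{(N)})$ is a finite whirl product and $X_{> N} := \prod_{i > N} M(\a^{(i)})$ is the tail, itself a right-infinite whirl satisfying the hypotheses of Lemma \ref{lem:infinitewhirl} (and not finitely supported, for otherwise $X$ would be). Since each whirl advances the column index by at most one, $X_{\leq N}$ has bounded off-diagonal support: $(X_{\leq N})_{j, k} = 0$ unless $0 \leq k - j \leq N$. Consequently
$$\frac{x_{j, i+1}}{x_{j, i}} = \frac{\sum_{k = j}^{j+N} (X_{\leq N})_{j, k} \, (X_{> N})_{k, i+1}}{\sum_{k = j}^{j+N} (X_{\leq N})_{j, k} \, (X_{> N})_{k, i}}$$
is a convex combination of the $N+1$ quantities $(X_{> N})_{k, i+1}/(X_{> N})_{k, i}$ for $k \in \{j, \ldots, j+N\}$. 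Each of these tends to $\mu_i(X_{> N})$ as $j \to -\infty$, so the weighted average does too, giving $\mu_i(X) = \mu_i(X_{> N})$.

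The next step bounds $\mu_i(X_{> N})$ by the tail parameter sum. By Lemma \ref{L:elem}, $(X_{> N})_{j, i}$ equals the sum $\sum_{N < i_1 < \cdots < i_r} a_{\bar j}^{(i_1)} a_{\overline{j+1}}^{(i_2)} \cdots a_{\overline{j+r-1}}^{(i_r)}$ with $r = i - j$. Splitting the analogous sum for $(X_{> N})_{j, i+1}$ according to the value $m$ of its largest (last) lower index and bounding the resulting inner sum by $(X_{> N})_{j, i}$ gives
$$\frac{(X_{> N})_{j, i+1}}{(X_{> N})_{j, i}} \leq \sum_{m > N} a_{\bar i}^{(m)},$$
independently of $j$. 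Taking the infimum over $j$ yields $\mu_i(X_{> N}) \leq \sum_{m > N} a_{\bar i}^{(m)}$.

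Combining the two steps gives $\mu_i(X) = \mu_i(X_{> N}) \leq \sum_{m > N} a_{\bar i}^{(m)}$ for every $N$. The summability hypothesis $\sum_m \sum_k a_k^{(m)} < \infty$ of Lemma \ref{lem:infinitewhirl} forces this tail to vanish as $N \to \infty$, hence $\mu_i(X) = 0$. Entireness of $X$ is then immediate from Lemma \ref{lem:limits}. The technical heart of the argument is the identity $\mu_i(X) = \mu_i(X_{> N})$: it relies essentially on $X$ being a genuine whirl product, so that peeling off any finite prefix of whirls leaves the southwest-asymptotic ratios undisturbed -- a property one would not expect from an arbitrary totally nonnegative matrix.
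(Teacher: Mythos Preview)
Your proof is correct. It is close in spirit to the paper's argument, which also rests on Lemma~\ref{L:elem} and the tail-sum decay $\sum_{m>N}\sum_j a_j^{(m)}\to 0$, but the paper compresses your two steps into one: since a strictly increasing sequence $i_1<\cdots<i_{r+1}$ of positive integers has $i_{r+1}\geq r+1=i+1-j$, splitting $x_{j,i+1}$ by its last index already forces $m>i-j$, yielding directly
\[
\frac{x_{j,i+1}}{x_{j,i}}\;\leq\;\sum_{m>i-j}a^{(m)}_{\bar i}\;\longrightarrow\;0\quad\text{as }j\to-\infty.
\]
Your Step~1 identity $\mu_i(X)=\mu_i(X_{>N})$ is correct and conceptually pleasant (and the convex-combination argument is clean), but it becomes unnecessary once this sharper bound is observed.
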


We remind the reader that with the $\a$ variables, the lower index
is the one taking values in $\Z/n\Z$.

\begin{proof}
Let us consider $X = \prod_{i=1}^{\infty} M(\a^{(i)})$; the other
case is similar.  Using Lemma \ref{L:elem} and the definition of
$\mu_i(X)$, we must show for each $k$ that the ratio
$\e^{(k+s)}_{s+1}({\bf a})/\e^{(k+s)}_{s}({\bf a})$ approaches 0 as $s
\to \infty$. We know that $\e^{(k+s)}_s({\bf a})$ is the generating
function of semistandard tableau with shape a column of size $s$ and
initial residue $k+s$.  Given such a column tableau $T$ with size
$s+1$ we may produce a column tableau $T'$ with size $s$ by removing
the letter in the last (lowest) box.  A fixed column tableau $T'$
with size $s$ can be obtained in this way for each possible value of
the last box.
 But for sufficiently large $s$, we have $\sum_{i \geq s}^\infty \sum_{j=1}^n a^{(i)}_{j}
< \varepsilon$, for any given $\varepsilon > 0$.  Thus for sufficiently large $s$, we have
$\e^{(k+s)}_{s+1}({\bf a})/\e^{(k+s)}_{s}({\bf a}) <\varepsilon$, as required.
\end{proof}

\begin{lem}\label{L:aep}
Let $X = \prod_{i=1}^{\infty} N(\a^{(i)})$ or $X =
\prod_{i=-\infty}^{-1} N(\a^{(-i)})$ be well-defined as in Lemma
\ref{lem:infinitewhirl}.  Define $b_i = \prod_{j=1}^n a_j^{(i)}$ and
assume that $b_1 = \max_i b_i \neq 0$. Then $\epsilon_j(X) =
a^{(1)}_j$.
\end{lem}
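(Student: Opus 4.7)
Throughout we focus on the right-infinite case $X = \prod_{i \geq 1} N(\a^{(i)})$; the left-infinite statement follows by the analogous argument applied instead to the dual quantity $\mu_j(X)$, together with the identity $\prod_j \epsilon_j(X) = \prod_j \mu_j(X)$ from Lemma \ref{lem:limits}. The plan is to sandwich each $\epsilon_j(X)$ between $a^{(1)}_j$ and itself by establishing (i) the pointwise lower bound $\epsilon_j(X) \geq a^{(1)}_j$ and (ii) the product upper bound $\prod_j \epsilon_j(X) \leq b_1$. Since $\prod_j a^{(1)}_j = b_1$ and every $a^{(1)}_j$ is positive (as $b_1 \neq 0$), equality of products forces equality of factors, giving the desired $\epsilon_j(X) = a^{(1)}_j$ for each $j$.

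For the pointwise lower bound (i), factor $X = N(\a^{(1)}) X'$ with $X' = \prod_{i \geq 2} N(\a^{(i)})$, which lies in $U_{\geq 0}$ by Lemma \ref{lem:infinitewhirl}. Observing that $N(\a^{(1)})^{-1} = M(-a^{(1)}_1, \ldots, -a^{(1)}_n)$, we have $M(-a^{(1)}_1, \ldots, -a^{(1)}_n) X = X'$, a TNN matrix. Lemma \ref{lem:ASWconv} then gives $a^{(1)}_j \leq \epsilon_j(X)$ for each $j$.

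For the product upper bound (ii), by Lemma \ref{lem:limits} it suffices to prove $r(X) \geq 1/b_1$. The complementary inequality $r(X) \leq 1/b_1$ is immediate from Lemma \ref{lem:infinitewhirldet}: the folded determinant $\det \overline X(t) = \prod_i (1 - b_i t)^{-1}$ has its smallest pole at $t = 1/b_1$, and a polynomial expression in the $n^2$ entries cannot be holomorphic in a strictly larger disk than all of its arguments. To establish $r(X) \geq 1/b_1$, write each folded curl as $\overline{N(\a^{(i)})}(t) = I + E_i(t)$; on any compact set $|t| \leq r < 1/b_1$ one has $|1 - b_i t| \geq 1 - b_1 r > 0$, so a direct entry-by-entry inspection yields a bound of the form
\[
\|E_i(t)\| \leq C(r, b_1)\, \sum_{j=1}^n a^{(i)}_j.
\]
The summability hypothesis $\sum_i \sum_j a^{(i)}_j < \infty$ built into Lemma \ref{lem:infinitewhirl} then implies $\sum_i \|E_i(t)\| < \infty$ uniformly on compact subsets of $\{|t| < 1/b_1\}$, so the infinite product of matrix-valued holomorphic functions $\prod_i \overline{N(\a^{(i)})}(t)$ converges uniformly on compacta to $\overline X(t)$; each entry of $\overline X(t)$ is therefore holomorphic on $\{|t| < 1/b_1\}$, as required.

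Combining (i) and (ii) gives $\prod_j \epsilon_j(X) = 1/r(X) = b_1 = \prod_j a^{(1)}_j$, which together with the pointwise inequalities forces $\epsilon_j = a^{(1)}_j$. The main technical obstacle is the holomorphic convergence establishing $r(X) \geq 1/b_1$: each factor $\overline{N(\a^{(i)})}(t)$ is individually holomorphic in $\{|t| < 1/b_1\}$ (its lone pole sits at $t = 1/b_i \geq 1/b_1$), but infinitely many such factors with $b_i$ clustering near $b_1$ could in principle accumulate singularities along $|t| = 1/b_1$; the summability hypothesis is precisely what prevents this, paralleling the role of $\sum \alpha_i + \sum \beta_i < \infty$ in the classical Edrei-Thoma theorem (Theorem \ref{T:ET}).
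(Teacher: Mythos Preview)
Your argument is correct and takes a genuinely different route from the paper's. Both proofs share the lower bound $\epsilon_j(X) \geq a^{(1)}_j$ via Lemma~\ref{lem:ASWconv}. For the reverse inequality, however, the paper works combinatorially: using the identification $x_{k,l} = h^{(k)}_{l-k}(\a)$ from Lemma~\ref{L:homog}, it bounds each ratio $h^{(j)}_{s+1}(\a)/h^{(j+1)}_s(\a)$ directly by a tableau-counting argument, partitioning row tableaux according to their first entry and using a weight-increasing multi-valued map to show that tableaux not starting with a $1$ contribute negligibly. You instead prove the single inequality $r(X) \geq 1/b_1$ by establishing uniform convergence of $\prod_i \overline{N(\a^{(i)})}(t)$ on compact subsets of $\{|t| < 1/b_1\}$, and then invoke $\prod_j \epsilon_j = 1/r(X)$ from Lemma~\ref{lem:limits} together with the pointwise lower bounds to force equality factor by factor. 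Your approach is shorter and yields Lemma~\ref{lem:inverseradius} as an immediate byproduct rather than a corollary. The paper's approach, on the other hand, is a deliberate warm-up: essentially the same tableau-manipulation technique (bounding ``bad'' fillings via a multi-valued replacement map into ``good'' ones) reappears in substantially more elaborate form in the proof of Theorem~\ref{thm:mrl}, where no analytic shortcut is available. One small remark: your bound $\|E_i(t)\| \leq C(r,b_1)\sum_j a^{(i)}_j$ is not literally uniform in $i$ as written (for the finitely many $i$ with $\sum_j a^{(i)}_j$ large, products of the $a^{(i)}_j$ need not be dominated by their sum), but this is harmless since only the tail of $\sum_i \|E_i\|$ matters, or equivalently one may let $C$ depend also on the total sum $\sum_{i,j} a^{(i)}_j$.
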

\begin{proof}
We consider the case $X = \prod_{i=1}^{\infty} N(\a^{(i)})$.  Let
$\alpha = a^{(1)}_j$.  The sum of all $a^{(i)}_{j}$ converges, so
certainly $b_i \to 0$ and the maximum $b = \max_i b_i$ exists.  By
Lemma \ref{lem:ASWconv}, we have $\epsilon_j(X) \geq \alpha$.  By
Lemmata \ref{L:homog} and \ref{lem:limits}, it suffices to check
that $\lim_{s \to \infty} h^{(j)}_{s+1}({\bf a})/h^{(j+1)}_s({\bf
a}) \leq \alpha$.  Let $S_{s+1}$ be the set of semistandard tableaux
of shape a row of length $s+1$, shifted in the plane so that the
initial box has residue $j$. Similarly let $S_{s}$ be the set of
semistandard tableaux of shape a row of length $s$, with initial box
having residue $j+1$.  If $S$ is a set of tableaux, then we write
$\wt(S) = \sum_{T \in S} a^T$. Thus $\wt(S_{s+1}) =
h^{(j)}_{s+1}({\bf a})$ and $\wt(S_{s}) = h^{(j+1)}_{s}({\bf a})$,
so it suffices to prove that for sufficiently large $s$ we have
$\wt(S_{s+1}) \leq (a^{(1)}_j+ \varepsilon) \wt(S_{s})$ for
arbitrarily small $\varepsilon$. Given a tableau $T \in S_{s}$ we
can obtain a tableau $T' \in S_{s+1}$ by adding the number 1 in
front, and we have $a^{T'} = \alpha \cdot a^T$.  Let $S'_{s} \subset
S_{s}$ be the subset of tableaux which start with a number 2 or
greater, and let $S^*_s = S_s - S'_s$.  It is enough to show that
for sufficiently large $s$ we have $\wt(S'_s) \leq \varepsilon
\wt(S^*_s)$ for arbitrarily small $\varepsilon$.  (For tableaux $T
\in S^*_s$ only the number 1 can be added in front, and every $T'
\in S_{s+1}$ is obtained by adding some number in front of some $T
\in S_s$.)

Pick $R$ so that $$\sum_{i \geq R} \sum_{j=1}^n a_j^{(i)} <
\min(a_1^{(1)},a_2^{(1)},\ldots,a_n^{(1)}).$$ This can be done since
the sum $\sum_{i,j} a_j^{(i)}$ is finite.  Let $W \subset S'_s$
denote the tableaux labeled with numbers from $\{2,3,\ldots,R\}$,
where we now declare that for $T \in W$, the tableau has a modified
weight $\wt'$: the number $R$ in a square with residue $j$ has
weight $a_j^{(1)}$.  By the construction of $R$, we deduce that
$\wt(S'_s) \leq \wt'(W)$ using this modified weight.

Pick $s > nR^2/\varepsilon$.  Given a tableau $T \in W$ there are at
least $s/R$ (consecutive) numbers all equal to some $r \in
[2,R]$. We pick the smallest such $r$.  We define a collection
$\gamma(T) \subset S^*_s$ by removing the first $n, 2n ,\ldots,$ of
these numbers from $T$, and replacing them with 1's in the beginning
of $T$. Thus $\gamma(T)$ consists of at least $s/nR$ distinct
tableaux. Furthermore, each tableau in $\gamma(T)$ has weight
greater than the (modified) weight of $T$, and each tableau in
$S^*_s$ can occur this way in at most $R$ ways.  We conclude that
$$
(s/nR) \wt(S'_s) \leq (s/nR) \wt'(W) \leq R \; \wt(S^*_s)
$$
so that $\wt(S'_s) \leq \varepsilon \wt(S^*_s)$, as required.
\end{proof}

\begin{lem}\label{lem:inverseradius}
Let $X = \prod_{i=1}^{\infty} N(\a^{(i)})$ or $X =
\prod_{i=-\infty}^{-1} N(\a^{(-i)})$ be well-defined as in Lemma
\ref{lem:infinitewhirl}.  Define $b_i = \prod_{j=1}^n a_j^{(i)}$ and
assume that $\max_i b_i \neq 0$.  Then $r(X) = 1/(\max_i b_i)$.
\end{lem}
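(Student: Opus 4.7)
The plan is to reduce to the situation already handled by Lemma \ref{L:aep}, where the maximum of the $b_i = \prod_j a^{(i)}_j$ is attained at position $1$. Once this reduction is made, Lemma \ref{L:aep} gives $\epsilon_j(X) = a^{(1)}_j$ for the rearranged sequence, hence $\prod_j \epsilon_j(X) = b_1 = \max_i b_i$, and then Lemma \ref{lem:limits} yields $r(X) = 1/\prod_j \epsilon_j(X) = 1/\max_i b_i$ directly. Before doing so, note that $X$ is not finitely supported: since $\max_i b_i \neq 0$, the folded determinant $\det(\overline X(t)) = \prod_i 1/(1-b_i t)$ of Lemma \ref{lem:infinitewhirldet} is not a polynomial, so $\overline X(t)$ has at least one non-polynomial entry, and Lemma \ref{lem:limits} applies.

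The key tool for the reduction is the commutation relation for adjacent curls in Theorem \ref{T:commute}, combined with property (3) of Lemma \ref{lem:eta}, namely that $\eta$ preserves the two products $\prod_j a_j$ and $\prod_j b_j$. Concretely, swapping two adjacent curls via $\eta$ permutes the associated $b$-values between the two positions while leaving the matrix product unchanged. I will first argue that the maximum of $\{b_i\}$ is attained at some finite index $i_0$: the convergence hypothesis $\sum_i \sum_j a^{(i)}_j < \infty$ from Lemma \ref{lem:infinitewhirl} forces each $a^{(i)}_j \to 0$, hence $b_i \to 0$, and together with $\max_i b_i \neq 0$ this guarantees the maximum is achieved at some finite $i_0$. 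Treating the right-infinite case, I will then bubble-sort the $i_0$-th curl to position $1$ by $i_0-1$ successive applications of $\eta$, each of which is well defined because the curl being moved is non-degenerate (its $b$-value is the positive maximum), so at most one of the two factors involved in each swap is degenerate as required.

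I must also check that the rearranged infinite product is still well defined. By property (1) of Lemma \ref{lem:eta}, each swap preserves the componentwise sums $a_i + b_i$, and therefore preserves $\sum_{k} \sum_j a^{(k)}_j$ summed over the two positions involved. Since only finitely many positions are altered, the total sum $\sum_i \sum_j a^{(i)}_j$ is unchanged and remains finite, so the rearranged product converges by Lemma \ref{lem:infinitewhirl}. At the end of the bubble sort, we have written $X$ as a new right-infinite product of curls whose first factor has $\prod_j = b_{i_0} = \max_i b_i$, and whose remaining factors have the same multiset of $\prod_j$ values as before, so the maximum is still attained at position $1$. Lemma \ref{L:aep} now applies and yields the claim. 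The left-infinite case is treated identically, bubbling the maximizer to the right end (position $1$ in that indexing) and invoking the corresponding half of Lemma \ref{L:aep}.

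The main obstacle is really the bookkeeping in the reduction step: one needs to be sure that the rearrangement is admissible (non-degeneracy at each swap, convergence of the new product, preservation of the multiset $\{b_i\}$), but all three points follow directly from parts (1) and (3) of Lemma \ref{lem:eta} together with the fact that the maximum-$b$ curl stays non-degenerate throughout the process. Once this reduction is in place, Lemmata \ref{lem:limits} and \ref{L:aep} finish the proof with no further computation.
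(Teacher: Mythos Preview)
Your proof is correct and follows the same approach as the paper: use Theorem \ref{T:commute} repeatedly (the bubble-sort rearrangement) to reduce to the case $b_1 = \max_i b_i$, then invoke Lemmata \ref{lem:limits} and \ref{L:aep}. You have carefully filled in the details the paper leaves implicit --- that the maximum is actually attained, that each swap is admissible because the migrating curl stays non-degenerate (via Lemma \ref{lem:eta}(3)), that the total parameter sum is preserved (via Lemma \ref{lem:eta}(1)), and that $X$ is not finitely supported so Lemma \ref{lem:limits} applies --- but the underlying argument is identical.
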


\begin{proof}
We prove the statement for $X = \prod_{i=1}^{\infty} N(\a^{(i)})$.
Using Theorem \ref{T:commute} possibly repeatedly, we may assume
that $b = b_1$ is maximal.  The result then follows from Lemmata
\ref{lem:limits} and \ref{L:aep}.
\end{proof}

\begin{remark}
The assumption $\max_i b_i \neq 0$ in Lemmata \ref{L:aep} and
\ref{lem:inverseradius} can be removed (see \cite{LP}).
\end{remark}

\begin{cor}
Suppose $X$ is of curl type.  Then the radius of
convergence of $\det(\overline{X})$ is equal to $r(X)$.
\end{cor}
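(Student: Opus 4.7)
The plan is to combine the explicit product formula for $\det(\overline{X})$ from Lemma \ref{lem:infinitewhirldet} with the identification of $r(X)$ in Lemma \ref{lem:inverseradius}. Write $X = \prod_i N(\a^{(i)})$ (right- or left-infinite) and set $b_i = \prod_{j=1}^n a_j^{(i)}$. Then Lemma \ref{lem:infinitewhirldet} gives
$$
\det(\overline X(t)) \;=\; \prod_i \frac{1}{1-b_i\, t}.
$$

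First I would verify that this product defines a meromorphic function whose radius of convergence at $0$ is exactly $1/\sup_i b_i$. The convergence hypothesis $\sum_{i,j} a^{(i)}_j < \infty$ from Lemma \ref{lem:infinitewhirl}, together with AM--GM ($b_i \le (\sum_j a^{(i)}_j/n)^n$), forces $\sum_i b_i < \infty$; in particular the $b_i$ have a maximum, and for $|t|<1/\sup_i b_i$ the series $\sum_i \log(1/(1-b_i t))$ converges absolutely, so the infinite product is holomorphic and nonzero there. On the other hand, any pole at $t=1/b_i$ (for $b_i\ne 0$) cannot be cancelled, since the other factors are nonzero at that point, so the radius of convergence as a power series about $0$ equals $1/\sup_i b_i$.

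Next I would invoke Lemma \ref{lem:inverseradius}, which asserts exactly $r(X) = 1/\max_i b_i$, under the assumption $\max_i b_i\neq 0$. Combining these two identifications finishes the proof in this case. The only remaining step is the degenerate case $\max_i b_i = 0$, i.e.\ every curl $N(\a^{(i)})$ has some vanishing parameter. Here the product formula gives $\det(\overline X(t)) = 1$, which has infinite radius of convergence; one then needs $r(X)=\infty$ as well. This follows from the extension of Lemma \ref{lem:inverseradius} alluded to in the remark just before the corollary (proved in \cite{LP}); alternatively, one can note that each degenerate curl factors as a product of Chevalley generators, so $X$ lies in the closure of products of Chevalley generators and, by a Lemma \ref{lem:whirlmu}-style argument applied to the $\epsilon$- and $\mu$-sequences of $X$, has all $\epsilon_i(X)=0$, forcing $r(X)=\infty$ via Lemma \ref{lem:limits}.

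The only mildly subtle point is ensuring that no cancellation occurs in the infinite product of $1/(1-b_i t)$, but this is immediate since each factor is nonzero away from its own pole. Thus the proof is essentially a direct combination of the two preceding lemmas, with the degenerate case handled as a limiting boundary case.
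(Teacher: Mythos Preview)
Your proposal is correct and follows exactly the approach the paper intends: the corollary is stated without proof immediately after Lemma \ref{lem:infinitewhirldet} and Lemma \ref{lem:inverseradius} (and the remark extending the latter), and is meant to be read as their direct combination. Your handling of the nondegenerate case is precisely this combination, and for the degenerate case $\max_i b_i = 0$ you correctly point to the remark referencing \cite{LP}; your alternative sketch via degenerate curls factoring into Chevalley generators is plausible but not needed, since the paper itself defers that case to \cite{LP}.
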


\section{Canonical form}\label{sec:canon}
Let $RC \subset U_{\geq 0}$ denote the set of matrices of the form
$Z = \prod_{i=1}^{\infty} N(a^{(i)}_{1},\ldots,a^{(i)}_{n})$ where
all the $a^{(i)}_{j}$ are strictly positive and the sum of the
$a^{(i)}_j$ converges. In other words, $RC$ is the set of
right-infinite products of non-degenerate curls. Define
$\overline{RC}$ to be the union of $RC$ and the set of finite
products of non-degenerate curls.  Similarly we define $LC$ and
$\overline{LC}$ (left-infinite non-degenerate curls), $RW$ and
$\overline{RW}$ (right-infinite non-degenerate whirls), and $LW$ and
$\overline{LW}$ (left-infinite non-degenerate whirls).

\subsection{Whirl and curl components}
Let $X \in U_{\geq 0}$.  A {\it curl factorization} of $X$ is a
factorization of the form $X = Z \, Y$, where $Y$ is entire and $Z
\in \overline{RC}$.

We say that a (possibly finite) sequence $X = X^{(0)}, X^{(1)},
\ldots$ of TNN matrices is a {\it curl reduction} of $X$
if
\begin{enumerate}
\item
$X^{(i)} = N(a^{(i)}_{1},\ldots,a^{(i)}_{n}) X^{(i+1)}$ for each
$i$, where $N(a^{(i)}_{1},\ldots,a^{(i)}_{n})$ is a non-degenerate
curl.
\item
The limit $Y = \lim_{i \to \infty} X^{(i)}$ is entire.
\end{enumerate}
Note that if (1) holds, the limit $Y$ always exists.  That is
because for fixed $k,l$, the entries $x^{(i)}_{k,l}$ of $X^{(i)}$
are non-increasing, but nonnegative.  (This is the case even if we
allow degenerate curls).  It is clear that curl reductions give rise
to curl factorizations.

\begin{lem}\label{lem:ASWreduction}
Let $X \in U_{\geq 0}$.  Then a curl reduction of $X$ exists.
\end{lem}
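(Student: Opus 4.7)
The plan is to construct a curl reduction by iteratively applying the ASW factorization (Lemma \ref{lem:ASW}). Starting from $X^{(0)} = X$, at each step I check whether $X^{(i)}$ is entire: if so, terminate the sequence with $Y = X^{(i)}$ and we are done. Otherwise $X^{(i)}$ cannot be finitely supported (finitely supported matrices have polynomial entries, hence are entire), so Lemma \ref{lem:ASW} applies. Moreover, by Lemma \ref{lem:limits} we have $r(X^{(i)}) = 1/\prod_k \epsilon_k(X^{(i)})$, and $r(X^{(i)}) < \infty$ forces every $\epsilon_k(X^{(i)})$ to be strictly positive. Thus the curl $N(\epsilon^{(i)})$ extracted from $X^{(i)}$ is automatically non-degenerate, and $X^{(i+1)} := M(-\epsilon^{(i)}) X^{(i)}$ lies in $U_{\geq 0}$ by Lemma \ref{lem:ASW}, so the process continues.

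If the sequence terminates after finitely many steps we are done. Otherwise I need to show $Y := \lim_{i\to\infty} X^{(i)}$ exists entrywise and is entire. The identity $x^{(i+1)}_{k,l} = x^{(i)}_{k,l} - \epsilon_k^{(i)} x^{(i)}_{k+1,l}$, obtained directly from $X^{(i+1)} = M(-\epsilon^{(i)}) X^{(i)}$, shows that entries are nonnegative and non-increasing in $i$, hence $Y$ exists entrywise, and $Y \in U_{\geq 0}$ by Lemma \ref{lem:limit}.

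The key step is a telescoping identity on the first superdiagonal. Specializing $l = k+1$ and using $x^{(i)}_{k+1,k+1} = 1$ gives $\epsilon_k^{(i)} = x^{(i)}_{k,k+1} - x^{(i+1)}_{k,k+1}$, so summing yields
$$\sum_{i=0}^{N-1} \epsilon_k^{(i)} \;=\; x_{k,k+1} - x^{(N)}_{k,k+1} \;\leq\; x_{k,k+1}.$$
Therefore $\sum_{i\geq 0} \epsilon_k^{(i)}$ converges for each $k$, and in particular $\epsilon_k^{(i)} \to 0$ as $i \to \infty$. Applying Lemma \ref{lem:limits} again, $r(X^{(i)}) = 1/\prod_k \epsilon_k^{(i)} \to \infty$.

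Finally, the entrywise bound $y_{k,l} \leq x^{(i)}_{k,l}$ together with Cauchy--Hadamard gives $r(\overline Y(t)_{k,l}) \geq r(\overline{X^{(i)}}(t)_{k,l})$ for each pair $(k,l)$; taking the minimum over the $n^2$ entries shows $r(Y) \geq r(X^{(i)}) \to \infty$, so $r(Y) = \infty$ and $Y$ is entire. The main observation is the telescoping identity that bounds $\sum_i \epsilon_k^{(i)}$ by the single entry $x_{k,k+1}$; everything else is a routine application of the structural lemmata already developed for $U_{\geq 0}$.
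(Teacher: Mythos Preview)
Your proof is correct. Both you and the paper construct the sequence by iterating the ASW factorization of Lemma~\ref{lem:ASW}, and both rely (you explicitly, the paper implicitly in its ``which is impossible'') on the telescoping identity $\epsilon_k^{(i)} = x^{(i)}_{k,k+1} - x^{(i+1)}_{k,k+1}$ to see that $\sum_i \epsilon_k^{(i)} \le x_{k,k+1}$ and hence $\prod_k \epsilon_k^{(i)} \to 0$.

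The genuine difference is in the last step, where one must pass from $r(X^{(i)}) \to \infty$ to $r(Y)=\infty$. The paper establishes a monotonicity inequality $\frac{x_{i,j}^{(k)}}{x_{i+n,j}^{(k)}} \ge \frac{x_{i,j}^{(k+1)}}{x_{i+n,j}^{(k+1)}}$ (via a $2\times 2$ minor) and then takes $j\to\infty$ to bound $\prod_i \epsilon_i(Y)$ by $\prod_i \epsilon_i(X^{(k)})$. You instead use the entrywise inequality $0 \le y_{k,l} \le x^{(i)}_{k,l}$ and compare radii of convergence directly by Cauchy--Hadamard. Your route is more elementary: it bypasses the ratio-monotonicity computation entirely and does not even need $Y$ to be infinitely supported (so that $\epsilon_i(Y)$ and Lemma~\ref{lem:limits} apply). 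The paper's route, on the other hand, yields the extra information that the ratios $x^{(k)}_{i,j}/x^{(k)}_{i+n,j}$ are monotone in $k$, which is of some independent interest but not needed for the lemma at hand.
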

\begin{proof}
Define $X^{(k+1)}$ be applying Lemma \ref{lem:ASW} to $X^{(k)}$. Let
$Y = \lim_{k \to \infty} X^{(k)}$. If $Y$ is finitely supported it
is clear that $Y$ is entire, so we assume otherwise, using Lemma
\ref{lem:limits} implicitly in the following. In particular, we
assume that the sequence $X^{(k)}$ involves infinitely many
non-trivial applications of Lemma \ref{lem:ASW}.

We now argue that $$\frac{x_{i,j}^{(k)}}{x_{i+n,j}^{(k)}} \geq
\frac{x_{i,j}^{(k+1)}}{x_{i+n,j}^{(k+1)}}.$$  Indeed, after the
substitution $x_{i,j}^{(k+1)} = x_{i,j}^{(k)} - \epsilon_i
x_{i+1,j}^{(k)}$ and similarly for $x_{i+n,j}^{(k)}$, the above
inequality follows from nonnegativity of the minor of $X^{(k)}$ with
rows $i,i+1$ and columns $j-n, j$.  We have used $\epsilon_i \neq 0$
for this calculation.  We conclude that $\frac{y_{i,j}}{y_{i+n,j}}
\leq \frac{x_{i,j}^{(k)}}{x_{i+n,j}^{(k)}}$ for any $k$. Taking the
limit $j \longrightarrow \infty$ we see that $\prod_{i=1}^n
\epsilon_i(Y) \leq \prod_{i=1}^n \epsilon_i(X^{(k)})$. We know that
the sequence $\frac{1}{r(X^{(k)})} = \prod_{i=1}^n
\epsilon_i(X^{(k)})$ is non-increasing as $k \longrightarrow
\infty$, but stays nonnegative. Assume its limit $\delta$ is
non-zero. Then for each $k$ at least one of the
$\epsilon_i(X^{(k)})$ is not less than $\delta^{1/n}$. This however
would mean that $\sum_{k} \sum_{j=1}^n a^{(k)}_{j}$ diverges, which
is impossible. Thus $\delta = 0$. Since $\prod_{i=1}^n
\epsilon_i(Y)$ is bounded from above by a sequence with zero limit
and is nonnegative, it must be the case that $\prod_{i=1}^n
\epsilon_i(Y)=0$. This is equivalent to $Y$ being entire.
\end{proof}

We denote by $Z(X)= \prod_{i=1}^{\infty}
N(a^{(i)}_{1},\ldots,a^{(i)}_{n})$ the infinite product obtained
from the curl reduction of Lemma \ref{lem:ASWreduction}.  Such
product expressions are called {\it ASW factorizations} of $Z(X)$.

\begin{prop}\label{prop:inversewhirlfact}
Let $X \in U_{\geq 0}$.  Then $X$ has a unique curl factorization.
\end{prop}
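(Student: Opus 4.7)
The plan is as follows. Existence is Lemma~\ref{lem:ASWreduction}, so the content is uniqueness: if $X = ZY = Z'Y'$ are two curl factorizations, I want to show both sides coincide with the factorization produced by iterating ASW reduction on $X$.

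The main technical step I would prove first is a ``key lemma'': if $X = ZY$ with $Z \in \overline{RC}$, $Y$ entire, and $X$ not entire, then $\epsilon_i(X) = \epsilon_i(Z)$ for every $i$. The lower bound $\epsilon_i(X) \geq \epsilon_i(Z)$ follows from $z_{i,k} \geq \epsilon_i(Z)\, z_{i+1,k}$ (the monotonicity used in Lemma~\ref{lem:limits}) applied to $x_{i,j} = \sum_k z_{i,k} y_{k,j}$. For the upper bound, given $\delta > 0$, choose $K$ with $z_{i,k} \leq (\epsilon_i(Z)+\delta)\, z_{i+1,k}$ for $k \geq K$; then
\[
\frac{x_{i,j}}{x_{i+1,j}} \;\leq\; \frac{\sum_{k < K} z_{i,k}\, y_{k,j}}{x_{i+1,j}} \;+\; (\epsilon_i(Z)+\delta).
\]
The first term goes to $0$ as $j \to \infty$ because each $y_{k,j}$ is a coefficient of an entire series in $\overline Y(t)$ (so decays faster than any geometric rate in $j$), whereas the $2 \times 2$ TNN minor $x_{i+1,j}^2 \geq x_{i+1,j-n}\, x_{i+1,j+n}$ (using periodicity $x_{i+1+n,j+n}=x_{i+1,j}$) shows that the column sequence is log-concave, so $x_{i+1,j}$ grows at least like $(1/r(X))^{\alpha(j)}$ for $j \gg 0$.

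With the key lemma in hand, the induction is clean. If $X$ is entire then $r(X) = \infty$, hence $r(Z) = \infty$ (since $Y$ and $Y^{-1}$ are both entire), and Lemma~\ref{lem:inverseradius} forces $Z = I$, $Y = X$, and similarly for $Z'$. Otherwise, I use Theorem~\ref{T:commute} to rearrange $Z$ so that a curl of maximal $b = \prod_j a_j$ appears on the left; by Lemma~\ref{L:aep} this leading curl is $N(\epsilon_1(Z),\dotsc,\epsilon_n(Z))$, which by the key lemma equals $N(\epsilon_1(X),\dotsc,\epsilon_n(X))$. This gives a curl factorization $X^{(1)} := N(\epsilon_1(X),\dotsc,\epsilon_n(X))^{-1} X = Z_1 Y$, and iterating yields $X^{(k)} = Z_k Y$ where $X^{(k)}$ is the $k$-th ASW iterate of $X$ (depending only on $X$) and $Z_k \in \overline{RC}$ is the tail of the rearranged $Z$ after its $k$ largest-$b$ curls have been stripped off.

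Finally I let $k \to \infty$. By Lemma~\ref{lem:ASWreduction}, $X^{(k)}$ converges to an entire matrix $Y_\infty$. Since the parameter sum of the tail curls in $Z_k$ tends to $0$ (the total parameter sum in $Z$ is finite), the bound in the proof of Lemma~\ref{lem:infinitewhirl} gives $Z_k \to I$ entrywise, hence $X^{(k)} = Z_k Y \to Y$, so $Y_\infty = Y$. The same reasoning applied to $Z'Y'$ gives $Y_\infty = Y'$, so $Y = Y'$; since $Y \in U$ is upper unipotent (hence invertible), $ZY = Z'Y$ forces $Z = Z'$. The main obstacle is the key lemma: quantitatively dominating the initial tail $\sum_{k<K} z_{i,k}\, y_{k,j}$ by $x_{i+1,j}$, which rests on the lower bound for the growth of $x_{i+1,j}$ obtained from log-concavity.
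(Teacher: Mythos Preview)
Your key lemma $\epsilon_i(X)=\epsilon_i(Z)$ is correct and the argument for it is clean; the paper reaches the same conclusion by a slightly different route (comparing $r(Z)$ and $r(X)$ via Lemma~\ref{lem:inverseradius} and then invoking Lemma~\ref{lem:ASWconv}). Two issues remain, one minor and one substantive.

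\emph{Minor.} In the entire case you assert ``$Y$ and $Y^{-1}$ are both entire'' to conclude $r(Z)=\infty$. Nothing in the definition of a curl factorization says $Y^{-1}$ is entire. The conclusion $Z=I$ is still true, but you should argue it directly: if $Z$ has a non-degenerate first curl $N(\a)$, then $N(\a)^{-1}X$ is TNN, so $a_j\le\epsilon_j(X)$ by Lemma~\ref{lem:ASWconv}; since $X$ is entire some $\epsilon_j(X)=0$, contradicting non-degeneracy. (If $X$ is finitely supported, nonnegativity of entries forces $Z$ finitely supported, hence $Z=I$.)

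\emph{Substantive gap.} Your endgame asserts that the total parameter sum in $Z_k$ tends to $0$ ``since the total parameter sum in $Z$ is finite''. This does not follow. Commutation preserves the \emph{total} parameter sum (Lemma~\ref{lem:eta}(1)) and the multiset of products $\prod_j a_j$ (Lemma~\ref{lem:eta}(3)), but it does \emph{not} preserve the individual curl parameters. The curl you strip off at step $m$ has parameters $\epsilon_j(X^{(m)})$, not the parameters of any original curl of $Z$. Concretely, writing $S$ for the total parameter sum of $Z$, one has $\sum_j (Z_k)_{j,j+1} = S - \sum_{m<k}\sum_j \epsilon_j(X^{(m)})$, and the right-hand side tends to $\sum_j\bigl((Y_\infty)_{j,j+1}-y_{j,j+1}\bigr)$, which is exactly what you are trying to show is zero. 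The argument is circular.

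The paper closes this gap differently: rather than showing $Z_k\to I$, it sandwiches $Z$ between the ASW partial products $Z^{(k)}$. One direction $Z^{(k)}\le Z$ is immediate from $Z=Z^{(k)}Z_k$. For the other, given any initial segment $Z'=N_1\cdots N_N$ of $Z$'s original factorization, one uses the commutation relations \emph{inside the original factorization of $Z$} (not inside the ASW one) to exhibit $Z^{(k)}=Z'W$ with $W$ TNN for suitable $k$, whence $Z'\le Z^{(k)}$. Taking $N\to\infty$ gives $Z\le \lim_k Z^{(k)}=Z(X)$. Your iteration sets this up perfectly, but you need this sandwich (or an equivalent device) in place of the unjustified parameter-sum claim.
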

\begin{proof}
Suppose $X = ZY$ is some curl factorization of $X$. Let us fix a
factorization of $Z$ as an infinite product of curls.
 Since $Y$ is entire, we have $r(Z) \leq r(X)$. Let
$N(a_1,a_2,\ldots,a_n)$ be the curl factor in the
factorization $Z$ with the smallest radius of convergence, that is,
largest value of $\prod_j a_j$.  By Lemma \ref{lem:inverseradius},
we have $r(Z) = 1/(\prod_j a_j)$. Using the whirl commutation
relations, we may move such a
factor to the front of $Z$, so that $Z = N(a'_1,a'_2,\ldots,a'_n)Z'$
and $\prod_j a'_j = \prod_j a_j$.  By Lemma \ref{lem:ASWconv}, we
have $a'_i \leq \epsilon_i(X)$ so that $r(Z) \geq r(X)$.  It follows
that $r(Z) = r(X)$ and $a'_i = \epsilon_i(X)$.

Repeating this argument, we see that the multiset of radii of
convergence of curls in $Z$ coincides with that of $Z(X)$. Let
$Z^{(k)}$ be the product of the first $k$ curls in the curl
reduction of Lemma \ref{lem:ASWreduction}, so that $Z(X) = \lim_{k
\to \infty} Z^{(k)}$.  It is clear that entry-wise $Z^{(k)}$ is less
than $Z$. Let $N$ be arbitrary. Let $b = \min\{\prod_j a^{(i)}_{j}
\mid i \in [1,N)\}$. Pick $k$ so that $Z^{(k)}$ contains all factors
in $Z(X)$ with radii of convergence less than or equal to $1/b$. Let
$Z'$ be the product of the first $N$ factors in $Z$.  By the whirl
commutation relations (Theorem \ref{T:commute}) we can write
$Z^{(k)} = Z' W$ for some $W \in U_{\geq 0}$ -- $W$ is obtained by
moving to the left all of the factors in $Z$ outside of $Z'$ but
with radius of convergence less than or equal to $1/b$.  The entries
of $Z^{(k)}$ are thus greater than those of $Z'$. It follows that
$Z$ is the limit of the $Z^{(k)}$.
\end{proof}

\begin{remark}
In \cite{BFZ} the following question is posed: explicitly describe
the transition map between two different factorizations of a totally
positive element of $GL_n(\R)$.  Distinct factorizations of totally
positive elements correspond to different double wiring diagrams
\cite{FZ2}. Later it was realized \cite{FZ3} that the graph
connecting different parametrizations can be completed to a regular
graph that is the exchange graph of the corresponding {\it {cluster
algebra}}. It is natural to ask a similar question in our setting.
Let us restrict our attention to infinite products of curls (or
whirls). By Proposition \ref{prop:inversewhirlfact} we have the
distinguished ASW factorization, and any other factorization is
obtainable by the repeated application of whirl commutation
relations. By Corollary \ref{cor:sinf} we can conclude that the
graph describing the adjacency between distinct parametrizations of
an infinite curl is just the Cayley graph of $S_{\infty}$ with
adjacent transpositions as generators.  This graph is already
regular and it seems unlikely that analogues of {\it {non-Pl\"ucker
cluster variables}} could arise.  The situation becomes more subtle
when we allow Chevalley generators in the factorizations.  We plan
to address these questions in \cite{LP}.
\end{remark}

We call $X \in U$ {\it doubly entire} if both $X$ and $X^{-1}$ are
entire.  For TNN matrices, we will usually check the equivalent
condition that $X$ and $X^{-c}$ are entire.

\begin{thm} \label{thm:canon}
Let $X \in U_{\geq 0}$.  Then it has a unique factorization of the
form
$$
X=\prod_{i=1}^{\infty} N(a^{(i)}_{1},\ldots,a^{(i)}_{n}) Y
\prod_{i=-\infty}^{-1} M(b^{(i)}_{1},\ldots,b^{(i)}_{n}),$$ where
all whirls and curls are either non-degenerate or the identity
matrix, and the parameters satisfy $\sum_{i,j} a^{(i)}_{i} + \sum_i
b^{(i)}_{j} < \infty$
and $Y \in U_{\geq 0}$ is doubly entire.
\end{thm}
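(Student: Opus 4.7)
The plan is to obtain the factorization by applying Proposition \ref{prop:inversewhirlfact} twice, once on $X$ to strip curls from the left, and once on the $^{-c}$-inverse of the remaining entire factor to strip whirls from the right. For existence, I first apply Proposition \ref{prop:inversewhirlfact} to get $X = Z \cdot X_1$ with $Z \in \overline{RC}$ and $X_1 \in U_{\geq 0}$ entire. By Lemma \ref{lem:cinverse}, $X_1^{-c} \in U_{\geq 0}$, so a second application yields $X_1^{-c} = Z' \cdot U$ with $Z' \in \overline{RC}$ and $U$ entire. Applying $^{-c}$ to this equation and using $(AB)^{-c} = B^{-c} A^{-c}$ together with $M(\a) = N(\a)^{-c}$ gives $X_1 = Y \cdot W$, where $Y := U^{-c} \in U_{\geq 0}$ satisfies $Y^{-c} = U$ entire, and $W := (Z')^{-c} \in \overline{LW}$ is the left-infinite (or finite) product of non-degenerate whirls obtained by reversing the order of the curls in $Z'$ and replacing each by the corresponding whirl. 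The required convergence condition on the parameters is built into the definitions of $\overline{RC}$ and $\overline{LW}$ via Lemma \ref{lem:infinitewhirl}.

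The step I expect to be the main obstacle is verifying that the middle factor $Y$ is entire (so that it is doubly entire), since my construction only delivers entirety of $Y^{-c}$ a priori. This drops out of a direct entry-wise comparison. Since $Y, W \in U_{\geq 0}$ are upper triangular with $1$'s on the diagonal and nonnegative entries, the equation $X_1 = YW$ expanded entry-wise gives
$$(X_1)_{i,j} = Y_{i,j} + W_{i,j} + \sum_{i < k < j} Y_{i,k} W_{k,j} \geq Y_{i,j} \geq 0$$
for all $i \leq j$. Consequently each nonnegative power series coefficient of every entry of $\overline{Y}(t)$ is dominated by the corresponding coefficient of $\overline{X_1}(t)$. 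Since $\overline{X_1}(t)$ is entire and the dominated series has nonnegative coefficients, its radius of convergence is also $\infty$, so $\overline{Y}(t)$ is entire and $Y$ is doubly entire.

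For uniqueness, suppose $X = Z Y W = \tilde Z \tilde Y \tilde W$ are two factorizations of the stated form. Since $Y, \tilde Y$ are entire and $W, \tilde W \in \overline{LW}$ are entire by Lemma \ref{lem:whirlmu}, the products $YW$ and $\tilde Y \tilde W$ are entire. Thus $(Z, YW)$ and $(\tilde Z, \tilde Y \tilde W)$ are both curl factorizations of $X$ in the sense of Proposition \ref{prop:inversewhirlfact}, so by uniqueness $Z = \tilde Z$ and $YW = \tilde Y \tilde W$. Taking $^{-c}$ of the latter equality gives $W^{-c} Y^{-c} = \tilde W^{-c} \tilde Y^{-c}$, two curl factorizations of $(YW)^{-c}$, and a final invocation of Proposition \ref{prop:inversewhirlfact} forces $W = \tilde W$ and $Y = \tilde Y$. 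This completes the plan, and shows that the theorem really reduces to two appearances of the one-sided statement already in hand.
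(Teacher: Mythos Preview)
Your proof is correct and follows the same overall architecture as the paper: apply Proposition~\ref{prop:inversewhirlfact} to peel off the curl part, then apply it again to the $^{-c}$-inverse of the remainder to peel off the whirl part, and use Lemma~\ref{lem:whirlmu} plus the uniqueness in Proposition~\ref{prop:inversewhirlfact} for uniqueness.

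The one substantive difference is in how you verify that the middle factor $Y$ is entire. The paper argues by contradiction: if $Y$ were not entire then all $\epsilon_i(Y)>0$, so by Lemma~\ref{lem:ASW} a non-degenerate curl factors out of $Y$ on the left, hence out of $X_1=YW$; but Lemma~\ref{lem:ASWconv} then forces the corresponding $\epsilon_i(X_1)>0$, contradicting that $X_1$ is entire. Your route is more elementary and avoids these lemmata entirely: from $X_1=YW$ with $Y,W\in U_{\geq 0}$ you read off $0\le Y_{i,j}\le (X_1)_{i,j}$, and since the entries of $\overline{X_1}(t)$ are entire power series with nonnegative coefficients, termwise domination forces the entries of $\overline{Y}(t)$ to be entire as well. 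This is a clean shortcut; the paper's argument, on the other hand, fits more naturally with the machinery (ASW factorization and the $\epsilon$-sequence) already developed and highlights why the curl factorization is the ``maximal'' one.
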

\begin{proof}
For existence, first use Proposition \ref{prop:inversewhirlfact} to
write $X = Z(X) X'$ where $X'$ is entire.  Now apply Proposition
\ref{prop:inversewhirlfact} to $(X')^{-c}$ to obtain $X' = Y W(X)$
where $Y^{-c}$ is entire and $W(X) = \prod_{i=-\infty}^{-1}
M(b^{(i)}_{1},\ldots,b^{(i)}_{n})$ and all parameters are positive.
We claim that $Y$ is entire.  For otherwise, by Lemma \ref{lem:ASW}
we can write $X' = N(\epsilon_1(Y),\ldots,\epsilon_n(Y))Y'W(X)$
where $Y'W(X)$ is TNN and the $\epsilon_i$ are strictly positive.
But $X'$ is entire so this is impossible by Lemma \ref{lem:ASWconv}.
Thus $X = Z(X) Y W(X)$ is the desired factorization.

For uniqueness, suppose we have a factorization $X = Z Y W$ as in
the statement of the theorem.  By Lemma \ref{lem:whirlmu}, we
may apply Proposition \ref{prop:inversewhirlfact} to $X = Z (YW)$ to
see that $Z = Z(X)$.  Repeating the argument for $X^{-c}$ we see
that $W = W(X)$ is well-defined.  (In particular, $W(X)$ can be
calculated before or after factoring $Z(X)$ out.)
\end{proof}

We call the expression $X = ZYW$ of Theorem \ref{thm:canon} the {\it
canonical form} of $X$.  We call $Z$ the {\it curl component} of $X$
and $W$ the {\it whirl component} of $X$.

\subsection{Doubly entire matrices as exponentials}

\begin{lem}
Suppose $A(t)$ is doubly entire.  Then $A(t) = e^{B(t)}$ for some
entire matrix $B(t)$.
\end{lem}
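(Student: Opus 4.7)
The plan is to construct $B$ by solving an ordinary differential equation, using the unipotent structure of $A(0)$ to provide a well-defined nilpotent initial condition.

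Since $X \in U$ is upper unitriangular, the folded matrix $A(0) = \overline{X}(0)$ is upper unitriangular, so $A(0) - I$ is nilpotent and
\[
B_0 \;:=\; \log A(0) \;=\; \sum_{k \geq 1} (-1)^{k+1}\frac{(A(0)-I)^k}{k}
\]
is a well-defined nilpotent matrix (the series terminates). Since both $A$ and $A^{-1}$ are entire, the logarithmic derivative $X(t) := A(t)^{-1} A'(t)$ is entire, and $A(t)$ is the unique solution of the linear ODE $Y' = YX$ with $Y(0) = A(0)$.

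The idea is now to set up a nonlinear ODE for $B(t)$ whose solution automatically satisfies $e^{B(t)} = A(t)$. The standard formula for the differential of the matrix exponential,
\[
\frac{d}{dt}\, e^{B(t)} \;=\; e^{B(t)} \cdot \chi\bigl(\mathrm{ad}\,B(t)\bigr)\bigl(B'(t)\bigr), \qquad \chi(z) = \frac{1 - e^{-z}}{z},
\]
suggests the ODE
\[
B'(t) \;=\; \psi\bigl(\mathrm{ad}\,B(t)\bigr)\bigl(X(t)\bigr), \qquad B(0) = B_0,
\]
where $\psi(z) = z/(1-e^{-z})$ is meromorphic with $\psi(0) = 1$ and poles only at $2\pi i k$ for $k \in \mathbb{Z}\setminus\{0\}$. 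Because $\mathrm{ad}\,B_0$ is nilpotent, $\psi(\mathrm{ad}\,B_0)$ is a well-defined operator on $\mathfrak{gl}_n(\mathbb{C})$, so the ODE has a unique holomorphic solution $B$ in a neighborhood of the origin. By construction $e^{B(t)}$ then solves the same linear ODE $Y' = YX$ with the same initial condition $Y(0) = A(0) = e^{B_0}$; uniqueness forces $e^{B(t)} = A(t)$ wherever $B$ is defined.

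The main obstacle is extending $B$ to all of $\mathbb{C}$. The ODE acquires apparent singularities exactly where $\mathrm{ad}\,B(t)$ picks up a nonzero eigenvalue in $2\pi i \mathbb{Z}$, i.e.\ where $\exp$ has a critical point at $B(t)$ (equivalently, where two eigenvalues of $A(t)$ coincide along a nontrivial branch). These are not genuine singularities of $B$: the identity $e^{B(t)} = A(t)$ forces $X(t)$ to lie in the image of $\chi(\mathrm{ad}\,B(t))$, and a limiting argument shows $\psi(\mathrm{ad}\,B(t))(X(t))$ stays finite by continuity. Combined with the simple connectedness of $\mathbb{C}$, which prevents multi-valuedness of any analytic continuation, one obtains a single-valued entire $B$ on $\mathbb{C}$ with $e^B = A$.
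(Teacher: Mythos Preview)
Your approach is genuinely different from the paper's and, in one respect, more careful. The paper simply sets $B(t) = \int A^{-1}(t)A'(t)\,dt$ with the constant chosen so that $Z(0) := e^{-B(0)} = A(0)^{-1}$; since $A$ and $A^{-1}$ are entire, $B$ is manifestly entire. It then computes $(AZ)' = A'Z - A\,A^{-1}A'\,Z = 0$ and concludes $AZ \equiv I$. That last step tacitly uses $Z' = -B'Z$, i.e.\ $(e^{-B})' = -B'e^{-B}$, which is exactly the non-commutativity issue your use of $\chi(\mathrm{ad}\,B)$ is designed to handle. So the paper's route is much shorter but has its own soft spot at the very point where you are being careful.

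That said, your argument has a genuine gap in the global extension. You correctly note that the nonlinear ODE $B' = \psi(\mathrm{ad}\,B)(X)$ becomes singular when $\mathrm{ad}\,B(t)$ has an eigenvalue in $2\pi i(\Z\setminus\{0\})$, but the claim that these are only ``apparent'' singularities is not established. The reasoning that $X(t)$ lies in the image of $\chi(\mathrm{ad}\,B(t))$ so that $\psi(\mathrm{ad}\,B(t))(X(t))$ stays finite is circular near such a point: you only know $e^{B} = A$ where $B$ is already defined, and even then, lying in the image of a degenerate linear map does not by itself produce a continuous choice of preimage. More fundamentally, a nonlinear ODE with entire right-hand side need not have entire solutions; you have not ruled out blow-up of $B$ in finite time along some direction. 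Simple connectedness of $\cc$ controls monodromy only after continuation along every path is already known. To make your strategy rigorous you would need an independent a~priori bound on $B(t)$ (for instance via the eigenvalues of $A(t)$), and that is the missing ingredient.
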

\begin{proof}
Define $Z(t) = e^{-\int A^{-1}(t) A'(t) dt}$ where $A'(t)$ denotes
$\frac{d}{dt}(A(t))$.  Clearly, $Z(t)$ is an entire matrix.  We may
pick the constant of integration so that $Z(0) = A^{-1}(0)$.  This
is possible because $A^{-1}(0)$ is non-singular (with inverse
$A(0)$). However,
$$\frac{d}{dt}(A(t)Z(t)) = A'(t)Z(t) -
A(t) A^{-1}(t)A'(t)Z(t) = 0.$$ Thus $A(t)Z(t)$ is a constant matrix.
But $A(0)Z(0)$ is the identity matrix, so the result holds with
$B(t) = \int A^{-1}(t) A'(t) dt$ which is clearly entire.
\end{proof}


\subsection{Infinite products of Chevalley
generators}\label{ssec:chev} A product of infinitely many
non-degenerate whirls (resp.~non-degenerate curls) can never be
written as a finite product of non-degenerate whirls
(resp.~non-degenerate curls).  This follows from either Lemma
\ref{lem:infinitewhirldet} or the observation that an infinite
product of non-degenerate whirls must have infinite support.  The
situation for Chevalley generators is markedly different.  For
example, with $n = 2$, one has $ \prod_{i=1}^{\infty} M(a_i,0) =
M(\sum_{i=1}^\infty a_i, 0)$ assuming that $\sum_i a_i < \infty$.

Let $S \subset U$ be a subsemigroup of $U$.  We call $S$ a {\it
right limit semigroup} if for all $X^{(1)}, X^{(2)}, \ldots$ in $S$
such that $X = \prod_{i =1}^\infty X^{(i)}$ exists, we have $X \in
S$.  Similarly, we define a {\it left limit semigroup} by replacing
right infinite products with left infinite products.

Let us define the {\it right Chevalley group} to be the smallest
subset $\L_r \subset U_{\geq 0}$ satisfying
\begin{enumerate}
\item
every $e_i(a)$ for $a \geq 0$ lies in $\L_r$,
\item
if $X, Y \in \L_r$ then $XY \in \L_r$ (that is, $\L_r$ is a
semigroup), and
\item
$\L_r$ is a right limit semigroup.
\end{enumerate}
Note that $\L_r$ exists because we may define $\L_r$ to be the
intersection of all (non-smallest) subsets satisfying (1), (2) and
(3).  We say that $\L_r$ is the right limit semigroup generated by
$e_i(a)$. Similarly, we define $\L_l$, the {\it left Chevalley
group} to be the left limit semigroup generated by $e_i(a)$.

\begin{remark}
In \cite{LP} we shall show that elements of $\L_r$ (resp.~$\L_l$)
have ``canonical'' factorizations.
\end{remark}


\subsection{Factorization of doubly entire TNN matrices}
A TNN matrix $X \in U_{\geq 0}$ is {\it regular} if it is either (i)
the identity matrix, or (ii) doubly entire, infinitely supported and
satisfying $\epsilon_i(X) = \mu_i(X) = 0$ for every $i$.  For
example, the matrix in Example \ref{ex:1} is regular.

\begin{lem}\label{lem:entiresupport}
Suppose $X \in U_{\geq 0}$ is entire, and infinitely supported. Then
$X^{-c}$ is infinitely supported.
\end{lem}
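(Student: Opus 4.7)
The plan is to argue by contradiction: I will assume that $X^{-c}$ is finitely supported and deduce that $X$ itself must then be finitely supported, contradicting the hypothesis. The argument mirrors the first paragraph of the proof of Theorem \ref{thm:finitefactor}, but with the roles of ``finitely supported'' and ``entire'' exchanged between $X$ and $X^{-c}$.

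First I would observe that if $X^{-c}$ is finitely supported, then every entry of $\overline{X^{-c}}(t)$ is a polynomial in $t$, and hence $\det(\overline{X^{-c}})(t)$ is itself a polynomial. The identity $X^c\cdot X^{-c}=I$ then gives $\det(\overline{X^c})(t)\cdot\det(\overline{X^{-c}})(t)=1$. By Lemma \ref{lem:detfold}, $\det(\overline{X^c})(t)$ equals $\det(\overline X)(t)$ or $\det(\overline X)(-t)$, and both are entire because $X$ is entire. A polynomial whose reciprocal extends to an entire function can have no zeros, so $\det(\overline{X^{-c}})(t)$ must be a non-zero constant; the normalization $\det(\overline{X^{-c}})(0)=1$ then forces $\det(\overline{X^{-c}})(t)\equiv 1$.

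With $\det(\overline{X^{-c}})(t)\equiv 1$ in hand, cofactor expansion shows that the entries of $\overline{X^c}(t)=\overline{X^{-c}}(t)^{-1}$ are again polynomials in $t$; equivalently, $X^c$ is finitely supported. Since $X$ and $X^c$ differ only by signs of entries, $\supp(X)=\supp(X^c)$, and so $X$ is finitely supported, contradicting the hypothesis.

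The substantive step is the extraction of $\det(\overline{X^{-c}})(t)\equiv 1$ from the interplay of the polynomial (finitely supported) side with the entire (no poles) side; the rest is bookkeeping about unfolding polynomial matrices to finitely supported periodic matrices. I do not expect a serious obstacle: Lemma \ref{lem:detfold} hands us exactly the entire/polynomial tension we need, and the conclusion follows almost mechanically.
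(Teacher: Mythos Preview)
Your argument is correct. It differs from the paper's proof, which invokes Theorem~\ref{thm:finitefactor} directly: assuming $X^{-c}$ is finitely supported, the paper factors $X^{-c}$ as a finite product of (possibly degenerate) whirls, and then observes that either all factors are Chevalley generators (forcing $X$ to be finitely supported) or some factor is a non-degenerate whirl (forcing $\det(\overline X)$ to have a pole via Lemma~\ref{lem:whirldet}, so $X$ is not entire). Your route bypasses the structure theorem entirely and works purely with determinants and Cramer's rule: the polynomial $\det(\overline{X^{-c}})(t)$ must be constant because its reciprocal is entire, and then the adjugate formula shows $X^c$ (hence $X$) has polynomial entries. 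This is more elementary and in fact never uses total nonnegativity, so it proves the stronger statement that any entire, infinitely supported $X\in U$ has infinitely supported $X^{-c}$. The paper's version, on the other hand, keeps the argument inside the TNN framework already developed and is a one-line application of an existing theorem.
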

\begin{proof}
Otherwise by Theorem \ref{thm:finitefactor}, $X^{-c}$ is a finite
product of possibly degenerate whirls.  If $X^{-c}$ is a product of
only Chevalley generators then $X$ will be finitely supported, so
the factorization of $X^{-c}$ must involve at least one
non-degenerate whirl.  But then by Lemma \ref{lem:infinitewhirldet},
$X$ would not be entire.
\end{proof}

\begin{lem}
Suppose $X$ is a doubly-entire infinitely supported TNN matrix.
Then $\epsilon_i(X) = 0$ for every $i$ if and only if $\mu_i(X^{-c})
= 0$ for every $i$.
\end{lem}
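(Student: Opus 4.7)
The plan is to prove the contrapositive of one implication, namely: if $\epsilon_j(X) > 0$ for some $j$, then $\mu_{j'}(X^{-c}) > 0$ for some $j'$. The reverse implication will follow by the mirror-image argument applied to $X^{-c}$ in place of $X$, using the right-sided analog of Lemma \ref{lem:ASW} (for $Y \in U_{\geq 0}$ not finitely supported, $Y = Y' \cdot N(\mu_1(Y), \ldots, \mu_n(Y))$ with $Y' \in U_{\geq 0}$, proved by applying the argument of Lemma \ref{lem:ASW} to column-solid minors with the row index tending to $-\infty$), followed by a right-sided Chevalley extraction and $(-)^{-c}$.

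For the forward direction, I first apply Lemma \ref{lem:ASW} to factor $X = N(\epsilon_1(X), \ldots, \epsilon_n(X)) \cdot X'$ with $X' \in U_{\geq 0}$. Since $X$ is entire, Lemma \ref{lem:limits} gives $\prod_k \epsilon_k(X) = 1/r(X) = 0$, so some $\epsilon_k(X)$ vanishes. Combined with the hypothesis $\epsilon_j(X) > 0$, the cyclic sequence $(\epsilon_1(X),\ldots,\epsilon_n(X))$ contains both a zero and a positive entry, so there exists an index $j_1$ with $\epsilon_{j_1-1}(X) = 0$ and $\epsilon_{j_1}(X) > 0$ (a ``rising edge'' in cyclic residue order).

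The crux of the argument is the algebraic identity
\[
N\big(\epsilon_1(X), \ldots, \epsilon_n(X)\big) \;=\; e_{j_1}\big(\epsilon_{j_1}(X)\big) \cdot N(\epsilon''),
\]
where $\epsilon''$ is obtained from $\epsilon(X)$ by setting $\epsilon''_{j_1} = 0$. This is verified entry-wise using $N(\epsilon)_{l,l+k} = \prod_{s=l}^{l+k-1} \epsilon_{\bar s}$: for rows with $\bar l = j_1$, the factor $\epsilon_{j_1}$ on the right is supplied by $\epsilon_{j_1} \cdot N(\epsilon'')_{l+1, l+k}$, and for large enough $k$ both sides vanish because the product eventually crosses $\bar s = j_1 - 1$; for rows with $\bar l \neq j_1$, any cyclic product from $\bar l$ that reaches the residue $j_1$ must first pass through $\bar s = j_1 - 1$, so $\epsilon_{j_1 - 1}(X) = 0$ forces both sides to vanish whenever they would otherwise disagree. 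Consequently $X = e_{j_1}(\epsilon_{j_1}(X)) \cdot X''$ with $X'' := N(\epsilon'') X' \in U_{\geq 0}$.

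Finally, applying $(-)^{-c}$ and using $e_{j_1}(a)^{-c} = e_{j_1}(a)$ yields $X^{-c} = (X'')^{-c} \cdot e_{j_1}(\epsilon_{j_1}(X))$. By Lemma \ref{lem:cinverse} the factor $(X'')^{-c}$ lies in $U_{\geq 0}$, and it must be infinitely supported (otherwise $X^{-c}$ would be finitely supported, contradicting Lemma \ref{lem:entiresupport}). Computing the column ratio $(X^{-c})_{j, j_1+1}/(X^{-c})_{j, j_1}$ from this factorization and passing to the limit $j \to -\infty$ gives $\mu_{j_1}(X^{-c}) = \mu_{j_1}((X'')^{-c}) + \epsilon_{j_1}(X) \geq \epsilon_{j_1}(X) > 0$, as desired. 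The main obstacle is the Chevalley-extraction identity above: without the rising-edge choice of $j_1$, the two sides would fail to agree on rows with $\bar l \neq j_1$, and the mechanism that converts the forced degeneracy of $\epsilon(X)$ into an honest Chevalley factor on the left would break down.
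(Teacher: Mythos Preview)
Your proof is correct and follows essentially the same approach as the paper's: both arguments hinge on the fact that, since $X$ is entire, the ASW curl $N(\epsilon_1(X),\ldots,\epsilon_n(X))$ is degenerate and hence factors into Chevalley generators, and that $e_i(a)^{-c}=e_i(a)$ transports a left Chevalley factor of $X$ to a right Chevalley factor of $X^{-c}$. The paper states this step in one line (``such curls are in fact products of Chevalley generators, and inverse of Chevalley generators are Chevalley generators''), whereas you make it explicit via the rising-edge index $j_1$ and the identity $N(\epsilon)=e_{j_1}(\epsilon_{j_1})\,N(\epsilon'')$; your final $\mu$-computation likewise makes explicit what the paper phrases as ``$\mu_j(X^{-c})$ records the biggest curl which can be factored out of $X^{-c}$ on the right.''
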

\begin{proof}
By Lemma \ref{lem:entiresupport} and Lemma \ref{lem:limits},
$X^{-c}$ is infinitely supported, so $\mu_i(X^{-c})$ is
well-defined.  By Lemma \ref{lem:ASWconv},
$(\epsilon_1(X),\ldots,\epsilon_n(X))$ records the parameters of the
biggest curl which can be factored out of $X$ on the left.
Similarly, $(\mu_1(X^{-c}),\ldots,\mu_n(X^{-c}))$ records the
parameters of the biggest curl which can be factored out of $X^{-c}$
on the right.  Because both $X$ and $X^{-c}$ is entire, such curls
are in fact products of Chevalley generators, and inverse of
Chevalley generators are Chevalley generators. So we have
$\epsilon_i > 0$ for some $i$, if and only if some Chevalley
generator can be factored out on the left of $X$, if and only if
some Chevalley generator can be factored out of $X^{-c}$ on the
right, if and only if $\mu_j(X^{-c}) > 0$ for some $j$.
\end{proof}

\begin{cor}
n A regular matrix $X \in U_{\geq 0}$ satisfies $\epsilon_i(X) =
\mu_i(X) = \epsilon_i(X^{-c}) = \mu_i(X^{-c}) = 0$ for every $i$.
\end{cor}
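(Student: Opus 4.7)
The plan is to observe that this corollary is essentially immediate from the preceding lemma once we check that the lemma can be applied with $X$ and $X^{-c}$ in either role. The definition of regular gives us $\epsilon_i(X) = \mu_i(X) = 0$ for free, so only the two conditions $\epsilon_i(X^{-c}) = 0$ and $\mu_i(X^{-c}) = 0$ remain.

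First I would dispose of the trivial case: if $X$ is the identity, all four quantities vanish. So assume $X$ is doubly entire and infinitely supported. The previous lemma, applied directly to $X$, says that $\epsilon_i(X) = 0$ for all $i$ if and only if $\mu_i(X^{-c}) = 0$ for all $i$; this gives $\mu_i(X^{-c}) = 0$.

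For the remaining equality $\epsilon_i(X^{-c}) = 0$, I would apply the same lemma but with $X^{-c}$ playing the role of $X$. To do so I must verify the two hypotheses of the lemma for $X^{-c}$. Double entireness of $X^{-c}$ is automatic, since the operation $Y \mapsto Y^{-c}$ is an involution (so $(X^{-c})^{-c} = X$) and both $X$ and $X^{-c}$ are entire by assumption. Infinite support of $X^{-c}$ follows from Lemma \ref{lem:entiresupport} applied to $X$. Therefore the lemma applied to $X^{-c}$ yields $\epsilon_i(X^{-c}) = 0$ for all $i$ if and only if $\mu_i((X^{-c})^{-c}) = \mu_i(X) = 0$ for all $i$, and the right-hand side holds by regularity.

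There is no real obstacle here; the only thing worth being careful about is the bookkeeping of the involution $^{-c}$ and confirming that ``doubly entire and infinitely supported'' is preserved under it, so that the preceding lemma is legitimately symmetric in $X$ and $X^{-c}$.
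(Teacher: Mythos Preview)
Your argument is correct and is exactly the intended one: the corollary is stated without proof in the paper precisely because it follows from two applications of the preceding lemma, once to $X$ and once to $X^{-c}$, after checking (via Lemma~\ref{lem:entiresupport} and the involution property of $^{-c}$) that $X^{-c}$ again satisfies the hypotheses. The only small addition one might make is to note that $X^{-c}$ is TNN by Lemma~\ref{lem:cinverse}, but this is routine.
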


\begin{thm}\label{thm:regular}
Every doubly entire, infinitely supported, $X \in U_{\geq 0}$ can be
factorized as $X = A Y B$ where $A \in \L_r$, $B \in \L_l$ and $Y
\in U_{\geq 0}$ is regular.
\end{thm}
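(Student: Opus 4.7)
The plan is to iteratively extract Chevalley generators from the left of $X$ to build $A \in \L_r$, then do the same on the right to build $B \in \L_l$, leaving the remainder $Y$ regular.

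For the left reduction, start with $X_0 = X$ and apply Lemma \ref{lem:ASW} iteratively to write $X_k = N(\epsilon_1(X_k), \dotsc, \epsilon_n(X_k)) X_{k+1}$. Since $X$ is entire, Lemma \ref{lem:limits} gives $\prod_i \epsilon_i(X) = 1/r(X) = 0$, so every curl factored off is degenerate and splits into a finite product $C_k$ of Chevalley generators; entireness propagates to each $X_k$ because $N(\cdots)^{-1}$ is polynomial. The partial products $A_k = C_1 \dotsb C_k$ are nondecreasing entrywise, and from $X = A_k X_k$ combined with $X_k \geq I$ (each $X_k$ has $1$'s on the diagonal and nonnegative off-diagonal entries) we get $A_k \leq X$ entrywise, so $A_k$ converges entrywise to some $A \in \L_r$ (using $C_k \in \L_r$ and closure of $\L_r$ under right infinite products). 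Likewise $X_k \to Y_L$ entrywise. The converse direction of Lemma \ref{lem:infinitewhirl}, applied to the convergent curl product $\prod_k N(\epsilon^{(k)})$, forces $\sum_{k,i} \epsilon_i(X_k) < \infty$, hence $\epsilon_i(X_k) \to 0$ for each $i$.

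The first key step is to check that $Y_L$ is doubly entire and that $\epsilon_i(Y_L) = 0$ for every $i$. Double entirety follows from $A$ being entire (each entry of $\overline A(t)$ has nonnegative coefficients dominated coefficientwise by the entire function $\overline X(t)$) together with $Y_L = A^{-1}X$ and $Y_L^{-1} = X^{-1}A$. The individual vanishing $\epsilon_i(Y_L) = 0$ is the main obstacle of the proof, since Lemma \ref{lem:ASWreduction} alone only yields $\prod_i \epsilon_i(Y_L) = 0$. To obtain the sharper statement, I would sharpen the monotonicity inequality used in Lemma \ref{lem:ASWreduction} from the $n$-step ratio $y_{i,j}/y_{i+n,j}$ to the one-step ratio $y_{i,j}/y_{i+1,j}$ via nonnegativity of the relevant $2 \times 2$ minors of $X_k$, then pass to the limit using $\epsilon_i(X_k) \to 0$; alternatively, one can argue by maximality, noting that any surviving positive $\epsilon_i(Y_L)$ would license a further ASW step that the limiting process has not yet exhausted.

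Having produced $Y_L$ doubly entire with all $\epsilon_i(Y_L) = 0$, apply the same construction to $Y_L^{-c}$, which lies in $U_{\geq 0}$ by Lemma \ref{lem:cinverse} and is also doubly entire. This yields $Y_L^{-c} = D Z$ with $D \in \L_r$ and $Z$ doubly entire satisfying $\epsilon_i(Z) = 0$ for all $i$. Using $(XY)^{-c} = Y^{-c}X^{-c}$ and $e_i(a)^{-c} = e_i(a)$, set $Y = Z^{-c}$ and $B = D^{-c}$, so that $Y_L = Z^{-c}D^{-c} = YB$ and hence $X = AYB$, with $B \in \L_l$ since $D^{-c}$ is a left-infinite product of Chevalley generators. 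The preceding lemma relating $\epsilon_i(W) = 0$ to $\mu_i(W^{-c}) = 0$ gives $\mu_i(Y) = 0$ for all $i$ from $\epsilon_i(Z) = 0$. The same lemma applied to $Y$ shows $\epsilon_i(Y) = 0$ iff $\mu_i(Z) = 0$, so the final point to verify is that the left reduction of $Y_L^{-c}$ preserves $\mu_i(Y_L^{-c}) = 0$, which itself was already established by the preceding lemma from $\epsilon_i(Y_L) = 0$. Thus $Y$ is regular, completing the factorization.
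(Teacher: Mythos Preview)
Your approach has a genuine gap at precisely the point you flag as ``the main obstacle'': establishing that after a single countable sequence of ASW extractions, the limit $Y_L$ satisfies $\epsilon_i(Y_L) = 0$ for \emph{every} $i$, not merely $\prod_i \epsilon_i(Y_L) = 0$. Neither of your proposed fixes works. The one-step ratio monotonicity $\frac{x_{i,j}^{(k)}}{x_{i+1,j}^{(k)}} \geq \frac{x_{i,j}^{(k+1)}}{x_{i+1,j}^{(k+1)}}$ would, after the substitution $x_{i,j}^{(k+1)} = x_{i,j}^{(k)} - \epsilon_i x_{i+1,j}^{(k)}$, reduce to $\epsilon_i (x_{i+1,j}^{(k)})^2 \geq \epsilon_{i+1}\, x_{i,j}^{(k)} x_{i+2,j}^{(k)}$, which is not a $2\times 2$ minor inequality and can fail when $\epsilon_i$ is small relative to $\epsilon_{i+1}$; the $n$-step version in Lemma~\ref{lem:ASWreduction} works only because $\epsilon_i = \epsilon_{i+n}$. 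Your maximality argument is circular: a surviving $\epsilon_i(Y_L) > 0$ does license a further ASW step, but this is not a contradiction --- it simply means countably many steps were insufficient, and you must continue. Relatedly, knowing $\epsilon_i(X_k) \to 0$ as $k \to \infty$ does not give $\epsilon_i(Y_L) = 0$, since $\epsilon_i(Y_L) = \lim_{j} \lim_{k}$ while $\lim_{k} \epsilon_i(X_k) = \lim_{k} \lim_{j}$, and you have no uniformity to swap these.

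The paper sidesteps this entirely by transfinite induction: it defines $X_\alpha$ for every ordinal $\alpha$, extracting one Chevalley generator on each side at successor stages (via the degenerate curl given by ASW) and taking entrywise infima at limit stages. If no $X_\alpha$ were ever regular, the map $\alpha \mapsto X_\alpha$ would be a strictly decreasing, hence injective, map from the ordinals into $U$, contradicting cardinality. This argument never needs to control the individual $\epsilon_i$ after any particular limit stage; it simply keeps going until regularity is reached, which it must be. Your two-phase scheme (left then right) could be salvaged by the same transfinite device, but as written it stops after $\omega$ steps, and that is where it breaks.
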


In \cite{LP}, we shall strengthen Theorem \ref{thm:regular} by
showing that the factorization is unique.

\begin{proof}
We use transfinite induction.  Every degenerate whirl or curl is a
product of Chevalley generators.  Pick such a factorization for each
degenerate whirl or curl, once and for all.

Now we define a $X_\alpha \in U_{\geq 0}$ for each ordinal $\alpha$.
We define $X_0 = X$.  We define $X_{\alpha+1}$ by factoring out a
Chevalley generator from $X_\alpha$ on both the left and the right
(if possible), always using the first Chevalley generator in the
chosen factorization of the curl specified by ASW factorization
(Lemma \ref{lem:ASW}).  If $X_\alpha$ is regular so that no
Chevalley generators can be factored out then $X_{\alpha+1} =
X_\alpha$. Finally, if $\alpha$ is a limit ordinal, then we set
$X_\alpha = \inf_{\beta < \alpha} X_\beta$, where the infimum is
taken entry-wise.

If $X_\alpha$ is never regular, then it is easy to see that $\alpha
\mapsto X_\alpha$ is injective ($X_\alpha$ is always decreasing).
This is impossible because $X_\alpha \in U$, and the cardinality of
$U$ is the same as that of the real numbers.  Thus $X_\alpha$ is
eventually regular, and this is the required matrix $Y$ of Theorem
\ref{thm:regular} (the matrices $A$ and $B$ are obtained by
remembering the Chevalley generators used during the transfinite
induction).
\end{proof}


\section{Commuting through infinite whirls and curls}
\label{sec:semigroup}
\subsection{(Limit) semigroups of infinite whirls and curls}

\begin{thm} \label{thm:semi}
Each of the sets $RC, LC, RW, LW$ of infinite products of non-degenerate
whirls and curls forms a semigroup.
\end{thm}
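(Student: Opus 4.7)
By the involution $X \mapsto X^{-c}$ (Lemma \ref{lem:cinverse}), which exchanges whirls with curls, combined with matrix transposition (which exchanges left- and right-infinite products), it is enough to prove the closure statement for $RC$. Fix $Z_1 = \prod_{i \geq 1} N(\a^{(i)})$ and $Z_2 = \prod_{j \geq 1} N(\b^{(j)})$ in $RC$, and set $X := Z_1 Z_2$, which is TNN by Lemma \ref{lem:product}. The task is to present $X$ itself as a single right-infinite product of non-degenerate curls with a convergent sum of parameters. First I would apply Proposition \ref{prop:inversewhirlfact} to obtain the unique curl factorization $X = ZY$ with $Z \in \overline{RC}$ and $Y \in U_{\geq 0}$ entire; the goal reduces to showing $Z \in RC$ (not merely $\overline{RC}$) and $Y = I$.

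For the cardinality of curl factors in $Z$, Lemma \ref{lem:infinitewhirldet} gives
\[
\det(\overline{X}(t)) \;=\; \det(\overline{Z_1}(t))\det(\overline{Z_2}(t)) \;=\; \prod_{i \geq 1} \tfrac{1}{1 - \alpha_i t}\prod_{j \geq 1} \tfrac{1}{1 - \beta_j t},
\]
with $\alpha_i = \prod_m a^{(i)}_m > 0$ and $\beta_j = \prod_m b^{(j)}_m > 0$. Since $Y$ is entire, so is $\det(\overline{Y}(t))$, so the same formula applied to $Z$ via Lemma \ref{lem:infinitewhirldet} identifies the multiset of ASW-weights $\{\prod_m c^{(k)}_m\}_{k}$ of the curls in $Z$ with $\{\alpha_i\} \cup \{\beta_j\}$; in particular $Z$ has countably infinitely many non-degenerate curl factors. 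To then pin down $Y = I$, I would run a greedy rearrangement: sort the combined multiset into a decreasing sequence $\gamma_1 \geq \gamma_2 \geq \ldots$ and at each stage pull the curl of the current largest ASW-weight to the front of the remaining product by repeated application of the commutation relation $\eta$ (Theorem \ref{T:commute}), which is valid since non-degenerate curls can always be commuted past one another. This produces, for every $N$, a factorization $X = N(\c^{(1)}) \cdots N(\c^{(N)}) R_N$ with $R_N \in U_{\geq 0}$; passing to the limit $N \to \infty$ and invoking uniqueness from Proposition \ref{prop:inversewhirlfact} forces $R_N \to I$, so $X = Z \in RC$.

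The hard part is the convergence of this limit, i.e.\ verifying that the extracted parameters satisfy $\sum_{k,m} c^{(k)}_m < \infty$ so that the rearranged product really lies in $RC$: the birational commutation $\eta$ preserves only the ASW-weight $\prod_m a_m$ and not the individual coordinates $a_m$, which can grow as curls are moved past each other. The required uniform estimate is the one underlying the proof of Lemma \ref{lem:infinitewhirl}: the $(i,i+1)$-entries of any finite partial product $\prod_{k=1}^N N(\c^{(k)})$ are dominated entrywise by those of $X$, which in turn are bounded by $\sum_{i,m} a^{(i)}_m + \sum_{j,m} b^{(j)}_m < \infty$ independently of $N$. Since $\sum_{k=1}^N \sum_m c^{(k)}_m$ equals the sum of the first superdiagonal entries of $\prod_{k=1}^N N(\c^{(k)})$, this uniform bound gives summability of the rearranged parameters and completes the placement of $X$ in $RC$.
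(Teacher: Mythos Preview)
Your approach has a genuine gap at the heart of the ``greedy rearrangement'' step. To produce the factorization $X = N(\c^{(1)})\cdots N(\c^{(N)}) R_N$, you must at some point move a curl from $Z_2$ to the front of the product, i.e.\ commute a single curl past the \emph{entire infinite tail} of $Z_1$. The relation $\eta$ of Theorem~\ref{T:commute} only handles one commutation at a time; applying it infinitely often requires a convergence argument you never give. Whether the parameters of the migrating curl stabilize, and whether the infinitely many curls it passes through settle down to a well-defined product, is exactly the analytic content of the theorem. The paper's proof addresses this directly: it works only with \emph{finite} truncations (the factors with inverse radius of convergence below a threshold $m$), rewrites each in ASW order, and then uses the quantitative estimate of Lemma~\ref{lem:pwhirl} (and its iteration, Lemma~\ref{L:multiY}) to show that commuting a whirl through a product of small whirls changes its parameters by a controlled amount. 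This is precisely the missing ingredient in your sketch: without such an estimate, the extracted $\c^{(k)}$ are not even defined, so the subsequent uniqueness and summability arguments have nothing to act on.

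There is also a smaller issue in the determinant step. You assert that the pole set of $\det(\overline X)$ coincides with the ASW-weights of $Z$, but from Proposition~\ref{prop:inversewhirlfact} you only know $Y$ is entire, not that $\det(\overline Y)$ is \emph{nonvanishing}; a zero of $\det(\overline Y)$ could in principle cancel a pole of $\det(\overline Z)$. (This is repairable: since $X^{-c}$ is a left-infinite whirl, Lemma~\ref{lem:whirlmu} gives $\epsilon_i(X^{-c})=0$, so Theorem~\ref{thm:canon} applies with trivial whirl part and yields $Y$ doubly entire, whence $\det(\overline Y)$ is entire with entire reciprocal.) But even with the pole count corrected, knowing the ASW-weights of $Z$ does not by itself force $Y=I$; you still need the rearrangement argument, and that is where the real work lies. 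Your final paragraph on summability is correct but peripheral: once the $\c^{(k)}$ exist and sit below $X$ in the entrywise order, the superdiagonal bound is immediate from Lemma~\ref{lem:tripleomit}, and this is indeed how the paper bounds parameter sums. The hard part is getting to that point.
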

\begin{example} \label{ex:6}
Let $n = 2$.  Consider the infinite curl $X = \prod_{i \geq 0}
N(2^{-i}, 2^{-i-1})$. Then the entries of $X$ for $i<j$ are given by
$x_{i,j} = 2^{-\omega(i,j)} \prod_{k=1}^{j-i} \frac{2^k}{2^k-1}$,
where $\omega(i,j) = [(j-i)/2] + 1$ if $j$ is odd and $i$ is even,
and $\omega(i,j) =[(j-i)/2]$ otherwise.  A fragment of $X$ looks as
$$
\left(\begin{array}{ccccccc} \ddots & \vdots & \vdots & \vdots & \vdots &\vdots \\
\cdots& 1& 2 & \frac{4}{3} & \frac{32}{21} & \frac{256}{315}& \cdots \\
\cdots& 0& 1& 1 & \frac{4}{3} & \frac{16}{21} & \cdots \\
 \cdots& 0&0 & 1& 2& \frac{4}{3}&\cdots \\
 \cdots& 0&0&0&1& 1& \cdots \\
  \cdots& 0&0&0&0&1&  \cdots \\
 & \vdots & \vdots & \vdots & \vdots &\vdots &\ddots
\end{array} \right)
.
$$
One can check using the curl commutation relation that if $a/b =
c/d$ then $N(a,b) N(c,d) = N(c,d) N(a,b)$. Using that one verifies
that $X^2 = \prod_{i \geq 0} N(2^{-i}, 2^{-i-1})^2$.
\end{example}

We focus on the case of $RW$.  Theorem \ref{thm:semi} follows from
Lemma \ref{L:Z} below.

\begin{lem} \label{lem:pwhirl}
Let $X = M(a_1,\ldots,a_n)$ and $Y=M(b_1,\ldots,b_n)$ be two
non-degenerate whirls, such that $\max(a_i) < \epsilon < \min(b_i)$.
Define $Y' = M(b'_1,\cdots,b'_n)$, $X' = M(a'_1,\ldots,a'_n)$ to be the result of applying the whirl relation, so
that $XY = Y'X'$.  Then, for each $i$
$$
|b'_i - b_i| \leq \epsilon \frac{n\max(b_i)}{\min(b_i)}.
$$
\end{lem}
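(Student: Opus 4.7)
The plan is to use the explicit formula from the definition preceding Theorem~\ref{T:commute}, namely
$$b'_i = \frac{b_{i+1}\,\kappa_{i+1}(\a,\b)}{\kappa_i(\a,\b)}, \qquad \kappa_i(\a,\b) = \sum_{j=i}^{i+n-1} \prod_{k=i+1}^{j} b_k \prod_{k=j+1}^{i+n-1} a_k,$$
and to reduce everything to a single simple ratio. First I would isolate the unique term of $\kappa_i$ that contains no $a$-factor (obtained by setting $j = i+n-1$), namely
$$P_i := b_{i+1}b_{i+2}\cdots b_{i+n-1}.$$
Writing $\kappa_i = P_i(1+\delta_i)$, the error is
$$\delta_i = \sum_{\ell=1}^{n-1} \prod_{k=i+n-\ell}^{i+n-1} \frac{a_k}{b_k}.$$
Because $a_k/b_k \leq \max(a_i)/\min(b_i) < \epsilon/\min(b_i) < 1$, each of the $n-1$ summands is bounded by $\epsilon/\min(b_i)$, giving the crude estimate $\delta_i \leq (n-1)\,\epsilon/\min(b_i)$.

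The key step is a periodicity identity. Both products $b_{i+1}P_{i+1} = b_{i+1}b_{i+2}\cdots b_{i+n}$ and $b_i P_i = b_i b_{i+1}\cdots b_{i+n-1}$ equal $b_1 b_2 \cdots b_n$, by $b_{i+n}=b_i$. Substituting $\kappa_j = P_j(1+\delta_j)$ into the formula for $b'_i$ and using $b_{i+1}P_{i+1} = b_iP_i$ collapses the whole expression to
$$\frac{b'_i}{b_i} = \frac{1+\delta_{i+1}}{1+\delta_i}.$$
All the work is in noticing that the ``leading part'' of $b'_i$ is precisely $b_i$.

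Finally, from the identity above,
$$b'_i - b_i = b_i\cdot\frac{\delta_{i+1}-\delta_i}{1+\delta_i}.$$
Since $\delta_i, \delta_{i+1} \geq 0$, a short case split (if $\delta_{i+1}\geq\delta_i$ use $(\delta_{i+1}-\delta_i)/(1+\delta_i)\leq \delta_{i+1}$, otherwise use $\delta_i/(1+\delta_i)\leq\delta_i$) shows $|\delta_{i+1}-\delta_i|/(1+\delta_i)\leq \max(\delta_i,\delta_{i+1})$. Combining,
$$|b'_i - b_i| \leq \max(b_i)\cdot(n-1)\,\frac{\epsilon}{\min(b_i)} < \epsilon\,\frac{n\,\max(b_i)}{\min(b_i)},$$
as required.

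The main obstacle is conceptual rather than technical: a naive bound on $\kappa_{i+1}/\kappa_i$ using $\kappa_i \geq P_i \geq \min(b_i)^{n-1}$ produces a bound that grows like $(\max(b_i)/\min(b_i))^{n-1}$, which is much weaker than claimed. The periodicity identity $b_{i+1}P_{i+1}= b_i P_i$, which forces the leading contributions in numerator and denominator to cancel exactly, is what gives the crucial linear dependence on $\max(b_i)/\min(b_i)$.
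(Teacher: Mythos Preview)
Your proof is correct and follows essentially the same route as the paper's: both factor $\kappa_i = P_i(1+\delta_i)$ with $P_i = b_{i+1}\cdots b_{i+n-1}$, bound $\delta_i \leq (n-1)\epsilon/\min(b_i)$, and exploit the periodicity identity $b_{i+1}P_{i+1} = b_iP_i = \prod_k b_k$. The only organizational difference is that the paper first drops the $\delta_{i+1}$ term (using $\kappa_{i+1} > P_{i+1}$) to get the one-sided bound $b'_i > b_i/(1+\delta_i)$ and then invokes symmetry for the reverse inequality, whereas you keep the exact identity $b'_i/b_i = (1+\delta_{i+1})/(1+\delta_i)$ and bound both sides at once.
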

\begin{proof}
We have
\begin{align*}
b'_i = \frac{b_{i+1} \kappa_{i+1}({\bf a}, {\bf b})}{
\kappa_{i}({\bf a}, {\bf b})} > \frac{\prod_{i=1}^n
b_i}{\kappa_{i}({\bf a}, {\bf b})} &= \frac{b_i}{1 +
\sum_{j=i}^{i+n-2} \prod_{k=j+1}^{i+n-1}(a_k/b_k)} \\
&> b_i(1 - n(a_{i+1}/b_{i+1}))
> b_i - \epsilon \frac{n\max_i(b_i)}{\min_i(b_i)}.
\end{align*}
Similarly, $b_i > b'_i - \epsilon \frac{n\max_i(b_i)}{\min_i(b_i)}$.
\end{proof}

\begin{lem}\label{L:multiY}
Let $Y_1, Y_2, \ldots, Y_r$ be non-degenerate whirls and $\delta >
0$. Let $X_1, X_2, \ldots, X_m$ be a finite sequence of whirls, and
let $Y'_j$ be obtained by successively commuting $Y_j$ through the
$X_i$:
$$
X_1 X_2 \cdots X_m Y_1 Y_2 \cdots Y_r = Y'_1 Y'_2 \cdots Y'_r X'_1
X'_2 \cdots X'_m.
$$
Then there is a constant $C$, depending only on $Y_1, \ldots, Y_r$,
such that if the total sum of parameters in all the $X_i$ is less
than $C$, then for each $i$, the parameters in $Y_i$ differ from
those in $Y'_i$ by at most $\delta$.
\end{lem}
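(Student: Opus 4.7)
The strategy is to push each $Y_j$ leftward past the $X$-block one at a time in the natural order, using Lemma \ref{lem:pwhirl} to bound the drift at each single commutation, and then to sum these bounds via an inductive argument on $j$. Let $\mu = \min_{j,k} b^{(j)}_k > 0$ and $M = \max_{j,k} b^{(j)}_k$ denote the extremal initial $Y$-parameters, and set $R = 1 + 2M/\mu$, an upper bound for $\max/\min$ of any perturbed $Y_j$ whose parameters remain within $\mu/2$ of the originals. The constant $C$ will be chosen at the end, depending on $\mu$, $M$, $n$, $r$, and $\delta$.

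The key bookkeeping principle is conservation of the total whirl-parameter sum: by Lemma \ref{lem:eta}(1) each individual commutation preserves $a_i + b_i$, so the grand total of all whirl parameters in the expression is an invariant of the process. Consequently, at any moment the total $X$-parameter sum equals the initial $X$-sum $S$ plus the accumulated net decrease in total $Y$-parameters. Letting $D_j$ denote the maximum deviation of any single parameter of $Y_j$ from its initial value at the end of $Y_j$'s journey, the total $Y$-decrease at any moment during the $j$-th journey is bounded by $n \sum_{j' < j} D_{j'}$.

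I would now prove inductively on $j$ that $D_j \leq \delta$ (keeping $\delta \leq \mu/2$ without loss of generality). Assume $D_{j'} \leq \delta$ for all $j' < j$. Then throughout the $j$-th journey the current parameters of $Y_j$ lie in $[\mu/2, 2M]$, so the $\max/\min$ ratio of $Y_j$ is at most $R$; the total $X$-sum is at most $S + n(r-1)\delta$, which the choice of $C$ forces below $\mu/2$, ensuring that $\max(a_i) < \min(b_i)$ in every application of Lemma \ref{lem:pwhirl}. Since $Y_j$ meets the $X_i$'s in the right-to-left order, each $X_i$ still has its start-of-journey parameters when it is reached; writing $\epsilon_i^{(j)}$ for the max parameter of $X_i$ at that moment, the sum $\sum_i \epsilon_i^{(j)}$ is bounded by the total $X$-sum at the start of the $j$-th journey, namely $S + n \sum_{j' < j} D_{j'}$. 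Summing Lemma \ref{lem:pwhirl}'s per-commutation bound $\epsilon_i^{(j)} \cdot nR$ over $i$ gives
\[
D_j \ \leq\ nR\,\Bigl(S + n \sum_{j' < j} D_{j'}\Bigr),
\]
which solves by induction on $j$ to $D_j \leq nRS(1 + n^2 R)^{j-1}$. Choosing
\[
C \ =\ \min\!\Bigl(\tfrac{\mu}{4n(r+1)},\ \tfrac{\delta}{nR(1 + n^2 R)^{r-1}}\Bigr)
\]
then closes the induction and proves the lemma.

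The main technical obstacle I expect is the bookkeeping: verifying that every application of Lemma \ref{lem:pwhirl} is legal (the hypothesis $\max(a_i) < \min(b_i)$ must be maintained at each of the $mr$ commutations), and checking that $\sum_i \epsilon_i^{(j)}$ really is controlled by the $X$-sum at the start of the $j$-th journey rather than by some cumulative quantity growing with $m$. Both points hinge on the right-to-left commutation order together with the sum-conservation identity; once these are in place, the recursive estimate above is routine and does not depend on $m$.
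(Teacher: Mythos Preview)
Your proposal is correct and follows essentially the same approach as the paper's own proof: iterate Lemma~\ref{lem:pwhirl} to control each single commutation, and use the conservation of the total parameter sum (Lemma~\ref{lem:eta}(1)) to keep the $X$-parameters small throughout, so that Lemma~\ref{lem:pwhirl} remains applicable at every step. The paper's proof is only a two-sentence sketch of exactly this strategy; you have supplied the explicit bookkeeping (the quantities $\mu$, $M$, $R$, the recursion for $D_j$, and the closed-form bound $D_j \le nRS(1+n^2R)^{j-1}$) that the paper omits.

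One small remark on presentation: your inductive hypothesis ``$D_{j'}\le\delta$'' by itself gives only $D_j\le nR(S+n(r-1)\delta)$, which does not close the induction when $n^2R(r-1)\ge 1$. The argument that actually works is the one you state immediately afterward, namely solving the recursion $D_j\le nR(S+n\sum_{j'<j}D_{j'})$ directly to obtain $D_j\le nRS(1+n^2R)^{j-1}$ and then choosing $C$ so that this is at most $\min(\delta,\mu/2)$. With $\delta\le\mu/2$, your second term for $C$ already forces $S(1+n^2R)^{r-1}<\mu/2$, which simultaneously guarantees the hypothesis $\max(a_i)<\min(b_i)$ of Lemma~\ref{lem:pwhirl} and the ratio bound $R$ at every one of the $mr$ commutations; so the bootstrap closes and the first term in your $\min$ is in fact redundant.
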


\begin{proof}
Lemma \ref{lem:pwhirl} allows us to pull the $Y$-s through the
$X$-s, one after another, guaranteeing that the parameters in the
$Y_i$-s do not change too much. While doing that we need to know
that the parameters inside $X$-s remain small so that we can
repeatedly apply Lemma \ref{lem:pwhirl}. This however follows from
the fact that parameters in $Y$-s do not change much, while the
total sum of parameters in $X$-s and $Y$-s remains constant.
\end{proof}

Let $X = \prod_{i=1}^\infty X_i$ and $Y =\prod_{i=1}^\infty Y_i$ be
two infinite products of whirls.  We assume the products are written
in the canonical ASW order, that is we have $r(X^{-c}_1) \leq
r(X^{-c}_2) \leq \cdots$ and similarly for $Y$.  We will call
$r(X^{-c})$ the inverse radius of convergence of $X$.  For each $m
\in \{1,2,\ldots\}$, let $X_1, \cdots,X_{s_m}$ and $Y_1, \cdots,
Y_{t_m}$ be the factors with inverse radius of convergence less than
$m$. We may rewrite using the ASW factorization
$$
X_1 X_2 \cdots X_{s_m} Y_1 Y_2 \cdots Y_{t_m} = Z^{(m)}_1 Z^{(m)}_2
\cdots Z^{(m)}_{s_m+t_m}.
$$
Each of the matrices $Z^{(1)}_1, Z^{(2)}_1, \cdots$ is a whirl
depending on $n$ real parameters.  These parameters are bounded
above by the sum of the parameters in $X$ and $Y$, so a subsequence
of $\{Z^{(m_i)}_1\}$ of converges to some whirl $Z_1$, which must be
non-degenerate.  Now find a subsequence of the matrices
$\{Z^{(m_i)}_2\}$ which converge to a whirl $Z_2$, and repeat to
define $Z_1, Z_2 , \cdots$.

\begin{lem}\label{L:Z}
The infinite product $Z = \prod_{i=1}^\infty Z_i$ converges, and $Z
= XY$.
\end{lem}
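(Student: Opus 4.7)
The plan is to prove convergence of $Z=\prod_{i\geq 1}Z_i$ and then the identity $Z=XY$.

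For convergence I would use that the whirl commutation relation preserves the total parameter sum: by part (1) of Lemma \ref{lem:eta}, $\sum_{k=1}^{s_m+t_m}\alpha(Z^{(m)}_k)=\sum_{i\leq s_m}\alpha(X_i)+\sum_{j\leq t_m}\alpha(Y_j)\leq C$, where $C:=\sum_i\alpha(X_i)+\sum_j\alpha(Y_j)<\infty$ and $\alpha$ denotes the sum of parameters of a whirl. Along the diagonal subsequence with $Z^{(m_i)}_k\to Z_k$, Fatou's lemma gives $\sum_k\alpha(Z_k)\leq C$, so Lemma \ref{lem:infinitewhirl} shows $Z$ converges to a TNN matrix.

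For the identity $Z=XY$ I would work entry by entry. Fix $i\leq j$ and set $d=j-i$. The $(i,j)$ entry of any product of whirls is the nonnegative sum
$$(W_1\cdots W_L)_{ij}=\sum_{1\leq k_1<\cdots<k_d\leq L}a^{(W_{k_1})}_i\cdots a^{(W_{k_d})}_{i+d-1}.$$
Restricting the expansion for $Z^{(m)}_1\cdots Z^{(m)}_{s_m+t_m}$ to tuples with $k_d\leq N$ gives exactly $(Z^{(m)}_1\cdots Z^{(m)}_N)_{ij}$, so $(Z^{(m)}_1\cdots Z^{(m)}_N)_{ij}\leq(X_1\cdots X_{s_m}Y_1\cdots Y_{t_m})_{ij}$; letting $m=m_i\to\infty$ and then $N\to\infty$ yields $(Z)_{ij}\leq(XY)_{ij}$.

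The reverse inequality is the hard step. The remainder from tuples with some $k_s>N$ is bounded, via the standard estimate $e^{(j)}_l(\vec a^{(1)},\ldots)\leq(\sum_k\alpha_k)^l/l!$, by $\sum_{l=1}^{d}C^{d-l}\beta_m(N)^l/((d-l)!\,l!)$ with $\beta_m(N):=\sum_{k>N}\alpha(Z^{(m)}_k)$. It therefore suffices to make $\beta_m(N)$ arbitrarily small. I would fix a threshold $\tau>0$ and split $X$, $Y$ into ``big'' initial blocks $B_X$, $B_Y$ of factors with $\pi>\tau$ and ``small'' tails $S_X$, $S_Y$ of factors with $\pi\leq\tau$, so that $\alpha(S_X)+\alpha(S_Y)\to 0$ as $\tau\to 0$. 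Apply Lemma \ref{L:multiY} to commute $B_Y$ past $S_X$ (legitimate once $\alpha(S_X)$ is below the constant provided by that lemma for $B_Y$): this produces $X_1\cdots X_{s_m}Y_1\cdots Y_{t_m}=B_X\tilde B_Y\tilde S_XS_Y$ with $\tilde B_Y$ close to $B_Y$, and total-sum preservation forces $\alpha(\tilde S_X)$ close to $\alpha(S_X)$. Further ASW-sorting within $B_X\tilde B_Y$ keeps these large-$\pi$ factors in the leading positions of $Z^{(m)}$, so $\beta_m(N)$ is essentially $\alpha(S_X)+\alpha(S_Y)$ and hence small. Combining these estimates gives $(XY)_{ij}\leq(Z_1\cdots Z_N)_{ij}+o(1)$, which completes the proof. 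The main obstacle is exactly this tail control --- preventing mass from leaking into small-$\pi$ whirls during the ASW rearrangement --- and Lemma \ref{L:multiY} is set up precisely to provide the required continuity of that rearrangement under small perturbations of the tail.
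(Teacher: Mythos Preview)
Your convergence argument and the inequality $Z\leq XY$ are fine and match the paper's proof.

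For the reverse inequality $XY\leq Z$, your overall idea---use Lemma~\ref{L:multiY} to push the large-$\pi$ part of $Y$ past the small tail of $X$---is the same as the paper's. But the single-threshold implementation has a circularity you have not resolved. You need $\alpha(S_X)$ below the constant $C$ that Lemma~\ref{L:multiY} produces for the block $B_Y$; however $C$ depends on $B_Y$ (indeed on the minimum parameter occurring in $B_Y$ and on $|B_Y|$, via Lemma~\ref{lem:pwhirl}), and $B_Y$ itself depends on $\tau$. As $\tau\to 0$, $|B_Y|\to\infty$ and the minimum parameter in $B_Y$ may shrink, so $C$ can go to zero at a rate you do not control relative to $\alpha(S_X)$. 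Your parenthetical ``legitimate once $\alpha(S_X)$ is below the constant'' flags the hypothesis but does not verify it. If you instead decouple---fix $\tau_Y$ first to determine $B_Y$ and hence $C$, \emph{then} pick $\tau_X$ so that $\alpha(S_X)<C$---you run into a second problem: with $\tau_X\neq\tau_Y$ the $\pi$-values of factors in $B_X$ and $S_Y$ can interleave, so the first $N=|B_X|+|B_Y|$ factors of the ASW sort are no longer exactly $B_X\cup\tilde B_Y$, and your identification of $\beta_m(N)$ with $\alpha(\tilde S_X)+\alpha(S_Y)$ breaks.

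The paper sidesteps both issues by abandoning the tail-sum bound entirely and arguing purely by entrywise monotonicity. It fixes $j,k$ and shows directly that
\[
X_1\cdots X_j\,Y_1\cdots Y_k \;\lesssim\; \prod_{i=1}^{s_m+t_m} Z_i,
\]
where the right side is obtained from $\prod_{i\leq s_m+t_m}Z_i^{(m')}=X_1\cdots X_{s_m}Y'_1\cdots Y'_{t_m}$ (equality as matrices, since the $\pi$-values of $Y'_1,\ldots,Y'_{t_m}$ still exceed $1/m$), followed by dropping factors and using Lemma~\ref{L:multiY} only for the \emph{fixed} tuple $Y_1,\ldots,Y_k$. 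Because $k$ is fixed before the $X$-tail is chosen, the circularity never arises; and because the argument is a chain of entrywise inequalities rather than an error estimate, no uniform control of $\beta_m(N)$ is needed.
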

\begin{proof}
We first show that $Z_1 Z_2 \cdots Z_k \leq XY$ entrywise.  This can
be done by finding a sufficiently large $m$ so that $Z_i^{(m)}$ is arbitrarily close to $Z_i$, for each $i =
1,2,\ldots,k$, in any desired set of entries.  Then we have
$$
\prod_{i=1}^k Z_i \sim \prod_{i=1}^k Z^{(m)}_i \leq
\prod_{i=1}^{s_m+t_m} Z^{(m)}_i = X_1 X_2 \cdots X_{s_m} Y_1 Y_2
\cdots Y_{t_m} \leq XY
$$
where the inequalities are entrywise.

Conversely, we show that for each $j, k$, we have $X_1 X_2 \cdots
X_{j} Y_1 Y_2 \cdots Y_{k} \leq Z$.  Pick $r > j$ so large that the
sum of all the parameters in $X_r, X_{r+1}, \ldots$ is less than the
constant $C$ of Lemma \ref{L:multiY}, for some small $\delta > 0$.
Now pick $m$ sufficiently large so that the inverse radius of
convergence of $X_1, \ldots, X_r, Y_1, \ldots, Y_k$ are all less
than $m$; in other words, $s_m \geq r$ and $t_m \geq k$.  Pick $m'$
sufficiently large so that $Z_i$ and $Z_i^{(m')}$ are arbitrarily
close for all $i \leq (s_m + t_m)$. Then
\begin{align*}
\prod_{i=1}^{s_m+t_m} Z_i &\sim  \prod_{i=1}^{s_m+t_m} Z^{(m')}_i \\
&= X_1 X_2 \cdots X_{s_m} Y'_1 Y'_2 \cdots Y'_{t_m} \\
& \geq X_1 X_2 \cdots X_{s_m} Y'_1 Y'_2 \cdots Y'_{k} \\
& \sim X_1 X_2 \cdots X_{s_m} Y_1 Y_2 \cdots Y_{k} \\
 &\geq X_1 X_2 \cdots X_{j} Y_1 Y_2 \cdots Y_{k}
\end{align*}
where $Y'_1, Y'_2, \ldots,Y'_{t_m}$ is obtained by commuting $Y_1,
Y_2, \ldots, Y_{t_m}$ past $X_{s_m +1}, \ldots, X_{s_{m'}}$.  Again
the approximations denoted by $\sim$ mean that a finite set of
entries is arbitrarily close.
\end{proof}

Essentially the same proof establishes a stronger statement. Recall
the definition of right and left limit semigroups from subsection
\ref{ssec:chev}.

\begin{thm} \label{thm:lsemi}
The semigroups $RC$ and $RW$ (resp.~$LC$ and $LW$) of infinite
products of non-degenerate whirls and curls are right (resp.~left)
limit semigroups.
\end{thm}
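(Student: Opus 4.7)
The plan is to generalize the proof of Lemma \ref{L:Z} from a product of two infinite whirl products to a countable product of them. I will focus on $RW$; the case $LW$ is symmetric under left--right reflection of the matrix, and the curl cases $RC, LC$ follow by applying the ${}^{-c}$ involution, which interchanges non-degenerate whirls and non-degenerate curls, reverses the order of products, and preserves total nonnegativity (Lemma \ref{lem:cinverse}).

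Let $X^{(j)} \in RW$ for $j \geq 1$, and suppose $X = \prod_{j=1}^\infty X^{(j)}$ converges in $U_{\geq 0}$. Write each $X^{(j)} = \prod_{i=1}^\infty M(\a^{(j,i)})$ in ASW order. Because superdiagonal entries of upper unitriangular matrices simply add under multiplication, the $(k, k+1)$ entry of $X$ equals $\sum_{j,i} a_k^{(j,i)}$, so the global parameter sum $\Sigma := \sum_{j,i,k} a_k^{(j,i)}$ is finite. By AM--GM the set $\{(j,i) : \prod_\ell a_\ell^{(j,i)} \geq 1/r\}$ is finite for every $r$, hence we may list the parameter products $p^{(j,i)} := \prod_\ell a_\ell^{(j,i)}$ in weakly decreasing order $p_1 \geq p_2 \geq \cdots$ with $p_m \to 0$.

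For each $N$, pick $k$ large enough that the $N$ largest of the $p^{(j,i)}$ already come from $X^{(1)}, \ldots, X^{(k)}$, and use repeated applications of the whirl commutation relation (Theorem \ref{T:commute}) inside $Y^{(k)} := X^{(1)} \cdots X^{(k)}$ to commute the corresponding $N$ whirls to the front, obtaining
$$Y^{(k)} = Z_1^{(k,N)} Z_2^{(k,N)} \cdots Z_N^{(k,N)} R^{(k,N)}.$$
Lemma \ref{lem:eta}(3) guarantees $\prod_\ell (Z_i^{(k,N)})_\ell = p_i > 0$ for each $i$, while Lemma \ref{lem:eta}(1) (applied iteratively) bounds each individual parameter of $Z_i^{(k,N)}$ above by $\Sigma$ and hence below by $p_i/\Sigma^{n-1}$. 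The whirls $Z_i^{(k,N)}$ therefore live in a compact subset of non-degenerate whirls, and a standard diagonal extraction yields limits $Z_1, Z_2, \ldots$ with $\prod_\ell (Z_i)_\ell = p_i > 0$, all non-degenerate.

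The remaining step is to identify $\prod_{i=1}^\infty Z_i$ with $X$, following the sandwich argument of Lemma \ref{L:Z}. On one side, the finite product $\prod_{i=1}^N Z_i$ is entrywise bounded by $Y^{(k)} \leq X$ for $k$ sufficiently large. Conversely, for any prefix $X^{(1)} \cdots X^{(j)}$ and any $M \gg j$, one uses Lemma \ref{L:multiY} to commute the finitely many ``large'' whirls of $X^{(1)} \cdots X^{(M)}$ past the remaining ``small'' whirls with arbitrarily little change in their parameters; this shows $X^{(1)} \cdots X^{(j)} \leq \prod_i Z_i$ in the limit. The main obstacle I anticipate is controlling the double limit in $N$ and $k$ while matching the subsequentially-defined limits $Z_i$ with the true combinatorial data of $X$, but the quantitative estimates of Lemmas \ref{lem:pwhirl} and \ref{L:multiY} are tailored precisely to this purpose.
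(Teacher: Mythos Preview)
Your proposal is essentially the paper's own argument: define approximate leading whirls via ASW order, extract limits by compactness/diagonalization, then prove the equality $X=\prod_i Z_i$ by a two-sided sandwich using Lemma~\ref{L:multiY}. The paper organizes the approximation slightly differently --- it fixes a threshold $m$ and collects from \emph{each} factor the finitely many whirls with inverse radius below $m$, so that all manipulations occur in genuinely finite products of whirls, whereas you index by $(k,N)$ and implicitly lean on Theorem~\ref{thm:semi} to regard $Y^{(k)}=X^{(1)}\cdots X^{(k)}$ as an element of $RW$ with a well-defined ASW factorization. Both bookkeeping schemes work.

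The one place where your write-up is thinner than the paper's is the converse inequality. Saying ``for any prefix $X^{(1)}\cdots X^{(j)}$'' is not yet a finite product; you must truncate each $X^{(\ell)}$ as well and then bound a product of \emph{finite initial segments of finitely many factors}. The paper does this by an explicit telescoping use of Lemma~\ref{L:multiY}: first commute the chosen tail of the last factor past the preceding one, then commute the resulting (larger) block past the next, and so on, choosing thresholds $m_1<m_2<\cdots$ at each stage so that the accumulated perturbation stays below any prescribed $\delta$. Your single sentence ``commute the finitely many large whirls of $X^{(1)}\cdots X^{(M)}$ past the remaining small whirls'' hides this iteration; to make your argument complete you should spell out that step-by-step commutation and the successive choice of thresholds exactly as in the paper's proof.
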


\begin{proof}
We prove the statement for $RW$.  Assume we have an infinite product
of infinite whirls: $V U \cdots$, each of which has been written in
the canonical ASW order.  For each $m \in \{1,2,\ldots\}$ denote by
$v(m)$ the index such that $V_1, \ldots, V_{v(m)}$ are exactly the
whirls in $V = \prod_{i=1}^\infty V_i$ with inverse radius of
convergence smaller than $m$.  Similarly define $u(m)$ for $U$, and
so on.  Note that for each $m$ only finitely many of the factors $V,
U, \ldots$ contain a whirl with inverse radius smaller than $m$. For
each $m$, we define non-degenerate whirls $Z^{(m)}_i$ by the
following equality:
$$V_1 \cdots V_{v(m)} U_1 \cdots U_{u(m)} \cdots W_1 \cdots
W_{w(m)} X_1 \cdots X_{x(m)} Y_1 \cdots Y_{y(m)} = Z_1^{(m)} \cdots
Z_{v(m)+\cdots + y(m)}^{(m)}$$ where the $Z^{(m)}_i$ are in the
canonical ASW order.

As before the proof of Lemma \ref{L:Z}, choose subsequences of $m$-s
to define $Z_1, Z_2, \ldots $.  We now claim that $V U \cdots = Z_1
Z_2 \cdots$.  For the inequality $Z_1 \cdots Z_q \leq V U \cdots$,
the proof is the same as in Lemma \ref{L:Z}.  For the other
direction let us assume we are given a product
$$V_1 \cdots V_v U_1 \cdots U_u \cdots W_1 \cdots W_w X_1
\cdots X_x Y_1 \cdots Y_y$$ of initial parts of certain finite
number of initial factors.  We now repeatedly apply Lemma
\ref{L:multiY}, in a similar manner to the proof of Lemma \ref{L:Z}.
Namely, choose $m_1$ so that
$$Y_1 \cdots Y_y \sim Y_1' \cdots Y_y',$$ where $Y_1' \cdots Y_y'$
is obtained by commuting $Y_1 \cdots Y_y$ through $X_{x(m_1)+1}
\cdots X_N$ for some $N$.  We may assume that $x(m_1) > x$.  By
Lemma \ref{L:multiY} we may assume $m_1$ is chosen so that the
approximation holds for any $N$.

Similarly choose $m_2 > m_1$ so that $$X_1 \cdots X_{x(m_1)} Y_1'
\ldots Y_{y}' \sim X_1' \cdots X_{x(m_1)}' Y_1'' \ldots Y_{y}''.$$
Here
$X_1' \cdots X_{x(m_1)}' Y_1'' \ldots Y_{y}''$ is obtained by
pulling $X_1 \cdots X_{x(m_1)} Y_1' \ldots Y_{y}'$ through the
product $W_{w(m_2)+1} \cdots W_{N}$.  Again we assume that $w(m_2) >
w$.  On the next step we find $m_3 > m_2$ that would allow to pull
$$W_1 \cdots W_{w(m_2)} X_1' \cdots X_{x(m_1)}' Y_1'' \ldots
Y_{y}'' $$ through the next factor, and so on.  Finally let $m =
\max(m_i)$ be the parameter in the last move and find $m'$ so that
$\prod_{i=1}^{v(m)+ \cdots + y(m)} Z^{(m')}_i$ is arbitrarily close
to $\prod_{i=1}^{v(m)+ \cdots + y(m)} Z_i$.  Now we calculate
\begin{align*}
\prod_{i=1}^{v(m)+ \cdots + y(m)} Z_i &\sim \prod_{i=1}^{v(m)+
\cdots + y(m)} Z^{(m')}_i \\
& =  V_1 \cdots V_{v(m)} \cdots W^*_1 \cdots
W^*_{w(m_2)} X^*_1 \cdots X^*_{x(m_1)} Y_1^* \cdots Y_y^* A\\
&\geq V_1 \cdots V_{v(m)} \cdots W^*_1 \cdots W^*_{w(m_2)} X^*_1
\cdots X^*_{x(m_1)} Y_1^* \cdots Y_y^* \\
&\sim V_1 \cdots V_{v(m)} \cdots W_1 \cdots W_{w(m_2)} X_1 \cdots
X_{x(m_1)} Y_1 \cdots Y_y \\
&\geq V_1 \cdots V_v \cdots W_1 \cdots W_w X_1 \cdots X_x Y_1 \cdots
Y_y.
\end{align*}
We explain the equality on the second line.  Here $W^*_i, X^*_i,
Y^*_i$ denote what we get when we commute $Y_1 \cdots Y_y$ past
$X_{x(m_1)+1} \cdots X_{x(m')}$, and then commute $X_1 \cdots
X_{x(m_1)}Y_1 \cdots Y_y$ past $W_{w(m_2)+1} \cdots W_{w(m')}$, and
so on.  Applying to $\prod_{i=1}^{v(m')} V_i
\cdots \prod_{i=1}^{w(m')}W_i\prod_{i=1}^{x(m')}X_i
\prod_{i=1}^{y(m')} Y_i$ all these commutations we obtain $V_1 \cdots V_{v(m)} \cdots X^*_1
\cdots X^*_{x(m_1)} Y_1^* \cdots Y_y^* B$ where $B$ consists of the
whirls obtained from
$$V_{v(m)+1}, \ldots, V_{v(m')},\ldots,
W_{w(m_2)+1},\ldots,W_{w(m')}, X_{x(m_1)+1},\ldots, X_{x(m')},
Y_{y+1},\ldots,Y_{y(m')}$$ via commutation.  The matrix $A$ is what
we get when we in addition commute all the whirls in $B$ with
inverse radius of convergence greater than $m$ to the right and
remove them.
\end{proof}

\subsection{Chevalley generators out of whirls}
We have shown that $RC, LC, RW,$ and $LW$ are semigroups.  We now
describe what happens when they are multiplied by Chevalley
generators from a particular side.  We only state our results for
right-infinite whirls and curls.

\begin{thm} \label{T:Chevcurl}
Suppose $e_i(a)$ is a Chevalley generator and $X \in RW$ (resp.~$X
\in RC$).  Then $e_i(a)X \in RW$ (resp.~$e_i(a)X \in RC$).
\end{thm}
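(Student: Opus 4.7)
The plan is to iteratively commute $e_i(a)$ to the right through the factors of $X = \prod_{j \geq 1} M(\a^{(j)}) \in RW$ using Lemma~\ref{lem:chevcom}: at the $j$-th step one has
$$
e_{i+j-1}(c_{j-1}) \, M(\a^{(j)}) = M(\b^{(j)}) \, e_{i+j}(c_j),
$$
where $M(\b^{(j)})$ is again a non-degenerate whirl (the new entries being visibly positive) and $c_j = c_{j-1}\, a^{(j)}_{i+j}/(c_{j-1} + a^{(j)}_{i+j-1}) \leq a^{(j)}_{i+j}$. Starting with $c_0 = a$ and iterating $j = 1, \ldots, N$ produces the finite identity
$$
e_i(a) \prod_{j=1}^N M(\a^{(j)}) = \bigg( \prod_{j=1}^N M(\b^{(j)}) \bigg) e_{i+N}(c_N),
$$
with each $M(\b^{(j)})$ non-degenerate.

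Two ingredients are needed to pass to the limit. First, direct inspection of Lemma~\ref{lem:chevcom} shows every commutation preserves the total parameter sum, so $c_N + \sum_{j \leq N, k} b^{(j)}_k = a + \sum_{j \leq N, k} a^{(j)}_k$; letting $N \to \infty$ gives $\sum_{j,k} b^{(j)}_k \leq a + \sum_{j,k} a^{(j)}_k < \infty$, and Lemma~\ref{lem:infinitewhirl} then ensures that $Y := \prod_{j \geq 1} M(\b^{(j)})$ converges to an element of $RW$. Second, $c_N \to 0$: since $\sum_j a^{(j)}_k < \infty$ for every residue class $k$, we have $a^{(j)}_k \to 0$ for each $k$, and therefore $c_N \leq a^{(N)}_{i+N} \to 0$. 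To identify $e_i(a) X$ with $Y$, fix any entry $(p, q)$: the left-hand side of the finite identity converges entry-wise to $(e_i(a) X)_{p,q}$, while the right-hand side equals $\big(\prod_{j=1}^N M(\b^{(j)}) \big)_{p,q}$ plus, in one residue class of columns, a correction $c_N \cdot \big(\prod_{j=1}^N M(\b^{(j)}) \big)_{p, q-1}$ which vanishes because $c_N \to 0$ and the second factor is bounded.

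The case $X \in RC$ is completely parallel, using Lemma~\ref{lem:chevcom2} in place of Lemma~\ref{lem:chevcom}: commuting $e_i(a)$ past a non-degenerate curl $N(\a^{(j)})$ produces $N(\b^{(j)}) e_{i-1}(c)$ with $c \leq a^{(j)}_{i-1}$, so the emitted Chevalley index decreases by one rather than increasing, but the remainder of the argument is unchanged. The main obstacle is the quantitative control needed to show that the tail $e_{i+N}(c_N)$ vanishes in the limit; everything hinges on the inductive bound $c_N \leq a^{(N)}_{i+N}$, which together with the summability of $\sum_{j,k} a^{(j)}_k$ forces the decay of $c_N$ and collapses the tail contribution entry-by-entry.
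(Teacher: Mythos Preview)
Your proof is correct and follows essentially the same route as the paper's: both commute $e_i(a)$ through successive whirl (resp.\ curl) factors via Lemma~\ref{lem:chevcom} (resp.\ Lemma~\ref{lem:chevcom2}), use the bound $c_j \leq a^{(j)}_{i+j}$ on the emitted Chevalley parameter to force $c_N \to 0$, and then compare entries to show the tail contribution dies. Your explicit observation that each commutation preserves the total parameter sum (hence $\sum_{j,k} b^{(j)}_k < \infty$ and $\prod_j M(\b^{(j)})$ converges by Lemma~\ref{lem:infinitewhirl}) makes this step cleaner than the paper's treatment, which leaves it implicit.
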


\begin{example}
If $X$ is the right-infinite curl in Example \ref{ex:6} then $$e_1(1) X = N(2, \frac{1}{4}) N(\frac{1}{4},\frac{1}{2}) N(\frac{1}{2},\frac{1}{16}) N(\frac{1}{16},\frac{1}{8}) \dotsc = \prod_{i \geq 0} N(2^{1-2i}, 2^{-2-2i}) N(2^{-2-2i}, 2^{-1-2i}).$$
\end{example}

Theorem \ref{T:Chevcurl} follows from the following more precise
Lemma.

\begin{lemma} \label{lem:chthrw}
Let $e_i(a)$ be a Chevalley generator and $X = \prod_{k=1}^{\infty}
M(b^{(k)}_1, \ldots, b^{(k)}_n)$ be a right-infinite product of
non-degenerate whirls.  Using the whirl commutation relation of
Theorem \ref{T:commute}, we define $\c^{(k)} = (c^{(k)}_1, \ldots,
c^{(k)}_n)$ and $a^{(j)}$ by \begin{equation}\label{E:abc} e_i(a)
\prod_{k=1}^{\infty} M(\b^{(k)}) = \prod_{k=1}^j M(\c^{(k)})
e_{i-j}(a^{(j)}) \prod_{k=j+1}^{\infty} M(\b^{(k)}).\end{equation}
Then
\begin{enumerate}
\item $\lim_{j \to \infty} a^{(j)} = 0$
\item The product $\prod_{k=1}^{\infty} M(\c^{(k)})$ is well-defined and equals $X$.
\end{enumerate}
The analogous statement holds for curls.
\end{lemma}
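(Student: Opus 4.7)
The plan is to iterate the single-step commutation of Lemma \ref{lem:chevcom}. Interpreting $e_{i'}(a')$ as the degenerate whirl with $a'$ in slot $i'$ and applying $\eta$, one step of commutation past $M(\b^{(k)})$ yields $M(\c^{(k)}) \, e_{i' \pm 1}(a^{(k)})$, where
$$a^{(k)} = \frac{a^{(k-1)} \, b^{(k)}_\alpha}{a^{(k-1)} + b^{(k)}_\beta}, \quad c^{(k)}_\beta = a^{(k-1)} + b^{(k)}_\beta, \quad c^{(k)}_\alpha = \frac{b^{(k)}_\alpha b^{(k)}_\beta}{a^{(k-1)} + b^{(k)}_\beta},$$
and $c^{(k)}_r = b^{(k)}_r$ for all other $r$; here the indices $\alpha, \beta$ depend only on the current Chevalley index. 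Iterating this recipe from the initial datum $e_i(a)$ produces the sequences $(a^{(j)})$ and $(\c^{(k)})$ appearing in \eqref{E:abc}.

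For part (1), the elementary inequality $\frac{xy}{x+y} \le \min(x,y)$ applied to the above recursion gives $a^{(k)} \le b^{(k)}_\alpha$ at every step. Since $X = \prod_k M(\b^{(k)})$ converges, Lemma \ref{lem:infinitewhirl} forces $\sum_{k,l} b^{(k)}_l < \infty$; in particular $b^{(k)}_\alpha \to 0$ as $k \to \infty$, so $a^{(j)} \to 0$.

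For part (2), the same bound gives $c^{(k)}_r \le b^{(k)}_r + a^{(k-1)}$, so
$$\sum_{k,r} c^{(k)}_r \;\le\; \sum_{k,r} b^{(k)}_r + n \sum_k a^{(k-1)} \;<\; \infty,$$
the last sum being finite because $a^{(k)} \le b^{(k)}_\alpha$. Lemma \ref{lem:infinitewhirl} then guarantees that $\prod_k M(\c^{(k)}) \in U_{\geq 0}$ is well-defined. To identify its value I would pass to the entrywise limit $j \to \infty$ in \eqref{E:abc}. The left-hand side is a fixed matrix ($e_i(a)$ times $X$). On the right, the initial segment $\prod_{k \le j} M(\c^{(k)})$ converges entrywise to $\prod_k M(\c^{(k)})$, the middle Chevalley factor $e_{i \pm j}(a^{(j)})$ tends to the identity by part (1), and the tail $\prod_{k > j} M(\b^{(k)})$ tends to the identity because each entry $d$ above the diagonal is bounded by $\bigl(\sum_{k>j,\,l} b^{(k)}_l\bigr)^d$ (the same estimate used inside the proof of Lemma \ref{lem:infinitewhirl}), and this tail parameter-mass vanishes. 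Passing to the limit identifies $\prod_k M(\c^{(k)})$ with the intended value on the right-hand side of \eqref{E:abc}, yielding the equality in assertion (2) and placing the result in $RW$, which is what Theorem \ref{T:Chevcurl} needs.

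The principal obstacle is the tail estimate on $\prod_{k > j} M(\b^{(k)})$: entrywise convergence of an infinite whirl product is not automatic, and one must invoke the diagonal-by-diagonal bound $\alpha^d$ from Lemma \ref{lem:infinitewhirl} to ensure the tail really collapses to $I$. Everything else is bookkeeping with the $\eta$-formulas. The curl case is entirely parallel, using Lemma \ref{lem:chevcom2} in place of Lemma \ref{lem:chevcom} and the corresponding curl commutation.
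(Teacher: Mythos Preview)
Your argument is correct and follows the same strategy as the paper. Two small remarks. First, the inequality $\tfrac{xy}{x+y}\le\min(x,y)$ does not literally apply, since the recursion is $a^{(k)}=\tfrac{a^{(k-1)}b^{(k)}_\alpha}{a^{(k-1)}+b^{(k)}_\beta}$ with $\alpha\ne\beta$; the desired bound $a^{(k)}<b^{(k)}_\alpha$ follows more simply from $\tfrac{a^{(k-1)}}{a^{(k-1)}+b^{(k)}_\beta}<1$ (using $b^{(k)}_\beta>0$ by non-degeneracy), which is exactly how the paper argues. Second, for part~(2) the paper sidesteps your tail estimate on $\prod_{k>j}M(\b^{(k)})$ by working only with the \emph{finite} identity $e_i(a)\prod_{k\le j}M(\b^{(k)})=\bigl(\prod_{k\le j}M(\c^{(k)})\bigr)e_{i-j}(a^{(j)})$ and bounding the $(s,t)$ entries on the two sides directly: $m_j-m'_j\le a^{(j)}x_{s,t-1}$. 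Your three-factor limit works too, but tacitly uses that for upper-triangular matrices each entry of a product depends on only finitely many entries of the factors, so entrywise convergence of the factors suffices.
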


We may think of Lemma \ref{lem:chthrw} as saying that infinite
products $\prod_{i=1}^{\infty} M^{(j)}_i$ of whirls (or curls)
``absorb'' Chevalley generators (if multiplied on the correct side).

\begin{proof}
In the setting of Lemma \ref{lem:chevcom} one has
$a'_{i+1}=\frac{b_{i+1}a_i}{a_i+b_i} < b_{i+1}$. In order for the
product $X$ to be well-defined it must be the case that $\lim_{j \to
\infty} b^{(j)}_{i-j}=0$, and so $\lim_{j \to \infty} a^{(j)} = 0$,
proving the first statement.

For the second part, consider a fixed entry $x_{s,t}$.  Suppose that
the sequence $X_j = e_i(a) \prod_{k=1}^{j} M(b^{(k)}_1, \ldots,
b^{(k)}_n)$ of matrices has entries $m_j$ in location $(s,t)$. Then
$\lim_{j \to \infty} m_j = x_{s,t}$.  Similarly define $m'_j$ as the
corresponding entry of $X'_j = \prod_{k=1}^{j} M(c^{(k)}_1, \ldots,
c^{(k)}_n).$  Clearly $\lim_{j \to \infty} m'_j$ exists and is less
than $x_{s,t}$.  We must show that the limit equals $x_{s,t}$.

For a given $\delta > 0$ one can choose $j$ large enough so that
$a^{(j)} x_{s,t-1} < \delta/2$ and $x_{s,t} - m_j < \delta/2$.  The
equality \eqref{E:abc} shows that $m_j - m'_j \leq a^{(j)}
x_{s,t-1}$, so we deduce that $x_{s,t} - m'_j < \delta$.  Thus
$\lim_{j \to \infty} m'_j = x_{s,t}$.

The proof for curls is verbatim, using the inequality
$a'_{i-1}=\frac{b_{i-1}a_i}{a_i+b_i} < b_{i-1}$ from Lemma
\ref{lem:chevcom2}.
\end{proof}

\subsection{Not all Chevalley generators at once}
The $\epsilon$-sequence of a TNN matrix $X$ give a bound on what
Chevalley generators can be factored out from $X$ on the left so
that the result remains TNN. In particular, by Lemma
\ref{lem:ASWconv}, $e_i(a)$ cannot be factored out if $a >
\epsilon_i$.  This bound is far from sharp: for example no Chevalley
generator can be factored out from a non-degenerate curl, but every
$\epsilon_i$ of a curl is strictly positive.

\begin{proposition}
Let $X \in U_{\geq 0}$.  There is an $i \in \Z/n\Z$ such that if $X
= e_i(a) X'$ for $a \geq 0$ and $X' \in U_{\geq 0}$ then $a = 0$.
\end{proposition}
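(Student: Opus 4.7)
The plan is to argue by contradiction: suppose that for every $i \in \Z/n\Z$ there exist $a_i > 0$ and $X'_i \in U_{\geq 0}$ with $X = e_i(a_i) X'_i$. Interpreting $e_i(a_i)$ as the degenerate whirl $M(0,\ldots,a_i,\ldots,0)$, Lemma \ref{lem:ASWconv} yields $a_i \leq \epsilon_i(X)$, so $\epsilon_i(X) > 0$ for every $i$. In particular $X$ is not finitely supported (the finitely supported case admits an independent combinatorial argument locating a blocker at the NE boundary of the support of $X$), and Lemma \ref{lem:ASW} gives the ASW factorization $X = N(\vec\epsilon)\,\tilde X$ with $\vec\epsilon = (\epsilon_1(X),\ldots,\epsilon_n(X))$ and $\tilde X \in U_{\geq 0}$.

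The key step is to commute each $e_i(a_i)$ past the leading curl. Applying Lemma \ref{lem:ASW} to $X'_i$, I would write $X'_i = N(\vec\epsilon^{(i)})\,Y'_i$ and then compute $\vec\epsilon^{(i)}$ directly from the row operation $X'_i = e_i(-a_i) X$. Distinguishing residues, one finds
\[ \epsilon^{(i)}_k = \epsilon_k(X) \text{ for } k \notin \{i-1,i\}, \quad \epsilon^{(i)}_i = \epsilon_i(X)-a_i, \quad \epsilon^{(i)}_{i-1} = \frac{\epsilon_{i-1}(X)\,\epsilon_i(X)}{\epsilon_i(X)-a_i}. \]
Feeding these values into the whirl--curl commutation (Lemma \ref{lem:chevcom2}), the two modified coordinates of $\vec\epsilon^{(i)}$ conspire to revert to the original $\vec\epsilon$, producing
\[ e_i(a_i)\,N(\vec\epsilon^{(i)}) = N(\vec\epsilon)\,e_{i-1}(\gamma_i), \qquad \gamma_i = \frac{\epsilon_{i-1}(X)\,a_i}{\epsilon_i(X)-a_i} > 0. \]
Substituting into $X = e_i(a_i) X'_i$ and using uniqueness of the leading factor in the ASW factorization of $X$, I conclude $\tilde X = e_{i-1}(\gamma_i)\,Y'_i$. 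Since $i \mapsto i-1$ is a bijection of $\Z/n\Z$, $\tilde X$ inherits the property that every Chevalley generator is left-factorable with positive parameter.

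Iterating, set $X_0 = X$ and $X_{k+1} = $ the result of one ASW step applied to $X_k$; each $X_k \in U_{\geq 0}$ satisfies the same property, and
\[ X = \prod_{k=1}^{\infty} N(\vec\epsilon(X_{k-1}))\cdot X_\infty, \]
where $X_\infty = \lim_k X_k$ (the entrywise monotone limit) is entire by repeated ASW reduction (Lemma \ref{lem:ASWreduction}).

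The hard part will be the final step: deriving a contradiction from the limit. My plan is to choose the parameters $a_i^{(k)}$ at each iteration carefully---for example $a_i^{(k)} = \frac{1}{2}\,\epsilon_i(X_k)$, which makes $\gamma_j^{(k)} = \epsilon_{j-1}(X_k)$---and then pass to a subsequential limit by compactness to obtain factorizations $X_\infty = e_j(\gamma_j^\infty)\,Y_j^\infty$ with $\gamma_j^\infty > 0$ and $Y_j^\infty \in U_{\geq 0}$ for each $j \in \Z/n\Z$. Lemma \ref{lem:ASWconv} applied to $X_\infty$ then gives $\epsilon_j(X_\infty) > 0$ for every $j$, whereas Lemma \ref{lem:limits} combined with the entireness of $X_\infty$ forces $\prod_j \epsilon_j(X_\infty) = 1/r(X_\infty) = 0$, yielding the sought contradiction. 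The main obstacle is making this limiting step rigorous---specifically, ensuring that the $\gamma_j^{(k)}$ do not collapse to zero along every subsequence.
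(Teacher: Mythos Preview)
Your computations are correct and insightful, but the final step you propose cannot work. With the choice $a_i^{(k)}=\tfrac12\epsilon_i(X_k)$ you get $\gamma_i^{(k)}=\epsilon_{i-1}(X_k)$; however, comparing the $(j,j+1)$-entries in $X=\bigl(\prod_{k<m}N(\vec\epsilon(X_k))\bigr)X_m$ gives $\sum_{k}\epsilon_j(X_k)\le x_{j,j+1}<\infty$, so $\epsilon_j(X_k)\to 0$ for every $j$. Hence every $\gamma_j^{(k)}\to 0$, and no subsequential limit can produce $\gamma_j^\infty>0$. The obstacle you flag is not merely technical---it is fatal for the limiting argument as written.

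The good news is that your own calculation already contains a direct contradiction, without passing to $X_\infty$. From $X_{k+1}=e_{j}(\epsilon_j(X_k))\,Y'$ with $Y'\in U_{\ge 0}$ (your re-indexed identity) and Lemma~\ref{lem:ASWconv} you obtain $\epsilon_j(X_{k+1})\ge\epsilon_j(X_k)$ for each $j$. Thus $\epsilon_j(X_k)\ge\epsilon_j(X_0)>0$ for all $k$, which makes $\sum_k\epsilon_j(X_k)$ diverge, contradicting the bound $\sum_k\epsilon_j(X_k)\le x_{j,j+1}$ just noted. Two minor points still need care: you should replace $a_i$ by $\min(a_i,\tfrac12\epsilon_i(X))$ so that $a_i<\epsilon_i(X)$ and your formulas for $\epsilon^{(i)}_{i-1}$ and $\gamma_i$ are valid; and you must allow for the possibility that some $X_k$ becomes finitely supported mid-iteration (then the ``independent combinatorial argument'' you allude to must actually be supplied, since Lemma~\ref{lem:ASWconv} and the $\epsilon$-limits require infinite support).

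For comparison, the paper's proof is organized differently. It first invokes the canonical form (Theorem~\ref{thm:canon}) to separate the curl component of each $X_j$, handling the finite-curl-component case by pushing $e_j(a_j)$ into the entire part. In the infinite-curl-component case the key idea is to track the \emph{product} $\prod_{j\in\Z/n\Z}a_j$ of the $n$ Chevalley parameters as each $e_j$ is commuted past the successive curls of its own $X_j$: using Lemma~\ref{lem:chevcom2} one gets $a'_{j-1}/a_j>b'_{j-1}/b'_j$ with $\b'$ the parameters of the common leading curl $N_1$ of $X$, and the telescoping product $\prod_j(b'_{j-1}/b'_j)=1$ shows $\prod_j a_j$ strictly increases at every step, contradicting Lemma~\ref{lem:chthrw}. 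Your route---tracking the $\epsilon$-sequence of the successive ASW reducts rather than the Chevalley parameters---is a genuinely different and somewhat more elementary line once the monotonicity observation is made.
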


\begin{proof}
Assume the statement is false and that for each $j$ we have $X =
e_j(a_j) X_j$ for some TNN $X_j$-s and $a_j > 0$.  By Theorem
\ref{thm:canon} one can write $X_j=\prod_{i=1}^{\infty} N^{(j)}_i
E^{(j)}$ where $E^{(j)}$ is entire.  There are two cases to
consider.

Case (1).  One of the products $\prod_{i=1}^{\infty} N^{(j)}_i$ has
only finitely many non-trivial terms.  Then one can commute
$e_j(a_j)$ through this product to obtain another finite product of
curls times $e_{j'}(a_{j'}) E^{(j)}$, which is entire.  Since the
decomposition of Theorem \ref{thm:canon} is unique, this means by
Lemma \ref{lem:chthrw} that the products $\prod_{i=1}^{\infty}
N^{(j)}_i$ are finite for each $j \in \Z/n\Z$ and that the
corresponding expressions $e_{j'}(a_{j'}) E^{(j)}$ are all equal to
some entire matrix $E$ (what we get from $X$ by removing the curl
component of $X$).  As $j$ varies over $\Z/n\Z$, so does $j'$.
Furthermore, each $a_{j'} > 0$.  This is impossible, because $E$,
being entire, has one of the $\epsilon$-s equal to $0$, and the
corresponding Chevalley generator cannot be factored out with any
positive constant.

Case (2).  All the products $\prod_{i=1}^{\infty} N^{(j)}_i$ are
infinite. Let $X=\prod_{i=1}^{\infty} N_i E$ factorize $X$ into its
curl component and an entire matrix.  By Lemma \ref{lem:chthrw} and
by the uniqueness in Theorem \ref{thm:canon} we have
$\prod_{i=1}^{\infty} N_i = e_j(a_j) \prod_{i=1}^{\infty} N^{(j)}_i$
for every $j$. Without loss of generality we can assume that each
such product of curls is an ASW factorization.

Let us consider what happens to the $a_j$ when we commute $e_j(a_j)$
past $N^{(j)}_1 = N(\b^{(j)})$.  We know that $e_j(a_j)N(\b^{(j)}) =
N(\b') e_{j'}(a'_j)$, where $N_1 = N(\b')$ does not depend on $j$.
We calculate using Lemma \ref{lem:chevcom2} that $a'_{j-1}/a_j =
b'_{j-1}/(b'_j-a_j) > b'_{j-1}/b'_j$. Note that there are no
references to $\b^{(j)}$ in these inequalities.

Now we observe that
$$
\frac{\prod_{j \in \Z/n\Z}a'_j}{\prod_{j \in \Z/n\Z} a_i} = \prod_{j
\in \Z/n\Z} \frac{a'_{j-1}}{a_j} > \prod_{j \in \Z/n\Z}
\frac{b'_{j-1}}{b'_j} =1.
$$
Thus the total product of parameters in the $e_j(a_j)$ increases
after commuting past $N^{(j)}_1$.  The same argument shows that the
product of parameters will continue to increase as we commute past
$N^{(j)}_2, N^{(j)}_3, \ldots$.  This contradicts Lemma
\ref{lem:chthrw}, which says that all $n$ Chevalley parameters have
zero limit.
\end{proof}

\subsection{Pure whirls and curls}
Let us call $X \in RW \cap LW$ a {\it pure whirl}, and write $PW =
RW \cap LW$.  Similarly we define the set $PC$ of {\it pure curls}.
By Theorem \ref{thm:semi}, we have
\begin{example} \label{ex:7}
The right-infinite curl $X$ from Example \ref{ex:6} is pure. Indeed,
one can see that $X$ has southwest-northeast axes of symmetry, and
thus its factors could be multiplied in the reverse direction: $X =
\prod_{i = -\infty}^{0} N(2^{-i}, 2^{-i-1})$.  One can also derive
this from the fact that the curl factors in $X$ commute.
\end{example}

\begin{thm}
The sets $PW$ and $PC$ of pure whirls and curls are semigroups.
\end{thm}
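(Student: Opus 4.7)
The plan is to deduce the statement directly from Theorem \ref{thm:semi}. By definition $PW = RW \cap LW$ and $PC = RC \cap LC$, and Theorem \ref{thm:semi} already tells us that each of the four sets $RW, LW, RC, LC$ is closed under multiplication. The intersection of two subsemigroups of a common ambient semigroup (here $U$ under matrix multiplication) is itself a subsemigroup, so the result will follow at once. Concretely, given $X, Y \in PW$, we have $X, Y \in RW$, and Theorem \ref{thm:semi} applied to $RW$ yields $XY \in RW$; similarly, $X, Y \in LW$ and Theorem \ref{thm:semi} applied to $LW$ give $XY \in LW$. Hence $XY \in RW \cap LW = PW$. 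The proof for $PC$ is identical, using that both $RC$ and $LC$ are semigroups.

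What is used about purity is only that each of $X$ and $Y$ admits, individually, both a right-infinite factorization and a left-infinite factorization into non-degenerate whirls (respectively curls); these two factorizations are chosen independently of one another. To realize $XY$ as a right-infinite product of non-degenerate whirls one applies Lemma \ref{L:Z} to the right-infinite representations of $X$ and $Y$, and to realize $XY$ as a left-infinite product one applies the analogous statement to the left-infinite representations. The two resulting infinite products are in general unrelated to each other, but mere existence of both is exactly what purity demands. All the analytic content (convergence of the ASW-ordered products, control over the commuted whirl parameters, etc.) has already been absorbed into Theorem \ref{thm:semi}, so no further estimates are needed and I do not anticipate any obstacle; the result is a formal corollary.
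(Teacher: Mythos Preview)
Your proposal is correct and matches the paper's approach exactly: the paper simply states ``By Theorem \ref{thm:semi}, we have'' before the theorem, treating it as an immediate consequence, and your argument (intersection of two subsemigroups is a subsemigroup) is precisely the one-line justification that makes this work.
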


Certain properties of pure whirls and curls are immediately clear,
for example it follows from Lemma \ref{lem:whirlmu} that elements of
$PW$ have all $\epsilon_i$-s and $\mu_i$-s equal to $0$.  We state
the following result only for infinite curls.  The result for whirls
is obtained by applying $^{-c}$.

\begin{theorem} Each $X \in RC$ can be uniquely factored as $X
= E X'$, where $E$ is doubly-entire and $X' \in PC$.  Similarly,
each $X \in LC$ can be uniquely factored as $X = X'E$, where $E$ is
doubly-entire and $X' \in PC$.
\end{theorem}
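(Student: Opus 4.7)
The two halves of the theorem are equivalent via the anti-involution $\sigma\colon U\to U$ given by $\sigma(X)_{ij} = x_{-j,-i}$, which preserves $U_{\geq 0}$ and the class of entire matrices, reverses multiplication ($\sigma(AB)=\sigma(B)\sigma(A)$), and interchanges $\overline{RC}\leftrightarrow\overline{LC}$ and $PC\leftrightarrow PC$. So it suffices to prove the first statement. The main tool is a mirror of Proposition~\ref{prop:inversewhirlfact}: every $Y\in U_{\geq 0}$ factors uniquely as $Y = F\cdot Z$ with $F\in U_{\geq 0}$ entire and $Z\in\overline{LC}$. This follows by applying $\sigma$ to Proposition~\ref{prop:inversewhirlfact}, or equivalently by reproving that proposition with a right-ASW factorization symmetric to Lemma~\ref{lem:ASW}.

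Applied to $X\in RC$ this yields $X = E\cdot X'$ with $E$ entire TNN and $X'\in\overline{LC}$. To show $E$ is doubly entire, invoke Theorem~\ref{thm:canon} to write $E = Z_E Y_E W_E$; the entireness of $E$ rules out any non-trivial $Z_E\in\overline{RC}$ (its curls would create poles in $\det E$), so $E = Y_E W_E$. To show also $W_E = I$, push $W_E$ through $X'$ using the whirl-curl commutation Theorem~\ref{T:commute}, extended to infinite products by limits via Lemma~\ref{lem:infinitewhirl} and Theorem~\ref{thm:semi}: $W_E X' = (X')^* W_E^*$ with $(X')^*\in\overline{LC}$ and $W_E^*\in\overline{LW}$. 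Rearranging $X = Y_E(X')^* W_E^*$ via Proposition~\ref{prop:inversewhirlfact} and a second application of Theorem~\ref{thm:canon} to the resulting entire-TNN middle factor, one puts $X$ into the canonical form of Theorem~\ref{thm:canon}; uniqueness against the trivial form $X \cdot I \cdot I$ forces $W_E^* = I$, and since commuting non-degenerate whirls past non-degenerate curls preserves non-degeneracy (Theorem~\ref{T:commute}), $W_E = I$. Thus $E = Y_E$ is doubly entire.

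For $X'\in PC$, apply Proposition~\ref{prop:inversewhirlfact} to $X' = Z(X')\cdot\tilde X$ and then to $E\cdot Z(X') = Z''\cdot Y''$; combining gives a curl factorization $X = Z''(Y''\tilde X)$. Uniqueness against $X = X\cdot I$ yields $Y''\tilde X = I$, and the elementary rigidity lemma that $A, A^{-1}\in U_{\geq 0} \Rightarrow A = I$ (propagate outward from the superdiagonal using $(A^{-1})_{i,i+1} = -A_{i,i+1}$) forces $\tilde X = I$, so $X'\in\overline{RC}$. A determinant comparison via Lemma~\ref{lem:infinitewhirldet}, using the nowhere-vanishing $\det E$, shows $\det X'$ inherits the infinitely many poles of $\det X$, ruling out $X'$ being a finite product on either side and giving $X'\in RC\cap LC = PC$. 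Uniqueness of the factorization $X = E\cdot X'$ follows immediately from uniqueness in the mirror of Proposition~\ref{prop:inversewhirlfact}.

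The main technical obstacle I anticipate is the extension of the single-factor whirl-curl commutation of Theorem~\ref{T:commute} to the infinite-product identity $W_E X' = (X')^* W_E^*$, which requires careful limit arguments controlled by Lemma~\ref{lem:infinitewhirl} and Theorem~\ref{thm:semi}, together with continuity of the commutation maps $\eta$ and $\theta$ on their parameter domains.
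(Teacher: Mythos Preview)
Your argument for $X' \in PC$ is essentially the paper's: both pivot on the uniqueness of the curl factorization (Proposition~\ref{prop:inversewhirlfact} and its mirror) to force an auxiliary entire factor to be the identity via the rigidity $A,A^{-1}\in U_{\geq 0}\Rightarrow A=I$. The paper phrases it as $X = FGX'''$ and invokes uniqueness of the left-curl factorization against $X = I\cdot X$, but the mechanism is the same as yours.

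The gap is in your proof that $E$ is doubly entire. You reduce to showing $W_E=I$ by commuting the (possibly infinite) whirl product $W_E$ through the infinite curl product $X'$, writing $W_E X' = (X')^*W_E^*$. This step is not supplied by the paper: Theorem~\ref{T:commute} gives the commutation $\theta$ only for a single whirl past a single curl, and Lemma~\ref{lem:chthrw} handles only a Chevalley generator being absorbed into an infinite product. Iterating $\theta$ infinitely often while controlling convergence and showing the limit has the form $(X')^*W_E^*$ with $(X')^*\in\overline{LC}$ and $W_E^*\in\overline{LW}$ is genuinely nontrivial; you correctly flag it as the main obstacle, but you do not resolve it, and the subsequent appeal to uniqueness in Theorem~\ref{thm:canon} would also require knowing the curl component of $Y_E(X')^*$, which is not automatic since $Y_E$ is doubly entire but $(X')^*$ is a curl product on the wrong side.

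The paper avoids all of this with a one-line argument you should adopt. For $X\in RC$ one has $X^{-c}\in LW$, and Lemma~\ref{lem:whirlmu} gives $\epsilon_i(X^{-c})=0$ for every $i$, so $X^{-c}$ is entire. If $E$ were not doubly entire, then (by the $\mu$-analogue of Lemma~\ref{lem:ASW}) a non-degenerate whirl $M$ could be factored from $E$ on the left, hence from $X=EX'$ on the left; equivalently a non-degenerate curl would factor from $X^{-c}$ on the right. By the mirror of Lemma~\ref{lem:ASWconv} this forces $\mu_i(X^{-c})>0$ for all $i$, contradicting $\prod_i\mu_i(X^{-c})=\prod_i\epsilon_i(X^{-c})=0$. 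This replaces your entire whirl--curl commutation detour. (Your pole-counting for $X'\in RC\cap LC$ rather than $\overline{RC}\cap\overline{LC}$ is fine and does not actually need $\det E$ nowhere-vanishing: entireness of $\det E$ already bounds the poles of $\det X$ by those of $\det X'$.)
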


\begin{proof}
We consider the case of $LC$, the case of $RC$ being identical.
Apply Theorem \ref{thm:canon} to obtain $X = X' E$ where $E$ is
entire and $X' \in RC$.  The matrix $E$ must be doubly-entire, for
otherwise a non-degenerate whirl can be factored out of $X$ on the
right. But this would mean that a non-degenerate curl can be
factored out of $X^{-c}$ on the left. This is impossible by Lemma
\ref{lem:whirlmu}, since $X^{-c}$ is an infinite product of whirls.

The factorization $X = X'E$ is unique, so it remains to show that
$X' \in PC$.  Apply (left-right swapped) Theorem \ref{thm:canon} to
$X'$ to rewrite it as $X' = F X''$, where $X'' \in LC$ and $F$ is
entire.  Finally, rewrite $X'' E $ as $G X'''$ where $X''' \in LC$
and $G$ is entire.  In the end we get $X = FGX'''$.  By Theorem
\ref{thm:canon} and the assumption that $X \in LC$, the entire
matrix $GF$ must be trivial, and thus $F$ is trivial.  This means
exactly $X' \in LC$.
\end{proof}

\section{Minor ratio limits}\label{sec:fact}
\subsection{Ratio limit interpretation and factorization problem}
Let $X \in RC$ and let $X = \prod_{i=1}^{\infty} N(a_1^{(i)},
\ldots,a_n^{(i)})$ be the ASW factorization of $X$.  Let $k \geq 1$
be an integer.  Let $I = \{i_1 < i_2 < \ldots < i_k\}$ be a
collection of positive integers such that $i_t \leq i+t$ for an
integer $i$, and let $I^k_i = \{i+1,i+2,i+3, \ldots, i+k\}$.  Also
let $J^k_h = \{h+1,h+2, \ldots, h+k\}$.  By Theorem \ref{T:schur},
the minor $\Delta_{I, J^k_j}(X)$ is equal to
$s_{\lambda_{I,h}^k}(\a)$, where $\lambda_{I,h}^k = \lambda(I,
J^k_h)$ is a skew shape the right-hand side of which is vertical. We
also define $\mu_{i,h}^k = \lambda(I^k_i,J^k_h)$, which is a
rectangular skew shape of height $k$ and width $h - i$.

It is clear that $\mu_{i,j}^k \subset \lambda_{I,h}^k$.  We let $\nu
= (i+k,\ldots,i+k)/(i_k, \ldots, i_1+k-1)$ be the difference.  We
define
$$
s_\nu(\a^{(1)},\a^{(2)},\ldots,\a^{(k)}) = \sum_T a^T
$$
to be the weight generating function of tableaux with shape $\nu$,
and filled with numbers from $[1,k]$.  We can also obtain
$s_\nu(\a^{(1)},\a^{(2)},\ldots,\a^{(k)})$ from $s_\nu(\a)$ by
setting $a^{(i)}_j = 0$ for $i > k$.  For example, for $I = (i, i+2,
\ldots, i+k)$ we have $s_\nu(\a^{(1)},\a^{(2)},\ldots,\a^{(k)}) =
\sum_{j=1}^k a_i^{(j)}$.

\begin{theorem} \label{thm:mrl}
We have
$$s_\nu(\a^{(1)},\a^{(2)},\ldots,\a^{(k)}) = \lim_{h \to \infty}
\frac{s_{\lambda_{I,h}^k}(\a)}{s_{\mu_{i,h}^k}(\a)} = \lim_{h \to
\infty} \frac{\Delta_{I, J_h^k}(X)}{\Delta_{I^k_i, J_h^k}(X)}.$$
\end{theorem}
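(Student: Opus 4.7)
We plan to prove the theorem as follows. The second equality in the display is immediate from Theorem~\ref{T:schur} applied to both minors, so we concentrate on the first equality.

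The shape $\lambda = \lambda_{I,h}^k$ decomposes cell-wise as a disjoint union $\lambda = \nu \sqcup \mu$, in which $\mu = \mu_{i,h}^k$ occupies the rectangular block in columns $[i+k+1,h+k]$ and $\nu$ fills the complementary left strip in columns $\le i+k$. Every SSYT $T$ of shape $\lambda$ thus restricts to a pair $(T_\nu,T_\mu)$ with $a^T = a^{T_\nu}a^{T_\mu}$; since $\nu$ and $\mu$ share no column, the only compatibility is the row-weak inequality $v_s(T_\nu) \le T_\mu(s,1)$ on each nonempty row $s$ of $\nu$. Using the hypothesis $i_t \le i+t$, a direct argument shows that the nonempty rows of $\nu$ are exactly the bottom $m$ rows of $\lambda$ (with $m$ determined by $I$), occurring at indices $s_j = k-m+j$; the column-strict entries $v_{s_j}$ in column $i+k$ then satisfy $v_{s_j} \le s_j$ automatically whenever $T_\nu$ has entries in $[1,k]$. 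On the other hand, a SSYT of the rectangle $\mu$ with entries in $[1,k]$ is forced to have every column equal to $(1,2,\ldots,k)$ top-to-bottom, so $s_\mu(\a^{(1)},\ldots,\a^{(k)})$ is a single monomial and the compatibility $v_{s_j} \le s_j = T_\mu(s_j,1)$ is automatic. Combining these observations yields the exact identity
\[
s_\lambda(\a^{(1)},\ldots,\a^{(k)}) = s_\nu(\a^{(1)},\ldots,\a^{(k)}) \cdot s_\mu(\a^{(1)},\ldots,\a^{(k)}),
\]
which reduces the theorem to proving $\lim_{h\to\infty} R_\lambda(h)/R_\mu(h) = 1$, where $R_\rho(h) := s_\rho(\a)/s_\rho(\a^{(1)},\ldots,\a^{(k)})$.

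To handle this final step, we expand each $s_\rho(\a)$ by the branching identity $s_\rho(\a) = \sum_\sigma s_\sigma(\a^{(1)},\ldots,\a^{(k)}) \cdot s_{\rho/\sigma}(\a^{(k+1)},\a^{(k+2)},\ldots)$, obtained by partitioning each SSYT of $\rho$ into its cells with entries $\le k$ (forming an upper-left sub-shape $\sigma$) and those with entries $>k$ (forming $\rho/\sigma$). The main analytic task is then to show that as $h \to \infty$ the dominant contributions to $R_\lambda(h)$ and $R_\mu(h)$ come from $\sigma$ agreeing with $\rho$ outside a bounded boundary region independent of $h$, so that the bulk of the long rectangle $\mu$ cancels in the ratio while the bounded boundary modification contributes a common limit. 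This is the principal obstacle, analogous to the Vershik--Kerov asymptotic analysis of classical Schur function ratios. The required growth estimates on rectangular loop Schur functions can be extracted from the ASW ordering $b_1 \ge b_2 \ge \cdots$ and the asymptotic $h^{(p)}_w(\a) \sim C_{p,w}\, b_1^{w/n}$ implicit in Lemmas~\ref{L:aep} and~\ref{lem:inverseradius}.
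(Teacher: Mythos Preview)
Your decomposition $\lambda=\nu\sqcup\mu$ and the exact identity
\[
s_\lambda(\a^{(1)},\ldots,\a^{(k)})=s_\nu(\a^{(1)},\ldots,\a^{(k)})\cdot s_\mu(\a^{(1)},\ldots,\a^{(k)})
\]
are correct and useful. However, the ``reduction'' to $\lim R_\lambda(h)/R_\mu(h)=1$ is not a reduction at all: unwinding the definitions and using your own identity,
\[
\frac{R_\lambda(h)}{R_\mu(h)}=\frac{s_\lambda(\a)}{s_\mu(\a)}\cdot\frac{s_\mu(\a^{(1)},\ldots,\a^{(k)})}{s_\lambda(\a^{(1)},\ldots,\a^{(k)})}=\frac{1}{s_\nu(\a^{(1)},\ldots,\a^{(k)})}\cdot\frac{s_\lambda(\a)}{s_\mu(\a)},
\]
so the statement $R_\lambda/R_\mu\to 1$ is literally equivalent to the theorem you are trying to prove. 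All of the analytic content has been pushed into your final paragraph, which you do not carry out. The asymptotic $h^{(p)}_w(\a)\sim C\,b_1^{w/n}$ you invoke from Lemmas~\ref{L:aep} and~\ref{lem:inverseradius} concerns single-row loop homogeneous functions; it does not give control over ratios of $k$-row rectangular loop Schur functions, and in particular it gives no handle on the boundary effects in your branching expansion. So as written the proposal has a genuine gap at the only nontrivial step.

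The paper's proof uses the same combinatorial observation you found, but exploits it more directly. One writes
\[
s_{\lambda}(\a)=s_\nu(\a^{(1)},\ldots,\a^{(k)})\cdot s_{\mu}(\a)+\wt(S_h^k),
\]
where $S_h^k$ consists of those SSYT of $\lambda$ with some entry $>k$ in the $\nu$-part; this holds because (as you verified) once the $\nu$-part is filled from $[1,k]$ the compatibility with \emph{any} SSYT of $\mu$ is automatic. Dividing by $s_\mu(\a)$ isolates a single error term $\wt(S_h^k)/s_\mu(\a)$ to be shown to vanish. The paper then bounds $\wt(S_h^k)\le s_\nu(\a)\cdot\wt(T_h^k)$, where $T_h^k$ are the rectangular tableaux whose bottom row avoids $[1,k]$, and proves $\wt(T_h^k)/s_\mu(\a)\to 0$ by an induction on $k$ with an auxiliary ``bounds vector'' playing the role of a hidden $(k{+}1)$-st row. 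This inductive tableau argument, together with a truncation at level $R$ chosen via $\sum_{\ell\ge R}\sum_j a_j^{(\ell)}<\min_j a_j^{(k+1)}$, is the substantive analytic work; it is not recoverable from the lemmas you cite.
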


\begin{proof}
The general plan is similar to the proof of Lemma \ref{L:aep}, but
the details are significantly more complicated.

We can write $$\frac{s_{\lambda_{I,h}^k}(\a)}{s_{\mu_{i,h}^k}(\a)} =
s_\nu(\a^{(1)},\a^{(2)},\ldots,\a^{(k)}) +
\frac{\wt(S_h^{k})}{\Delta_{I^k_i, J_h^k}(X)},$$ where $S_h^{k}$ is
the set of all semistandard fillings of $\lambda_{I,h}^{k}$ such
that not all numbers filling the left $\nu$ part of the shape are in
the range from $1$ to $k$. It remains to show that $\lim_{h \to
\infty} \frac{\wt(S_h^{k})}{\Delta_{I^k_i, J_h^k}(X)} = 0$.  Denote
by $T_h^{k}$ the set of all semistandard fillings of $\mu_{i,h}^{k}$
with entries in the bottom row not smaller than $k+1$.  Since
$\wt(S_h^{k}) < s_{\nu}(\a) \wt(T_h^{k})$, it suffices to show that
$$\lim_{h \to \infty} \frac{\wt(T_h^{k})}{\Delta_{I^k_i, J_h^k}(X)} =
0.$$

We shall prove by induction a stronger statement.  Namely, let us
take a vector $b = (b_1, \ldots, b_{h+k})$ of positive integers we
call {\it {bounds}}.  We also allow some of the bounds to be
infinite.  Denote by $T_h^{k,b}$ the set of all semistandard
tableaux of shape $\mu_{i,h}^{k}$ with entries in the bottom row not
smaller than $k+1$ and smaller than the corresponding entries of $b$
(that is, an entry in the $r$-th column has to be less than or equal
to $b_r$).  One can think of $b$ as a hidden $(k+1)$-st row of the
tableau.  Similarly denote by $U_h^{k,b}$ the set of all
semistandard fillings of $\mu_{i,h}^{k}$ with the first entry in the
bottom row equal $k$ and all entries in the bottom row less than the
corresponding entry of $b$.  Let $V_h^{k,b} = U_h^{k,b} \cup
T_h^{k,b}$ be the set of all semistandard fillings of
$\mu_{i,h}^{k}$ with entries in the bottom row less than the
corresponding entry of $b$.  We claim that for a fixed $\varepsilon
> 0$ there is $N$ such that for $h \geq N$ we have
$\wt(T_h^{k,b})/\wt(U_h^{k,b}) < \varepsilon$, or equivalently
$\wt(V_h^{k,b})/\wt(U_h^{k,b}) < 1 + \varepsilon$ for any $b$ such
that $V_h^{k,b}$ (and thus $U_h^{k,b}$) is non-empty.

We proceed by induction on $k$.  Checking the base case $k = 1$ is
essentially the same as checking the inductive step, so assume now
that the claim has been proved for the values up to $k$, and prove
it for the $(k+1)$-row case.  By the induction assumption, for any
$\varepsilon$ there exists an $N$ such that for $h \geq N$ and any
bound $b'$, the fillings of the first $k$ rows with the first column
filled with the numbers $1, \ldots, k$ constitute at least $1/(1 +
\varepsilon)$ part of weight of all possible fillings.  Iterating,
we can claim that for any $m$ and $\varepsilon$ there exists an $N$
such that for $h \geq N$ and any bound $b'$ the fillings of the
first $k$ rows with the first $m$ columns filled minimally
constitute at least $1/(1 + \varepsilon)$ portion of weight of all
possible fillings.  Thinking of the bounds $b'$ as a $(k+1)$-st row,
we now sum over all $b'$ which are compatible with given bound $b$,
and conclude that for any $m$ and $\epsilon$ there is an $N$ such
that for $h \geq N$
$$\frac{\wt(V_h^{k+1,b})}{\wt(W_{m,h}^{k+1,b})} < 1 + \varepsilon.$$
Here $\wt(W_{m,h}^{k+1,b})$ denotes all fillings of
$\mu_{i,h}^{k+1}$ compatible with $b$ such that the rectangle formed
by first $k$ rows and first $m$ columns is filled minimally, that
is, with the numbers $1, 2, \ldots, k$.  Now among let
$T_{m,h}^{k+1,b} \subset W_{m,h}^{k+1,b}$ be the subset of tableaux
with the lowest row filled with numbers greater than $k+1$, and the
$U_{m,h}^{k+1,b}  \subset W_{m,h}^{k+1,b}$ be the subset of tableaux
with the lower left corner filled with $k+1$.  Note that
$\wt(U_{m,h}^{k+1,b}) < \wt(U_{h}^{k+1,b})$ since dropping the
minimality condition on the second to $m$-th rows can only increase
the sum.

Pick $R$ so that $$\sum_{\ell \geq R} \sum_{j=1}^n a_j^{(\ell)} <
\min(a_1^{(k+1)} a_2^{(k+1)},\ldots, a_n^{(k+1)}).$$ This can be
done since the sum of all $a_j^{(\ell)}$ is finite.  Let
$Q_{m,h}^{k+1,b} \subset T_{m,h}^{k+1,b}$ be the subset of tableaux
with only the labels $1,2,\ldots,R$ in the first $m$ columns.  We
define a map $T_{m,h}^{k+1,b} \to Q_{m,h}^{k+1,b}$ by changing every
entry in the last row and first $m$ columns which is greater than
$R$, to $R$.  As we did in Lemma \ref{L:aep}, we give tableaux in
$Q_{m,h}^{k+1,b}$ a modified weight, denoted $\wt'$: the entries in
a cell with residue $j$, in the last row and first $m$ columns,
labeled $R$, have weight equal to $a^{(k+1)}_j$.  All the other
entries have the usual weight.  By our choice of $R$, we have
$\wt(T_{m,h}^{k+1,b}) < \wt'(Q_{m,h}^{k+1,b})$.

For any $T \in Q_{m,h}^{k+1,b}$, there is some $r \in [k+1,R]$ such
that there are at least $m/R$ cells filled with $r$ in the last row.
If there are several options for $r$ choose the smallest one.  Let
us change the last row by removing the first $n, 2n, \ldots,$ of the
$r$'s, changing them to $(k+1)$'s placed in the front of the row. As
a result we get a filling that agrees with the bound $b$ since the
entry of each cell did not increase.  This produces $m/Rn$ distinct
tableaux in $U_{h}^{k+1,b}$.  The weight of the resulting tableau is
at least as large as the modified weight of the original one: if $r
< R$ this follows from the fact that in an ASW factorization the
products of parameters in successive curls do not increase.  If $r =
R$ this follows by definition of the modified weight.

Thus we obtain a multi-valued map from $Q_{m,h}^{k+1,b}$ to
$U_{m,h}^{k+1,b}$ such that each element of $Q_{m,h}^{k+1,b}$ maps
into $m/Rn$ elements of $U_{m,h}^{k+1,b}$, while each element of
$U_{m,h}^{k+1,b}$ is the image of less than $R$ elements of
$T_{m,h}^{k+1,b}$.  Thus we have
$$\wt(T_{m,h}^{k+1,b}) < \frac{m}{Rn} \wt'(Q_{m,h}^{k+1,b}) < R \; \wt(U_{m,h}^{k+1,b}),$$ which implies
$$\wt(W_{m,h}^{k+1,b}) < \left(1+\frac{R^2n}{m} \right) \wt(U_{m,h}^{k+1,b}).$$
Now we can combine several claims to get
$$\wt(V_h^{k+1,b}) < (1 +
\varepsilon)\wt(W_{m,h}^{k+1,b}) < (1 + \varepsilon)
(1+\frac{R^2n}{m}) \wt(U_{m,h}^{k+1,b}) \leq (1 + \varepsilon)
(1+\frac{R^2n}{m}) \wt(U_{h}^{k+1,b}).$$ Clearly for any $\delta >
0$ one can choose $\epsilon > 0$ and large enough $m$ so that $(1 +
\epsilon) (1+\frac{R^2n}{m}) < 1 + \delta$, which finishes the
proof.
\end{proof}

\begin{remark}
In \cite{LP}, we shall give a different interpretation of limit
ratio minors for arbitrary TNN matrices, not just for infinite
products of curls.
\end{remark}

\begin{example}
The definition of $\epsilon_i$ as the limit $\lim_{j \to \infty} \frac{x_{i,j}}{x_{i+1,j}}$ is an instance of Theorem \ref{thm:mrl} with $k=1$ and $I = \{i\}$.
\end{example}

\begin{example}
Take the matrix from Example \ref{ex:4}. Take $i=2$, $k= 2$, and $I
= (1,4)$. Then $$\lim_{h \to \infty} \frac{\Delta_{I,
J_h^k}(X)}{\Delta_{I^k_i, J_h^k}(X)} = \lim_{g \to \infty}
\frac{\det \left(\begin{array}{cc}
2^{g+2}-3 & 2^{g+2}-2 \\
3\cdot 2^{g-1} -3 & 3 \cdot 2^{g-1}-2
\end{array} \right)
}{\det
\left(\begin{array}{cc}
2^{g+1}-3 & 2^{g+1}-2 \\
3\cdot 2^{g-1} -3 & 3 \cdot 2^{g-1}-2
\end{array} \right)
} =5.$$ And indeed, this is the value of $s_{(4,4)/(4,2)} =
h_2^{(1)}$ evaluated at the first (and in this case - the only) two
curls of the ASW factorization: $\frac{4}{3}\cdot\frac{3}{2} +
\frac{4}{3}\cdot\frac{3}{2} + \frac{2}{3}\cdot\frac{3}{2} = 5$.
\end{example}

The proof of Theorem \ref{thm:mrl} clearly works for $X$ a finite
product of curls as long as $k$ is not larger than the number of
curls in the product. The following immediate corollary allows to
express all the parameters involved in the ASW factorization of an
infinite curl directly through the minor ratio limits.

\begin{corollary}\label{C:mrl}
We have $$a_i^{(k)} = \lim_{h \to \infty} \frac{\Delta_{I_{i-1}^k,
J_h^k}(X)}{\Delta_{I^k_i, J_h^k}(X)}\Big/\lim_{h \to \infty}
\frac{\Delta_{I_i^{k-1}, J_h^{k-1}}(X)}{\Delta_{I^{k-1}_{i+1},
J_h^{k-1}}(X)}.$$
\end{corollary}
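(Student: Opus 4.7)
The plan is to invoke Theorem \ref{thm:mrl} separately for the numerator and denominator of the outer quotient and observe that in each case the limiting loop Schur function reduces to a single monomial; the outer ratio then telescopes to $a_i^{(k)}$.

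First, I would apply Theorem \ref{thm:mrl} to the top limit with $I = I^k_{i-1} = \{i, i+1, \ldots, i+k-1\}$, so that $i_t = i+t-1$ and the denominator is $\Delta_{I^k_i, J^k_h}(X)$ as required. The inner shape of the skew diagram $\nu$ prescribed by the theorem is $(i_k, i_{k-1}+1, \ldots, i_1+k-1)$; every one of its parts collapses to $i+k-1$, while the outer shape is $(i+k)^k$. Hence $\nu_1$ is a single vertical strip of $k$ boxes lying in column $i+k$. A semistandard filling of such a column by letters from $\{1,\ldots,k\}$ must be strictly increasing and so is unique: the box in row $s$ receives the letter $s$. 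Using the residue convention $r(s, i+k) = \overline{i+k-s}$, this produces
$$s_{\nu_1}(\a^{(1)}, \ldots, \a^{(k)}) = \prod_{s=1}^k a^{(s)}_{\overline{i+k-s}}.$$

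Next, I would apply Theorem \ref{thm:mrl} to the bottom limit with $k$ replaced by $k-1$, with $I = I^{k-1}_i = \{i+1, \ldots, i+k-1\}$, and with the theorem's denominator set $I^{k-1}_{i+1}$. The identical computation shows that $\nu_2$ is a column of $k-1$ boxes again in column $i+k$, with a unique semistandard filling by $\{1,\ldots,k-1\}$, yielding
$$s_{\nu_2}(\a^{(1)}, \ldots, \a^{(k-1)}) = \prod_{s=1}^{k-1} a^{(s)}_{\overline{i+k-s}}.$$

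Dividing the first monomial by the second cancels every factor except the $s=k$ term $a^{(k)}_{\overline{i}} = a^{(k)}_i$, which is the claimed identity. No real obstacle arises: the outer quotient of limits is legitimate because both inner limits are strictly positive (all $a^{(j)}_\ell > 0$ since $X \in RC$), and all the genuine analytic content is already bundled inside Theorem \ref{thm:mrl}. The corollary is essentially a bookkeeping exercise in identifying the two skew shapes as single columns, whose loop Schur functions are automatically monomials.
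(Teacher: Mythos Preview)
Your proof is correct and follows essentially the same approach as the paper: apply Theorem~\ref{thm:mrl} to each of the two limits, obtain the single-column shapes whose loop Schur functions are the monomials $\prod_{s=1}^k a^{(s)}_{\overline{i+k-s}}$ and $\prod_{s=1}^{k-1} a^{(s)}_{\overline{i+k-s}}$, and divide. The paper's proof simply records these two products (written as $\prod_{j=i+k-1}^{i} a^{(i+k-j)}_j$ and $\prod_{j=i+k-1}^{i+1} a^{(i+k-j)}_j$) without spelling out, as you do, that $\nu$ is a single column and therefore has a unique semistandard filling.
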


\begin{proof}
By Theorem \ref{thm:mrl} the numerator is equal to
$\prod_{j=i+k-1}^{i} a^{(i+k-j)}_j$ and the denominator is equal to
$\prod_{j=i+k-1}^{i+1} a^{(i+k-j)}_j$, from which the statement
follows.
\end{proof}

It appears that even in the case $n=1$ the result of Theorem
\ref{thm:mrl} is new, we state it separately as follows. Let ${\bf
{a}} = a_1, a_2, \ldots$ be a sequence of parameters such that
$\sum_i a_i < \infty$ and let $s_{\lambda}$ denote the usual Schur
function. Let $\nu = (i+k,\ldots,i+k)/(i_k, \ldots, i_1+k-1)$ and
adopt other notation as above.
\begin{corollary}
The limit of ratios of Schur functions $\lim_{h \to \infty}
s_{\lambda_{I,h}^k}(\a)/s_{\mu_{i,h}^k}(\a)$ is equal to the Schur
polynomial $s_\nu(a_{i_1},a_{i_2},\ldots,a_{i_k})$ evaluated at the
$k$ largest parameters among the $a_i$-s.
\end{corollary}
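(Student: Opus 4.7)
The plan is to deduce this as the specialization $n=1$ of Theorem~\ref{thm:mrl}, after identifying the Corollary's objects with the loop-group objects from the body of the paper.

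First I would unpack what the $n=1$ case of the setup actually means. For $n=1$, a ``curl'' $N(a)$ is the scalar power series $(1-at)^{-1}$, and its infinite periodic representation is the upper-triangular matrix whose $(i,j)$-entry (for $j \geq i$) equals $a^{j-i}$. The commutation relation for curls becomes trivial: any two curls commute, because the map $\eta$ of Section~\ref{sec:comm} degenerates to the identity in one variable. Consequently, for an infinite curl $X = \prod_{i=1}^{\infty} N(a_i)$, all curl factors commute, and the ASW reduction of Lemma~\ref{lem:ASW} at each step peels off the factor $N(\max_i a_i)$ from the left (since $\epsilon_1(X) = \lim_{j \to \infty} h_{j-1}(\a)/h_{j-2}(\a) = \max_i a_i$ by a classical computation, using $\sum_i a_i < \infty$). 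Therefore the ASW factorization of $X$ is exactly $\prod_{k=1}^{\infty} N(a^{(k)})$, where $a^{(1)} \geq a^{(2)} \geq \cdots$ is the sequence of $a_i$'s sorted in weakly decreasing order. In particular $a^{(1)},\ldots,a^{(k)}$ are the $k$ largest parameters.

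Next I would identify the matrix entries and minors of $X$ with familiar symmetric functions. By Lemma~\ref{L:homog}, for $n=1$, $x_{i,j} = h_{j-i}(\a)$ is the ordinary complete homogeneous symmetric function in the variables $a_1,a_2,\ldots$. By Theorem~\ref{T:schur} (whose $n=1$ case is the ordinary Jacobi--Trudi identity), for index sets $I$ and $J$ with $i_t \leq j_t$,
\[
\Delta_{I,J}(X) = s_{\lambda(I,J)}(\a),
\]
the usual skew Schur function. Applied to $(I, J_h^k)$ and $(I_i^k, J_h^k)$, this gives exactly the two Schur functions in the numerator and denominator of the Corollary.

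Finally I would apply Theorem~\ref{thm:mrl} directly. Since the ASW parameters $(a^{(1)}, a^{(2)}, \ldots, a^{(k)})$ are precisely the $k$ largest $a_i$'s, and since $s_\nu$ is symmetric in its arguments, the theorem yields
\[
\lim_{h \to \infty} \frac{s_{\lambda_{I,h}^k}(\a)}{s_{\mu_{i,h}^k}(\a)}
= \lim_{h \to \infty} \frac{\Delta_{I, J_h^k}(X)}{\Delta_{I_i^k, J_h^k}(X)}
= s_\nu(a^{(1)}, a^{(2)}, \ldots, a^{(k)}),
\]
which is the claimed evaluation at the $k$ largest parameters among the $a_i$. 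The only points requiring care are (a) confirming that the $n=1$ ASW factorization truly sorts the parameters (handled by the scalar $\epsilon$-computation above together with the fact that $a_i \to 0$, so the maximum is attained), and (b) handling any zero entries in $\a$, which simply drop out of the ASW factorization without affecting either side of the identity. No new ideas beyond Theorem~\ref{thm:mrl} are needed; the main content of the Corollary is the recognition that for $n=1$ the ``abstract'' ASW parameters coincide with the concrete decreasing rearrangement of the original sequence.
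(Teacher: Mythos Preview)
Your proof is correct and follows the same route as the paper: the corollary is stated there without separate proof, immediately after Theorem~\ref{thm:mrl}, as its $n=1$ specialization. You have made explicit the one identification the paper leaves implicit, namely that for $n=1$ the curls commute and the ASW factorization of $\prod_i N(a_i)$ simply rearranges the $a_i$ into decreasing order, so that $a^{(1)},\dots,a^{(k)}$ are the $k$ largest parameters.
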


\subsection{Invariance}

In \cite{RS} to any non-crossing matching $\tau$ on $2n$ vertices
and to any permutation $w \in S_n$ a number  $f_{\tau}(w)$ was
associated using the Temperley-Lieb algebra. Let $Y = (y_{st})$ be
an $n \times n$ matrix variables.  One can then construct a family
of polynomials
$$
\Imm_{\tau}^{\mathrm{TL}}(Y):=\sum_{w \in S_n} f_{\tau}(w)\,y_{1,w(1)} \cdots y_{n,w(n)}
$$
called {\it {Temperley-Lieb immanants}}.  Let us consider $2n$
points $\{1,2,\ldots,2n\}$ arranged in two columns, with the numbers
$\{1,2,\ldots,n\}$ arranged from top to bottom in the left column,
and the numbers $\{n+1,n+2,\ldots,2n\}$ arranged from top to bottom
in the right column.  A (complete) matching of $[2n]$ is called
non-crossing if it can be drawn without intersecting edges, where
edges are not allowed to leave the rectangle bounded by
$1,n,n+1,2n$.  For a subset $S\subset [2n]$, let us say that a
non-crossing (complete) matching is {\it $S$-compatible\/} if each
strand of the matching has one endpoint in $S$ and the other
endpoint in its complement $[2n]\setminus S$. Coloring vertices in
$S$ black and the remaining vertices white, a non-crossing matching
is $S$-compatible if and only if each edge in it has endpoints of
different color. Let $\Theta(S)$ denote the set of all
$S$-compatible non-crossing matchings. An example for $n=5$, $S =
\{3,6,7,8,10\}$ is shown in the figure below.
\begin{figure}[h!]
    \begin{center}
    \input{loo5.pstex_t}
    \end{center}
\end{figure}
For a subset $I \subset [n]$ let $\bar I := [n]\setminus I$ and let
$I^\wedge := \{2n+1-i\mid i\in I\}$. The following results were
obtained in \cite{RS}.

\begin{theorem}
\label{th:immdecomp} \cite[Proposition~2.3, Proposition~4.4]{RS}\ If
$Y$ is a totally nonnegative matrix, then
$\Imm_{\tau}^{\mathrm{TL}}(Y) \geq 0$.  For two subsets $I,J\subset
[n]$ of the same cardinality and $S=J\cup (\bar I)^\wedge$, we have
$$
\Delta_{I,J}(Y) \cdot \Delta_{\bar I, \bar J}(Y)
= \sum_{\tau \in \Theta(S)} \Imm_{\tau}^{\mathrm{TL}}(Y).
$$
\end{theorem}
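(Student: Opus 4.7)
The plan is to place $\Imm_\tau^{\mathrm{TL}}$ into its natural algebraic home, the Temperley--Lieb algebra. Recall that $TL_n$ has the diagrammatic basis $\{T_\tau\}$ indexed by non-crossing matchings on $2n$ points, and there is a surjection $\pi:\mathbb{C}[S_n] \twoheadrightarrow TL_n$ (the specialization at $q=1$ of the Hecke-algebra quotient) under which $\pi(w) = \sum_\tau f_\tau(w)\, T_\tau$. Thus the Temperley--Lieb immanant $\Imm_\tau^{\mathrm{TL}}(Y)$ is the coefficient extraction of the entry-monomial polynomial of $Y$ against $T_\tau$. A second ingredient is the action of $TL_n$ on $(\cc^2)^{\otimes 2n}$: the non-crossing matching $\tau$ corresponds, under the standard skein identification, to a specific $SL_2$-invariant tensor $v_\tau$ obtained by contracting paired indices with the invariant form, and these $v_\tau$ form a basis of the invariant subspace.

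For the complementary-minors identity, my plan is to work in this invariant space. Label the $2n$ boundary sites by $[n] \sqcup [n]^\wedge$, corresponding to row and column indices. A product of two complementary minors $\Delta_{I,J}(Y)\cdot \Delta_{\bar I,\bar J}(Y)$ is the evaluation, against the entry tensor of $Y$, of a particular invariant tensor $v_S$ determined by the data $S = J \cup (\bar I)^\wedge$ (equivalently, the black/white coloring of the $2n$ boundary points). The main combinatorial step is to identify $v_S = \sum_{\tau \in \Theta(S)} v_\tau$. I would prove this by decomposing $v_S$ in the $\{v_\tau\}$ basis and observing that a non-crossing matching contributes with coefficient $1$ precisely when each pairing connects a black to a white endpoint, which is exactly $S$-compatibility; non-compatible matchings would force a pair of same-colored indices to be contracted with the antisymmetric form, giving zero. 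Pulling back along $\pi$ and reading off the coefficient of $y_{1,w(1)}\cdots y_{n,w(n)}$ then yields the stated identity.

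For the nonnegativity $\Imm_\tau^{\mathrm{TL}}(Y)\geq 0$ on TNN matrices, my plan is to use Theorem \ref{T:LW} to reduce to the case where $Y$ is a positive-parameter product of Chevalley generators and then invoke Lindström's lemma: each monomial $y_{1,w(1)}\cdots y_{n,w(n)}$ becomes a sum over $n$-tuples of weighted paths $P = (p_1,\ldots,p_n)$ in the associated planar acyclic network, with $p_i$ joining source $i$ to sink $w(i)$. Summing the signed combination $\sum_w f_\tau(w)\, \prod_i y_{i,w(i)}$ over paths and then applying the classical crossing-swap involution (swap $p_i$ and $p_j$ at their first crossing) should cancel all contributions from path families whose topological connectivity pattern differs from $\tau$. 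The surviving, non-crossing families then contribute with weight $+1$, yielding a manifestly nonnegative expression.

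The main obstacle is the last compatibility claim: the coefficients $f_\tau(w)$ are defined via $TL_n$-multiplication, not combinatorially on permutations, so one must show that the crossing-swap involution really does extract the correct coefficient. The cleanest way I see is to verify this inductively as one multiplies $Y$ by Chevalley generators $e_i(a)$ or $f_i(a)$, checking that adjoining a generator to the network transforms the sign-cancellation story in parallel with the $TL_n$-algebraic action (using the fundamental relation $e_i^2 = 2 e_i$ to absorb the "loop value"). An attractive alternative route is to sidestep the involution entirely: once part 2 is established, one can try to invert the transition between the basis $\{T_\tau\}$ and the products $\{\Delta_{I,J}\Delta_{\bar I,\bar J}\}$ of complementary minors, and show the inverse has nonnegative entries; then each $\Imm_\tau^{\mathrm{TL}}(Y)$ is automatically a nonnegative combination of nonnegative quantities. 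Establishing this positivity of the inverse change of basis — essentially a planar-algebra statement about skein expansions — is itself the heart of the matter.
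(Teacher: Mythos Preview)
The paper does not prove this theorem at all: it is quoted verbatim from Rhoades--Skandera \cite{RS} (the citation ``Proposition~2.3, Proposition~4.4'' in the statement header is the entire ``proof'' the paper offers), and is used as a black box to deduce Lemma~\ref{L:inc}. So there is no proof in the paper to compare your proposal against.

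As for your plan on its own merits: the framework you describe---realizing $\Imm_\tau^{\mathrm{TL}}$ via the quotient $\pi:\mathbb{C}[S_n]\to TL_n$, interpreting products of complementary minors in the $SL_2$-invariant space $((\cc^2)^{\otimes 2n})^{SL_2}$, and deducing nonnegativity from a planar-network model via Loewner--Whitney---is indeed the architecture of the Rhoades--Skandera argument. But your write-up is a sketch, not a proof, and you correctly identify the real gap yourself: the compatibility between the crossing-swap involution on path families and the $TL_n$-coefficients $f_\tau(w)$ is the substantive content of the nonnegativity statement, and you have not carried it out. Your ``alternative route'' (invert the change of basis from $\{\Imm_\tau^{\mathrm{TL}}\}$ to complementary-minor products and show the inverse is nonnegative) is not obviously easier---in fact the transition matrix is not square in the naive sense, and one has to be careful about what ``invert'' means here. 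If you want to actually prove the theorem rather than cite it, you should work through \cite{RS} directly; the argument there is self-contained and not long.
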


Let now $I = i_1 < i_2 < \ldots < i_k$, $I' = i'_1 < i'_2 < \ldots < i'_k$, $J = j_1 < j_2 < \ldots < j_k$ and $J' = j'_1 < j'_2 < \ldots < j'_k$ be four $k$-tuples of positive integers such that $i_t \leq i'_t$ and $j_t \leq j'_t$ for each$1 \leq t \leq k$.

\begin{lemma}\label{L:inc}
For a totally nonnegative matrix $X$ we have
$$\frac{\Delta_{I,J}(X)}{\Delta_{I',J}(X)} \geq
\frac{\Delta_{I,J'}(X)}{\Delta_{I',J'}(X)}$$ as long as the
denominators are non-zero.
\end{lemma}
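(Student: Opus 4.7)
The plan is to deduce the lemma from its cross-multiplied form
\[
\Delta_{I,J}(X)\,\Delta_{I',J'}(X) \;\geq\; \Delta_{I,J'}(X)\,\Delta_{I',J}(X),
\]
and to prove the latter by a compound-matrix argument. Each of the four minors depends only on entries of $X$ lying in rows $I \cup I'$ and columns $J \cup J'$, so I would first reduce to the case where $X$ is replaced by a finite rectangular TNN submatrix containing all these rows and columns; this submatrix is TNN since its minors are minors of $X$.

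The key observation is that
\[
\Delta_{I,J}(X)\,\Delta_{I',J'}(X) \;-\; \Delta_{I,J'}(X)\,\Delta_{I',J}(X)
\]
is exactly the $2\times 2$ minor of the $k$-th compound matrix $C_k(X)$ on rows $\{I,I'\}$ and columns $\{J,J'\}$, where $C_k(X)_{A,B} = \Delta_{A,B}(X)$ and rows/columns of $C_k(X)$ are indexed by $k$-subsets in lexicographic order. The hypotheses $i_t \leq i'_t$ and $j_t \leq j'_t$ for all $t$ imply $I<I'$ and $J<J'$ in lex order (at the first differing position, the componentwise inequality is strict), so the minor is genuinely a $2\times 2$ minor in standard orientation.

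The inequality thus reduces to the classical fact that the compound matrix of a TNN matrix is itself TNN with respect to the lex order on $k$-subsets. This follows from Cauchy--Binet, $C_k(AB) = C_k(A)C_k(B)$: combined with the Loewner--Whitney theorem (Theorem \ref{T:LW}), it suffices to check that $C_k(e_i(a))$, $C_k(f_i(a))$, and $C_k(\mathrm{diag}(\mathbf{d}))$ are TNN with positive parameters, each of which is immediate from their sparse structure.

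The main obstacle is the verification of the compound-TNN statement for the lex order. A more self-contained alternative, closer in spirit to the paper, would use the Lindstr\"om/Brenti realization of the finite TNN submatrix by a planar acyclic network and construct a weight-preserving injection from pairs of non-intersecting path families $(P:I'\to J,\, Q:I\to J')$ to pairs $(P^*:I\to J,\, Q^*:I'\to J')$, by uncrossing at crossings between $P$-paths and $Q$-paths; these crossings are forced to exist because $I\leq I'$ and $J\leq J'$ componentwise make the two families traverse ``opposite'' vertical directions. The delicate step in this direct approach is ensuring that the uncrossed families remain internally non-intersecting, which requires choosing a canonical order in which to perform the uncrossings.
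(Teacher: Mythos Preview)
Your compound-matrix reduction has a genuine gap: the $k$-th compound $C_k(X)$ of a TNN matrix is \emph{not} TNN in the lexicographic order on $k$-subsets, and the step you flagged as ``immediate from their sparse structure'' is exactly where it fails. Take $n=4$, $k=2$, and $X=e_1(a)$ with $a>0$. In lex order the $2$-subsets are $\{1,2\},\{1,3\},\{1,4\},\{2,3\},\{2,4\},\{3,4\}$, and the $2\times 2$ minor of $C_2(e_1(a))$ on rows $\{1,3\},\{1,4\}$ and columns $\{1,4\},\{2,3\}$ equals
\[
\Delta_{\{1,3\},\{1,4\}}\,\Delta_{\{1,4\},\{2,3\}}-\Delta_{\{1,3\},\{2,3\}}\,\Delta_{\{1,4\},\{1,4\}}
=0\cdot 0 - a\cdot 1 = -a<0.
\]
So $C_k(e_i(a))$ is not TNN, the Cauchy--Binet/Loewner--Whitney factorization does not produce a TNN compound, and the $2\times 2$-minor argument collapses. (Note that $\{1,4\}$ and $\{2,3\}$ are lex-comparable but not componentwise comparable, which is why this does not contradict the lemma itself.)

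The inequality you want is genuinely weaker than full TNN of $C_k(X)$: it only concerns $2\times 2$ minors indexed by \emph{componentwise}-comparable pairs, and any correct proof must exploit that extra hypothesis rather than discard it. The paper does this by passing to the $2k\times 2k$ submatrix on the multisets $I\cup I'$ and $J\cup J'$ and invoking the Rhoades--Skandera decomposition of complementary minor products into nonnegative Temperley--Lieb immanants (Theorem~\ref{th:immdecomp}); the componentwise conditions $i_t\le i'_t$, $j_t\le j'_t$ translate into a coloring property forcing every compatible non-crossing matching for $\Delta_{I',J}\Delta_{I,J'}$ to also be compatible for $\Delta_{I,J}\Delta_{I',J'}$, so the latter dominates the former term by term. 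Your alternative path-uncrossing sketch is closer in spirit to a viable direct proof, but as you note the delicate point---preserving internal non-intersection after uncrossing---is exactly where the componentwise hypothesis must be used, and you have not supplied that argument.
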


\begin{proof}
Let $Y$ be the $2k \times 2k$ submatrix of $X$ induced by the rows
in $I \cup I'$ and columns $J \cup J'$, where we repeat a row or a
column if it belongs to both of the sets (that is, $I \cup I'$ and
$J \cup J'$ are considered multisets).  We index rows and columns of
$Y$ again by $I \cup I'$ and $J \cup J'$. Whenever there is a
repeated column we consider the right one of the two to be in $J'$.
 Similarly whenever there is a repeated row we consider the bottom
one of the two to be in $I'$. Then $I' = \bar I$, $J' = \bar J$ and
we can apply the above theorem to the products
$\Delta_{I,J}(X)\Delta_{I',J'}(X)$ and $\Delta_{I',J}(X)
\Delta_{I,J'}(X)$.

The coloring of $4k$ points one obtains from from $I$ and $J'$ (that
is, $S = J'\cup (\bar I)^\wedge$) has the property that both in the
right and left columns there are more white vertices near the top
than black vertices. More precisely, the $t$-th white vertex is
above the $t$-th black vertex. This follows from the conditions $i_t
\leq i'_t$ and $j_t \leq j'_t$.  Its easy to see this property of
the coloring implies that all edges of a compatible matching have
either both endpoints on the left or both on the right. Indeed, if
there is an edge connecting the two sides then the non-crossing
condition implies that the vertices above its endpoints on either
side should have an equal number of black and white vertices.  This
contradicts ``more white vertices near the top''.

The coloring coming from $I$ and $J$ (that is, $S = J\cup (\bar
I)^\wedge$) is obtained by swapping black and white colors on the
left.  It follows that every compatible matching remains compatible.
Thus every Temperley-Lieb immanant occurring in the decomposition of
$\Delta_{I',J}(X) \Delta_{I,J'}(X)$ occurs also in the decomposition
of $\Delta_{I,J}(X)\Delta_{I',J'}(X)$.  Since immanants are
nonnegative, by Theorem \ref{th:immdecomp} we conclude that
$$\Delta_{I,J}(X)\Delta_{I',J'}(X)) - \Delta_{I',J}(X) \Delta_{I,J'}(X) \geq
0.$$
\end{proof}

Call a sequence $J^s = j^s_1 < j^s_2 < \ldots < j^s_k$ increasing if $j_t^s < j_t^{s+1}$ for any $t$ and $s$.

\begin{theorem}
For a totally positive matrix $X$, $I$ and $I'$ as above and any
increasing sequence $J^s$ the limit $$\lim_{s \to \infty}
\frac{\Delta_{I,J^s}(X)}{\Delta_{I',J^s}(X)}$$ exists and does not
depend on the choice of the sequence $J^s$.
\end{theorem}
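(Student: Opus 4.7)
My plan is to use Lemma~\ref{L:inc} twice: once to establish monotonicity in $s$ (giving existence of the limit), and once in a sandwich argument to compare two different increasing sequences.

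\textbf{Step 1 (existence).} Since $J^s$ is increasing in the stated sense, we have $j_t^s \le j_t^{s'}$ for every $t$ whenever $s \le s'$, i.e.\ $J^s \le J^{s'}$ componentwise. Because $X$ is totally positive and $j_t^s \to \infty$ as $s \to \infty$ for each $t$ (the strict integer inequality $j_t^s < j_t^{s+1}$ forces this), both $\Delta_{I,J^s}(X)$ and $\Delta_{I',J^s}(X)$ are strictly positive for all sufficiently large $s$, by the definition of $U_{>0}$. Applying Lemma~\ref{L:inc} to the pair $(J,J') = (J^s,J^{s'})$ together with the given pair $(I,I')$ yields
$$\frac{\Delta_{I,J^s}(X)}{\Delta_{I',J^s}(X)} \;\ge\; \frac{\Delta_{I,J^{s'}}(X)}{\Delta_{I',J^{s'}}(X)}.$$
Thus the sequence of ratios is eventually non-increasing and bounded below by $0$, so it converges to some limit $L \ge 0$.

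\textbf{Step 2 (independence of sequence).} Let $\widetilde J^r$ be another increasing sequence in the stated sense, and let $\widetilde L$ be its corresponding limit, which exists by Step 1. Fix $s$. Because $\widetilde j_t^r \to \infty$ in $r$ for each $t$, we can choose $r = r(s)$ large enough that $\widetilde j_t^r \ge j_t^s$ for every $t$, i.e.\ $\widetilde J^r \ge J^s$ componentwise. Applying Lemma~\ref{L:inc} once more,
$$\frac{\Delta_{I,J^s}(X)}{\Delta_{I',J^s}(X)} \;\ge\; \frac{\Delta_{I,\widetilde J^r}(X)}{\Delta_{I',\widetilde J^r}(X)} \;\ge\; \widetilde L,$$
where the second inequality uses Step 1 applied to the sequence $\widetilde J^r$ (the ratios along $\widetilde J^r$ decrease to $\widetilde L$, so every term dominates $\widetilde L$). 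Letting $s \to \infty$ gives $L \ge \widetilde L$. Interchanging the roles of $J^s$ and $\widetilde J^r$ gives the reverse inequality, hence $L = \widetilde L$.

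\textbf{Remarks on difficulty.} The real content is packaged into Lemma~\ref{L:inc} (via the Temperley--Lieb immanant identity of \cite{RS}); once that monotonicity in the column indices is in hand, the present theorem is essentially a monotone convergence plus sandwich argument. The only mild subtlety is verifying that denominators are nonzero so that Lemma~\ref{L:inc} applies, which is handled by the total positivity hypothesis on $X$: sufficiently northeast minors are strictly positive, and $j_t^s \to \infty$ forces $J^s$ eventually northeast of any fixed row index set.
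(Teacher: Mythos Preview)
Your proof is correct and follows essentially the same approach as the paper's: use Lemma~\ref{L:inc} to get monotonicity (hence convergence), then sandwich two increasing sequences against each other via the same lemma and symmetry. You are in fact slightly more careful than the paper in justifying that the denominators are eventually nonzero.
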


\begin{proof}
The fact that the limit exists follows from Lemma \ref{L:inc}: the
ratio is non-increasing and remains nonnegative.  To see that it is
independent of $J^s$, assume there is another sequence $J'^s$. Then
for every element $J^s = j^s_1 < j^s_2 < \ldots < j^s_k$ there is an
element $J'^t = j'^t_1 < j'^t_2 < \ldots < j'^t_k$ such that $j^s_r
< j'^s_r$ for every $r$. This means that
$$\lim_{s \to \infty} \frac{\Delta_{I,J^s}(X)}{\Delta_{I',J^s}(X)} \leq
\lim_{s \to \infty}
\frac{\Delta_{I,J'^s}(X)}{\Delta_{I',J'^s}(X)}.$$ However in the
same way we obtain the backwards inequality.  Thus the two limits
are equal.
\end{proof}

\section{Some open problems}\label{sec:prob}
We collect here some questions and conjectures.

\noindent {\sc From Section \ref{sec:networks}.}
\begin{conjecture}
Corollary \ref{C:coeffs} holds for all $X \in \G_{\geq 0}$.
\end{conjecture}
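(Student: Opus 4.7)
The plan is to prove the conjecture by approximating any $X \in \G_{\geq 0}$ entrywise by matrices representable by (possibly non-acyclic) finite cylindric networks, and then invoking the extension of Corollary \ref{C:coeffs} to such networks. The enabling fact is that each coefficient of $t^m$ in any minor $\Delta_{I,J}(\X(t))$ is a polynomial in finitely many entries of the infinite periodic matrix $X$, so both the nonnegativity and sign-alternation conditions are closed and pass to entrywise limits.

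First I would apply Theorem \ref{T:red} to factor $X = F S^k Y$ with $F S^k \in \g_{\geq 0}$ and $Y \in U_{\geq 0}$; Corollary \ref{C:coeffs} already handles $F S^k$. The Cauchy-Binet expansion
\[
\Delta_{I,J}(\X(t)) \;=\; \sum_{K \subset [n],\ |K|=|I|} \Delta_{I,K}(\overline{FS^k}(t))\,\Delta_{K,J}(\Y(t))
\]
is a finite sum of products of Laurent series, and since every minor appearing has common size $|I|$, all fall under the same clause of the conjecture. Convolution preserves the two sign patterns: nonnegativity obviously, and $(-1)^m$-alternation via $(-1)^m c_k d_{m-k} = ((-1)^k c_k)\cdot((-1)^{m-k} d_{m-k}) \geq 0$. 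So it suffices to prove the conjecture for $Y \in U_{\geq 0}$.

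Next, apply Theorem \ref{thm:canon} to write $Y = Z Y_1 W$ with $Z \in \overline{RC}$ a (possibly infinite) product of curls, $W \in \overline{LW}$ a product of whirls, and $Y_1$ doubly entire; then decompose $Y_1 = P V Q$ via Theorem \ref{thm:regular}, with $P \in \L_r$, $Q \in \L_l$, and $V$ regular. Now $Z$ is the entrywise limit of the partial products in its ASW factorization (Lemma \ref{lem:ASWreduction}, Proposition \ref{prop:inversewhirlfact}), each a finite product of non-degenerate curls; $W$ is similarly a limit of finite products of whirls; and $P, Q$ are limits of finite products of Chevalley generators by the very definition of the right and left Chevalley groups. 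Each such finite product of whirls, curls, and Chevalley generators, together with the torus and shift generators from the earlier reduction, is representable by a cylindric network (Figures \ref{fig:loo2}, \ref{fig:loo3}, \ref{fig:loo8}), non-acyclic if any curls are involved. The paper's remark following Theorem \ref{thm:tp}, that the results of Section \ref{sec:networks}, and hence Corollary \ref{C:coeffs}, still hold when we allow oriented cycles with non-zero rotor, gives the sign pattern for each finite-product approximation. Since multiplication of upper-triangular matrices with $1$s on the diagonal is continuous for entrywise convergence (each entry of the product is a finite sum), entrywise approximations of each factor yield an entrywise approximation of $X$, and the sign pattern passes through.

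The main obstacle is the regular component $V$: unlike the other factors, $V$ arises from the transfinite recursion in the proof of Theorem \ref{thm:regular} and is not manifestly an entrywise limit of finite products of the cylindric-network generators above. Handling it will likely need input from the forthcoming companion paper \cite{LP}, where $(P,V,Q)$ is shown to be unique and $V$ is presumably given its own canonical factorization---perhaps as a limit of finite products of whirls analogous to $e^{\gamma t} = \lim_N (1+\gamma t/N)^N$ in the $n=1$ case. A secondary technical matter is rigorously justifying the extension of Corollary \ref{C:coeffs} to non-acyclic networks: the sign-reversing involution of Theorem \ref{thm:L} needs adaptation to path pieces traversing cycles of nonzero rotor, and the geometric-series contributions from such cycles (e.g.\ the denominators $1/(1-(\prod b_j)t)$ in Lemma \ref{lem:whirldet}) must combine with the permutation signs to give exactly the predicted sign pattern on minor coefficients.
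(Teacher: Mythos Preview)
This statement is listed in the paper as an open \emph{conjecture} in Section~\ref{sec:prob}; there is no proof in the paper to compare against.

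More to the point, the conjecture as literally stated is \emph{false} for even $n$, and your proposal cannot be completed. Take $n=2$ and $X=N(b_1,b_2)$ a non-degenerate curl, which lies in $U_{\geq 0}\subset\G_{\geq 0}$. By Lemma~\ref{lem:whirldet},
\[
\det(\X(t))=\frac{1}{1-b_1b_2\,t}=\sum_{k\ge 0}(b_1b_2)^k\,t^k,
\]
whose coefficients are all strictly positive. This is the full $2\times 2$ (even) minor, and Corollary~\ref{C:coeffs} would demand sign-alternating coefficients. So a single curl already violates the even-minor clause. What you flagged as a ``secondary technical matter'' --- justifying the extension of Corollary~\ref{C:coeffs} to non-acyclic networks --- is in fact the decisive obstruction: the determinant theorem preceding Corollary~\ref{C:coeffs} produces the sign $(-1)^{k(n-1)}$ because in an \emph{acyclic} network the weights $w(\Gamma_k)$ are nonnegative, and this fails once cycles of nonzero rotor are permitted. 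The paper's remark after Theorem~\ref{thm:tp} that ``the definitions and results of Section~\ref{sec:networks} still hold'' should be read as referring to Theorem~\ref{thm:L}, not to the sign corollary.

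Your Cauchy--Binet reduction and the closure of the two sign conditions under products and entrywise limits are both correct, and your isolation of the regular factor $V$ as the hard case is reasonable. But since the curl component $Z$ in the canonical form of Theorem~\ref{thm:canon} already fails the even-minor clause, no amount of approximation of the other factors can rescue the statement. If you want to pursue this, the natural first step is to reformulate the conjecture --- for instance, asking whether every minor of $\X(t)$ is a totally positive function in the sense of Theorem~\ref{T:ET}, which is consistent with both whirls and curls and reduces to the original statement on $\g_{\geq 0}$.
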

\begin{question}
Can {\it every} TNN matrix be represented by a possibly
infinite, not necessarily acyclic cylindric ``network"?
\end{question}

\medskip \noindent {\sc From Section \ref{sec:upper}.}
\begin{problem}
Let $X$ be a TNN matrix.  Then every entry $\overline{x}_{ij}(t)$ of
$\overline{X}(t)$ is a totally positive function.  What is the
relationship between the poles and zeroes (see Theorem \ref{T:ET})
of different entries $\overline{x}_{ij}(t)$?
\end{problem}

\begin{problem}
Brenti \cite{Br2} has studied combinatorics of Polya frequency
sequences, as well as generalizations such as log-concave sequences.
Can his questions and results be generalized to $n>1$?
\end{problem}

\medskip \noindent {\sc From Section \ref{sec:comm}.}  The following
problem is inspired by \cite{BFZ}.

\begin{problem}
Let $w \in S_{\infty}$ be applied to an infinite curl via the maps
$\eta_i$, as in Corollary \ref{cor:sinf}. Describe the parameters of
the resulting product explicitly as rational functions of the
original parameters.
\end{problem}
\begin{example}
For $n=3$ applying $w = s_1 s_2 s_1$ to $N({\bf a})N({\bf b})N({\bf
c}) \dotsc$ we get $c'_1 = $

{\footnotesize{
$$\frac{c_3(a_1a_3b_1b_2+a_1b_2b_3c_1+a_1a_3b_2c_1+b_1c_1c_2c_3+b_1b_3c_1c_2+a_1a_3c_1c_2+a_1c_1c_2c_3+a_1b_3c_1c_2+c_1^2c_2c_3)}{a_3b_1b_2c_3+b_2b_3c_1c_3+a_3b_2c_1c_3+a_2a_3b_1c_3+a_2a_3b_1b_3+a_2a_3c_1c_3+c_1c_2c_3^2+b_3c_1c_2c_3+a_3c_1c_2c_3}$$}}
\end{example}

\medskip \noindent {\sc From Section \ref{sec:infprod}.}
\begin{problem}\label{prob:poles}
Suppose $X = \prod_{i=1}^\infty N(\a^{(i)})$ is an infinite product
of curls (or whirls).  Each entry of $\overline{X}(t)$ is a totally
positive function. What can we say about the poles and zeroes of
$\overline{x}_{i,j}(t)$, in terms of the parameters $\a^{(i)}$?
\end{problem}

\medskip
\noindent {\sc From Section \ref{sec:canon}.} The following problem
is non-trivial even when $Y = Y'$ is the identity matrix.
\begin{problem}[Multiplication of canonical forms] \label{prob:can}
Let $X = ZYW$ and $X' = Z'Y'W'$ be written in canonical form.  How
can one write $XX'$ in canonical form?
\end{problem}

\begin{problem}Repeat Problem \ref{prob:poles} for matrices in
canonical form.
\end{problem}

\medskip
\noindent {\sc From Section \ref{sec:semigroup}.} One can break
Problem \ref{prob:can} into smaller more specific problems.
\begin{question}[Commutation of infinite whirls with infinite curls]
Let $X \in RC$ (or $LC$) and $Y \in RW$ (or $LW$).  When is it
possible to write $XY$ as $Y'X'$, where $Y' \in RW$ (or $LW$) and
$X' \in RC$ (or $LC$)?
\end{question}
\begin{question}[Product of opposing whirls or curls]
Let $X \in RC$ (resp.~$RW$) and $Y \in LC$ (resp.~$LW$).  How does
one rewrite $XY$ in canonical form?
\end{question}
\begin{question}[Doubly-infinite whirls or curls]
How does one rewrite in canonical form a {\it {doubly infinite}}
whirl (resp.~curl), that is, a product of whirls (resp.~curls)
infinite in both directions?
\end{question}

\medskip \noindent {\sc From Section \ref{sec:fact}.}

\begin{problem}
Let $X = \prod_{i=1}^\infty N(\a^{(i)})$ be an infinite product of
curls, and suppose the given factorization of $X$ is obtained from
the ASW factorization by the action of $w \in S_\infty$ (via the
maps $\eta_i$ in Corollary \ref{cor:sinf}).  Find simple expressions
for $a_j^{(i)}$ in terms minor ratio limits.
\end{problem}

A special case of the following problem is discussed in \cite{LP}.

\begin{problem}
Give an interpretation of minor ratio limits when both column and
row indices are increasing sequences.  When do such limits exist?
\end{problem}



\end{document}